\newtheorem {theorem}{Theorem} [section]
\newtheorem {lemma} [theorem] {Lemma}
\newtheorem {proposition} [theorem] {Proposition}
\newtheorem {corollary} [theorem] {Corollary}
\newtheorem {definition} [theorem] {Definition}
\newtheorem {remark} [theorem] {Remark}
\newtheorem{assumption} [theorem] {Assumption}
\def\N {\mathbb N}
\def\mc {\mathcal}
\def\walmost {$\omega$--almost }
\def\w {$\omega$--}
\begin{document}

\title[Discrete representations into ${\rm PSL}(2,\mathbb{R})$]
{\bf Discrete representations of finitely generated groups into ${\rm PSL}(2,\mathbb{R})$}

\author{Hao Liang}
\address{Hao Liang, School of Mathematics, Sun Yat-sen University, Guangzhou, 
Guangdong, 510275, People's Republic
of China} \email{liangh63@mail.sysu.edu.cn; lianghao1019@hotmail.com}

\subjclass[2000]{Primary: 20F65, 20F67; Secondary: 30C20} \keywords{
${\rm PSL}(2, \mathbb{R})$, Fuchsian groups, limit groups, $\mathbb{R}$-tree}


\begin{abstract}
We prove a factorization theorem for Fuchsian groups similar to those proved by Agol and Liu for 3-manifold groups.  As an application, we build Makanin-Razborov diagrams, which parametrize the collection of all discrete representations from an arbitrary but fixed finitely generated group $G$ to ${\rm PSL}(2, \mathbb{R})$. We define a new class of groups called ${\rm PSL}(2, \mathbb{R})$-discrete limit groups and then use the factorization theorem to obtain useful information about this class of groups.  
\end{abstract}

\thanks{The author is supported by NSFC (No.11701581 and No.11521101).}

\maketitle

\setcounter{tocdepth}{1}
\tableofcontents

\vspace{4mm}

\section{\large Introduction}


Discrete and faithful representations into ${\rm PSL}(2, \mathbb{R})$ are fundamental objects in many important branches of mathematics and are heavily studied. In this paper, we consider all discrete representations into ${\rm PSL}(2, \mathbb{R})$ without requiring them to be faithful. We use tools and ideals from the theory of limit groups and group actions on $\mathbb{R}$-trees, developed by Sela, Rips and others. Here is the simplified version of one of our main results (Theorem \ref{finite factoring set}):

\begin{theorem}\label{TMR}
Let $G$ be a finitely generated group. There exists a finite collection $\{\Gamma^1, \dots, \Gamma^n\}$, where each $\Gamma^i$ is an amalgamated product of a hyperbolic 2-orbifold group with finitely many virtually cyclic abelian groups, such that any discrete representation $f$ of $G$ into ${\rm PSL}(2, \mathbb{R})$ factors through $\Gamma^i$ for some $1\leq i\leq n$. 
\end{theorem}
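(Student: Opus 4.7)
The plan is to proceed by contradiction, combining a Bestvina--Paulin style limiting argument with a Rips--Sela shortening argument adapted to the ${\rm PSL}(2,\mathbb{R})$ setting. Assume no finite collection of the announced form suffices. A diagonal extraction then produces an infinite sequence $\{f_n\}$ of discrete representations $G\to{\rm PSL}(2,\mathbb{R})$, no cofinal subsequence of which factors through any single candidate produced at intermediate stages. After replacing each $f_n$ by a ``shortest'' representative in its orbit under the natural action of $\Aut(G)\times{\rm PSL}(2,\mathbb{R})$ --- that is, one minimizing $\max_{s\in S} d_{\mathbb{H}^2}(x_0, f_n(\alpha(s))x_0)$ over a fixed generating set $S$, basepoint $x_0$, and conjugating isometry --- I will seek a contradiction with this minimality.

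First I rescale. If the minimal displacements remain bounded as $n\to\infty$, then discreteness together with a Margulis-type pigeonhole on conjugacy classes of discrete subgroups of ${\rm PSL}(2,\mathbb{R})$ with boundedly generated fundamental domains leaves only finitely many $f_n$ up to conjugacy, contradicting the extraction. Hence on a subsequence the displacements $\lambda_n\to\infty$, and rescaling the hyperbolic metric by $1/\lambda_n$ yields, via Bestvina--Paulin, a non-trivial isometric action of $G$ on an $\mathbb{R}$-tree $T$ without a global fixed point. Let $L$ denote the quotient of $G$ by the kernel of the $T$-action. Arc stabilizers in $T$ are limits of virtually cyclic subgroups of ${\rm PSL}(2,\mathbb{R})$, hence themselves virtually cyclic, so the action of $L$ on $T$ is stable with controlled stabilizers.

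Next I feed $(L,T)$ into the Rips machine. The resulting decomposition of $T$ into a graph of actions has only axial and surface components (exotic Levitt components are excluded by the discreteness hypothesis, as in the classical Fuchsian setting). This yields a splitting of $L$ with a single hyperbolic 2-orbifold vertex group, obtained by amalgamating the surface pieces along their common boundaries, together with finitely many virtually cyclic abelian vertex groups coming from the axial components, all amalgamated along virtually cyclic edge groups --- precisely the shape demanded of the $\Gamma^i$. By construction, cofinally many $f_n$ factor through $L$.

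Finally comes the shortening argument. Using modular automorphisms arising from the decomposition --- Dehn twists along edge groups together with elements of the mapping class group of the surface vertex orbifold --- I can lift non-trivial automorphisms $\alpha_n$ back to $\Aut(G)$ in such a way that $f_n\circ\alpha_n$ has strictly smaller maximal displacement than $f_n$, contradicting minimality. This forces the diagonal extraction to terminate, yielding finitely many groups $\Gamma^1,\ldots,\Gamma^n$ of the announced form through which every discrete representation factors. The hardest step will be the Rips-machine analysis: one must rule out non-geometric components, verify that all edge stabilizers are virtually cyclic abelian, and reassemble the surface pieces into a single 2-orbifold group amalgamated with virtually cyclic abelian groups in exactly the manner specified. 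A secondary technical obstacle is that the shortening must be made robust to torsion in the 2-orbifold vertex groups and to orientation-reversing elements of ${\rm PSL}(2,\mathbb{R})$.
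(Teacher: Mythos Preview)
Your overall architecture---contradiction, Bestvina--Paulin limit to an $\mathbb{R}$-tree, Rips machine, shortening---is the right shape, and matches the paper's strategy at the coarsest level. But there is a genuine gap at the heart of the argument, and a related misidentification of what the $\Gamma^i$ actually are.

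The central problem is your claim that the Rips machine applies to the limiting action. You assert that arc stabilizers are virtually cyclic because they are ``limits of virtually cyclic subgroups of ${\rm PSL}(2,\mathbb{R})$''. This is false: a sequence of elliptic elements whose orders tend to infinity can give, in the limit group, a free abelian subgroup of arbitrarily high rank fixing a non-degenerate arc. Worse, such elliptic (or parabolic) subgroups can stabilize non-degenerate tripods and produce unstable arcs, so the hypotheses of Guirardel's theorem fail outright. The paper is explicit that this is \emph{the} main difficulty; see Lemma~\ref{tripod sta elliptic} and the discussion at the end of Section~\ref{rtree}. The entire machinery of Sections~\ref{es}--\ref{msas}---the Groves--Manning--Wilton construction of ``elliptic/parabolic splittings'' (Theorem~\ref{GMWtree}), the iterated refinement via special abelian splittings, and the complexity bounds from acylindrical accessibility---exists precisely to separate off these bad subtrees \emph{before} the Rips machine and shortening argument can be run on what remains.

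This also explains why your identification of the $\Gamma^i$ is off. The virtually cyclic abelian factors in $\Gamma^i=\pi_1(\mc{O})*_{\bar p}\bar A$ do not arise from axial components of a Rips decomposition of the limit group $L$; they are the groups $\mathbb{Z}\oplus\mathbb{Z}/a_j\mathbb{Z}$ attached at punctures of a fixed orbifold $\mc{O}$, through which the abelian vertex groups of the special abelian splitting of $L$ are made to factor (Proposition~\ref{vertexbounded} and Lemma~\ref{abelianlemma}). In particular $L$ is not itself one of the $\Gamma^i$; it is a $\Gamma^i$-limit group (Theorem~\ref{psllimitgroupishyperboliclimit}). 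Finally, your bounded-displacement case is too quick: bounded translation length does not by itself give finitely many representations up to conjugacy, and the paper instead uses geometric convergence of the target Fuchsian groups $\Gamma_i\to\Gamma$ (Theorem~\ref{strongversionofboundedcase}) after first pigeonholing on the underlying orbifold type.
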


Theorem \ref{TMR} allows one to use the theory of Makanin-Razborov diagrams to encode most of the ``unfaithfulness'' of ${\rm Hom}_d(G, {\rm PSL}(2, \mathbb{R}))$, the collection of all discrete representations of $G$ into ${\rm PSL}(2, \mathbb{R})$. Theorem \ref{TMR}, in a sense, decomposes ${\rm Hom}_d(G, {\rm PSL}(2, \mathbb{R}))$ into $\{{\rm Hom}_d(\Gamma_i, {\rm PSL}(2, \mathbb{R}))\mid i=1, \dots, n\}$ and $\{{\rm Hom}(G, \Gamma_i)\mid i=1, \dots, n\}$. By the extra information about $\Gamma_i$ and the factoring maps provided in Theorem \ref{finite factoring set}, in the above decomposition one only needs to consider a subset of ${\rm Hom}_d(\Gamma_i, {\rm PSL}(2, \mathbb{R}))$, whose complexity is mostly captured by sets of the form ${\rm Hom}^f_d(\pi_1(\mc{O}), {\rm PSL}(2, \mathbb{R}))$. Here ${\rm Hom}^f_d(\pi_1(\mc{O}), {\rm PSL}(2, \mathbb{R}))$ denote the collection of all discrete faithful representations from the fundamental group of a hyperbolic two orbifold $\mc{O}$ to ${\rm PSL}(2, \mathbb{R})$. Hence the ``unfaithfulness'' of ${\rm Hom}_d(G, {\rm PSL}(2, \mathbb{R}))$ is mostly captured by the sets $\{{\rm Hom}(G, \Gamma_i)\mid i=1, \dots, n\}$. 

To understand ${\rm Hom}(G, \Gamma^i)$, one can apply the theory of Makanin-Razborov diagrams. The theory of Makanin-Razborov diagrams is introduced by Sela in \cite{Sela2} to study ${\rm Hom}(G, \mathbb{F})$, where $\mathbb{F}$ is a nonabelian free group. This is the first step in Sela's solution of Tarski problems regarding the elementary theory of free groups. (Kharlampovich and Myasnikov \cite{KM} have another approach to these problems). Sela's work on Makanin-Razborov diagrams has been generalized to several important classes of finitely generated groups(\cite{Ali}, \cite{CK}, \cite{G2}, \cite{JS}, \cite{R}, \cite{Sela2}, \cite{Sela3}), among which is Gromov hyperbolic groups.  Makanin-Razborov diagrams for a finitely generated group $\Gamma$ are finite diagrams of homomorphisms. They give  parametrizations of sets of the form ${\rm Hom}(G, \Gamma)$, where $G$ is an arbitrary fixed finitely generated group. We want to emphasize that finiteness is the crucial property of Makanin-Razborov diagrams, whose existence is trivial without the finiteness requirement. In our current setting, we will see in Section \ref{application} that each $\Gamma^i$ in Theorem \ref{TMR} is a Gromov hyperbolic group. As a result, each ${\rm Hom}(G, \Gamma^i)$ can be understood by constructing the Makanin-Razborov diagrams for $\Gamma^i$. (See \cite{Sela3} and \cite{R}).    

Since there are finitely many $\Gamma^i$, the union of the Makanin-Razborov diagrams for each ${\rm Hom}(G, \Gamma^i)$ is a finite diagram of homomorphisms. This diagram, together with some well understood homomorphisms between hyperbolic 2-orbifold groups, parametrizes ${\rm Hom}_d(G, {\rm PSL}(2, \mathbb{R}))$ in a similar way the Makanin-Razborov diagrams for $\Gamma^i$ parametrize ${\rm Hom}(G, \Gamma^i)$. We call this finite diagram of homomorphisms a {\em discrete Makanin-Razborov diagram} for ${\rm PSL}(2, \mathbb{R})$.  See Section \ref{application} for the precise description of this diagram. 

$\Gamma$-limit groups naturally arise in the construction of Makanin-Razborov diagrams for $\Gamma$ and they are natural objects to study. We introduce ${\rm PSL}(2, \mathbb{R})$-discrete limit groups (See Definition \ref{omega kernel}), which naturally arise in the construction of discrete Makanin-Razborov diagrams for ${\rm PSL}(2, \mathbb{R})$. As an application of Theorem \ref{TMR}, we obtain useful information about ${\rm PSL}(2, \mathbb{R})$-discrete limit groups. 

\begin{theorem}\label{finite pre finite abe}
${\rm PSL}(2, \mathbb{R})$-discrete limit groups are finitely presented. All abelian subgroups of ${\rm PSL}(2, \mathbb{R})$-discrete limit groups are finitely generated. 
\end{theorem}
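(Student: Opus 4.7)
The plan is to reduce the theorem to the known results that limit groups over Gromov hyperbolic groups are finitely presented and have finitely generated abelian subgroups. Let $L$ be a $\mathrm{PSL}(2,\mathbb{R})$-discrete limit group, presented as $L = G/\underrightarrow{\mathrm{Ker}}(f_n)$ for some sequence $f_n \colon G \to \mathrm{PSL}(2,\mathbb{R})$ of discrete representations of a finitely generated group $G$. Apply Theorem \ref{finite factoring set} to each $f_n$: we obtain factorizations $f_n = g_n \circ h_n$ with $h_n \colon G \to \Gamma^{i_n}$ and $g_n \colon \Gamma^{i_n} \to \mathrm{PSL}(2,\mathbb{R})$, the index $i_n$ ranging over the fixed finite set $\{1,\dots,N\}$. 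Pigeonhole on $n$ then lets us pass to a subsequence along which $i_n = i$ is constant, so all the $f_n$ factor through a single group $\Gamma^i$ from our finite collection.

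The central step is to identify $L$ with a $\Gamma^i$-limit group. Form $L' := G/\underrightarrow{\mathrm{Ker}}(h_n)$, which is by construction a $\Gamma^i$-limit group. Because $\ker h_n \subseteq \ker f_n$, we get a canonical surjection $L' \twoheadrightarrow L$ for free. The key claim is that this surjection is in fact an isomorphism: the refined statement of Theorem \ref{finite factoring set} is precisely designed to supply enough control on the factoring maps $h_n$ that they can be taken to satisfy $\ker h_n = \ker f_n$ (equivalently, $g_n$ is injective on $h_n(G)$). Granting this, the stable kernels coincide and $L = L'$.

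Once $L$ is identified as a $\Gamma^i$-limit group, the proof concludes by appealing to known theory. As observed in Section \ref{application}, each $\Gamma^i$ is an amalgam of a hyperbolic 2-orbifold group with virtually cyclic abelian groups along virtually cyclic edge subgroups, hence is Gromov hyperbolic by the Bestvina--Feighn combination theorem. The fact that limit groups over a hyperbolic group are finitely presented and have all abelian subgroups finitely generated is due to Sela \cite{Sela3} in the torsion-free case, and extended to arbitrary hyperbolic groups by Reinfeldt--Weidmann. Applying this to $L = L'$ delivers both conclusions of the theorem.

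The step I expect to be the main obstacle is the kernel-matching assertion $\ker h_n = \ker f_n$ required to promote the surjection $L' \twoheadrightarrow L$ to an isomorphism. Without it, one only realizes $L$ as a quotient of a $\Gamma^i$-limit group, and neither finite presentability nor finite generation of abelian subgroups is preserved under arbitrary quotients. Extracting the appropriate minimality of the factoring maps from Theorem \ref{finite factoring set} uniformly in $n$ is where the real work lies; the fallback, should this direct matching fail, is to iterate the construction by applying Theorem \ref{TMR} to $L$ itself and arguing that the descending tower of factorizations stabilizes, so that finite presentability may still be read off from one of its levels.
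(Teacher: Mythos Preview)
Your primary approach has a real gap at exactly the point you flag. The factoring in Theorem~\ref{finite factoring set} is $f=\iota\circ q\circ\tilde f$, where $\iota$ is a faithful discrete embedding but $q:\pi_1(\mc{O})*_{\bar p}\bar A\to\pi_1(\mc{O}_{\bar n})$ is a genuine quotient (it kills, for instance, the torsion parts of the $\bar A$ factors and the relations coming from the cone points). So $g_n=\iota\circ q$ is not injective, and there is no reason to expect $\ker h_n=\ker f_n$. The refined statement of Theorem~\ref{finite factoring set} gives you no such minimality; it only controls the \emph{shape} of the factoring map, not its kernel on $h_n(G)$.

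Your fallback is the right idea, and in fact it is exactly what the paper does --- but it requires no iteration and no descending tower. The missing observation is that you should apply the factorization theorem to $L$ rather than to $G$. By Lemma~\ref{factorthroughlimitgroup}, $\omega$-almost surely $f_i$ factors through the quotient $G\to L$ via maps $f'_i:L\to{\rm PSL}(2,\mathbb{R})$ with $\ker^\omega(f'_i)=\{1\}$. Now apply Theorem~\ref{finite factoring set} to $L$ and the sequence $\{f'_i\}$: after pigeonholing on $\mc{F}$ you get $\phi_i:L\to\Gamma$ with $f'_i$ factoring through $\phi_i$. Then automatically $\ker^\omega(\phi_i)\subset\ker^\omega(f'_i)=\{1\}$, so $L=L/\ker^\omega(\phi_i)$ is itself a $\Gamma$-limit group --- not merely a quotient of one. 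No kernel-matching at the level of individual maps is needed; only the containment of $\omega$-kernels, which is free. From there your concluding paragraph is correct: $\Gamma$ is hyperbolic by Bestvina--Feighn, and the Reinfeldt--Weidmann results (Proposition~\ref{fin gen fin pre} in the paper) finish the job.
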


It is not hard to reduce the proof of Theorem \ref{TMR} to proving a factorization theorem for hyperbolic 2-orbifold groups (Theorem \ref{MT}), whose simplified version is the following: 

\begin{theorem}\label{t: main factor one conept}
Let $G$ be a finitely generated group. Suppose $\mc{O}$ is an orientable hyperbolic cone type 2-orbifold of finite type. Then there exists a number $N$, depending only on $G$ and $\mc{O}$, such that the following is true:  Denote by $\mc{O}_p$ the 2-orbifold obtained from $\mc{O}$ by replacing a cone point of order greater than $N$ (assuming $\mc{O}$ has such a cone point) with a puncture.  Then every homomorphism $f: G\rightarrow \pi_1(\mc{O})$ factors through a group of the form $\pi_1(\mc{O}_p)*_{\mathbb{Z}}(\mathbb{Z}\oplus\mathbb{Z}/a\mathbb{Z})$. Moreover, the factoring map $\pi_f:\pi_1(\mc{O}_p)*_\mathbb{Z}(\mathbb{Z}\oplus\mathbb{Z}/a\mathbb{Z}) \rightarrow \pi_1(\mc{O})$ is an extension of the natural quotient map $\pi: \pi_1(\mc{O}_p)\rightarrow \pi_1(\mc{O})$.
\end{theorem}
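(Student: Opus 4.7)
The plan is to prove Theorem~\ref{t: main factor one conept} by a Bestvina--Paulin degeneration argument. First, the setup: write $p$ for the designated cone point of order $k$, $\langle\gamma\rangle \le \pi_1(\mathcal{O})$ for its stabilizer, and $c \in \pi_1(\mathcal{O}_p)$ for the puncture loop, so that $\pi\co \pi_1(\mathcal{O}_p)\to \pi_1(\mathcal{O})$ sends $c\mapsto \gamma$. For each divisor $a$ of $k$, form
\[
A_a := \pi_1(\mathcal{O}_p) *_{\langle c\rangle} \bigl(\langle c\rangle \oplus \langle s \mid s^a\rangle\bigr),
\]
and extend $\pi$ to $\pi_f\co A_a \to \pi_1(\mathcal{O})$ by $s\mapsto \gamma^{k/a}$. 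The aim is, given $f\co G\to \pi_1(\mathcal{O})$, to factor $f$ through some $A_a$, with $a$ depending on $f$.

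Assume for contradiction that no bound $N$ works for the topological type of $\mathcal{O}$. Then there is a sequence of orbifolds $\mathcal{O}^{(m)}$ of this type with designated cone order $k_m\to\infty$, and homomorphisms $f_m\co G\to \pi_1(\mathcal{O}^{(m)})$ none of which factors through any $A_a^{(m)}$ via the natural quotient. View each $f_m$ as an isometric action of $G$ on $\mathbb{H}^2$, rescale by the reciprocal of the maximum translation length on a fixed generating set of $G$, and apply the Bestvina--Paulin construction to pass to a subsequential equivariant Gromov--Hausdorff limit. This yields a nontrivial minimal isometric action of $G$ on an $\mathbb{R}$-tree $T$ with virtually cyclic arc stabilizers.

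Now apply the Rips machine (the Bestvina--Feighn--Guirardel structure theorem for stable $\mathbb{R}$-tree actions) to decompose $T$ into surface, axial, and simplicial components. Because the degeneration is driven by the shrinking cone angle $2\pi/k_m$, there is a distinguished axial component whose stabilizer records both the peripheral $\mathbb{Z}$ around $p$ and the finite cyclic subgroup of $\langle\gamma_m\rangle$ actually intersected by $f_m(G)$. Reading this decomposition as a splitting of $G$ modulo a kernel, the non-axial pieces reassemble into $\pi_1(\mathcal{O}^{(m)}_p)$ and the axial piece is $\mathbb{Z}\oplus\mathbb{Z}/a\mathbb{Z}$ with $a = |f_m(G)\cap \langle\gamma_m\rangle|$. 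Amalgamating along $\langle c\rangle$ yields $\tilde f_m\co G\to A_a^{(m)}$ with $\pi_f\circ\tilde f_m = f_m$ for all sufficiently large $m$, contradicting the choice of the $f_m$.

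The main obstacle will be this last step: the Rips machine produces a generic graph of actions, and one must recognize the relevant axial piece as precisely $\mathbb{Z}\oplus \mathbb{Z}/a\mathbb{Z}$ (rather than a larger free abelian or a surface-type piece), amalgamated along exactly $\langle c\rangle$ to the orbifold-with-puncture part. This requires uniform control over (i) the peripheral subgroup at the degenerating cone point throughout the limit, and (ii) the finite cyclic subgroup of $\langle\gamma_m\rangle$ realized in $f_m(G)$, both of which must be tracked stably in $m$ in order to pin down the amalgamation parameter $a$ and assemble the factoring map $\pi_f$ extending the natural quotient $\pi$.
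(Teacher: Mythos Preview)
Your overall strategy---contradiction via a sequence with cone orders $k_m\to\infty$, Bestvina--Paulin limit to an $\mathbb{R}$-tree, then structure theory of the tree action---is exactly the shape of the paper's argument. But there is a genuine gap at the step where you invoke the Rips machine, and it is precisely the difficulty that occupies most of the paper.

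The hypotheses of the Rips machine (in the Guirardel form you cite) require control on arc and tripod stabilizers: typically one needs tripod stabilizers to be trivial (or at least finite) and unstable arcs to have trivial stabilizer. Here that fails. Arc stabilizers are abelian, but they can be infinite \emph{elliptic or parabolic} subgroups of $G$ (coming from the degenerating cone point), and tripod stabilizers, while elliptic or parabolic, need not be trivial. Worse, there can be unstable arcs whose stabilizers are exactly these infinite elliptic/parabolic subgroups. So the Rips decomposition you want does not come for free, and your assertion that the action has ``virtually cyclic arc stabilizers'' in a sense sufficient for the structure theorem is unjustified.

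The paper handles this by first isolating the ``bad'' subtrees---those fixed by elliptic or parabolic subgroups---using a construction of Groves--Manning--Wilton (Section~\ref{es}). This produces a star-shaped splitting of $G$ with abelian center vertex and edge groups that are small elliptic (or parabolic). One then has to iterate: restrict to the non-abelian vertex groups (which are only relatively finitely generated, requiring the machinery of Section~\ref{s:rf}), possibly split again, and prove that this process terminates by bounding a complexity invariant (Section~\ref{msas}). Only after reaching a splitting where no elliptic/parabolic arc stabilizers remain can one run the shortening argument (Section~\ref{hs}) and reduce to the bounded-translation-length case.

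A second, related issue: your description of the output of the Rips machine is not right. An axial component has stabilizer a group acting by translations on a line; it does not naturally produce a factor $\mathbb{Z}\oplus\mathbb{Z}/a\mathbb{Z}$ with nontrivial torsion, and the remaining pieces of a Rips decomposition certainly do not ``reassemble into $\pi_1(\mathcal{O}_p)$'' without substantial further argument. In the paper, the $\mathbb{Z}\oplus\mathbb{Z}/a\mathbb{Z}$ arises not from the Rips machine but from a direct analysis of how $f_i$ restricted to an abelian vertex group factors (Lemma~\ref{abelianlemma} and Proposition~\ref{vertexbounded}), after one has already reduced to the bounded case on the non-abelian pieces.

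Finally, you have not addressed what happens when the translation lengths $|f_m|_S$ stay bounded; in that regime there is no nontrivial $\mathbb{R}$-tree and one instead argues directly that $f_m$ factors through $\pi_1(\mathcal{O}_p)$ via algebraic/geometric convergence (Section~\ref{s:Bounded}).
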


In the general version of the above theorem, replacing multiple cone points with punctures are allowed. See Theorem \ref{MT} for the exact statement. 

Factorization theorems of the same flavor have been proved by Agol and Liu in \cite{Agol-liu} for maps from a finitely presented group $G$ to the fundamental group of an orientable aspherical compact 3-manifold $M$. (As an application of their factorization theorems, Liu proves a factorization theorem similar to Theorem \ref{TMR} for torsion free Kleinian groups of uniformly bounded covolume in \cite{liu}.) In their theorems, instead of a cone point with sufficiently large order being replaced by a puncture, a sufficiently short simple closed geodesic in a hyperbolic piece (or an exceptional fibre at a sufficiently sharp cone point in a Seifert fibered piece) is drilled out. The group $\mathbb{Z}\oplus\mathbb{Z}/a\mathbb{Z}$ ``attached'' to the hyperbolic 2-orbifold group in Theorem \ref{t: main factor one conept} can be thought of an algebraic analog of the Dehn extension introduced in \cite{Agol-liu}. Agol and Liu's proof is topological and does not seem to generalize easily to the case where $G$ is finitely generated. (However, when the 3-manifold $M$ is hyperbolic, Liu did generalized their theorem to finitely generated case in \cite{liu}. ) Unlike their proof, our proof of the factorization theorem relies on the theory of group actions on $\mathbb{R}$-trees and Sela's theory of limit groups. We believe that the technic in this paper and the theory developed in our paper \cite{GHL} will allow us to generalize Agol-Liu's factorization theorem to the finitely generated case. 

Agol and Liu's factorization theorems serve as the crucial step in their proof of J. Simon's conjecture about epimorphisms between knot groups. Like Agol and Liu's factorization theorem, our factorization theorem (Theorem \ref{MT2}) also has an application in the study of homomorphisms between fundamental groups of 3-manifolds: 

In \cite{GHL}, we prove that the collection of all compact 3-manifold groups are equationally Noetherian, which answers a question of Agol and Liu. This result implies that sequences of proper epimorphisms between compact 3-manifold groups have finite lengths, which answers a question of Reid, Wang and Zhou \cite{RWZ}. 

An important step in the proof of the above result is to show that limit groups over fundamental groups of Seifert fibered 3-manifolds with hyperbolic base orbifolds are finitely presented and that all their abelian subgroups are finitely generated. The proofs of these facts rely on Theorem \ref{finite pre finite abe}, which can be considered as a corollary of Theorem \ref{MT2}.

We now give a brief outline of this paper. In Section 2, we set up the basic notations and state the stronger version of Theorem \ref{t: main factor one conept} (Theorem \ref{MT2}), which is the key in proving the other main theorems. The main idea of the proof of Theorem \ref{MT2} will also be explained in Section 2.  The proof of Theorem \ref{MT2} is separated into eight sections: Section \ref{s:Bounded} to Section \ref{proof}. We will give more information about what each of these sections is about after we explain the main idea of the proof in Section 2. In the last section, we prove the other main theorems as applications of Theorem \ref{MT2}. 

I am deeply grateful to Daniel Groves for telling me about the problem, its connection with the theory of limit groups and the construction in Section \ref{es}, which is the key tool in the proof of Theorem \ref{MT2}. This paper benefits from many of the discussions Daniel Groves, Michael Hull and I had when we worked on a closely related project (\cite{GHL}). I want to thank them for that. I thank Michael Siler for pointing out a mistake in the first version of the statement of Theorem \ref{MT2}. I also want to thank Lars Louder for an interesting conversation related to this work.

\section{\large Statement of the factorization theorems}\label{s:sandp}

Let $\mc{O}$ be an orientable hyperbolic cone type $2$-orbifold of finite type, i.e. the only singularities of $\mc{O}$ are cone points and the underlying surface of $\mc{O}$ has finite genus and finitely many punctures and cone points. Equivalently, $\mc{O}$ is the $2$-orbifold corresponding to a finitely generated Fuchsian group, i.e. discrete subgroup of ${\rm PSL}(2, \mathbb{R})$. Let $\bar p=(p_1,\dots, p_k)$ be a tuple of $k$ punctures of $\mc{O}$. Let $P_j$ be the subgroup of $\pi_1(\mc{O})$ generated by the simple loop around $p_j$. Note that $P_j$ is only well defined up to conjugation.

\begin{definition}\label{orbifolddef}
Given $\mc{O}$ as above. Let $\bar n=(n_1, \dots, n_k)\in \mathbb{N}^k$.  Denote by $\mc{O}_{\bar n}$ the 2-orbifold obtained from $\mc{O}$ by replacing the punctures $p_j$ with a cone point $c_j$ of order $n_j$. Let $\pi_{\bar n}:\pi_1(\mc{O})\rightarrow \pi_1(\mc{O}_{\bar n})$ be the natural projection. 
\end{definition}

\begin{remark}
Note that when $n_j$ is big enough for all $1\leq j\leq k$, $\mc{O}_{\bar n}$ also has a hyperbolic structure. This is the only case we consider in this paper. 
\end{remark}

Let $C_j$ be the subgroup of $\pi_1(\mc{O}_{\bar n})$ generated by the simple loop around $c_j$ for $1\leq j\leq k$.  
Note that $C_j$ is only well defined up to conjugation. 


\begin{definition}\label{amalg}
Given $\mc{O}$ as above. Let $\bar A=(A_1, \dots, A_k)$ be a tuple of groups. Define a graph of groups as follow: 
\begin{enumerate}

\item The underlying graph is the complete bipartite graph $K_{1, k}$. 

\item The  center vertex groups is $\pi_1(\mc{O})$ and the other $k$ vertex groups are $A_1, \dots, A_k$. 
\item  The edge groups are cyclic.

\item The boundary homomorphism takes the edge group connecting $A_j$ and $\pi_1(\mc{O})$ isomorphically to $P_j$. 

\end{enumerate}
We call the fundamental group the above graph of group {\em the amalgamated product of $\pi_1(\mc{O})$ with  $\bar A$ along $\bar p$} and denote it by $\pi_1(\mc{O})*_{\bar p}\bar A$.
\end{definition}

With the above notation, the main theorem is as follow. 

\begin{theorem}\label{MT}
Let $G$ be a finitely generated group, $\mc{O}$ be an orientable hyperbolic cone type 2-orbifold of finite type and $\bar p=(p_1, \dots, p_k)$ be a tuple of punctures of $\mc{O}$. Suppose $\mc{O}_{\bar n}$ has hyperbolic structure for some $\bar n\in \mathbb{N}^k$. Then there exists $\bar N\in \mathbb{N}^k$, depending on $G$ and $\mc{O}$, with the following property: For any $\bar n\geq \bar N$, every homomorphism $f:G\rightarrow\pi_1(\mc{O}_{\bar n})$ factors through some $\pi_1(\mc{O})*_{\bar p}\bar A$, where $\bar A=(\mathbb{Z}\oplus(\mathbb{Z}/a_1\mathbb{Z}), \dots, \mathbb{Z}\oplus(\mathbb{Z}/a_k\mathbb{Z}))$. Moreover the factoring map $\pi_f: \pi_1(\mc{O})*_{\bar p}\bar A\rightarrow \pi_1(\mc{O}_{\bar n})$ is an extension of $\pi_{\bar n}$.  
\end{theorem}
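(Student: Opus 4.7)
The plan is to argue by contradiction using the standard limit-group / $\mathbb{R}$-tree machinery adapted to the orbifold setting, and to encode the cone-point behaviour via the amalgamated factor $\mathbb{Z}\oplus\mathbb{Z}/a_j\mathbb{Z}$ that the author flags as ``an algebraic analog of the Dehn extension''. Concretely, suppose the conclusion fails. Then there is a sequence $\bar n_i\in\mathbb{N}^k$ with $\bar n_i\to\infty$ coordinatewise (at least along some subset of the coordinates; the coordinates that stay bounded can be absorbed into $\bar N$), together with homomorphisms $f_i\co G\to\pi_1(\mc{O}_{\bar n_i})$ none of which admits a factorization through any $\pi_1(\mc{O})*_{\bar p}\bar A$ extending $\pi_{\bar n_i}$. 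Using the discrete faithful representation of $\pi_1(\mc{O}_{\bar n_i})$ into $\mathrm{Isom}(\mathbb{H}^2)$, each $f_i$ produces a $G$-action on $\mathbb{H}^2$; one now normalises these actions by pre-composing with elements of the modular group $\mathrm{Mod}(\pi_1(\mc{O}_{\bar n_i}))$ (Dehn twists along essential curves, half-twists along the cone points, and mapping-class automorphisms of the underlying orbifold) so that each $f_i$ becomes ``short'' in the Bestvina--Paulin sense.

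Next I would pass to a stable subsequence, rescale the $\mathbb{H}^2$ metric by the minimal displacement of a fixed finite generating set of $G$, and extract a Bestvina--Paulin limit: a nontrivial minimal action of $G$ on an $\mathbb{R}$-tree $T$ with stable kernel $K$. Writing $L=G/K$, the shortness of the $f_i$ forces $L$ to be freely indecomposable relative to the elliptic subgroups corresponding to $\pi_{\bar n_i}(P_j)$; standard arguments (arc stabilisers abelian, tripod stabilisers trivial in the limit because $\pi_1(\mc{O}_{\bar n_i})$ is CSA when $\bar n_i$ is large) let Rips theory apply, yielding a GAD of $L$ with QH vertices, abelian vertices and rigid vertices. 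The QH pieces account for the hyperbolic 2-orbifold structure, while the abelian vertex groups will carry the rank-one-plus-torsion pieces attached at the punctures $p_j$; this is exactly where I would invoke the construction promised in Section \ref{es} (the ``Groves construction'') to promote the abstract abelian vertex groups into the required amalgamands $\mathbb{Z}\oplus\mathbb{Z}/a_j\mathbb{Z}$ glued along $P_j$.

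Once the limit GAD is in hand, I would lift it back to a factorization of $f_i$ for all large $i$. The mechanism is the usual shortening argument: if $f_i$ failed to factor through $\pi_1(\mc{O})*_{\bar p}\bar A$ for the chosen tuple of abelian groups, then one could shorten $f_i$ by a modular automorphism of the GAD, contradicting the normalisation. The compatibility requirement ``the factoring map extends $\pi_{\bar n}$'' is then read off from the fact that the QH-vertex of the GAD is literally $\pi_1(\mc{O})$ (with its boundary cycles identified with the $P_j$) and that the map on the abelian vertex $\mathbb{Z}\oplus\mathbb{Z}/a_j\mathbb{Z}$ sends the $\mathbb{Z}$ factor to $\pi_{\bar n_i}(p_j)=c_j^{n_j/a_{j,i}}$ and the torsion factor onto the cyclic group $C_j$, consistent with the amalgamation.

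The main obstacle, and the part that I expect to require genuine work across the ``eight technical sections'', is twofold. First, one must show that the limit $\mathbb{R}$-tree $T$ is ``tame'' enough for Rips theory: arc stabilisers need to be abelian, and in the current setting one has to control elements whose images under $f_i$ are large torsion rotations about the cone points (these translate by amounts comparable to the rescaling and hence survive into $T$). This is precisely where the torsion $\mathbb{Z}/a_j\mathbb{Z}$ enters --- the ratio $a_{j,i}=\gcd$(things) that appears in the statement is forced by the residual order of the cone-point image of $f_i$. Second, the Groves-style construction that upgrades an abelian vertex with torsion stabiliser into the direct sum $\mathbb{Z}\oplus\mathbb{Z}/a_j\mathbb{Z}$, together with the verification that the resulting amalgamated map extends $\pi_{\bar n}$ and yields a genuine factorization of $f_i$ (not merely of its conjugacy class), is the combinatorial heart of the argument and is what drives the estimate for the threshold $\bar N$.
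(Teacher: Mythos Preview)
Your proposal is really a sketch of Theorem~\ref{MT2}; the paper's proof of Theorem~\ref{MT} itself is just the one-line contradiction reducing to Theorem~\ref{MT2} (extract a sequence $\bar n_i\to\infty$ and $f_i$ that fail to factor, contradicting Theorem~\ref{MT2}). That said, your outline of the underlying argument has a genuine gap at its centre.

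You assert that ``tripod stabilisers [are] trivial in the limit because $\pi_1(\mc{O}_{\bar n_i})$ is CSA when $\bar n_i$ is large'', so that Rips theory applies directly and yields a GAD whose QH vertex is $\pi_1(\mc{O})$. This is false and is precisely the difficulty the paper is built around. CSA gives abelian arc stabilisers (Lemma~\ref{arc sta abel}), but Lemma~\ref{tripod sta elliptic} shows only that tripod stabilisers are elliptic or parabolic, and these are typically \emph{infinite}: an element whose $f_i$-image is a rotation of small angle about a cone point of order $n_{i,j}\to\infty$ can fix an entire ball or horoball in the limit tree (Lemmas~\ref{ball}, \ref{horoball}). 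Hence the hypotheses of \cite[Theorem~5.1]{Guirardel} fail on $\mc{T}$, and no GAD of the shape you describe is available. The paper's mechanism is different: the Groves--Manning--Wilton construction (Theorem~\ref{GMWtree}) does not ``promote abelian vertex groups into amalgamands'' but rather produces an elliptic/parabolic splitting of $G$ that walls off the bad subtree; one then iterates through ``special abelian splittings'' of bounded complexity (Sections~\ref{s:rf}--\ref{msas}) until every non-abelian vertex group either has bounded translation length---where the factorisation through $\pi_1(\mc{O})$ comes from geometric convergence $\Gamma_i\to\Gamma$ (Theorem~\ref{strongversionofboundedcase}), not from a QH vertex---or supports a tree action to which Rips theory and the shortening argument genuinely apply (Proposition~\ref{shortening}). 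A smaller point: in the shortening step one pre-composes with automorphisms of the \emph{domain} $G$ (Definition~\ref{d:short}), not with $\mathrm{Mod}(\pi_1(\mc{O}_{\bar n_i}))$ acting on the target.
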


It is easy to see that Theorem \ref{MT} follows from the next theorem.

\begin{theorem}\label{MT2}
Let $G$ be a finitely generated group, $\mc{O}$ be an orientable hyperbolic cone type 2-orbifold of finte type and $\bar p=(p_1, \dots, p_k)$ be a tuple of punctures of $\mc{O}$. Suppose $\mc{O}_{\bar n}$ also has hyperbolic structure for some $\bar n\in \mathbb{N}^k$. Suppose $\bar n_i\in \mathbb{N}^k\rightarrow \infty$ as $i\rightarrow \infty$.  Let $\{f_i: G\rightarrow \pi_1(\mc{O}_{\bar n_i})\}$ be a sequence of homomorphisms. Then there exists $\bar A=(\mathbb{Z}\oplus(\mathbb{Z}/a_1\mathbb{Z}), \dots, \mathbb{Z}\oplus(\mathbb{Z}/a_k\mathbb{Z}))$ and $\pi_1(\mc{O})*_{\bar p}\bar A$ such that $f_i$ factor through $\pi_1(\mc{O})*_{\bar p}\bar A$ for infinitely many $i$.  Moreover, each of the factoring map $\pi_{f_i}: \pi_1(\mc{O})*_{\bar p}\bar A\rightarrow \pi_1(\mc{O}_{\bar n_i})$ is an extension of the natural map $\pi_{\bar n_i}$. 
\end{theorem}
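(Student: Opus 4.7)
The plan is to argue by contradiction, combining a Bestvina--Paulin style limiting construction with the Rips--Sela theory of group actions on $\mathbb{R}$-trees and Sela's shortening argument. Fix a finite generating set $S$ of $G$. For each $i$, choose a hyperbolic structure on $\mc{O}_{\bar n_i}$ so that $\pi_1(\mc{O}_{\bar n_i})$ is realised as a discrete subgroup of $\mathrm{PSL}(2,\mathbb{R}) = \mathrm{Isom}(\mathbb{H}^2)$, and compose with $f_i$ to obtain isometric actions $\rho_i\co G\to \mathrm{Isom}(\mathbb{H}^2)$. Let $x_i \in \mathbb{H}^2$ be a basepoint minimising $\lambda_i := \max_{s \in S} d_{\mathbb{H}^2}(x_i, \rho_i(s)x_i)$. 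Assume for contradiction that no tuple $\bar A = (\mathbb{Z}\oplus\mathbb{Z}/a_1\mathbb{Z}, \dots, \mathbb{Z}\oplus\mathbb{Z}/a_k\mathbb{Z})$ works for infinitely many $i$; pass to a subsequence on which no $f_i$ admits the desired factorisation.

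The argument splits according to the behaviour of $\lambda_i$. In the \emph{bounded} case $\lambda_i \leq C$, the pointed actions $(\mathbb{H}^2, x_i, \rho_i)$ subconverge in the equivariant Gromov topology to a nontrivial action $\rho_\infty\co G \to \mathrm{Isom}(\mathbb{H}^2)$. One may choose the hyperbolic structures so that $\mc{O}_{\bar n_i}$ converges geometrically to the finite-volume hyperbolic structure on $\mc{O}$ (the cone angles $2\pi/n_{i,j}$ tend to $0$, so each cone point opens into a cusp), and in this limit the cone subgroups $\pi_{\bar n_i}(P_j)$ of order $n_{i,j}$ converge to the parabolic puncture subgroup $P_j \leq \pi_1(\mc{O})$. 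Reading the limit backwards yields, for all sufficiently large $i$, a homomorphism $G \to \pi_1(\mc{O})*_{\bar p}\bar A$ whose composition with the natural map to $\pi_1(\mc{O}_{\bar n_i})$ equals $f_i$: a generator $s \in S$ whose $\rho_\infty$-image fixes the $j$-th cusp must have $f_i$-image centralising the cyclic cone group, hence lying in that cyclic group, contributing to the $\mathbb{Z}\oplus\mathbb{Z}/a_j\mathbb{Z}$ piece; after a further subsequence the torsion orders $a_j$ stabilise, giving a uniform $\bar A$ and contradicting our assumption.

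In the \emph{unbounded} case $\lambda_i \to \infty$, rescale the metric by $1/\lambda_i$ and apply Bestvina--Paulin to extract a minimal nontrivial isometric action of the limit group $L := G/\SK(f_i)$ on an $\mathbb{R}$-tree $T$. Because Fuchsian groups have virtually cyclic centralisers of loxodromic elements, the arc stabilisers in $T$ are abelian, and the Rips--Sela structure theorem yields a GAD of $L$ whose QH vertex groups come from the Fuchsian surface components of the action and whose abelian vertex groups absorb the degenerating cone subgroups $P_j$. Using this GAD, run Sela's shortening procedure: precompose each $f_i$ by a modular automorphism of $G$ pulled back from a modular automorphism of $L$ to strictly decrease $\lambda_i$ for infinitely many $i$, contradicting the minimality of the basepoint $x_i$. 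This reduces the unbounded case to the bounded case already treated.

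The main obstacle is the Rips--Sela step: one must show that the QH vertex group of the GAD is genuinely $\pi_1(\mc{O})$ and not some a priori different hyperbolic $2$-orbifold group, and that each abelian vertex group has the specific form $\mathbb{Z}\oplus\mathbb{Z}/a_j\mathbb{Z}$ attached along $P_j$ as dictated by the amalgamation $\pi_1(\mc{O})*_{\bar p}\bar A$. The hypothesis $\bar n_i \to \infty$ is crucial here: because $f_i(P_j)$ has order $n_{i,j}$ in $\pi_1(\mc{O}_{\bar n_i})$, it translates in $\mathbb{H}^2$ by an amount bounded independently of $\lambda_i$, so in the rescaled tree $T$ the element $P_j$ fixes a point and lands in an abelian vertex group rather than in the QH piece. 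Pinning down the QH orbifold to be $\mc{O}$ itself, and stabilising the torsion orders $a_j$ along a single subsequence, is where the bulk of Sections~\ref{s:Bounded}--\ref{proof} will be spent.
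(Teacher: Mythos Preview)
Your outline correctly identifies the two-case split on $\lambda_i$ and the role of the Bestvina--Paulin construction, but it misses the paper's central technical obstacle. In the unbounded case you write that ``the Rips--Sela structure theorem yields a GAD of $L$'' from the $\mathbb{R}$-tree action. This is precisely where the argument breaks. Although arc stabilisers are abelian (Lemma~\ref{arc sta abel}), tripod stabilisers need not be trivial: they can be infinite elliptic or parabolic subgroups (Lemma~\ref{tripod sta elliptic}), and such subgroups can stabilise \emph{unstable} arcs of $\mathcal{T}$. The hypotheses of the Rips machine (\cite[Theorem~5.1]{Guirardel}) are therefore not satisfied, and no GAD is produced directly. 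The paper states this explicitly at the end of Section~\ref{s:sandp}: ``Sela's shortening argument does not work in our setting, at least not directly.''

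The bulk of the paper is devoted to circumventing this. The Groves--Manning--Wilton construction (Theorem~\ref{GMWtree}) isolates the ``bad'' subtrees fixed by elliptic or parabolic subgroups and produces an \emph{elliptic/parabolic splitting} of $G$ as a star of groups with abelian centre. One must then iterate: restrict to the non-abelian vertex groups (which are only finitely generated \emph{relative} to their edge groups, requiring the machinery of Section~\ref{s:rf}), rebuild the asymptotic cone, and check whether new bad subtrees appear. Section~\ref{msas} shows this iteration terminates, via acylindrical accessibility (Proposition~\ref{2aa}) and a rank argument (Proposition~\ref{coreproposition}). Only once no elliptic/parabolic segment stabilisers remain does the shortening argument of Section~\ref{hs} apply. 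Your proposal collapses all of this into a single invocation of Rips--Sela, and your stated ``main obstacle'' (identifying the QH vertex group with $\pi_1(\mathcal{O})$) is not in fact where the difficulty lies.

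A secondary issue: in your bounded case, you attribute the $\mathbb{Z}\oplus\mathbb{Z}/a_j\mathbb{Z}$ factors to generators of $G$ fixing cusps in the limit. But in the paper's bounded case (Theorem~\ref{strongversionofboundedcase}) the factoring is simply through $\Gamma=\pi_1(\mathcal{O})$ itself, with no $\bar A$ needed. The virtually cyclic pieces $\bar A$ arise only in Proposition~\ref{vertexbounded}, from the abelian vertex groups of the special abelian splitting built in the unbounded case --- they record elliptic subgroups of $G$ that were separated off by the Groves--Manning--Wilton construction, not cusp-fixing generators in a geometric limit.
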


\begin{remark}
In Theorem \ref{MT}, $\bar A=(\mathbb{Z}\oplus(\mathbb{Z}/a_1\mathbb{Z}), \dots, \mathbb{Z}\oplus(\mathbb{Z}/a_k\mathbb{Z}))$ and $\pi_1(\mc{O})*_{\bar p}\bar A$ depend on $f$. However, in Theorem \ref{MT2}, $\bar A=(\mathbb{Z}\oplus(\mathbb{Z}/a_1\mathbb{Z}), \dots, \mathbb{Z}\oplus(\mathbb{Z}/a_k\mathbb{Z}))$ and $\pi_1(\mc{O})*_{\bar p}\bar A$ depend only on the sequence $\{f_i\}$, but not on $i$. 
\end{remark}

We make the following assumption in Section \ref{s:Bounded} through Section \ref{proof}, which are devoted to the proof of Theorem \ref{MT2}. Although $G$ is assumed to be finitely generated in Theorem \ref{MT2}, we do not make that assumption for the domain of $f_i$ in \ref{standing assumption} since we need to consider the restriction of $f_i$ to subgroups of $G$ that are not known to be finitely generated. 

\begin{assumption}\label{standing assumption}
Let $G$ be a countable group and $\{f_i: G\rightarrow \pi_1(\mc{O}_{\bar n_i})\}$ be a sequence as in Theorem \ref{MT2}. To abbreviate notation, we denote the natural projection from $\pi_1(\mc{O})$ to $\pi_1(\mc{O}_{\bar n_i})$ by $\pi^i$. Fix discrete embeddings $e:\pi_1(\mc{O})\rightarrow\Gamma\subset{\rm PSL}(2, \mathbb{R})$ and $\{e_i:\pi_1(\mc{O}_{\bar n_i})\rightarrow\Gamma_i\subset{\rm PSL}(2, \mathbb{R})\}$ with the following properties:
\begin{enumerate}
\item $\{e_i\circ\pi^i\}$ converges algebraically to $e$ in ${\rm Hom}(\Gamma, {\rm PSL}(2, \mathbb{R}))$.

\item $\Gamma_i$ converges to $\Gamma$ in ${\rm PSL}(2, \mathbb{R})$ geometrically.

\item All punctures of $\mc{O}$ and $\mc{O}_{\bar n_i}$ correspond to parabolic subgroups of $\Gamma$ and $\Gamma_i$, respectively. 
\end{enumerate}
(See \cite[Theorem 4.3.2]{Katok} and \cite{J-M} for detail.) We identify $\pi_1(\mc{O})$ with $\Gamma$ and $\pi_1(\mc{O}_{\bar n_i})$ with $\Gamma_i$. By abuse of notation, we also say $\{\pi^i\}$ converges algebraically to $I_{\Gamma}$, the identity map on $\Gamma$. 
\end{assumption}

Before we explain the idea of the proof of Theorem \ref{MT2}, we make a couple reductions. The idea of limit group allows us to make the first reduction.


Recall that a non-principal ultrafilter $\omega$ is a finitely additive probability measure $\omega\colon 2^\N\to\{0, 1\}$ so that $\omega(F)=0$ for any finite set $F$. A statement $P(i)$ depending on an index $i$ is said to hold {\em \walmost surely} if $\omega(\{i\mid P(i) \text{ holds}\})=1$. Fix a non-principal ultrafilter $\omega$.  (See \cite{BH} for basic properties of ultrafilter.) Suppose $G$ is a countable group and $\{f_i:G\rightarrow {\rm PSL}(2, \mathbb{R})\}$ be a sequence of discrete representatives.  

\begin{definition}\label{omega kernel}
The $\omega$-kernel of $\{f_i\}$, denoted by $ker^\omega(f_i)$, is defined by 
\begin{equation*}
ker^\omega(f_i)=\{g\in G\mid f_i(g)=1 \hspace{2mm} {\text  \walmost surely}\}
\end{equation*}
$L=G/ker^\omega(f_i)$ is called the {\em limit group} associated to $\{f_i\}$. If $G$ is finitely generated, we call $L$ a {\em ${\rm PSL}(2, \mathbb{R})$-discrete limit group}. 
\end{definition}

\begin{lemma}\label{factorthroughlimitgroup}
Suppose $L=G/ker^\omega(f_i)$ is a ${\rm PSL}(2, \mathbb{R})$-discrete limit group associated to $\{f_i: G\rightarrow {\rm PSL}(2, \mathbb{R})\}$. Then \walmost surely, $f_i$ factors through the natural quotient map from $G$ to $L$. If $f'_i:L\rightarrow {\rm PSL}(2, \mathbb{R})$ is the factoring map, then $ker^\omega(f'_i)=\{1\}$. 



\end{lemma}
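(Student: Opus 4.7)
The plan is to reduce the factorization question to a finite set of conditions on each $f_i$ and then to invoke the equational Noetherianity of ${\rm PSL}(2,\mathbb{R})$. The main obstacle to overcome is that, a priori, $ker^\omega(f_i)$ can be an infinitely generated normal subgroup of $G$; in a non-principal ultrafilter a countable intersection of $\omega$-full-measure subsets of $\mathbb{N}$ need not itself have $\omega$-measure one, so one cannot naively ``simultaneously'' annihilate every element of $ker^\omega(f_i)$ \walmost surely. Equational Noetherianity is precisely the finiteness tool that replaces such a countable intersection with a finite one.

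First I would fix a finite generating set of $G$ and the induced surjection $\pi_G\co F_n\twoheadrightarrow G$. Pulling back produces $\tilde f_i:=f_i\circ\pi_G\co F_n\to{\rm PSL}(2,\mathbb{R})$, and the preimage $\tilde K:=\pi_G^{-1}(ker^\omega(f_i))$ is exactly the set of those $w\in F_n$ for which $\tilde f_i(w)=1$ \walmost surely. Since ${\rm PSL}(2,\mathbb{R})$ is a linear group (for instance via its identification with ${\rm SO}^+(2,1)\subset GL_3(\mathbb{R})$), it is equationally Noetherian by the classical theorem of Bryant--Guba. Applying this property to the ``system of equations'' $\tilde K\subseteq F_n$ yields a finite subset $\tilde K_0\subseteq\tilde K$ such that any homomorphism $F_n\to{\rm PSL}(2,\mathbb{R})$ which kills $\tilde K_0$ automatically kills all of $\tilde K$.

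Because $\tilde K_0$ is finite, the set $I=\{i\mid \tilde f_i(\tilde K_0)=\{1\}\}$ is a finite intersection of $\omega$-full-measure sets and hence itself has $\omega$-measure one. For every $i\in I$ the equationally Noetherian conclusion forces $\tilde K\subseteq\ker\tilde f_i$, which descends to $ker^\omega(f_i)\subseteq\ker f_i$; the universal property of the quotient then exhibits the desired factorization $f_i=f_i'\circ\pi$ for \walmost every $i$. The final assertion $ker^\omega(f_i')=\{1\}$ follows immediately: if $[g]\in L$ satisfies $f_i'([g])=1$ \walmost surely, then restricting to $I$ (still $\omega$-large) one has $f_i(g)=f_i'(\pi(g))=1$ \walmost surely, so $g\in ker^\omega(f_i)$ and therefore $[g]=1$ in $L$.
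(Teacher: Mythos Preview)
Your argument is correct and follows essentially the same approach as the paper: the paper's proof simply records that ${\rm PSL}(2,\mathbb{R})$ is equationally Noetherian and defers to the proof of \cite[Corollary~6.3]{R}, leaving the second assertion as an exercise---you have spelled out exactly that argument. One minor point: the equational Noetherianity of linear groups is cited in the paper from Baumslag--Myasnikov--Remeslennikov \cite[Theorem~B1]{BMR} rather than Bryant--Guba, but this does not affect the mathematics.
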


\begin{proof}
Note that ${\rm PSL}(2, \mathbb{R})$ is equationally Noetherian. Hence first statement follows from the proof of \cite[Corollary 6.3]{R}. We leave the second statement as an exercise for the reader. 
\end{proof}

Supppose $\{f'_i\}$ satisfies the conclusion of Theorem \ref{MT2}. Then one can precompose the factoring maps for $\{f'_i\}$ by the quotient map from $G$ to $L$ to get factoring maps for $\{f_i\}$.  So $\{f_i\}$ also satisfies the conclusion of Theorem \ref{MT2}. Hence by Lemma \ref{factorthroughlimitgroup} we have our first reduction:

\begin{lemma}\label{l:reduction 1}
If Theorem \ref{MT2} holds with the addition assumption that $ker^\omega(f_i)=\{1\}$, then Theorem \ref{MT2} holds. 
\end{lemma}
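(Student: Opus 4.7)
The plan is to use Lemma \ref{factorthroughlimitgroup} to replace the sequence $\{f_i\}$ by a sequence defined on a finitely generated quotient of $G$ whose $\omega$-kernel is trivial, and then to obtain factorizations of the $f_i$ themselves by precomposing the factorizations supplied by the assumed special case of Theorem \ref{MT2}.

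First I would fix the non-principal ultrafilter $\omega$ and form the ${\rm PSL}(2,\mathbb{R})$-discrete limit group $L = G/\ker^\omega(f_i)$ together with the quotient homomorphism $q\colon G\to L$. Since $G$ is finitely generated, so is $L$. By Lemma \ref{factorthroughlimitgroup}, the set $S := \{i : f_i \text{ factors through } q\}$ satisfies $\omega(S)=1$, and for $i\in S$ the factored maps $f'_i \colon L \to \pi_1(\mc{O}_{\bar n_i})$ (extended arbitrarily to the complement of $S$) satisfy $\ker^\omega(f'_i)=\{1\}$. Since $\omega$ is non-principal, $S$ is infinite, and the restriction of $\omega$ to $S$ is a non-principal ultrafilter on $S$. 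I would therefore pass at the outset to the subsequence indexed by $S$: the divergence $\bar n_i \to \infty$ is preserved, the source group $L$ is still finitely generated, and the $\omega$-kernel remains trivial. This is precisely the hypothesis under which Theorem \ref{MT2} is being assumed to hold.

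Applying the assumed version of Theorem \ref{MT2} to $\{f'_i\colon L\to\pi_1(\mc{O}_{\bar n_i})\}_{i\in S}$ then produces a tuple $\bar A=(\mathbb{Z}\oplus\mathbb{Z}/a_1\mathbb{Z},\dots,\mathbb{Z}\oplus\mathbb{Z}/a_k\mathbb{Z})$, an infinite set $T\subseteq S$, and for each $i\in T$ a factorization $f'_i = \pi_{f'_i}\circ q'_i$ through $\pi_1(\mc{O})*_{\bar p}\bar A$ with $\pi_{f'_i}$ extending $\pi^i$. Composing with $q$ gives $f_i = \pi_{f'_i}\circ(q'_i\circ q)$ for every $i\in T$, exhibiting the desired factorization for infinitely many $i$ with the same fixed $\bar A$ and with the factoring maps $\pi_{f'_i}$ extending $\pi^i$, as required. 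The only point that needs care is the initial passage to the subsequence indexed by $S$: performing it before invoking the assumed theorem ensures that the indices it supplies automatically lie in $S$, thereby sidestepping the nontrivial question of whether an arbitrary infinite subset of $\mathbb{N}$ meets the $\omega$-full set $S$. Apart from this minor bookkeeping, no genuine technical obstacle arises, and the reduction allows the standing hypothesis \ref{standing assumption} to be augmented by $\ker^\omega(f_i)=\{1\}$ throughout the remainder of the paper.
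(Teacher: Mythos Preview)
Your proposal is correct and follows exactly the same approach as the paper: form $L=G/\ker^\omega(f_i)$, invoke Lemma~\ref{factorthroughlimitgroup} to obtain factoring maps $f'_i$ with trivial $\omega$-kernel, apply the assumed special case to $\{f'_i\}$, and precompose with the quotient $q$. The paper's own argument is a two-sentence sketch that does not address the index issue you raise; your explicit passage to the $\omega$-full subsequence $S$ before invoking the special case is a legitimate way to ensure the infinite set $T$ produced by Theorem~\ref{MT2} consists of indices where $f_i=f'_i\circ q$ actually holds, and is more careful than the paper on this point.
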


The following are some facts about ${\rm PSL}(2, \mathbb{R})$-discrete limit groups that we need later in the paper. 

\begin{lemma}\label{lgs}
Any sequence of proper epimorphisms between ${\rm PSL}(2, \mathbb{R})$-discrete limit groups has finite length. 
\end{lemma}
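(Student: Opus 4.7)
The plan is to argue by contradiction using that ${\rm PSL}(2,\mathbb{R})$ is equationally Noetherian (a fact already invoked in the proof of Lemma \ref{factorthroughlimitgroup}). Suppose toward contradiction that there is an infinite chain of proper epimorphisms $L_1 \twoheadrightarrow L_2 \twoheadrightarrow L_3 \twoheadrightarrow \cdots$ of ${\rm PSL}(2,\mathbb{R})$-discrete limit groups. Since $L_1$ is finitely generated by Definition \ref{omega kernel}, I would fix a free group $F$ of finite rank surjecting onto $L_1$ and let $\tilde K_j \trianglelefteq F$ be the preimage of $\ker(L_1 \twoheadrightarrow L_j)$, yielding a strictly increasing chain $\tilde K_1 \subsetneq \tilde K_2 \subsetneq \cdots$ with $L_j \cong F / \tilde K_j$.

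For each $j$, because $L_j$ is itself a ${\rm PSL}(2,\mathbb{R})$-discrete limit group, Definition \ref{omega kernel} together with Lemma \ref{factorthroughlimitgroup} supplies a sequence of discrete representations $\bar\sigma_i^{(j)} \co L_j \to {\rm PSL}(2,\mathbb{R})$ with trivial $\omega$-kernel. Pulling these back through $F \twoheadrightarrow L_j$ gives discrete representations $\sigma_i^{(j)} \co F \to {\rm PSL}(2,\mathbb{R})$ that kill $\tilde K_j$ for every $i$. Now I would apply equational Noetherianity of ${\rm PSL}(2,\mathbb{R})$ to the system of equations $\tilde K := \bigcup_j \tilde K_j \subseteq F$: there is a finite subset $T \subseteq \tilde K$ such that every homomorphism $F \to {\rm PSL}(2,\mathbb{R})$ vanishing on $T$ vanishes on all of $\tilde K$. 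Since $T$ is finite and the $\tilde K_j$ are increasing, $T \subseteq \tilde K_J$ for some $J$.

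The contradiction then falls out quickly: every $\sigma_i^{(J)}$ kills $\tilde K_J \supseteq T$, hence by equational Noetherianity kills all of $\tilde K \supseteq \tilde K_{J+1}$, so passing to the quotient $\bar\sigma_i^{(J)}$ factors through $L_J \twoheadrightarrow L_{J+1}$ for every $i$. This forces $\ker(L_J \twoheadrightarrow L_{J+1}) \subseteq ker^\omega(\bar\sigma_i^{(J)}) = \{1\}$, contradicting the properness of $L_J \twoheadrightarrow L_{J+1}$. The argument is essentially formal once equational Noetherianity is available; the only subtlety is to choose the witnessing representations for each $L_j$ independently of the chain and pull them back to the common free cover $F$, which is immediate from the definitions. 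I do not foresee a serious obstacle.
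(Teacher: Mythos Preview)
Your argument is correct and follows essentially the same approach as the paper: both rely on the equational Noetherianity of ${\rm PSL}(2,\mathbb{R})$. The paper's proof simply cites \cite[Corollary 6.2]{R} and \cite[Corollary 6.3]{R} as black boxes, whereas you have unpacked their content directly (passing to a free cover, applying Noetherianity to the union of kernels, and deriving a contradiction from the trivial $\omega$-kernel of the witnessing sequence for $L_J$).
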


\begin{proof}
Since ${\rm PSL}(2, \mathbb{R})$ is linear, by \cite[Theorem B1]{BMR}, ${\rm PSL}(2, \mathbb{R})$ is equationally Noetherian. Hence by \cite[Corollary 6.2]{R} and \cite[Corollary 6.3]{R}, we know that a sequence of epimorphisms between ${\rm PSL}(2, \mathbb{R})$-limit groups eventually stabilizes. 
\end{proof}

Recall that a group $G$ is {\em commutative transitive} if $[g,h]=[g', h]=1$ implies $[g,g']=1$ for any $g, g', 1\neq h\in G$. 

\begin{lemma}\label{l:comm tran}
${\rm PSL}(2, \mathbb{R})$-discrete limit groups are commutative transitive. 
\end{lemma}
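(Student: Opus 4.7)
The plan is to transfer commutative transitivity from $\mathrm{PSL}(2,\mathbb{R})$ itself to the limit group via the defining sequence. First, using Lemma \ref{factorthroughlimitgroup}, I may assume without loss of generality that the $\mathrm{PSL}(2,\mathbb{R})$-discrete limit group $L$ is presented as $L = L/\ker^\omega(f_i)$ with its \emph{own} defining sequence of discrete representations $\{f_i : L \to \mathrm{PSL}(2,\mathbb{R})\}$ satisfying $\ker^\omega(f_i) = \{1\}$; indeed, if $L = G/\ker^\omega(f_i)$ with $f_i : G \to \mathrm{PSL}(2,\mathbb{R})$, then $\omega$-almost surely $f_i$ factors through $L$ as $f'_i$, and $\ker^\omega(f'_i) = \{1\}$.

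The key input is the classical fact that $\mathrm{PSL}(2,\mathbb{R})$ is itself commutative transitive. This follows from the trichotomy of non-identity elements into hyperbolic, parabolic, and elliptic types: the centralizer in $\mathrm{PSL}(2,\mathbb{R})$ of a non-identity element is, in each case, contained in the unique one-parameter subgroup preserving its fixed-point data (translations along the axis for hyperbolic elements, parabolics fixing the same ideal point for parabolic elements, rotations around the same fixed point in $\mathbb{H}^2$ for elliptic elements). In particular, these centralizers are abelian, which immediately implies commutative transitivity of $\mathrm{PSL}(2,\mathbb{R})$ and of any subgroup thereof.

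Now let $g, g', h \in L$ with $h \neq 1$ and $[g,h] = [g',h] = 1$. Since $h \notin \ker^\omega(f_i) = \{1\}$, the set $\{i : f_i(h) \neq 1\}$ has $\omega$-measure one. Simultaneously, $\omega$-almost surely $[f_i(g), f_i(h)] = f_i([g,h]) = 1$ and $[f_i(g'), f_i(h)] = 1$. Taking the intersection of these three full-measure sets (still of full measure since $\omega$ is finitely additive), for $\omega$-almost every $i$ we have $f_i(g)$ and $f_i(g')$ both centralizing the non-trivial element $f_i(h)$ in $\mathrm{PSL}(2,\mathbb{R})$. Commutative transitivity of $\mathrm{PSL}(2,\mathbb{R})$ then yields $[f_i(g), f_i(g')] = 1$ $\omega$-almost surely, i.e.\ $[g,g'] \in \ker^\omega(f_i) = \{1\}$, so $[g,g'] = 1$ in $L$.

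There is no real obstacle here; the only thing one must be careful about is the reduction step to $\ker^\omega(f_i) = \{1\}$, which is precisely what Lemma \ref{factorthroughlimitgroup} supplies, and the verification that centralizers in $\mathrm{PSL}(2,\mathbb{R})$ of non-identity elements are abelian, which is standard from the classification of isometries of $\mathbb{H}^2$.
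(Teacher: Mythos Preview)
Your proof is correct and follows essentially the same approach as the paper's: both transfer commutative transitivity from $\mathrm{PSL}(2,\mathbb{R})$ to the limit group by showing that $[f_i(g),f_i(g')]=1$ $\omega$-almost surely and concluding via the $\omega$-kernel. The only cosmetic difference is that you first invoke Lemma~\ref{factorthroughlimitgroup} to assume $\ker^\omega(f_i)=\{1\}$ and work directly in $L$, whereas the paper works with lifts $\tilde g,\tilde g',\tilde h$ in $G$; these are equivalent bookkeeping choices.
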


\begin{proof}
Let $\{f_i: G\rightarrow {\rm PSL}(2, \mathbb{R})\}$ be a sequence discrete of representations. Let $L=G/ker^\omega(f_i)$ be the associated limit group. Suppose $g, g', 1\neq h\in L$ satisfy $[g,h]=[g', h]=1$. Let $\widetilde{g}, \widetilde{g'}$ and $\widetilde{h}$ be lifts in $G$ of $g, g'$ and $h$, respectively. Note that $f_i(\widetilde{h})\neq 1$ \walmost surely since $h\neq 1$.  Observe that $[\widetilde{g}, \widetilde{h}], [\widetilde{g'}, \widetilde{h}]$ and $[\widetilde{g},\widetilde{g'}]$ are lifts of $[g,h], [g', h]$ and $[g,g']$, respectively. Since $[g, h]=1$, we have $f_i([\widetilde{g}, \widetilde{h}])=[f_i(\widetilde{g}), f_i(\widetilde{h})]=1$ \walmost surely. Similarly, we have $[f_i(\widetilde{g'}), f_i(\widetilde{h})]=1$ \walmost surely. Note that the image of $f_i$ in ${\rm PSL}(2, \mathbb{R})$ is commutative translation. As a result,  $[f_i(\widetilde{g}), f_i(\widetilde{g'})]=1$ \walmost surely, which implies that $[g, g']=1$.
\end{proof}

Here is our second reduction. 

\begin{lemma}\label{l:reduction 2}
Theorem \ref{MT2} holds if $G$ is abelian.
\end{lemma}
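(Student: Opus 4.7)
The plan is to handle the abelian case by direct construction, exploiting the fact that every abelian subgroup of an orientation--preserving Fuchsian group is cyclic. First I would apply Lemma~\ref{l:reduction 1} to assume $ker^\omega(f_i)=1$, and write $G=\mathbb{Z}^r\oplus T$ with $T$ finite of exponent $D$. Each image $f_i(G)$ is an abelian subgroup of $\pi_1(\mc{O}_{\bar n_i})$, hence cyclic. By a pigeonhole argument using $\omega$, I may assume after discarding an $\omega$--null set that all the $f_i(G)$ are conjugate into cyclic subgroups of one fixed type: the trivial subgroup; generated by a hyperbolic element; generated by a parabolic at some puncture of $\mc{O}$ outside $\bar p$; generated by an elliptic at a fixed original cone point of $\mc{O}$; or equal to $C_{j_0}$ for some fixed $j_0\in\{1,\dots,k\}$.

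In all of these cases except the last, the cyclic subgroup containing $f_i(G)$ lifts through the surjection $\pi^i:\pi_1(\mc{O})\to\pi_1(\mc{O}_{\bar n_i})$ to a cyclic subgroup of $\pi_1(\mc{O})$ of the same isomorphism type, since the relations added by $\pi^i$ are only of the form $P_j^{n_{i,j}}=1$ and do not affect these conjugacy classes. Using that $G$ is abelian I can then lift $f_i$ to a homomorphism $\phi_i:G\to\pi_1(\mc{O})$ with $\pi^i\circ\phi_i=f_i$. Taking $\bar A=(\mathbb{Z},\dots,\mathbb{Z})$---so that each amalgamation identifies $P_j$ isomorphically with the generator of $A_j=\mathbb{Z}$---collapses $\pi_1(\mc{O})*_{\bar p}\bar A$ to $\pi_1(\mc{O})$ itself, finishing these cases with $\pi_{f_i}=\pi^i$.

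The essential case is when $f_i(G)$ is conjugate into $C_{j_0}$. After absorbing a conjugation by $\tilde\gamma_i\in\pi_1(\mc{O})$ chosen to lift a conjugator in $\pi_1(\mc{O}_{\bar n_i})$, I may assume $f_i(G)\subset C_{j_0}\cong\mathbb{Z}/n_{i,j_0}\mathbb{Z}$. Since $T$ has exponent $D$, the torsion $f_i(T)$ lies in the subgroup of $C_{j_0}$ of order $\gcd(n_{i,j_0},D)\mid D$. I would set $a_{j_0}:=D$ and $a_j:=1$ for $j\neq j_0$, so that $A_{j_0}=\mathbb{Z}\oplus\mathbb{Z}/D\mathbb{Z}$. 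The factoring map $\pi_{f_i}$ would extend $\pi^i$ by sending the generator of the $\mathbb{Z}$ summand of $A_{j_0}$ (which is identified with $P_{j_0}$) to $c_{j_0}$, and the generator of the $\mathbb{Z}/D\mathbb{Z}$ summand to $c_{j_0}^{k_i}$ for an integer $k_i$ with $n_{i,j_0}\mid Dk_i$. Correspondingly, $\phi_i:G\to A_{j_0}$ sends $\mathbb{Z}^r$ to the $\mathbb{Z}$ summand via an integer lift of $f_i|_{\mathbb{Z}^r}\pmod{n_{i,j_0}}$, and sends $T$ to the $\mathbb{Z}/D\mathbb{Z}$ summand via a homomorphism whose composition with multiplication by $k_i$ realises $f_i|_T\pmod{n_{i,j_0}}$.

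The hard step will be the consistency check that the parameters $k_i$, the integer lifts on $\mathbb{Z}^r$, and the torsion homomorphism $T\to\mathbb{Z}/D\mathbb{Z}$ can be chosen simultaneously so that $\pi_{f_i}\circ\phi_i=f_i$ on all of $G$, while keeping $\bar A$ independent of $i$. This reduces to a finite piece of number theory: given that $f_i(T)$ sits inside the $D$--torsion subgroup of $\mathbb{Z}/n_{i,j_0}\mathbb{Z}$, one must choose $k_i$ so that multiplication by $k_i$ carries $\mathbb{Z}/D\mathbb{Z}$ onto the subgroup of $\mathbb{Z}/n_{i,j_0}\mathbb{Z}$ containing $f_i(T)$, and then define the torsion factor of $\phi_i$ accordingly, using the structure theorem for finite abelian groups to lift $f_i|_T$ through the multiplication--by--$k_i$ surjection. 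The main obstacle is this bookkeeping, resolved by an additional pigeonhole step on $\omega$ that makes $\gcd(n_{i,j_0},D)$ constant, so that the single choice $a_{j_0}=D$ works uniformly.
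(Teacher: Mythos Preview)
Your approach is the paper's: reduce via Lemma~\ref{l:reduction 1} to $ker^\omega(f_i)=1$, write $G=\mathbb{Z}^r\oplus T$ with $T$ finite, observe that $f_i(G)$ is cyclic, case--split on the type of that cyclic image, and handle the only interesting case (image inside a cone--point group $C_{j_0}$ of growing order) by attaching a single $\mathbb{Z}\oplus\mathbb{Z}/a\mathbb{Z}$ factor at $p_{j_0}$.

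The one point where the paper is sharper, and where your ``hard step'' can be eliminated: since $ker^\omega(f_i)=1$ and $T$ is finite, $f_i|_T$ is injective $\omega$--almost surely, so $T$ embeds in a cyclic group and is itself cyclic, say $T\cong\mathbb{Z}/a\mathbb{Z}$. The paper then simply takes $a_{j_0}=a$, sends $\mathbb{Z}^r$ into $P_{j_0}\cong\mathbb{Z}$ by an integer lift, sends $T$ identically onto the $\mathbb{Z}/a\mathbb{Z}$ summand, and lets $\pi_{f_i}$ send the generator of $\mathbb{Z}/a\mathbb{Z}$ to $f_i(1_T)$. No choice of $k_i$, no pigeonhole on $\gcd(n_{i,j_0},D)$, no structure theorem.

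This is not merely cosmetic. For a general finite abelian $T$ of exponent $D$, your lifting claim can fail: take $T=\mathbb{Z}/8\mathbb{Z}\oplus\mathbb{Z}/4\mathbb{Z}$, $D=8$, $n_{i,j_0}=12$, $k_i=3$, and let $f_i$ send the $\mathbb{Z}/4\mathbb{Z}$ generator to $c_{j_0}^3$ (order~$4$). Any $\phi_i$--preimage in $\mathbb{Z}/8\mathbb{Z}$ would have to be odd (to hit $3\bmod 12$ under multiplication by~$3$) and of order dividing~$4$ (hence even) --- impossible. So the cyclicity of $T$ is exactly what makes your factorisation go through; once you insert that one observation, your argument and the paper's coincide.
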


\begin{proof}
By Lemma \ref{l:reduction 1}, we can assume $ker^\omega(f_i)=\{1\}$. Hence $G$ is ${\rm PSL}(2, \mathbb{R})$-discrete limit group.  Since $G$ is finitely generated and abelian, $f_i(G)$ is abelian and $G=\mathbb{Z}^n\oplus F$, where $F$ is a finite abelian group. Hence $f_i(G)$ is cyclic. If $f_i(G)$ is infinite or has order bounded independent of $i$, then there is a subgroup of $\Gamma$ mapped isomorphically to $f_i(G)$ by $\pi^i$. Hence in this case, $f_i$ factors through $\Gamma$. So we are left with the case where $f_i(G)$ has order going to infinity. In this case, up to conjugation, $f_i(G)$ is contained in an elliptic subgroup $C_i$ of $\Gamma_i$ and $C_i$ converges to a parabolic subgroup $P$ of $\Gamma$ corresponding to a puncture. Note that $P$ is abelian and is mapped onto $C_i$ by $\pi^i$. Therefore $f_i|_{\mathbb{Z}^n}$ factor through $P$ and hence $\Gamma$ since $\mathbb{Z}^n$ is free abelian. Since $ker^\omega(f_i)=\{1\}$, $f_i$ is injective on any finite subset of $G$ \walmost surely. Hence $f_i|_F$ is injective, which implies that $F=\mathbb{Z}/a\mathbb{Z}$ for some $a$. Therefore $f_i|_F$ and hence $f_i$ factors through $\Gamma*_P(P\oplus \mathbb{Z}/a\mathbb{Z})$. 
\end{proof}

By Lemma \ref{l:reduction 1} and Lemma \ref{l:reduction 2}, we make the following additional assumption in Section \ref{s:Bounded} to Section \ref{proof}.  

\begin{assumption}\label{SA2}
Given \ref{standing assumption}, we assume $G$ is not abelian and $ker^\omega(f_i)=\{1\}$. 
\end{assumption}

We introduce one more important ingredient before we explain the idea of the proof of Theorem \ref{MT2}. 

\begin{definition}\label{d:translation length}
Let $S\subset G$ be a finite subset of $G$. For any homomorphism $f: G\rightarrow {\rm PSL}(2, \mathbb{R})$,  define the {\em translation length} $|f|_S$ of $f$ with respect to $S$ by 
\begin{equation*}
|f|_S=inf_{x\in\mathbb{H}}\{max_{g\in S}\{d_{\mathbb{H}}(f(g)x, x)\}\}.
\end{equation*}
A point $x\in \mathbb{H}$ realizing $|f|_S$ is called a {\em centrally located} point of $f$ with respect to $S$.
\end{definition}

\begin{lemma}\label{l:cent located points exits}
Under Assumption \ref{standing assumption} and Assumption \ref{SA2}, a centrally located point of $f_i$ with respect to any finite subset $S$ of $G$ exists \walmost surely. 
\end{lemma}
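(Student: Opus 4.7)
The plan is to show that the continuous convex function $\phi_i\colon\mathbb{H}\to\mathbb{R}_{\ge 0}$ defined by $\phi_i(x)=\max_{g\in S}d_{\mathbb{H}}(f_i(g)x,x)$ attains its infimum $|f_i|_S$ on $\mathbb{H}$ $\omega$-almost surely. Convexity follows from the $\mathrm{CAT}(0)$ geometry of $\mathbb{H}$: each displacement function $x\mapsto d_{\mathbb{H}}(f_i(g)x,x)$ is convex, and a finite maximum of convex functions is convex.

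First I would choose a minimizing sequence $(x_n)\subset\mathbb{H}$ for $\phi_i$. If a subsequence stays in a compact subset of $\mathbb{H}$, continuity of $\phi_i$ produces an actual minimizer $x_{\ast}\in\mathbb{H}$, which is by definition centrally located. Otherwise, after passing to a subsequence, $x_n\to\xi\in\partial\mathbb{H}$ in the closed-disk topology, while $d_{\mathbb{H}}(f_i(g)x_n,x_n)\le\phi_i(x_n)\to|f_i|_S<\infty$ remains bounded for every $g\in S$. Since two sequences in $\overline{\mathbb{H}}$ at bounded hyperbolic distance must approach the same boundary point, this forces $f_i(g)\xi=\xi$ for every $g\in S$, so $\langle f_i(S)\rangle$ is a discrete subgroup of $\mathrm{Stab}_{\mathrm{PSL}(2,\mathbb{R})}(\xi)$.

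A standard upper half-plane computation shows that any discrete subgroup of $\mathrm{PSL}(2,\mathbb{R})$ fixing a point of $\partial\mathbb{H}$ is cyclic, of exactly one of three types: trivial, infinite cyclic hyperbolic with axis ending at $\xi$, or infinite cyclic parabolic fixing $\xi$ (a mixed parabolic--hyperbolic group fixing $\xi$ would violate discreteness, as conjugating a parabolic by a hyperbolic translation along the axis produces an accumulating family of parabolics). In the trivial case every point of $\mathbb{H}$ is centrally located, and in the hyperbolic case $\phi_i$ attains its minimum along the axis inside $\mathbb{H}$, contradicting $x_n\to\xi\in\partial\mathbb{H}$.

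The main obstacle is to exclude the remaining parabolic case $\omega$-almost surely, in which $|f_i|_S=0$ would not be attained. If that case held $\omega$-almost surely, then $\langle f_i(S)\rangle$ would be abelian, giving $[s,s']\in\ker f_i$ for all $s,s'\in S$; combined with $\ker^\omega(f_i)=\{1\}$ from Assumption \ref{SA2}, this would force $\langle S\rangle$ to be an abelian subgroup of $G$ whose image $\omega$-almost surely lies in a parabolic cyclic subgroup of $\Gamma_i$. I would then run the abelian analysis from the proof of Lemma \ref{l:reduction 2} on $\langle S\rangle$: any torsion in $\langle S\rangle$ and any free abelian rank greater than one produce a non-identity element of $\ker^\omega(f_i)$, contradicting Assumption \ref{SA2}, so $\langle S\rangle$ must itself be infinite cyclic with a generator whose image is parabolic $\omega$-almost surely. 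In this last sub-case the geometric convergence $\Gamma_i\to\Gamma$ from Assumption \ref{standing assumption} identifies the fixed points of the $\langle f_i(S)\rangle$ with a parabolic fixed point of $\Gamma$ corresponding to a cusp of $\mathcal{O}$, and combining this location with the factorization of Lemma \ref{l:reduction 2} lets one take a centrally located point inside a suitable horoball in $\mathbb{H}$. This step, leaning on the geometric convergence of Assumption \ref{standing assumption}, is the technical heart of the argument.
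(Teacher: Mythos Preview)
Your argument in steps 1--5 essentially reproves Bestvina's Proposition~2.1, which the paper simply cites. That part is fine. The genuine gap is in your handling of the parabolic case at the end.

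You correctly recognize that if $\langle f_i(S)\rangle$ is a nontrivial parabolic group then $|f_i|_S=0$ is \emph{not} attained. But your proposed resolution---``take a centrally located point inside a suitable horoball''---is impossible for exactly the reason you stated two sentences earlier: the displacement function is strictly positive on $\mathbb{H}$ and tends to $0$ as one approaches the parabolic fixed point, so no minimizer exists. No amount of geometric convergence or factorization from Lemma~\ref{l:reduction 2} changes this; the infimum is genuinely not realized. (Your intermediate claim that free abelian rank $\ge 2$ in $\langle S\rangle$ produces a nontrivial element of $\ker^\omega(f_i)$ is also unjustified: different $f_i$ can have different rank-one kernels in $\mathbb{Z}^2$ with trivial $\omega$-intersection.)

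The paper's proof avoids this entirely by observing that the parabolic case forces $G$ itself---not just $\langle S\rangle$---to be abelian, contradicting Assumption~\ref{SA2}. This works because in every application $S$ is a generating set (or a relative generating set) for $G$; the lemma as literally stated for \emph{arbitrary} finite $S$ is in fact false (take $S=\{g\}$ with $f_i(g)$ parabolic). So the missing idea is simply: once you have shown $\langle f_i(S)\rangle$ abelian $\omega$-almost surely, use $\langle S\rangle=G$ and $\ker^\omega(f_i)=\{1\}$ to conclude $G$ is abelian, and stop there.
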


\begin{proof}
By  \cite[Proposition 2.1]{Bestvina}, it suffices to show that $f_i(G)$ is not contained in a parabolic subgroup \walmost surely. Suppose $f_i(G)$ is contained in a parabolic subgroup \walmost surely. Let $g,h\in G$. Then $f_i([g,h])=[f_i(g), f_i(h)]=1$ \walmost surely since parabolic subgroups are abelian. Hence $[g, h]\in ker^\omega(f_i)$. Therefore $[g, h]=1$. As a result, $G$ is abelian. Contradiction. 
\end{proof}

We now explain the idea of the proof of Theorem \ref{MT2}: 
Assume \ref{standing assumption} and \ref{SA2}. Suppose $S$ generates $G$. The proof of Theorem \ref{MT2} is separated into the following two cases: 
\begin{enumerate}
\item $\{|f_i|_S\}$ has a bounded subsequence.
\item $\{|f_i|_S\}$ has no bounded subsequence.
\end{enumerate}

Case (1) is the easy case. In this case, after passing to subsequences and pos-composing with conjugation in $\Gamma_i$,  $\{f_i\}$ converges to a representation $f: G\rightarrow \Gamma$. We then observed that $f_i$ factors through $f$ for all big $i$. (This is explained in Section \ref{s:Bounded})

Case (2) is the hard case. The goal is to reduce it to Case (1). Note that if $\alpha_i$ is an automorphism of $G$ and $f_i\circ\alpha_i$ factors through $\Gamma$ (or $\Gamma*_{\bar P}\bar A$), then so does $f_i$.  This suggests the following approach: we precompose $f_i$ with automorphism $\alpha_i$ of $G$ to ``shorten'' the translation length $|f_i|_S$, i.e. so that $|f_i\circ \alpha_i|_S<|f_i|_S$.  If the resulting sequence has a bounded subsequence, we are done by Case (1). However, in general, the automorphisms of $G$ do not have enough ``shortening power" to turn $\{|f_i\circ\alpha_i|_S\}$ into a bounded sequence. This is where Sela's theory of limit groups and shortening argument comes in: If $f_i\circ\alpha_i$ is ``shortest'' possible (Definition \ref{d:short}) and $\{|f_i\circ\alpha_i|_S\}$ is still unbounded, then Sela's shortening argument (IF it applies) says that $L_1=G/ker^\omega(f_i\circ\alpha_i)$ is a proper quotient of $G$ and $f_i\circ\alpha_i$ factors through the quotient map $G\rightarrow L_1$ \walmost surely. Let $f_i^1: L_1\rightarrow \Gamma_i$ be the factoring map. Then it suffice to show that $\{|f_i^1|_{S_1}\}$ has a bounded subsequence, where $S_1$ is a finite generating set of $L_1$. Now we can repeat the above process and the automorphisms of $L_1$ give us extra ``shortening power", which gives us a better chance of attaining a subsequence of bounded translation lengths. The above process gives us sequences $\{f_i^1\}, \{f_i^2\}, \dots$, which are more and more likely to be ``bounded'' , i.e. have subsequence with bounded translation lengths. At the same time, a sequence of proper quotients $G\rightarrow L_1\rightarrow \cdots $ is produced. By Lemma \ref{lgs} such a sequence has finite length. As a result, $\{f_i^J\}$ has a subsequence with bounded translation lengths for some $J$ and Case (2) is reduced to Case (1).

However, Sela's shortening argument does not work in our setting, at least not directly. 
Roughly, here is how the shortening argument works: Suppose $L_1=G/ker^\omega(f_i\circ\alpha_i)$ is NOT a proper quotient of $G$. The sequence $\{f_i\}$ induces an action of $G$ on an ultralimit $\mc{X}$ of $(\mathbb{H}^2, \frac{1}{|f_i|_S}d_{\mathbb{H}})$. If $\{|f_i|_S\}$ is unbounded, $\mc{X}$ is an $\mathbb{R}$-tree. Let $\mc{T}$ be the minimal $G$-invariant subtree of $\mc{X}$. (The construction of $\mc{X}$ and $\mc{T}$ and their properties are explained in Section \ref{rtree}. ) If the action of $G$ on $\mc{T}$ is ``nice'', then the Rips machine (\cite[Theorem 5.1]{Guirardel}) can be applied to obtain useful information about $G$ and its action on $\mc{T}$, which allows us to find automorphisms of $G$ that shorten $|f_i\circ \alpha_i|_S$. But this is impossible as $f_i\circ \alpha_i$ is the ``shortest'' by construction. Hence $L_1$ is a proper quotient of $G$. 

The main difficulty that we need to overcome is that the action of $G$ on $\mc{T}$ is not ``nice'': there are subtrees of $\mc{T}$ that can not be described by the output of the Rips machine. These ``bad'' subtrees (with the actions of their stabilizers in $G$) do not seem to have the dynamical properties that allow the shortening argument to work. (Properties of these ``bad'' subtrees will be studied in Section \ref{es}.)  To deal with these ``bad'' subtree, we apply a construction introduced by Groves, Manning and Wilton to ``separates'' the ``bad'' subtrees from the rest of the tree $\mc{T}$. More specifically, we obtain a splitting $\mathbb{G}$ of $G$ with two types of vertex groups, one of which corresponds to the stabilizers of the $G$-orbits of  ``bad'' subtrees. We call this type of vertex groups ``bad''. (The Groves-Manning-Wilton construction will be explained in Section \ref{es}.) While these ``bad'' vertex groups are stabilizers of ``bad'' subtrees of $\mc{T}$, they are easy to understand algebraically: they are abelian. So we can restrict $f_i$ to these ``bad'' vertex groups and see that they factor through groups of the form $\mathbb{Z}\oplus(\mathbb{Z}/a\mathbb{Z})$. Now we restrict $f_i$ to the vertex groups of $\mathbb{G}$ that are not ``bad'' and try to run the shortening argument on each of these vertex groups. There are two problems that one needs to deal with: 
\begin{enumerate}
\item The ``not-bad'' vertex groups are not known to be finitely generated and translation length is only well defined for finite subsets. 
\item New ``bad'' subtrees might show up again. 
\end{enumerate} 
First we need to show that it makes sense to run the shortening argument in these vertex groups of $\mathbb{G}$, even though they are not known to be finitely generated. We do this in Section \ref{s:rf}. To deal with the second problem, when ``bad'' subtrees show up again, we apply the Groves-Manning-Wilton construction again to get a refinement of $\mathbb{G}$ and then work with the new (smaller) ``not-bad'' vertex groups. The key here is to show that new ``bad'' subtrees do not keep showing up forever. This is explained in Section \ref{msas}. Then we will be able to obtained a refinement $\mathbb{G}$ such that when one tries to run the shortening argument on the ``not-bad'' vertex groups of $\mathbb{G}$ , ``bad'' subtrees will not show up. We then show that the shortening argument works in this case (see Section \ref{hs}). In Section \ref{proof}, we will put all the above ideas together to finish the proof of Theorem \ref{MT2}.

\section{\large Bounded translation length}\label{s:Bounded}

\begin{theorem}\label{strongversionofboundedcase}
Assume \ref{standing assumption} and \ref{SA2}. Suppose $G$ is finitely generated and let $S$ be a finite generating set of $G$. Suppose ${\rm lim}^\omega |f_i|_S< \infty$ . Then there exists $f': G\rightarrow \Gamma$ such that $f_i=\pi^i\circ \tilde c_i\circ f'$ \walmost surely, where $\tilde c_i$ is an inner automorphism of $\Gamma$.

\end{theorem}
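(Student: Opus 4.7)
The plan is to use the bounded translation length to conjugate $f_i$ into a compact family of representations in ${\rm PSL}(2,\mathbb{R})$, extract an $\omega$-limit $f'$ landing in $\Gamma$, and then upgrade the limiting identification $\tilde f_i(g)\to f'(g)$ into an $\omega$-almost-sure equality using the algebraic and geometric convergences of Assumption \ref{standing assumption}.

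By Lemma \ref{l:cent located points exits}, \walmost surely there is a centrally located point $x_i\in\mathbb{H}$ for $f_i$ relative to $S$, and $d_\mathbb{H}(f_i(s)x_i,x_i)\le|f_i|_S\le B$ for every $s\in S$, where $B$ is a constant since ${\rm lim}^\omega|f_i|_S<\infty$. I first claim that \walmost surely one can choose $\gamma_i\in\Gamma_i$ with $\gamma_i^{-1}x_i$ lying in a fixed compact $K\subset\mathbb{H}$. Otherwise, the image $\bar x_i$ of $x_i$ in $\mathcal{O}_{\bar n_i}$ sinks arbitrarily deep into a Margulis-thin component---a cusp or a neighborhood of a high-order cone point---and then every loop of length $\le B$ based at $\bar x_i$ stays inside that component and represents an element of its (cyclic, hence abelian) stabilizer. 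Hence $f_i(S)$, and therefore $f_i(G)$, is abelian \walmost surely, forcing $[G,G]\subseteq ker^\omega(f_i)=\{1\}$ by Assumption \ref{SA2}, contradicting the non-abelianness of $G$ from the same assumption. Uniformity of $K$ in $i$ follows from the geometric convergence $\Gamma_i\to\Gamma$ combined with finiteness of area of $\mathcal{O}_{\bar n_i}$.

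Define $\tilde f_i:=\gamma_i^{-1}f_i\gamma_i$. Since $\tilde f_i(g)$ sends $\gamma_i^{-1}x_i\in K$ to a point at distance at most $|g|_S\cdot B$, it lies in a compact subset of ${\rm PSL}(2,\mathbb{R})$ depending only on $g$. Thus $f'(g):={\rm lim}^\omega\tilde f_i(g)\in{\rm PSL}(2,\mathbb{R})$ is well defined and, by continuity of multiplication, extends to a homomorphism $f'\colon G\to{\rm PSL}(2,\mathbb{R})$. Because each $\tilde f_i(g)\in\Gamma_i$ and $\Gamma_i$ converges geometrically to $\Gamma$, the limit $f'(g)$ lies in $\Gamma$, so $f'\colon G\to\Gamma$. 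To promote the pointwise convergence to equality, note that by algebraic convergence of $\pi^i$ to $I_\Gamma$, $\pi^i(f'(g))\to f'(g)$, and by construction also $\tilde f_i(g)\to f'(g)$, with both sequences in $\Gamma_i$. Pick a neighborhood $U$ of $f'(g)$ in ${\rm PSL}(2,\mathbb{R})$ with $\Gamma\cap U=\{f'(g)\}$ (using discreteness of $\Gamma$); Hausdorff convergence $\Gamma_i\cap U\to\Gamma\cap U$ then gives $|\Gamma_i\cap U|=1$ \walmost surely, forcing $\tilde f_i(g)=\pi^i(f'(g))$ \walmost surely.

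Since $S$ is finite, this equality holds simultaneously for every $s\in S$ \walmost surely, so $\tilde f_i=\pi^i\circ f'$ as homomorphisms \walmost surely. Finally, surjectivity of $\pi^i\colon\Gamma\to\Gamma_i$ gives $\delta_i\in\Gamma$ with $\pi^i(\delta_i)=\gamma_i$; taking $\tilde c_i$ to be conjugation by $\delta_i$ in $\Gamma$ yields $f_i=\gamma_i\tilde f_i\gamma_i^{-1}=\pi^i\circ\tilde c_i\circ f'$ \walmost surely.

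The main obstacle is the thick--thin dichotomy in the first step: one must use the non-abelianness of $G$ together with $ker^\omega(f_i)=\{1\}$ to rule out the centrally located point sinking into a cusp or a cone-point neighborhood, and then use the geometric convergence $\Gamma_i\to\Gamma$ to make the resulting bounded region $K$ uniform in $i$. Once $x_i$ is constrained to a uniform compact set, the rest of the argument is a routine $\omega$-limit combined with the discreteness of the geometric limit $\Gamma$.
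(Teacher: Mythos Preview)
Your proof is correct and follows essentially the same strategy as the paper's: conjugate $f_i$ so that the centrally located point lies in a uniformly bounded region, extract a limiting representation $f'\colon G\to\Gamma$ via geometric convergence, and then use discreteness of $\Gamma$ together with algebraic convergence $\pi^i\to I_\Gamma$ to upgrade convergence to equality. The paper packages the first step using explicit nested fundamental polyhedra $\Omega_i\subset\Omega$ together with Lemma~\ref{boudnary disct}, whereas you argue directly via the Margulis thick--thin decomposition; both reductions to ``$f_i(G)$ abelian $\omega$-almost surely'' are the same in substance. One small point: your justification ``Hausdorff convergence $\Gamma_i\cap U\to\Gamma\cap U$ gives $|\Gamma_i\cap U|=1$'' is slightly informal---geometric convergence alone does not control cardinalities---but the conclusion is exactly the content of Lemma~\ref{isolatednbh} (which uses both the algebraic and geometric convergence from Assumption~\ref{standing assumption}), so the step is valid once phrased that way.
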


Here are some basic facts that we need. 

\begin{lemma}\label{econp}
Let $H_1$ and $H_2$ be two parabolic subgroups of $\Gamma$. Suppose $\pi^i(H_1)$ and $\pi^i(H_2)$ are conjugate to each other by $\gamma\in \Gamma_i$. Then $H_1$ and $H_2$ are conjugate to each other by some $\tilde \gamma\in \pi^{-1}_i(\gamma)$  
\end{lemma}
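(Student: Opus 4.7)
The plan is to start with any preimage $\tilde\gamma_0 \in \pi^{-1}_i(\gamma)$ of $\gamma$ in $\Gamma$ and then correct it by left-multiplication with a suitable element of $\Gamma$ so that the corrected element both still lies in $\pi^{-1}_i(\gamma)$ and conjugates $H_1$ to $H_2$. The key algebraic input will be the self-normalization of maximal parabolic cyclic and cone-point-stabilizing finite cyclic subgroups in the Fuchsian groups $\Gamma$ and $\Gamma_i$.

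First I would reduce to a common representative. Maximal parabolic subgroups of $\Gamma$ (resp.\ $\Gamma_i$) fall into finitely many conjugacy classes in bijection with the punctures of $\mc{O}$ (resp.\ the punctures and cone points of $\mc{O}_{\bar n_i}$), and $\pi^i$ carries each maximal parabolic subgroup of $\Gamma$ onto a maximal parabolic subgroup of $\Gamma_i$ (which becomes a finite cyclic elliptic subgroup when the corresponding puncture is among $p_1, \dots, p_k$). Because $\pi^i(H_1)$ and $\pi^i(H_2)$ are conjugate in $\Gamma_i$, $H_1$ and $H_2$ must correspond to the same puncture of $\mc{O}$, so I can fix a common representative $P$ and elements $g_1, g_2 \in \Gamma$ with $H_j = g_j P g_j^{-1}$ for $j = 1, 2$.

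Next, I would rearrange the conjugation relation to see that $\delta := \pi^i(g_2)^{-1} \gamma\, \pi^i(g_1)$ normalizes $\pi^i(P)$ in $\Gamma_i$. Since $\pi^i(P)$ is either a maximal infinite cyclic parabolic or a maximal finite cyclic cone-point stabilizer in the Fuchsian group $\Gamma_i$, it is self-normalizing: each equals the full $\Gamma_i$-stabilizer of its unique fixed point in $\mathbb{H}^2 \cup \partial \mathbb{H}^2$. Hence $\delta \in \pi^i(P)$, and I can lift $\delta$ through the surjection $\pi^i|_P : P \twoheadrightarrow \pi^i(P)$ to some $\tilde\delta \in P$ and set $\tilde\gamma := g_2 \tilde\delta g_1^{-1}$. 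A direct computation using that $P$ is abelian (so $\tilde\delta P \tilde\delta^{-1} = P$) then gives $\tilde\gamma H_1 \tilde\gamma^{-1} = H_2$ and $\pi^i(\tilde\gamma) = \pi^i(g_2)\, \delta\, \pi^i(g_1)^{-1} = \gamma$, as required.

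The only step I expect to require genuine care is the self-normalization of $\pi^i(P)$ in $\Gamma_i$: standard for maximal parabolic subgroups (the normalizer must fix the same boundary point), and for maximal cone-point stabilizers it uses that $\mc{O}_{\bar n_i}$ is a genuine hyperbolic orbifold whose only singularity at $c_j$ is a cone point of order exactly $n_{i,j}$. Everything else is formal.
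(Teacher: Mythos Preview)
The paper states this lemma without proof, listing it among the ``basic facts'' preceding Theorem~\ref{strongversionofboundedcase}. Your argument is correct and essentially complete when $H_1$ and $H_2$ are \emph{maximal} parabolic subgroups: the reduction to a common conjugacy representative $P$, the self-normalization of $\pi^i(P)$ in $\Gamma_i$, and the lifting of $\delta$ through $\pi^i|_P$ are exactly what such a proof should contain.

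For arbitrary (not necessarily maximal) parabolic subgroups, however, your first reduction has a genuine gap. Knowing that $\pi^i(H_1)$ and $\pi^i(H_2)$ are conjugate in $\Gamma_i$ only tells you that the \emph{maximal} parabolic subgroups containing $H_1$ and $H_2$ correspond to the same puncture of $\mc{O}$; it does not follow that $H_1$ and $H_2$ themselves are conjugate in $\Gamma$, so a common $P$ with $H_j = g_j P g_j^{-1}$ need not exist. Indeed, the lemma as literally stated fails in this generality: if $P=\langle t\rangle$ is a maximal parabolic corresponding to one of the filled punctures $p_j$ and the cone-point order is odd, then $H_1=P$ and $H_2=\langle t^2\rangle$ satisfy $\pi^i(H_1)=\pi^i(H_2)$ (hence are conjugate by $\gamma=1$), yet $H_1$ and $H_2$ are not conjugate in $\Gamma$, since $H_1$ is a maximal parabolic subgroup and $H_2$ is not. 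Your argument, and presumably the paper's intended statement, should therefore be read with $H_1$ and $H_2$ maximal parabolic.
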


The next lemma follows from the two properties of $e_i$ and $e$ in Assumption \ref{standing assumption} and \cite[Proposition 3.8]{J-M}

\begin{lemma}\label{converging representations}
For any $\gamma\in \Gamma$, we have $\pi^i(\gamma)$ converges in ${\rm PSL}(2, \mathbb{R})$ to $\gamma$.
\end{lemma}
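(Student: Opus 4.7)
The plan is essentially to unwind the identifications made at the end of Assumption \ref{standing assumption} and observe that the statement is a direct translation of algebraic convergence under those identifications. After identifying $\pi_1(\mc{O})$ with $\Gamma$ via $e$, the embedding $e$ becomes the identity map $I_{\Gamma}$, and the algebraic convergence $e_i\circ \pi^i \to e$ in $\mathrm{Hom}(\Gamma,\mathrm{PSL}(2,\mathbb{R}))$ is, by definition, pointwise convergence: for every fixed $\gamma\in\Gamma$, the sequence $e_i(\pi^i(\gamma))\in\mathrm{PSL}(2,\mathbb{R})$ converges to $e(\gamma)=\gamma$ in $\mathrm{PSL}(2,\mathbb{R})$.

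Next I would invoke the second identification. Under the identification $\pi_1(\mc{O}_{\bar n_i})=\Gamma_i$ via $e_i$, the element $\pi^i(\gamma)\in \pi_1(\mc{O}_{\bar n_i})$ is tautologically the element $e_i(\pi^i(\gamma))\in\Gamma_i\subset\mathrm{PSL}(2,\mathbb{R})$. Stringing the two identifications together yields $\pi^i(\gamma)\to\gamma$ in $\mathrm{PSL}(2,\mathbb{R})$, which is exactly the claim.

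The only content beyond bookkeeping is the justification that the identifications $e_i$ are chosen coherently with the geometric limit $\Gamma_i\to\Gamma$, so that the algebraic convergence of $\{e_i\circ\pi^i\}$ actually records convergence of the images of $\pi^i(\gamma)$ inside the ambient Lie group (as opposed to convergence only in some abstract variety of representations). This compatibility between algebraic and geometric convergence — ensuring that on any finite subset of $\Gamma$ the maps $e_i\circ\pi^i$ agree, up to a small perturbation, with inclusion into the geometric limit — is precisely the content of \cite[Proposition 3.8]{J-M}, which is what is being cited.

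I expect no substantive obstacle; the lemma is really a definition-chase once one respects the conventions set up in Assumption \ref{standing assumption}. The only place one might stumble is in conflating the abstract algebraic limit and the geometric limit in $\mathrm{PSL}(2,\mathbb{R})$, which is exactly why the Jorgensen--Marden reference is needed to legitimize treating $\pi^i(\gamma)$ as an element of $\mathrm{PSL}(2,\mathbb{R})$ that can be compared with $\gamma$ directly.
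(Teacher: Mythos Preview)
Your argument is correct and matches the paper's approach: the paper simply states that the lemma follows from properties (1) and (2) of Assumption \ref{standing assumption} together with \cite[Proposition 3.8]{J-M}, and you have unpacked precisely this. One small remark: your core observation that algebraic convergence is by definition pointwise convergence in $\mathrm{PSL}(2,\mathbb{R})$ already finishes the proof once the identifications are made, so your final paragraph slightly overstates the subtlety --- there is no additional issue of comparing abstract versus ambient convergence, since $e_i\circ\pi^i$ already lands in $\mathrm{PSL}(2,\mathbb{R})$.
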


The next lemma follows easily from \cite[Proposition 3.10]{J-M}.

\begin{lemma}\label{isolatednbh}
For any $\gamma\in \Gamma$, there exists a neighborhood $U_{\gamma}$ of $\gamma$ in ${\rm PSL}(2, \mathbb{R})$ such that if $\pi^i(\gamma')\in U_{\gamma}$, then $\gamma'=\gamma$. 
\end{lemma}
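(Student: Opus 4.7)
The plan is to derive Lemma~\ref{isolatednbh} directly from the discreteness of $\Gamma$ as a subgroup of ${\rm PSL}(2,\mathbb{R})$ together with the pointwise algebraic convergence $\pi^i(\gamma)\to \gamma$ supplied by Lemma~\ref{converging representations}. No further input from the geometric convergence of $\Gamma_i$ to $\Gamma$ is needed for this particular lemma; the role of \cite[Proposition~3.10]{J-M} is just to guarantee that $\{\pi^i\}$ converges in the topology on ${\rm PSL}(2,\mathbb{R})$ in which $\Gamma$ is discrete.

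First, since $\Gamma$ is Fuchsian and hence discrete in ${\rm PSL}(2,\mathbb{R})$, each $\gamma\in\Gamma$ is an isolated point of $\Gamma$. I therefore choose an open neighborhood $U_\gamma$ of $\gamma$ in ${\rm PSL}(2,\mathbb{R})$ whose closure satisfies $\overline{U_\gamma}\cap\Gamma=\{\gamma\}$. This $U_\gamma$ will witness the conclusion of the lemma.

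Next, suppose $\gamma'\in\Gamma$ and $\pi^i(\gamma')\in U_\gamma$ (read, as required by the intended application, along an infinite index set of $i$). Applying Lemma~\ref{converging representations} to the fixed element $\gamma'$ gives $\pi^i(\gamma')\to \gamma'$ in ${\rm PSL}(2,\mathbb{R})$. Passing to the limit along the infinite index set on which $\pi^i(\gamma')\in U_\gamma$ forces $\gamma'\in\overline{U_\gamma}$. Combined with $\gamma'\in\Gamma$, this yields $\gamma'\in\overline{U_\gamma}\cap\Gamma=\{\gamma\}$, so $\gamma'=\gamma$, as required.

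I do not foresee any real obstacle: the content is a soft limiting argument combining discreteness of $\Gamma$ with the pointwise convergence $\pi^i\to I_\Gamma$. The conceptual role of the lemma, as it will be used e.g.\ in the proof of Theorem~\ref{strongversionofboundedcase}, is precisely that once a neighborhood is chosen small enough to isolate $\gamma$ inside $\Gamma$, the algebraic convergence forces any $\gamma'\in\Gamma$ whose $\pi^i$-images repeatedly visit this neighborhood to coincide with $\gamma$.
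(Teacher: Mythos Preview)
Your proof is correct. The paper does not give a detailed argument for this lemma; it simply states that it ``follows easily from \cite[Proposition~3.10]{J-M}.'' Your reduction to the discreteness of $\Gamma$ together with the pointwise convergence $\pi^i(\gamma')\to\gamma'$ (Lemma~\ref{converging representations}) is exactly the sort of soft argument that citation is gesturing at, and you have also correctly flagged and handled the implicit quantifier on $i$ (the statement only makes sense when read along an infinite index set, since each individual $\pi^i$ has nontrivial kernel).
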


Denote the Margulis constant for $\mathbb{H}$ by $\epsilon_2$. Let $H$ be a maximal elliptic or parabolic subgroup of a finitely generated Fuchsian group $\Gamma$. We define 
\begin{equation*}
D(H)=\{x\in \mathbb{H}\mid d(x, hx)\leq \epsilon_2 \text{ for some } 1\neq h\in H\}
\end{equation*}

We call $D(H)$ the {\em Margulis domain }associated to $H$. The next lemma are standard facts about Margulis domain. 

\begin{lemma}\label{l:margulis}
Let $D(H)$ be the Margulis domain for a maximal elliptic or parabolic subgroup $H$ of a finitely generated Fuchsian group $\Gamma$. 
\begin{enumerate}
\item $D(H)$ is a ball (or horoball) centered at the fixed point of $H$. 
\item\label{mag2} For any $h\in H$ and any $x\in \partial D(H)$, we have $d(x, hx)\geq\epsilon_2$. 
\item\label{mag3} For any $\gamma\in\Gamma$ and any $x\in D(H)$, if $\gamma x\in D(H)$, then $\gamma\in H$. 
\end{enumerate}
\end{lemma}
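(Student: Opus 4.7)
These are standard facts about Margulis domains, so my plan is to handle parts (1) and (2) by direct calculation in the upper half-plane model, and then deduce part (3) from the two-dimensional Margulis lemma together with the maximality hypothesis on $H$.

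For part (1), first recall that any elliptic or parabolic element of a discrete subgroup of $\mathrm{PSL}(2,\mathbb{R})$ lies in a cyclic elementary subgroup, so the maximal $H$ is itself cyclic; let $h_0$ be a generator. In the elliptic case with fixed point $p\in\mathbb{H}$ and rotation angle $\theta$, the classical identity
\[
\sinh\bigl(d(x, h_0^k x)/2\bigr) \;=\; \sinh\bigl(d(x, p)\bigr)\,\bigl|\sin(k\theta/2)\bigr|
\]
shows that $\min_{k\neq 0}d(x, h_0^k x)$ is a strictly increasing function of $d(x, p)$, realized at $k = \pm 1$. In the parabolic case, normalize so that the fixed point is $\infty$ and $h_0(z)=z+t$; then
\[
\cosh\bigl(d(z, h_0^k z)\bigr) \;=\; 1 + \frac{k^2 t^2}{2\,(\mathrm{Im}\,z)^2},
\]
which is minimized over $k\neq 0$ at $k=\pm 1$ and is strictly decreasing in $\mathrm{Im}(z)$. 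In either case $D(H)$ is a sublevel set of a monotone radial function about the fixed point of $H$, hence a hyperbolic ball or horoball centered there. Part (2) is then immediate (for nontrivial $h$, which is the intended reading): on $\partial D(H)$ the formulas give $d(x, h_0 x) = \epsilon_2$, and for any $h = h_0^k$ with $k\neq 0$ one has $d(x, h x)\geq d(x, h_0 x) = \epsilon_2$.

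For part (3), the key tool is the Margulis lemma in dimension two: there is a universal constant $\epsilon_2>0$ such that for any $x\in\mathbb{H}$ the subgroup of a discrete group generated by elements moving $x$ at most $\epsilon_2$ is elementary. Given $x, \gamma x \in D(H)$, choose nontrivial $h_1, h_2\in H$ with $d(x, h_1 x)\leq\epsilon_2$ and $d(\gamma x, h_2\gamma x)\leq\epsilon_2$; the latter rewrites as $d(x, \gamma^{-1}h_2\gamma\, x)\leq\epsilon_2$. Hence $\langle h_1, \gamma^{-1}h_2\gamma\rangle$ is elementary, so the nontrivial element $h_1$ and $\gamma^{-1}h_2\gamma$ share a common fixed point in $\overline{\mathbb{H}}$; since $h_1$ has a unique such fixed point, namely that of $H$, we conclude that $\gamma^{-1}$ sends the fixed point of $H$ to a fixed point of $h_2\in H$, which, by uniqueness again, is the same point. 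Thus $\gamma$ stabilizes the fixed point of $H$ in $\overline{\mathbb{H}}$, and the full stabilizer of that point in $\Gamma$ is the (maximal) elementary subgroup $H$ itself, so $\gamma\in H$. The only subtle point in writing this out carefully is the invocation of the maximality hypothesis to identify the stabilizer of the fixed point with $H$ rather than some a priori larger elementary subgroup; this is the main obstacle, but it is handled cleanly by the fact that the stabilizer in a Fuchsian group of a point in $\overline{\mathbb{H}}$ is already cyclic elementary and hence equals the maximal $H$ that fixes it.
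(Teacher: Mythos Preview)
Your proof is correct. The paper itself does not prove this lemma; it simply introduces it as ``standard facts about Margulis domain'' and moves on without argument. Your direct computations for (1) and (2) via the explicit distance formulas, together with the two-dimensional Margulis lemma and maximality of $H$ for (3), constitute a clean verification of exactly these standard facts.

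One cosmetic slip in part (3): from $\gamma^{-1}h_2\gamma(p)=p$ one deduces $h_2(\gamma p)=\gamma p$, so it is $\gamma$ (not $\gamma^{-1}$) that sends the fixed point $p$ of $H$ to a fixed point of $h_2$. Since the conclusion is $\gamma p = p$ either way, this does not affect the argument.
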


\begin{lemma}\label{boudnary disct}
Assume \ref{standing assumption}. Suppose $G$ is finitely generated and let $S$ be a finite generating set of $G$. Suppose ${\rm lim}^\omega |f_i|_S< \infty$.  Let $x_i$ be a centrally located point for $f_i$ with respect to $S$.Let $H_i$ be a maximal elliptic (or parabolic) subgroup of $\Gamma_i$ and $D_i=D(H_i)$ be the associated Margulis domain. Suppose at least one of the following is true. 
\begin{enumerate}
\item There exists $g\in G$ such that $f_i(g)\in H_i$ \walmost surely and $x_i\notin D_i$.
\item There exists $g\in G$ such that $f_i(g)\notin H_i$ \walmost surely and $x_i\in D_i$.
\end{enumerate}
Then $d_{\mathbb{H}}(x_i, \partial D_i)$ has finite $\omega$-limit.
\end{lemma}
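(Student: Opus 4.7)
The plan is to upgrade the hypothesis ${\rm lim}^\omega|f_i|_S<\infty$ into a uniform $\omega$-almost upper bound on $d_{\mathbb{H}}(x_i, f_i(g)x_i)$, and then read off the bound on $d_{\mathbb{H}}(x_i,\partial D_i)$ from the geometry of the Margulis domain $D_i$. Since $S$ generates $G$, we may write $g$ as a word of length $k$ in $S$; because $x_i$ is centrally located for $f_i$ with respect to $S$, the triangle inequality gives $d_{\mathbb{H}}(x_i, f_i(g)x_i)\le k\,|f_i|_S$, which is $\omega$-almost bounded by some constant $M$. We may further assume $g\neq 1$ (so that $f_i(g)\neq 1$ \walmost surely, using $ker^\omega(f_i)=\{1\}$ from Assumption \ref{SA2}); otherwise the hypothesis carries no useful information.

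Case (2) is immediate: since $f_i(g)\notin H_i$ and $x_i\in D_i$, Lemma \ref{l:margulis}(\ref{mag3}) forces $f_i(g)x_i\notin D_i$, so the geodesic from $x_i$ to $f_i(g)x_i$ must cross $\partial D_i$, giving
\[
d_{\mathbb{H}}(x_i,\partial D_i)\ \le\ d_{\mathbb{H}}(x_i, f_i(g)x_i)\ \le\ M.
\]

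Case (1) is the substantive one. Set $r_i=d_{\mathbb{H}}(x_i,\partial D_i)$. The nontrivial element $f_i(g)\in H_i$ is parabolic or elliptic, preserves $D_i$, and by Lemma \ref{l:margulis}(\ref{mag2}) has translation length at least $\epsilon_2$ on $\partial D_i$. A direct computation --- realising $f_i(g)$ as $z\mapsto z+t$ in the upper half plane with $\partial D_i=\{y=h\}$ in the parabolic case, and as rotation by $\theta$ about the centre of $D_i$ of hyperbolic radius $\rho_i$ in the disk model in the elliptic case --- yields
\[
\sinh\!\bigl(d_{\mathbb{H}}(x_i, f_i(g)x_i)/2\bigr)\ \ge\ \tfrac{1}{2}\,e^{r_i}\sinh(\epsilon_2/2),
\]
using the identities $\sinh(d/2)=t/(2y)$ (parabolic) and $\sinh(d/2)=|\sin(\theta/2)|\sinh\rho$ (elliptic), together with the lower bound $\epsilon_2$ on the translation length at $\partial D_i$. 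Combined with $d_{\mathbb{H}}(x_i, f_i(g)x_i)\le M$, this forces ${\rm lim}^\omega r_i<\infty$.

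The conceptual symmetry is that in case (2) an element outside $H_i$ must push $x_i$ entirely out of $D_i$, while in case (1) an element inside $H_i$ displaces $x_i$ by an amount growing exponentially in how deep $x_i$ lies outside $D_i$. The mild technical obstacle is handling parabolic and elliptic $H_i$ together, but both types yield exponential growth of the displacement in $r_i$ with essentially the same constant, so the two sub-cases can be treated uniformly.
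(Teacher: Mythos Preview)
Your proof is correct and follows essentially the same route as the paper's: bound $d_{\mathbb{H}}(x_i,f_i(g)x_i)$ using the generating set, then in case~(2) invoke Lemma~\ref{l:margulis}(\ref{mag3}) exactly as the paper does, and in case~(1) use Lemma~\ref{l:margulis}(\ref{mag2}) to show the displacement of $x_i$ grows with $r_i$. The only difference is that in case~(1) the paper simply asserts that $d_{\mathbb{H}}(x_i,f_i(g)x_i)\ge 2d_{\mathbb{H}}(x_i,\partial D_i)$ up to a uniformly bounded error, whereas you carry out the explicit upper-half-plane and disk-model computations to obtain the sharper inequality $\sinh(d/2)\ge \tfrac12 e^{r_i}\sinh(\epsilon_2/2)$; these are the same fact. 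Your remark that one must take $g\neq 1$ (so that $f_i(g)\neq 1$ $\omega$-almost surely, via $ker^\omega(f_i)=\{1\}$) is a point the paper leaves implicit but which is indeed needed for Lemma~\ref{l:margulis}(\ref{mag2}) to have content.
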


\begin{proof}
Since ${\rm lim}^\omega |f_i|_S< \infty$ and $S$ generate $G$, we know that $d_{\mathbb{H}}(x_i,  f_i(g)x_i)$ has finite $\omega$-limit for any $g\in G$. 

Suppose (1) is true.  Lemma \ref{l:margulis} (\ref{mag2}) implies that \walmost surely $d_{\mathbb{H}}(x_i, f_i(g)x_i)$ is greater than $2d_{\mathbb{H}}(x_i, \partial D_i)$ with an error bounded independent of $i$. Therefore we know that $d_{\mathbb{H}}(x_i, \partial D_i)$ has finite $\omega$-limit. 

Now suppose (2) is true. By Lemma \ref{l:margulis}(\ref{mag3}) $f_i(g)x_i$ is outside of $D_i$. Therefore $d_{\mathbb{H}}(x_i, f_i(g)x_i)\geq d_{\mathbb{H}}(x_i, \partial D_i)$. Since $d_{\mathbb{H}}(x_i, f_i(g)x_i)$ has finite $\omega$-limit, so does $d_{\mathbb{H}}(x_i, \partial D_i)$.   
\end{proof}

\begin{proof}[Proof of Theorem \ref{strongversionofboundedcase}]
Let $x_i$ be a centrally located point for $f_i$ with respect to $S$. Choose fundamental polyhedras in the hyperbolic plane $\mathbb{H}$ for $\{\Gamma_i\}$ and $\Gamma$, denoted by $\{\Omega_i\}$  and $\Omega$, respectively with the following properties:
\begin{enumerate}
\item $\Omega_i\subset \Omega_{i+1}$.

\item $\Omega_i\subset \Omega$. 

\item $\Omega_i$ converges to $\Omega$ uniformly on compact subsets of $\mathbb{H}$. 
\end{enumerate}
(See \cite[Theorem 4.3.2]{Katok} for more detail of the construction of $\Omega_i$  and $\Omega$. )
For each $i$, pick $\gamma_i\in \Gamma_i$ such that $\gamma_i x_i\in \Omega_i$. Let $c_{i}: \Gamma_i\rightarrow \Gamma_i$ be defined by $c_i(\beta)=\gamma_i\beta\gamma_i^{-1}$. Then $x_i'=\gamma_ix_i$ is a centrally located point for $f_i'=c_i\circ f_i$ and $|f_i'|_S=|f_i|_S$, which implies $\{|f_i'|_S\}$ is bounded. 

For each $i$, let $H_i$ be a maximal elliptic (or parabolic) subgroup of $\Gamma_i$ and $D_i=D(H_i)$ be the associated Margulis domain. We claim that there exists $g\in G$ such that $f_i(g)\notin H_i$ \walmost surely. (Otherwise for any $g\in G$ we have $f_i(g)\in H_i$ \walmost surely. Then $G=G/ker^\omega(f_i)$ is abelian since $H_i$ is abelian. Contradiction.) Then it follows from Lemma \ref{boudnary disct} that either $x_i$ is outside of $D_i$ or $d_{\mathbb{H}}(x_i, \partial D_i)$. This implies that $\{x'_i\in\mathbb{H}\mid i=1,2,\dots\}$ has a bounded subsequence in $\mathbb{H}$. 
Hence for each $g\in S$ the sequence $\{f'_i(g)\}$ lies in a compact set. Therefore pass to subsequence if necessary,  ${\rm lim}f'_i(g)$ exists. Define a map $f':S\rightarrow {\rm PSL}(2,\mathbb{R})$ by $f'(g)={\rm lim}f'_i(g)$. It is easy to see that $f'$ extends to a homomorphisms from $G$ to ${\rm PSL}(2,\mathbb{R})$. Since $\Gamma$ is the geometric limit of $\Gamma_i$, we know that $f'(g)={\rm lim}f'_i(g)\in \Gamma$. Therefore $f'$ is a homomorphism from  $G$ to $\Gamma$. 

We now show that $f'_i=\pi^i\circ f'$ for infinitely many $i$. For $g\in S$, we have $f'_i(g)\in U_{f'(g)}$ for all large $i$, where $U_{f'(g)}$ is a neighborhood of $f'(g)$ satisfying the conclusion of Lemma \ref{isolatednbh}. For each $i$, since $\pi^i$ is surjective, there exists $\beta_i\in \Gamma$ with $\pi^i(\beta_i)=f'_i(g)$. Hence we have $\pi^i(\beta_i)\in U_{f'(g)}$. By Lemma \ref{isolatednbh}, we have $\beta_i=f'(g)$ for all large $i$.  Hence  $\pi^i(f'(g))=f'_i(g)$. Since $S$ generates $G$, we have $(\pi^i\circ f')(g)=f'_i(g)$ for all $g\in G$ and all large $i$.  

For each $i$, let $\tilde c_i:\Gamma\rightarrow \Gamma$ be a lift a $c_i$. Then for large $i$ we have 
\begin{equation}\label{commute}
f_i=\pi^i\circ \tilde c_i\circ  f'.
\end{equation} 
\end{proof}

\vspace{3mm}

\section{\large The $\mathbb{R}$-tree $\mathcal{T}$}\label{rtree}

We assume \ref{standing assumption} and \ref{SA2} in this section. 

We say $1\neq g\in G$ is {\em elliptic, parabolic or hyperbolic} (with respect to $\{f_i\}$) if \walmost surely $f_i(g)$ is elliptic, parabolic or hyperbolic, respectively. We say that an abelian subgroup $H$ of $G$ is a {\em elliptic, parabolic or hyperbolic} (with respect to $\{f_i\}$) if for some $g\in H$ (and hence all $g\in H$) \walmost surely $f_i(g)$ is elliptic, parabolic or hyperbolic, respective.  

In this section, we assume \ref{standing assumption} and that $G$ is finitely generated relative to a finite collection of pairwise non-conjugate elliptic or parabolic subgroups $\{H_1, \dots, H_n\}$.  Let $S$ be a finite subset of $G$ such that $S\cup H_1\cup \cdots\cup H_n$ generates $G$. In this section we assume ${\rm lim}^\omega|f_i|_S=\infty$. 

All the asymptotic cones in this paper are defined using $\omega$. Let $x_i$ be a centrally located point of $f_i$ with respect to $S$. Note that the existence of $x_i$ follows from Lemma \ref{l:cent located points exits}.  Let $\mathcal{X}$ be an asymptotic cone of the hyperbolic plane $\mathbb{H}$ defined by $\{(\mu_i, x_i)\}$. Let $x\in \mathcal{X}$ be the point defined by $\{x_i\}$. Note that $\mathcal{X}$ is an $\mathbb{R}$-tree. Denote by $d$ the metric of $\mathcal{X}$, $d_{\mathbb{H}}$ the metric of $\mathbb{H}$ and let $d_i=\frac{1}{\mu_i}d_{\mathbb{H}}$. 

Since $S$ is not a generating set for $G$, the actions of $G$ on $\mathbb{H}$ do not in general induce an action of $G$ on $\mathcal{X}$: When $g\in G$ is not a product of elements in $S$, it might take a sequence $\{y_i\}$ with $d_i(x_i, y_i)$ bounded to a sequence $\{f_i(g)y_i\}$ such that $d_i(x_i, f_i(g)y_i)$ is not bounded. In this section, we consider only the case where $g\cdot \{y_i\}=\{f_i(g)y_i\}$ does induce a well defined action of $G$ on $\mc{X}$, i.e. for any $g\in G$ and any sequence of points $\{y_i\}$ in $\mathbb{H}$, we have that  ${\rm lim}^{\omega}d_i(x_i, f_i(g)y_i)$ is finite as long as ${\rm lim}^{\omega}d_i(x_i, y_i)$ is finite. Denote by $ker(\mathcal{X})$ the kernel of the action of $G$ on $\mathcal{X}$.





\vspace{3mm}

\begin{lemma}\label{arc sta abel}
The arc stabilizers of $\mathcal{X}$ in $L$ are abelian. 
\end{lemma}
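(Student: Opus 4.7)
Let $g, h \in L$ both stabilize a non-degenerate arc $[a, b] \subset \mathcal{X}$, with lifts $\tilde g, \tilde h \in G$. The goal is to show $[\tilde g, \tilde h]$ acts trivially on $\mathcal{X}$, so that $[g, h] = 1$ in $L$.

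The starting point is the elementary inequality
\[
d_{\mathbb{H}}\bigl([\phi, \psi] p, p\bigr) \leq 2\bigl(d_{\mathbb{H}}(\phi p, p) + d_{\mathbb{H}}(\psi p, p)\bigr),
\]
valid for any isometries $\phi, \psi$ of $\mathbb{H}^2$ and any point $p \in \mathbb{H}^2$ (immediate from the triangle inequality together with the fact that $\phi, \psi$ are isometries). Applied with $\phi = f_i(\tilde g),\ \psi = f_i(\tilde h)$ at a representative $p_i \in [a_i, b_i]$ of a point of the arc, the right-hand side is $o(\mu_i)$ since $g, h$ fix the arc. Hence $[f_i(\tilde g), f_i(\tilde h)]$ has displacement $o(\mu_i)$ on the whole segment $[a_i, b_i]$, and passing to the ultralimit already gives that $[g, h]$ fixes $[a, b]$ pointwise and that $f_i([\tilde g, \tilde h])$ has translation length $o(\mu_i)$ in $\mathbb{H}^2$.

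To upgrade this to $[g, h]$ acting trivially on all of $\mathcal{X}$, I exploit the two-dimensionality of $\mathbb{H}^2$: in $\mathrm{PSL}(2, \mathbb{R})$ the near-stabilizer of a long segment is, up to an exponentially small correction, contained in the one-parameter abelian subgroup of hyperbolic translations along the geodesic carrying the segment. Conjugating so that $[a_i, b_i]$ lies on the imaginary axis of the upper half-plane between heights $1$ and $e^{L_i}$, where $L_i = d(a, b)\mu_i \to \infty$, and solving the near-fixing conditions at the two endpoints for the Möbius matrix of $f_i(\tilde g)$ forces its lower-left entry to satisfy $|\gamma_g| = O(\epsilon_i e^{-L_i})$ with $\epsilon_i = o(\mu_i)$, while the remaining entries are $O(\epsilon_i)$ or $1 + O(\epsilon_i)$; analogously for $f_i(\tilde h)$. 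A direct matrix-commutator computation then produces second-order cancellations that give $[f_i(\tilde g), f_i(\tilde h)]$ an additional factor of $e^{-L_i}$ off-diagonally; the resulting hyperbolic displacement at any sequence $\{c_i\}$ with $d_{\mathbb{H}}(c_i, x_i) \leq R\mu_i$ is $o(\mu_i)$ for every fixed $R$. Combined with an extension step (re-applying the same matrix estimate to arbitrarily long subsegments of the maximal arc in $\mathcal{X}$ fixed by $g$ and $h$), ultralimiting gives $[g, h] c = c$ for every $c \in \mathcal{X}$, hence $[\tilde g, \tilde h] \in ker(\mathcal{X})$.

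The main obstacle is precisely this quantitative matrix estimate when $\epsilon_i$ is allowed to diverge (only the ratio $\epsilon_i/\mu_i \to 0$ is guaranteed): first-order Taylor expansions in $\epsilon_i$ are not valid, and one must handle the Möbius near-fixing equations exactly to extract the $e^{-L_i}$ decay in the commutator's entries and then propagate the displacement bound past the rescaled radius $d(a, b)$. Two-dimensionality of $\mathbb{H}^2$ is essential here: it makes the near-stabilizer of two far-apart points essentially one-dimensional and hence abelian, which is what forces the commutator to be close to the identity rather than merely to lie in a larger abelian subgroup where this suppression would fail.
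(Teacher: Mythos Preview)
Your argument has a fundamental gap in its target: showing that $[\tilde g,\tilde h]$ acts trivially on $\mathcal{X}$, i.e.\ lies in $\ker(\mathcal{X})$, does \emph{not} imply $[g,h]=1$ in $L$. Under Assumption~\ref{SA2} we have $L=G/\ker^\omega(f_i)$ with $\ker^\omega(f_i)=\{1\}$, so one must show $f_i([\tilde g,\tilde h])=1$ $\omega$--almost surely, not merely that its displacement is $o(\mu_i)$ everywhere. These are genuinely different conditions: a fixed nontrivial elliptic element $\gamma\in\bigcap_i\Gamma_i$ (or any element whose displacement at $x_i$ stays bounded while $\mu_i\to\infty$) acts trivially on $\mathcal{X}$ yet is nontrivial in $L$. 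Faithfulness of the $L$--action on $\mathcal{X}$ is not available at this point in the argument (indeed, any proof of faithfulness would likely \emph{use} this lemma), so the implication you invoke is circular or simply unsupported.

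There is also a quantitative gap in the matrix step. Even if the commutator $[f_i(\tilde g),f_i(\tilde h)]$ has entries within $O(\epsilon_i^2 e^{-L_i})$ of the identity, this does not control its displacement at points distance $R\mu_i$ from the segment: an elliptic element with rotation angle $\theta$ moves a point at distance $D$ from its center by roughly $2\sinh(D)\sin(\theta/2)$, so with $\theta\sim e^{-L_i}$ and $D=R\mu_i$ one gets growth like $e^{(R-d(a,b))\mu_i}$, which explodes once $R>d(a,b)$. Your ``extension step'' does not rescue this, since the maximal arc fixed by both $g$ and $h$ may be exactly $[a,b]$. The paper's proof takes a completely different route: it uses the \emph{discreteness} of $\Gamma_i$ --- a uniform lower bound on hyperbolic translation lengths and the Margulis lemma for elliptic/parabolic subgroups --- to force $f_i([g_1,g_2])$ to be literally the identity $\omega$--almost surely, which is exactly what is needed to land in $\ker^\omega(f_i)$.
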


\begin{proof}
Let  $l>0$ be a lower bound on the translation lengths of hyperbolic elements of $\Gamma$. Note that when $i$ is big enough, $\frac{1}{2}l$ is a lower bound on the translation lengths of hyperbolic elements of $\Gamma_i$. Let $l'>0$ be a number such that $d_{\mathbb{H}}(x, hx)>l'$ for any maximal elliptic (parabolic) subgroup $\tilde H_i$ of $\Gamma_i$, any $h\in \tilde H_i$ and any $x$ outside of the Margulis domain $D(\tilde H_i)$. Let $e=min\{\frac{1}{1000}l, \frac{1}{1000}l'\}$.

Let $J\subset \mathcal{X}$ be a non-trivial segment and let $g$ be an element in the stabilizer of $J$. Let $y=[\{y_i\}], z=[\{z_i\}]\in \mathcal{X}$ be the end points of $J$. For each $i$, consider the geodesic quadrilateral $[y_i, z_i, f_i(g)y_i, f_i(g)z_i]$ in $\mathbb{H}$. Let $L=d(y, z)$. Let $L_i=d_i(y_i, z_i)$, $\epsilon_i=d_i(y_i, f_i(g)y_i)$ and $\epsilon'_i=d_i(z_i, f_i(g)z_i)$. Then we have:
\begin{equation}\label{Kapovich1}
d_{\mathbb{H}}(f_i(g)y_i, f_i(g)z_i)=d_{\mathbb{H}}(y_i, z_i)=\mu_iL_i;
\end{equation}
\begin{equation}\label{Kapovich2}
d_{\mathbb{H}}(y_i, f_i(g)y_i)=\mu_i\epsilon_i;
\end{equation}
\begin{equation}\label{Kapovich3}
d_{\mathbb{H}}(z_i, f_i(g)z_i)=\mu_i\epsilon'_i. 
\end{equation}
By the definition of $\mathcal{X}$ we have:
\begin{equation}\label{Kapovich4}
{\rm lim}^{\omega}L_i=L>0; 
\end{equation}
\begin{equation}\label{Kapovich5}
{\rm lim}^{\omega}{\epsilon_i}={\rm lim}^{\omega}{\epsilon'_i}=0; 
\end{equation}
By (\ref{Kapovich1}),(\ref{Kapovich2}),(\ref{Kapovich3}),(\ref{Kapovich4}),(\ref{Kapovich5}) and the fact that $\mu_i\rightarrow \infty$ as $i\rightarrow \infty$, \walmost surely \cite[Lemma 3.10]{Kapovich} can be applied to the quadrilateral $[y_i,z_i, f_i(g)y_i, f_i(g)z_i]$. Hence \walmost surely, there are segments $J_i\subset [y_i, z_i]$ and $J_i'\subset[f_i(g)y_i, f_i(g)z_i]$ with length $\frac{1}{2}\mu_iL$ such that the Hausdorff distance between $J_i$ and $J_i'$ is less than $e$. Hence up to an error of $4e$, $g$ acts on $J_i$ as translations. Therefore for any $g_1, g_2$ in the stabilizer of $J$, both of the commutators $[g_1,g_2]$ and $[g_2, g_1^{-1}]=[g_1,g_2]^{g_{1}}$ move a point $u_i$ in the middle of $J_i$ by a distance less than $16e$. By the definition of $l$ and $e$, we know that  $f_i([g_1,g_2])$ moves $u_i$ by a distance less than $l$, which implies that $f_i([g_1, g_2])$ is not hyperbolic. 
  
Suppose $f_i([g_1, g_2])$ is elliptic (parabolic) and lies in a maximal elliptic (parabolic) subgroup $\tilde H_i$. Let $w_i$ be the fixed point of  $\tilde H_i$. Then $w'_i=f_i(g^{-1}_1)w_i$ is the fixed point of $(\tilde H_i)^{g_1}$. Let $D(\tilde H_i)$ and $D((\tilde H_i)^{g_1})$ be Margulis domain associated to $\tilde H_i$ and $(\tilde H_i)^{g_1}$, respectively. Since both $[g_1,g_2]$ and $[g_2, g_1^{-1}]=[g_1,g_2]^{g_{1}}$ move the point $u_i$ by less than $16e$, which is smaller than $l'$, we have $u_i\in D(\tilde H_i)\cap D((\tilde H_i)^{g_1})$. On the other hand, by Lemma \ref{l:margulis}, $D(\tilde H_i)$ and $D((\tilde H_i)^{g_1})$ are disjoint if $w_i\neq w'_i$. Therefore we have $w_i=w'_i=f_i(g^{-1}_1)w_i$. Hence $f_i(g_1)$ is elliptic (parabolic) with fixed point $w_i$. Therefore $f_i(g_2g^{-1}_1g^{-1}_2)$ is also elliptic (parabolic) with fixed point $w_i$ and so is $f_i(g_2)$. So $f_i([g_1, g_2])=1$ \walmost surely, which implies that $[g_1, g_2]\in ker^\omega(f_i)=\{1\}$. Hence the proof of the lemma is complete. 
\end{proof}

Let $\mathcal{T}$ be the minimal $L$-invariant subtree of $\mathcal{X}$ containing the base point $x$ of $\mathcal{X}$.

\begin{lemma}\label{tripod sta elliptic}
If $H_{T}\subset L$ is the stabilizer of a non-degenerate tripod $T$ in $\mathcal{T}$. Then $H_{T}$ is either elliptic or parabolic. 
\end{lemma}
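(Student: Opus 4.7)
The plan is to prove the statement in two stages: first, to rule out that $f_i(g)$ can be hyperbolic \walmost surely for any individual non-trivial $g \in H_T$; second, to upgrade this to a uniform statement about all of $H_T$. The upgrade step is easy once the first is in hand: since $H_T$ is contained in the stabilizer of each of the three arms of $T$, Lemma~\ref{arc sta abel} shows that $H_T$ is abelian, and abelian subgroups of ${\rm PSL}(2, \mathbb{R})$ have only one type of non-trivial element, because an elliptic (whose fixed point is in $\mathbb{H}$) and a parabolic (whose fixed point is on $\partial\mathbb{H}$) can never commute. Thus once one non-trivial element of $H_T$ is shown to be non-hyperbolic \walmost surely, every non-trivial element will be of the same type \walmost surely, matching the definition of ``elliptic'' or ``parabolic'' from Section~\ref{rtree}.

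To set up the non-hyperbolicity argument, fix a non-trivial $g \in H_T$, write the arms of $T$ as $[c, u^{(j)}]$ with lengths $L^{(j)} = d(c, u^{(j)}) > 0$, and fix representatives $c = [\{c_i\}]$ and $u^{(j)} = [\{u_i^{(j)}\}]$. The tripod being non-degenerate will translate, \walmost surely, into two quantitative facts: the four hyperbolic displacements $d_{\mathbb{H}}(c_i, f_i(g)c_i)$ and $d_{\mathbb{H}}(u_i^{(j)}, f_i(g)u_i^{(j)})$ are $o(\mu_i)$, while the pairwise distances among $c_i, u_i^{(1)}, u_i^{(2)}, u_i^{(3)}$ grow like $\mu_i$ times the tree distances; in particular $d_{\mathbb{H}}(u_i^{(j)}, u_i^{(j')}) = \mu_i(L^{(j)} + L^{(j')}) + o(\mu_i)$ because the arms meet only at $c$ in $\mathcal{T}$.

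The hyperbolic case will then be ruled out by a displacement-and-projection argument. Assume toward contradiction that $f_i(g)$ is hyperbolic \walmost surely, with axis $A_i$ and translation length $\tau_i$; since $f_i(g) \in \Gamma_i$, the lower bound $\tau_i \geq l/2$ from the proof of Lemma~\ref{arc sta abel} applies. The hyperbolic displacement identity
\begin{equation*}
\cosh d(p, f_i(g)p) = \cosh^2 d(p, A_i) \cdot \cosh \tau_i - \sinh^2 d(p, A_i),
\end{equation*}
together with $\tau_i$ bounded below, yields $d(p, A_i) \leq \tfrac{1}{2} d(p, f_i(g)p) + O(1)$, so each of the four tripod representatives lies within $o(\mu_i)$ of $A_i$. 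Letting $p_i^{(0)}$ and $p_i^{(j)}$ denote the nearest-point projections of $c_i$ and $u_i^{(j)}$ onto $A_i$, the elementary inequality $|d_{\mathbb{H}}(p, q) - d_{\mathbb{H}}(p', q')| \leq d(p, p') + d(q, q')$ will preserve all the tripod pairwise asymptotics up to an additive $o(\mu_i)$ error after projection.

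The contradiction will live entirely on the line $A_i \cong \mathbb{R}$: one of $p_i^{(1)}, p_i^{(2)}, p_i^{(3)}$, say $p_i^{(j_0)}$, must lie between the other two, so $d(p_i^{(j)}, p_i^{(j'')}) = d(p_i^{(j)}, p_i^{(j_0)}) + d(p_i^{(j_0)}, p_i^{(j'')})$ where $\{j, j''\} = \{1, 2, 3\} \setminus \{j_0\}$; substituting the projected tripod asymptotics will force $2\mu_i L^{(j_0)} = o(\mu_i)$, contradicting $L^{(j_0)} > 0$. From there, $g \neq 1$ and $ker^\omega(f_i) = \{1\}$ from Assumption~\ref{SA2} will give $f_i(g)$ elliptic or parabolic \walmost surely, and the abelian classification of the first paragraph completes the argument. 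The main obstacle I anticipate is the passage from ``$g$ fixes a tripod in $\mathcal{T}$'' to ``the four representatives lie within $o(\mu_i)$ of a common geodesic $A_i$'', which is where the discrete translation-length lower bound $\tau_i \geq l/2$ is genuinely used; everything past that is linear-algebraic bookkeeping on the line.
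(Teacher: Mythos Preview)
Your proof is correct and takes a genuinely different route from the paper's.

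The paper works directly with the tripod's center: it picks approximating sequences for the three endpoints, locates a sequence $u_i$ approximating the branch point, and then uses two applications of \cite[Lemma~3.10]{Kapovich} (fellow-traveling of thin quadrilaterals) together with the claim from \cite[Lemma~4.1]{RS1} to show that $d_{\mathbb{H}}(u_i, f_i(h)u_i)$ is bounded by a fixed constant (indeed, can be made arbitrarily small) \walmost surely. Since this is eventually smaller than the minimum hyperbolic translation length in $\Gamma_i$, the element $f_i(h)$ cannot be hyperbolic.

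Your argument is the contrapositive: assume $f_i(g)$ is hyperbolic with axis $A_i$ and translation length $\tau_i \geq l/2$, use the displacement formula to push all four tripod representatives to within $o(\mu_i)$ of $A_i$, and then observe that three collinear points cannot realize the Gromov product relations of a non-degenerate tripod. This is more self-contained (no external thin-quadrilateral lemmas) and arguably more transparent geometrically: the obstruction is simply that a line has no branch points. The paper's approach, on the other hand, extracts a sharper quantitative statement (the center is moved by a uniformly bounded amount, not merely $o(\mu_i)$), which is the kind of estimate that feeds into shortening arguments elsewhere, though it is not needed for this lemma itself.

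One small remark on your upgrade step: when you say ``abelian subgroups of ${\rm PSL}(2,\mathbb{R})$ have only one type of non-trivial element'', you are implicitly using Assumption~\ref{SA2} ($ker^\omega(f_i)=\{1\}$) to ensure that for commuting $g,h\in H_T$ the images $f_i(g),f_i(h)$ are \walmost surely both non-trivial in $\Gamma_i$, so that the fixed-point dichotomy in the Fuchsian group actually bites. This is exactly what the paper's Section~\ref{rtree} definition (``for some $g\in H$ and hence all $g\in H$'') is encoding, so the step is fine, but it is worth making the dependence explicit.
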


\begin{proof}
Let $h\in H_T$. Since $H_T$ is abelian by Lemma \ref{arc sta abel}, it suffices to show that $h$ is elliptic or parabolic. Let $a=[\{a_i\}]$, $b=[\{b_i\}]$ and $c=[\{c_i\}]$ be the endpoints of $T$ and $u$ be the three valence vertex in $T$. Let $L=d(a, u)$, $M=d(b,u)$ and $N=d(c,u)$. Since $T$ is non-degenerate,  $L$, $M$ and $N$ are all positive. 

Let $u_i$ be the point on the geodesic $[a_i, b_i]$ minimizing the distance to $c_i$, or equivalently, the projection of $c_i$ onto $[a_i, b_i]$. Then $f_i(h)u_i$ is the projection of $f_i(h)c_i$ onto $[f_i(h)a_i, f_i(h)b_i]$.  Since $\mathbb{H}$ is $\delta$-hyperbolic and the hyperbolic constant $\delta_i$ of $(\mathbb{H}, d_i)$ goes to zero as $i$ goes infinity, we have $u=[\{u_i\}]$, which implies that the \w-limits of $L_i=d_i(a_i, u_i)$, $M_i=d_i(b_i, u_i)$ and $N_i=d_i(c_i, u_i)$ are $L$, $M$ and $N$, respectively.  Also, since $h$ fixes the tripod $T$, the \w-limits of $\epsilon_i=d_i(a_i, f_i(h)a_i)$, $\epsilon'_i=d_i(b_i, f_i(h)b_i)$ and $\epsilon''_i=d_i(c_i, f_i(h)c_i)$ are zero. By definition of $d_i$, we have $d_{\mathbb{H}}(a_i, u_i)=L_i\mu_i$, $d_{\mathbb{H}}(b_i, u_i)=M_i\mu_i$, $d_{\mathbb{H}}(c_i, u_i)=N_i\mu_i$, $d_i(a_i, f_i(h)a_i)=\epsilon_i\mu_i$, $d_i(b_i, f_i(h)b_i)=\epsilon'_i\mu_i$ and $d_i(c_i, f_i(h)c_i)=\epsilon''_i\mu_i$. Hence \walmost surely the claim in the proof of \cite[Lemma 4.1]{RS1} can be applied to the two sets of points $\{a_i, b_i, c_i, u_i\}$ and $\{f_i(h)a_i, f_i(h)b_i, f_i(h)c_i, f_i(h)u_i\}$ to conclude that $d_{\mathbb{H}}(u_i, f_i(h)u_i)\leq 10\delta$ \walmost surely. Note that the above facts imply that \walmost surely \cite[Lemma 3.10]{Kapovich} can be applied to the quadrilaterals $[a_i, b_i,f_i(h)a_i, f_i(h)b_i]$ and $[c_i, u_i,f_i(h)c_i, f_i(h)u_i]$.

Applying \cite[Lemma 3.10]{Kapovich} to $[a_i, b_i,f_i(h)a_i, f_i(h)b_i]$,  we see that there are intervals $J_i\subset [a_i, b_i]$ and $J'_i\subset [f_i(h)a_i, f_i(h)b_i]$ such that (1) the Hausdorff distance in $\mathbb{H} $ between $J_i$ and $J'_i$ is arbitrarily small and (2) the ratio of the length of $J_i$ and $d_\mathbb{H}(a_i, b_i)$ is arbitrarily closed to one.  Note that (2) together with the fact that neither of the \w limts of $L_i$ and $M_i$ are zero implies that $u_i\in J_i$. Similarly, we have $f_i(h)u_i\in J'_i$.

Applying \cite[Lemma 3.10]{Kapovich} to $[c_i, u_i,f_i(h)c_i, f_i(h)u_i]$,  we see that there exists $v_i\in[c_i, u_i]$ and $v'_i\in [f_i(h)c_i, f_i(h)u_i]$ such that $d_\mathbb{H}(v_i, v'_i)$ is arbitrarily small. Note that  $u_i$ is also the projection of $v_i$ onto $J_i$. Similarly, $f_i(h)u_i$ is also the projection of $v'_i$ onto $J'_i$. Since the Hausdorff distance between $J_i$ and $J'_i$ is arbitrarily small and $d_\mathbb{H}(v_i, v'_i)$ is also arbitrarily small, we know that $d_{\mathbb{H}}(u_i, f_i(h)u_i)$ is arbitrarily small \walmost surely. Therefore \walmost surely $d_{\mathbb{H}}(u_i, f_i(h)u_i)$ is smaller than the translation length of all the hyperbolic elements in $\Gamma_i$. Hence $f_i(h)$ is elliptic or parabolic. 
\end{proof}

In general, non-degenerate tripods in $\mc{T}$ could have infinite stabilizers, which are either elliptic or parabolic subgroups of $G=L$ by Lemma \ref{tripod sta elliptic}.  In fact, there could be unstable arcs in $\mc{T}$ whose stabilizers are infinite elliptic or parabolic subgroups. The existence of such arcs prevents us from applying the Rips machine (\cite[Theorem 5.1]{Guirardel}) to get a ``nice'' graph of actions decomposition of the action of $G$ on $\mc{T}$, which is essential to the shortening argument.  Hence it is important to understand the subtree trees in $\mc{T}$ (and $\mc{X}$) fixed by elliptic or parabolic subgroup, which is the goal of the next section.

\vspace{3mm}

\section{\large elliptic(parabolic) splittings}\label{es}
In this section, with the same notations and assumptions as the previous section, we continue the study of the action of $G$ on $\mc{T}$ (and $\mc{X}$) and closely examine the subtree trees of $\mc{T}$ (and $\mc{X}$) fixed by elliptic or parabolic subgroup of $G$.  The main result is the existence of an abelian splitting of $G$ in the case when $\mathcal{T}$ has a non-degenerate segment with elliptic (parabolic) stabilizer (Theorem \ref{GMWtree}). We obtain this result by using a construction developed by Groves, Manning and Wilton in the setting of representations into ${\rm PSL}(2, \mathbb{C})$. 

Let $\delta>0$ be the smallest Gromov hyperbolic constant for $\mathbb{H}$. 

\begin{definition}\label{deltaball}
The {\em $\delta$-ball} ({\em $\delta$-horoball}) of an elliptic (parabolic) element $\gamma\in {\rm PSL}(2, \mathbb{R})$ is the closed ball (horoball) in $\mathbb{H}$ centered at the fixed point of $\gamma$ such that every point on the boundary of the ball is moved by $\delta$ by $\gamma$. 
For an elliptic (parabolic) element $h\in G$ , the {\em $\delta$-ball} of $h$ with respect to $f_i$, denoted by $B_i(h)$, is the $\delta$-ball ($\delta$-horoball) of $f_i(h)$. 
\end{definition}

Let $H$ be an elliptic (parabolic) subgroup and $1\neq h\in H$. Denote by $\xi_i$ the fixed point of $f_i(h)$ in $\mathbb{H}$ ($\partial \mathbb{H}$) whenever $f_i(h)$ is elliptic (parabolic).  
Let $\mathcal{X}$ be the asymptotic cone defined by $\{x_i\}$, $\{\mu_i=|f_i|_S\}$ and $\omega$ and suppose $\{f_i\}$ induces an action of $G$ on $\mathcal{X}$. Let ${\rm Fix}_{\mathcal{X}}(h)$ be the set of points in $\mathcal{X}$ fixed by $h$. 

\begin{lemma}\label{efl}
$y\in{\rm Fix}_{\mathcal{X}}(h)$ if and only if $y\in \mathcal{X}$ can be defined by $\{y_i\}$ where $y_i\in B_i(h)$ \walmost surely. 
\end{lemma}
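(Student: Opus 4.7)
The plan is to prove the two directions separately, both resting on the observation that $B_i(h)$ is precisely the sublevel set $\{x\in\mathbb{H}\mid d_{\mathbb{H}}(x, f_i(h)x)\leq \delta\}$ of the displacement function of $f_i(h)$. This holds because along any geodesic issuing from the fixed point of $f_i(h)$ the displacement is monotone (decreasing toward the fixed point for both elliptic and parabolic elements), so it takes the value $\delta$ exactly on $\partial B_i(h)$.

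The ``if'' direction is then immediate: if $y=[\{y_i\}]$ with $y_i\in B_i(h)$ \walmost surely, then $d_{\mathbb{H}}(y_i, f_i(h)y_i)\leq \delta$ \walmost surely, so $d_i(y_i, f_i(h)y_i)\leq \delta/\mu_i \to 0$, giving $h\cdot y = y$, i.e.\ $y\in {\rm Fix}_{\mathcal{X}}(h)$.

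For the converse, suppose $y=[\{z_i\}]\in {\rm Fix}_{\mathcal{X}}(h)$, so that $d_{\mathbb{H}}(z_i, f_i(h)z_i)=o(\mu_i)$. The natural candidate is to take $y_i=z_i$ whenever $z_i\in B_i(h)$, and otherwise to let $y_i$ be the nearest-point projection of $z_i$ onto $B_i(h)$; this projection is well-defined since $B_i(h)$ is a convex ball (horoball) centered at the fixed point $\xi_i$ of $f_i(h)$, and $y_i$ lies on the geodesic from $z_i$ to $\xi_i$. By construction $y_i\in B_i(h)$, and it remains to verify that $d_i(z_i,y_i)\to 0$.

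The main technical step, which I expect to be the substantive part, is an estimate of the form $d_{\mathbb{H}}(z_i, y_i)\leq C\,d_{\mathbb{H}}(z_i, f_i(h)z_i)+C'$ with constants $C,C'$ depending only on $\delta$. I plan to verify this from explicit hyperbolic displacement formulas. In the parabolic case, normalizing so that $\xi_i=\infty$ and $f_i(h)(z)=z+1$, the displacement at $z=x+iy$ satisfies $\cosh(d)=1+1/(2y^2)$; the projection onto $B_i(h)$ is purely vertical, and a short logarithmic computation gives the estimate. In the elliptic case, writing $\sinh(d/2)=\sinh(r)\sin(\theta_i/2)$ where $\theta_i$ is the rotation angle and $r$ the distance from $\xi_i$, the projection distance becomes $\sinh^{-1}(\sinh(d/2)/\sin(\theta_i/2)) - \sinh^{-1}(\sinh(\delta/2)/\sin(\theta_i/2))$. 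The subtle point is that this must be bounded uniformly as $\theta_i$ may degenerate to $0$, which follows from elementary monotonicity properties of $\sinh^{-1}$. Once the estimate is established, $d_{\mathbb{H}}(z_i, f_i(h)z_i)=o(\mu_i)$ forces $d_{\mathbb{H}}(z_i, y_i)=o(\mu_i)$, so $d_i(z_i,y_i)\to 0$ and hence $[\{y_i\}]=[\{z_i\}]=y$ in $\mathcal{X}$.
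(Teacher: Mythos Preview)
Your proposal is correct and follows the same overall structure as the paper's proof: the ``if'' direction is identical, and for the converse you project the representative sequence onto $B_i(h)$ along the geodesic to the fixed point $\xi_i$ --- exactly the paper's point $\epsilon_i$ --- and show the projection distance is $o(\mu_i)$.

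The only difference is in how the key estimate is obtained. You propose explicit hyperbolic trigonometry, splitting into parabolic and elliptic cases and handling the degeneration $\theta_i\to 0$ by monotonicity of $\sinh^{-1}$. This works, but the paper dispatches the estimate in one line via $\delta$-hyperbolicity: for $y_i\notin B_i(h)$, the triangle with vertices $y_i$, $f_i(h)y_i$, $\xi_i$ is $\delta$-thin, and since $\epsilon_i$ and $f_i(h)\epsilon_i$ are $\delta$-close on the two legs, one reads off
\[
d_{\mathbb{H}}(y_i, f_i(h)y_i) = 2\,d_{\mathbb{H}}(y_i, \epsilon_i) + l_i,\qquad |l_i|\leq 4\delta,
\]
which immediately gives $d_{\mathbb{H}}(y_i,\epsilon_i)\leq \tfrac12 d_{\mathbb{H}}(y_i,f_i(h)y_i)+2\delta$. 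This coordinate-free argument avoids your case split and the uniformity check in $\theta_i$, and would carry over verbatim to any $\delta$-hyperbolic space; your computation, while correct, is more labor than the situation requires.
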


\begin{proof}
Suppose $y$ is defined by $\{y_i\}$ where $y_i\in B_i(h)$ \walmost surely. Then by definition of $B_i(h)$, we have $d_{\mathbb{H}}(f_i(h)y_i, y_i)\leq \delta$ \walmost surely. Hence we know that $d(hy, y)={\rm lim}^{\omega}d_{\mathbb{H}}(f_i(h)y_i, y_i)/\mu_i={\rm lim}^{\omega}\delta/\mu_i=0$. So $h$ fixes $y$. 

Now suppose $h$ fixes $y$ and $y$ is defined by $\{y_i\}$ where $y_i\notin B_i(h)$ \walmost surely. For $y_i\notin B_i(h)$, let $\epsilon_i$ be the intersection of the segment $[y_i, \xi_i]$ with $\partial B_i(h)$. By Gromov hyperbolicity of $\mathbb{H}$, we have $d_{\mathbb{H}}(f_i(h)y_i, y_i)=2d_{\mathbb{H}}(y_i, \epsilon_i)+l_i$ for some $l_i$ satisfying $|l_i|\leq 4\delta$. Since $h$ fixes $y$, we know that ${\rm lim}^{\omega}d_{\mathbb{H}}(f_i(h)y_i, y_i)/\mu_i=0$. Therefore ${\rm lim}^{\omega}d_{\mathbb{H}}(y_i, \epsilon_i)/\mu_i=0$. As a result $y$ is also defined by $\{\epsilon_i\}$, where $\epsilon_i\in B_i(h)$ \walmost surely.  
\end{proof}

\begin{lemma}\label{pointsinas}
Let $y=[\{y_i\}]\in \mathcal{X}$. Let $\epsilon_i$ be the intersection of the infinite geodesic (ray) from  $\xi_i$ towards $y_i$ with $\partial B_i(h)$. 
Then $\{\epsilon_i\}$ defines a point in $\mathcal{X}$.   
\end{lemma}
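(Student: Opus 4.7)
The plan is to show $\{\epsilon_i\}$ is admissible, i.e.\ $\lim^{\omega}d_\mathbb{H}(x_i,\epsilon_i)/\mu_i<\infty$. By the triangle inequality
\[
d_\mathbb{H}(x_i,\epsilon_i)\le d_\mathbb{H}(x_i,y_i)+d_\mathbb{H}(y_i,\epsilon_i),
\]
and the first summand divided by $\mu_i$ has finite $\omega$-limit because $y=[\{y_i\}]\in\mathcal{X}$. Hence the task reduces to bounding $d_\mathbb{H}(y_i,\epsilon_i)/\mu_i$.

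The main step is a direct rerun of the Gromov hyperbolicity estimate from the proof of Lemma~\ref{efl}. The point $\epsilon_i$ is the nearest point of $\partial B_i(h)$ to $y_i$: when $y_i\notin B_i(h)$, it sits between $y_i$ and $\xi_i$ on the geodesic ray, and the geodesic-quadrilateral argument in the hyperbolic plane yields
\[
d_\mathbb{H}(f_i(h)y_i,y_i)=2\,d_\mathbb{H}(y_i,\epsilon_i)+l_i,\qquad |l_i|\le 4\delta,
\]
uniformly in $i$. Because $\{f_i\}$ induces an action of $G$ on $\mathcal{X}$ and $y\in\mathcal{X}$, the orbit point $hy$ lies in $\mathcal{X}$, whence $\lim^{\omega}d_\mathbb{H}(f_i(h)y_i,y_i)/\mu_i=d_\mathcal{X}(hy,y)$ is finite. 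Dividing by $2\mu_i$ gives $\lim^{\omega}d_\mathbb{H}(y_i,\epsilon_i)/\mu_i=d_\mathcal{X}(hy,y)/2<\infty$, as required.

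The main potential obstacle is the degenerate case $y_i\in B_i(h)$ \walmost surely, in which the Gromov hyperbolicity identity breaks down (the displacement $d_\mathbb{H}(f_i(h)y_i,y_i)$ is exponentially small in the depth of $y_i$ inside $B_i(h)$). Here Lemma~\ref{efl} tells us $y\in\mathrm{Fix}_\mathcal{X}(h)$ and provides an alternative representative of $y$ lying on $\partial B_i(h)$; for this boundary representative the associated $\epsilon_i$ coincides with the representative itself and is tautologically admissible. The original $\epsilon_i$ is then compared to it using the $1$-Lipschitz function $p\mapsto d_\mathbb{H}(p,\partial B_i(h))$, giving $d_\mathbb{H}(y_i,\epsilon_i)=d_\mathbb{H}(y_i,\partial B_i(h))\le d_\mathbb{H}(x_i,y_i)+d_\mathbb{H}(x_i,\partial B_i(h))$. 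The first term is controlled by hypothesis; for the second, one verifies that the centrally-located property of $x_i$ together with the non-abelian-ness of $G$ (Assumption~\ref{SA2}), which forces some generator whose image in $\Gamma_i$ does not fix $\xi_i$, keeps $d_\mathbb{H}(x_i,\partial B_i(h))/\mu_i$ bounded.
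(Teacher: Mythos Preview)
Your handling of the non-degenerate case $y_i\notin B_i(h)$ is correct and identical to the paper's: the Gromov-hyperbolicity estimate $d_\mathbb{H}(f_i(h)y_i,y_i)=2\,d_\mathbb{H}(y_i,\epsilon_i)+O(\delta)$ together with the well-definedness of the $G$-action on $\mathcal{X}$ gives the bound.

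The gap is in the degenerate case $y_i\in B_i(h)$. Your reduction
\[
d_\mathbb{H}(y_i,\epsilon_i)=d_\mathbb{H}(y_i,\partial B_i(h))\le d_\mathbb{H}(x_i,y_i)+d_\mathbb{H}(x_i,\partial B_i(h))
\]
only trades the depth of $y_i$ for the depth of $x_i$ in $B_i(h)$, which is the same problem. You then assert that non-abelian-ness of $G$ plus the centrally-located property of $x_i$ bounds $d_\mathbb{H}(x_i,\partial B_i(h))/\mu_i$, via ``some generator whose image in $\Gamma_i$ does not fix $\xi_i$''. This does not work. First, $S$ is only a \emph{relative} generating set in this section, so nothing forces an element of $S$ to avoid the stabilizer of $\xi_i$. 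Second, and more seriously, $B_i(h)$ is the $\delta$-ball of the single element $f_i(h)$, not a Margulis domain, so there is no analogue of Lemma~\ref{l:margulis}(\ref{mag3}) available. An isometry $f_i(g)$ with $f_i(g)\xi_i\neq\xi_i$ can perfectly well satisfy $f_i(g)x_i\in B_i(h)$ when $x_i$ sits deep inside $B_i(h)$; the displacement bound $d_\mathbb{H}(f_i(g)x_i,x_i)\le\mu_i$ gives no control on that depth.

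The paper's argument in this case is different and avoids this issue entirely. Assume for contradiction that $\lim^\omega d_\mathbb{H}(y_i,\epsilon_i)/\mu_i=\infty$. Then for every $z=[\{z_i\}]\in\mathcal{X}$ one has $d_\mathbb{H}(z_i,y_i)\le(d(y,z)+1)\mu_i$ \walmost surely, which is eventually dwarfed by the depth of $y_i$ in $B_i(h)$, so $z_i\in B_i(h)$ \walmost surely; by Lemma~\ref{efl}, $h$ fixes $z$. Thus $h$ fixes all of $\mathcal{X}$, contradicting faithfulness of the action and $h\neq 1$. The key idea you are missing is that unbounded rescaled depth means $B_i(h)$ swallows the entire asymptotic cone, and the contradiction comes from $h$ acting nontrivially on $\mathcal{X}$ rather than from a displacement estimate at $x_i$.
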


\begin{proof}
Since $y=[\{y_i\}]\in \mathcal{X}$, it suffices to show ${\rm lim}^{\omega}d_{\mathbb{H}}(y_i, \epsilon_i)/\mu_i$ is finite. 

We first consider the case when $y_i$ is on the segment $[\xi_i, \epsilon_i]$ \walmost surely. Then $y_i\in B_i(h)$ \walmost surely. Suppose ${\rm lim}^{\omega}d_{\mathbb{H}}(y_i, \epsilon_i)/\mu_i=\infty$. Given an arbitrary point $z=[\{z_i\}]\in\mathcal{X}$. Let $d$ be the distance between $y$ and $z$ in $\mathcal{X}$. Then $d_{\mathbb{H}}(z_i, y_i)/\mu_i\leq d+1$ \walmost surely. Hence we have $z_i\in B_i(h)$ \walmost surely. Hence by Lemma \ref{efl}, $h$ fixes $z$. Since $z$ is an arbitrary point in $\mathcal{X}$, $h$ fixes $\mathcal{X}$ point-wise. This contradicts the fact that $G$ acts on $\mathcal{X}$ faithfully and $h\neq 1$. 

Now we suppose $\epsilon_i$ is on the segment $[\xi_i, y_i]$. By Gromov hyperbolicity of $\mathbb{H}$, we have $d_{\mathbb{H}}(f_i(h)y_i, y_i)=2d_{\mathbb{H}}(y_i, \epsilon_i)+l_i$, where $|l_i|\leq 4\delta$. Since $G$ acts on $\mathcal{X}$, we know that ${\rm lim}^{\omega}d_{\mathbb{H}}(f_i(h)y_i, y_i)/\mu_i$ is finite. So ${\rm lim}^{\omega}d_{\mathbb{H}}(y_i, \epsilon_i)/\mu_i$ is also finite.
\end{proof}

Suppose $h$ is elliptic. If ${\rm lim}^{\omega}d_i(\xi_i, x_i)$ is finite, then $\{\xi_i\}$ defines a point $\xi$ in $\mathcal{X}$. If ${\rm lim}^{\omega}d_i(\xi_i, x_i)=\infty$, the sequence of geodesic segments $\{[x_i, \xi_i]\}$ defines a geodesic ray in $\mathcal{X}$. We denote the point in $\partial \mathcal{X}$ determined by this geodesic ray by $\xi$.   
When $h$ is parabolic, the geodesic rays $[x_i, \xi_i]$ determine a geodesic ray in $\mathcal{X}$. Again we denote the point in $\partial \mathcal{X}$ determined by this geodesic ray by $\xi$. In all this case, we say $\xi$ is the point in $\mathcal{X}\cup \partial \mathcal{X}$ determined by $\{\xi_i\}$ and write $\xi=[\{\xi_i\}]$. 

Depending on whether $\xi\in\mathcal{X}$ or $\xi\in\partial\mathcal{X}$, ${\rm Fix}_{\mathcal{X}}(h)$ is either a ball (Lemma \ref{ball}) or a horoball (Lemma \ref{horoball}) in $\mathcal{X}$. 

\begin{lemma}\label{ball}
Let $\epsilon=[\{\epsilon_i\}]$, where $\epsilon_i\in \partial B_i(h)$. If $\xi=[\{\xi_i\}]$ is in $\mathcal{X}$, then ${\rm Fix}_{\mathcal{X}}(h)=D(\xi, R)$, the ball centered at $\xi$ with radius $R=d(\epsilon, \xi)$. 
\end{lemma}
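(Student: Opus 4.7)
The plan is to establish both inclusions of $\mathrm{Fix}_{\mathcal{X}}(h)=D(\xi,R)$ directly from Lemma \ref{efl}, together with a simple radial projection onto $B_i(h)$ in $\mathbb{H}$. Throughout, write $r_i:=d_{\mathbb{H}}(\epsilon_i,\xi_i)$ for the hyperbolic radius of $B_i(h)$, so that by definition of $\epsilon$ and $\xi$ we have $R=d(\epsilon,\xi)=\lim^{\omega}r_i/\mu_i$, which is finite since $\xi,\epsilon\in\mathcal{X}$ (the latter by Lemma \ref{pointsinas}).

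For the forward inclusion, suppose $y\in\mathrm{Fix}_{\mathcal{X}}(h)$. By Lemma \ref{efl}, $y$ admits a representative $\{y_i\}$ with $y_i\in B_i(h)$ $\omega$-almost surely. Since $B_i(h)$ is the closed ball of radius $r_i$ centered at $\xi_i$, this gives $d_{\mathbb{H}}(y_i,\xi_i)\leq r_i$ $\omega$-almost surely. Dividing by $\mu_i$ and passing to the $\omega$-limit yields $d(y,\xi)\leq R$, so $y\in D(\xi,R)$.

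For the reverse inclusion, suppose $y=[\{y_i\}]\in D(\xi,R)$, i.e.\ $\lim^{\omega}d_{\mathbb{H}}(y_i,\xi_i)/\mu_i\leq R$. I will construct a second representative $\{y_i'\}$ of $y$ lying in $B_i(h)$. Define $y_i'=y_i$ whenever $y_i\in B_i(h)$, and otherwise let $y_i'$ be the unique point on the geodesic segment $[\xi_i,y_i]$ at distance $r_i$ from $\xi_i$; in either case $y_i'\in B_i(h)$. Then
\begin{equation*}
d_{\mathbb{H}}(y_i,y_i')=\max\{0,\ d_{\mathbb{H}}(y_i,\xi_i)-r_i\}.
\end{equation*}
Taking $\omega$-limits after dividing by $\mu_i$ gives
\begin{equation*}
d(y,[\{y_i'\}])\leq \max\{0,\ d(y,\xi)-R\}=0,
\end{equation*}
so $y=[\{y_i'\}]$ with $y_i'\in B_i(h)$ $\omega$-almost surely. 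Lemma \ref{efl} then forces $y\in\mathrm{Fix}_{\mathcal{X}}(h)$, completing the proof.

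There is no real obstacle here; the only point requiring a line of care is that the radial projection $y_i\mapsto y_i'$ produces a sequence at $\omega$-limit distance zero from $\{y_i\}$, which follows immediately because the discrepancy $d_{\mathbb{H}}(y_i,\xi_i)-r_i$ is already bounded by $R-R=0$ in the $\omega$-limit after rescaling by $\mu_i$.
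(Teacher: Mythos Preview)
Your proof is correct and uses the same key ingredients as the paper: Lemma~\ref{efl} for both inclusions, and a radial projection toward $\xi_i$ for the reverse inclusion. The forward inclusion is identical to the paper's.

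For the reverse inclusion the paper takes a slightly more roundabout route: rather than projecting $y_i$ directly onto $\partial B_i(h)$, it fixes a sequence $l_j\to 0$, moves $y_i$ a distance $l_j\mu_i$ toward $\xi_i$ to obtain points $y^j\in\mathrm{Fix}_{\mathcal{X}}(h)$ with $d(y,y^j)=l_j$, and then invokes closedness of $\mathrm{Fix}_{\mathcal{X}}(h)$ to conclude. Your single-step projection is cleaner: since $t\mapsto\max\{0,t\}$ is continuous, the $\omega$-limit of $\max\{0,d_{\mathbb H}(y_i,\xi_i)-r_i\}/\mu_i$ is indeed $\max\{0,d(y,\xi)-R\}=0$, so $[\{y_i'\}]=y$ directly and no appeal to closedness is needed. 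Both arguments are short; yours just avoids the auxiliary index $j$.
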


\begin{proof}
Suppose $y\in {\rm Fix}_{\mathcal{X}}(h)$. By Lemma \ref{efl} $y=[\{y_i\}]$ with $y_i\in B_i(h)$ \walmost surely. Hence $d(y, \xi)={\rm lim}^{\omega}d_{\mathbb{H}}(y_i, \xi_i)/\mu_i\leq {\rm lim}^{\omega}d_{\mathbb{H}}(\epsilon_i, \xi_i)/\mu_i=R$. So $y\in D(\xi, R)$.

Suppose $y=[\{y_i\}]\in D(\xi, R)$. Then ${\rm lim}^{\omega}d_{\mathbb{H}}(y_i, \xi_i)/\mu_i=d(y, \xi)\leq R={\rm lim}^{\omega}d_{\mathbb{H}}(\epsilon_i, \xi_i)/\mu_i$. Hence there exists $\{l_j\}$ with ${\rm lim}_{j\rightarrow \infty}l_j=0$, such that for each $j$ we have $d_{\mathbb{H}}(y_i, \xi_i)/\mu_i\leq d_{\mathbb{H}}(\epsilon_i, \xi_i)/\mu_i+l_j$ \walmost surely. Hence for each $j$ we have we have $d_{\mathbb{H}}(y_i, \xi_i)\leq d_{\mathbb{H}}(\epsilon_i, \xi_i)+l_j\mu_i$ \walmost surely. Let $y^j_i$ be the point in geodesic $[y_i, \xi_i]$ which that is $l_j\mu_i$ away from $y_i$. Then we have $d_{\mathbb{H}}(y^j_i, \xi_i)=d_{\mathbb{H}}(y_i, \xi_i)-l_j\mu_i\leq d_{\mathbb{H}}(\epsilon_i, \xi_i)$. Hence $h$ fixes $y^j=[\{y^j_i\}]$ by Lemma \ref{efl}. On the other hand, we have $d(y, y^j)={\rm lim}^{\omega}d_{\mathbb{H}}(y_i, y^j_i)/\mu_i={\rm lim}^{\omega}l_j=l_j$. Since $l_j\rightarrow 0$ as $j\rightarrow \infty$, we know that $\{y^j\}$ is a sequence of points in ${\rm Fix}_{\mathcal{X}}(h)$ converging to $y$. Hence $y\in {\rm Fix}_{\mathcal{X}}(h)$ since ${\rm Fix}_{\mathcal{X}}(h)$ is closed. 
\end{proof}

Suppose $\xi=[\{\xi_i\}]\in \partial \mathcal{X}$. Let $B: \mathcal{X}\rightarrow \mathbb{R}$ be the Busemann function defined by the geodesic ray from the base point $x=[\{x_i\}]$ to $\xi$ with $B(\xi)=-\infty$.

\begin{lemma}\label{horoball}
Let $\epsilon=[\{\epsilon_i\}]$, where $\epsilon_i\in \partial B_i(h)$. If $\xi$ is in $\partial\mathcal{X}$, then ${\rm Fix}_{\mathcal{X}}(h)=B^{-1}((-\infty, s])$, where $s=B(\epsilon)$. 
\end{lemma}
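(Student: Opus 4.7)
The plan is to mirror the proof of Lemma \ref{ball}, with balls replaced by sublevel sets of a Busemann function, via the following identification. For each $i$, let $b_i$ be the Busemann function on $\mathbb{H}$ associated to the geodesic ray from $x_i$ toward $\xi_i$, normalized so that $b_i(x_i)=0$. Since $\partial B_i(h)$ is a horocycle (or metric sphere with the same center $\xi_i$, in the elliptic case this alternative is handled by Lemma \ref{ball}) centered at $\xi_i$, we have $B_i(h)=\{z\in\mathbb{H} : b_i(z)\leq b_i(\epsilon_i)\}$ for any $\epsilon_i\in\partial B_i(h)$. A standard ultralimit computation (using that $\xi$ is the equivalence class of the rays $[x_i,\xi_i]$ and $\mu_i\to\infty$) shows that for $y=[\{y_i\}]\in\mathcal{X}$, the Busemann function $B$ on $\mathcal{X}$ satisfies $B(y)=\lim^{\omega} b_i(y_i)/\mu_i$. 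In particular $s=B(\epsilon)=\lim^{\omega} b_i(\epsilon_i)/\mu_i$.

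For the inclusion ${\rm Fix}_{\mathcal{X}}(h)\subseteq B^{-1}((-\infty,s])$, let $y\in{\rm Fix}_{\mathcal{X}}(h)$. By Lemma \ref{efl}, $y$ admits a representative $\{y_i\}$ with $y_i\in B_i(h)$ \walmost surely, hence $b_i(y_i)\leq b_i(\epsilon_i)$ \walmost surely. Dividing by $\mu_i$ and passing to the $\omega$-limit yields $B(y)\leq s$.

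For the reverse inclusion, suppose $y=[\{y_i\}]$ satisfies $B(y)\leq s$. Mimicking the approximation step in the proof of Lemma \ref{ball}, fix a sequence $l_j\searrow 0$. For each $j$, let $y_i^{j}$ be the point on the geodesic ray from $y_i$ toward $\xi_i$ at distance $l_j\mu_i$ from $y_i$ (this is well defined since $\xi_i$ is a boundary point of $\mathbb{H}$). Because Busemann functions along their defining ray decrease linearly, $b_i(y_i^{j})=b_i(y_i)-l_j\mu_i$. From $B(y)\leq s$ one has $\lim^{\omega}(b_i(y_i)-b_i(\epsilon_i))/\mu_i\leq 0$, so \walmost surely $b_i(y_i)-b_i(\epsilon_i)\leq l_j\mu_i$, which rearranges to $b_i(y_i^{j})\leq b_i(\epsilon_i)$, i.e.\ $y_i^{j}\in B_i(h)$ \walmost surely. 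Lemma \ref{efl} then gives $y^{j}:=[\{y_i^{j}\}]\in{\rm Fix}_{\mathcal{X}}(h)$. Since $d(y,y^{j})=\lim^{\omega}d_{\mathbb{H}}(y_i,y_i^{j})/\mu_i=l_j\to 0$ and ${\rm Fix}_{\mathcal{X}}(h)$ is closed (as the fixed set of a continuous isometric action on the $\mathbb{R}$-tree $\mathcal{X}$), we conclude $y\in{\rm Fix}_{\mathcal{X}}(h)$.

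The only non-routine piece is the identification $B(y)=\lim^{\omega}b_i(y_i)/\mu_i$; once that is established, the argument is a direct horoball analogue of Lemma \ref{ball}. I expect this Busemann identification, together with careful bookkeeping when $h$ is elliptic but $\xi\in\partial\mathcal{X}$ (so that the radii of the $\delta$-balls $B_i(h)$ grow unboundedly in the scale $\mu_i$, forcing them to look horoball-like in the cone), to be the main technical obstacle; it reduces to a standard comparison of $d_{\mathbb{H}}(y_i,\gamma_i(t))-t$ along the rays $\gamma_i(t)\to\xi_i$ and the fact that ${\rm lim}^{\omega}d_i(\xi_i,x_i)=\infty$ in the hypothesis of the lemma.
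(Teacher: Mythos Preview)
Your argument is correct and mirrors the paper's strategy: use Lemma~\ref{efl} for both inclusions and, for the reverse one, push $y_i$ a distance $l_j\mu_i$ toward $\xi_i$ and invoke closedness of ${\rm Fix}_{\mathcal X}(h)$. The packaging differs slightly: you route the comparison through Busemann functions $b_i$ on $\mathbb{H}$ and the identification $B(y)=\lim^\omega b_i(y_i)/\mu_i$, whereas the paper bypasses that identification by picking a point $v=[\{v_i\}]\in[y,\xi]\cap[\epsilon,\xi]$ in the tree and comparing $d(y,v)$ with $d(\epsilon,v)$ directly via representatives (taking $v_i\in[y_i,\xi_i]$ for the forward inclusion and $v_i\in[\epsilon_i,\xi_i]$ for the reverse, then moving $y_i$ along $[y_i,v_i]$ rather than along $[y_i,\xi_i]$). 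Your route is cleaner once the identification is in hand; the paper's is more self-contained and does not need to justify that ultralimits of Busemann functions give the Busemann function on the cone.

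One correction: your parenthetical that the elliptic alternative ``is handled by Lemma~\ref{ball}'' is wrong---Lemma~\ref{ball} concerns $\xi\in\mathcal{X}$, not elliptic $h$. When $h$ is elliptic but $\xi\in\partial\mathcal{X}$ you remain in the present lemma, and (as you note at the end) replacing $b_i$ by $d_{\mathbb{H}}(\cdot,\xi_i)-d_{\mathbb{H}}(x_i,\xi_i)$ makes your proof go through unchanged, since $B_i(h)$ is still the sublevel set $\{b_i\le b_i(\epsilon_i)\}$ and $b_i$ still decreases with unit speed along $[y_i,\xi_i]$.
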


\begin{proof}
Let $[y, \xi]$ be the geodesic ray from $y$ to $\xi$ and $[\epsilon, \xi]$ be the geodesic ray from $\epsilon$ to $\xi$.  

Suppose $y\in {\rm Fix}_{\mathcal{X}}(h)$.  By Lemma \ref{efl} $y=[\{y_i\}]$ with $y_i\in B_i(h)$ \walmost surely. Let $v=[\{v_i\}]$ be a point in$[\epsilon, \xi]\cap[y, \xi]$ where $v_i$ is chosen to be on $[y_i, \xi_i]$. Hence $d(y, v)={\rm lim}^{\omega}d_{\mathbb{H}}(y_i, v_i)/\mu_i\leq {\rm lim}^{\omega}d_{\mathbb{H}}(\epsilon_i, v_i)/\mu_i=d(\epsilon, v)$. This implies $B(y)\leq B(\epsilon)=s$. Hence $y\in B^{-1}((-\infty, s])$.

Suppose $y=[\{y_i\}]\in B^{-1}((-\infty, s])$. 
Let $v=[\{v_i\}]$ be a point in $[\epsilon, \xi]\cap[y, \xi]$ where $v_i$ is chosen to be on the geodesic (geodesic ray) $[\epsilon_i, \xi_i]$. Then we have $d(y, v)\leq d(\epsilon, v)$ since $y\in B^{-1}((-\infty, s])$. Hence we have ${\rm lim}^{\omega}d_{\mathbb{H}}(y_i, v_i)/\mu_i\leq {\rm lim}^{\omega}d_{\mathbb{H}}(\epsilon_i, v_i)/\mu_i$. Hence there exists $\{l_j\}$ with ${\rm lim}_{j\rightarrow \infty}l_j=0$, such that for each $j$ we have $d_{\mathbb{H}}(y_i, v_i)/\mu_i\leq d_{\mathbb{H}}(\epsilon_i, v_i)/\mu_i+l_j$ \walmost surely. Let $y^j_i$ be the point in geodesic $[y_i, v_i]$ that is $l_j\mu_i$ away from $y_i$. Then we $d_{\mathbb{H}}(y^j_i, v_i)=d_{\mathbb{H}}(y_i, v_i)-l_j\mu_i\leq d_{\mathbb{H}}(\epsilon_i, v_i)$. This implies $y^j_i\in B_i(h)$.  Hence $h$ fixes $y^j=[\{y^j_i\}]$ by Lemma \ref{efl}. Now apply the same argument as in second paragraph of the proof of Lemma \ref{ball} we see that  $y\in {\rm Fix}_{\mathcal{X}}(h)$. 
\end{proof}

\begin{lemma}\label{fixedpowerrotation}
Let $\gamma_i\in {\rm PSL}(2, \mathbb{R})$ an elliptic (parabolic) element for each $i$ and suppose the order of $\gamma_i$ goes to infinity as $i\rightarrow \infty$. Let $s, t>0$. Suppose  \walmost surely we have $f_i(h)=\gamma_i^s$ and $f_i(h')=\gamma_i^t$. Then ${\rm Fix}_{\mathcal{X}}(h)={\rm Fix}_{\mathcal{X}}(h')$.
\end{lemma}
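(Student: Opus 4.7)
The plan is to exploit the fact that $f_i(h) = \gamma_i^s$ and $f_i(h') = \gamma_i^t$ share the same fixed point in $\mathbb{H} \cup \partial\mathbb{H}$, so the $\delta$-balls (or $\delta$-horoballs) $B_i(h)$ and $B_i(h')$ are concentric, and then to show that the hyperbolic distance between their boundaries is bounded independently of $i$. Once this is done, Lemmas \ref{ball} and \ref{horoball} will immediately identify ${\rm Fix}_{\mathcal{X}}(h)$ with ${\rm Fix}_{\mathcal{X}}(h')$.

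First I would observe that any two non-trivial powers of $\gamma_i$ share the same fixed point $\xi_i$ (in $\mathbb{H}$ if $\gamma_i$ is elliptic, in $\partial\mathbb{H}$ if $\gamma_i$ is parabolic), and that $f_i(h)$ and $f_i(h')$ are of the same type as $\gamma_i$. Hence the same sequence $\{\xi_i\}$ determines the point $\xi \in \mathcal{X} \cup \partial\mathcal{X}$ appearing in the characterizations of both ${\rm Fix}_{\mathcal{X}}(h)$ and ${\rm Fix}_{\mathcal{X}}(h')$, so Lemmas \ref{ball} and \ref{horoball} apply to both $h$ and $h'$ with this common center $\xi$.

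The main step is to bound $d_{\mathbb{H}}(\partial B_i(h), \partial B_i(h'))$ uniformly in $i$. In the elliptic case, let $\theta_i$ denote the rotation angle of $\gamma_i$; since the order of $\gamma_i$ tends to infinity, $\theta_i \to 0$. The radius $r_i^a$ of the $\delta$-ball of $\gamma_i^a$ satisfies $\sinh(r_i^a)\sin(a\theta_i/2) = \sinh(\delta/2)$, so as $\theta_i \to 0$,
\begin{equation*}
r_i^s - r_i^t \longrightarrow \log(t/s),
\end{equation*}
a bounded quantity. The parabolic case is analogous: using a horocyclic coordinate, the Busemann-level difference between the $\delta$-horoballs $B_i(\gamma_i^s)$ and $B_i(\gamma_i^t)$ is $\log(t/s) + O(1)$. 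In either case $\partial B_i(h)$ and $\partial B_i(h')$ are concentric and their hyperbolic separation is $O(1)$, independent of $i$.

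Finally, I would choose $\epsilon_i \in \partial B_i(h)$ and $\epsilon'_i \in \partial B_i(h')$ on the common geodesic (ray) from $x_i$ through $\xi_i$. By the previous step $d_{\mathbb{H}}(\epsilon_i, \epsilon'_i) = O(1)$, so since $\mu_i = |f_i|_S \to \infty$ we obtain $d(\epsilon, \epsilon') = {\rm lim}^{\omega} d_{\mathbb{H}}(\epsilon_i, \epsilon'_i)/\mu_i = 0$. Hence the ball in Lemma \ref{ball} (resp.\ the Busemann sublevel set in Lemma \ref{horoball}) describing ${\rm Fix}_{\mathcal{X}}(h)$ coincides with the one describing ${\rm Fix}_{\mathcal{X}}(h')$, proving the equality. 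The only non-trivial ingredient is the logarithmic comparison of radii, which is a routine hyperbolic-trigonometry estimate once the concentric picture is in place; the hypothesis that the order of $\gamma_i$ goes to infinity is precisely what is needed in the elliptic case to ensure that $\theta_i$ is eventually small enough for this estimate to kick in.
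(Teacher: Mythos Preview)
Your proof is correct and follows the same overall strategy as the paper: identify the common center $\xi$, show that the boundaries $\partial B_i(h)$ and $\partial B_i(h')$ stay a bounded hyperbolic distance apart, and conclude via Lemmas~\ref{ball} and~\ref{horoball}. The only difference is in how the uniform bound is obtained---you use the explicit formula $\sinh(r_i^a)\sin(a\theta_i/2)=\sinh(\delta/2)$ to get $r_i^s-r_i^t\to\log(t/s)$, whereas the paper argues coarsely via $\delta$-hyperbolicity, comparing both $\epsilon_i$ and $\epsilon'_i$ to an intermediate point $\epsilon^0_i\in\partial B_i(\gamma_i)$ to obtain $d_{\mathbb{H}}(\epsilon_i,\epsilon'_i)\leq\frac{1}{2}(s+t)\delta+O(\delta)$.
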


\begin{proof}
We restrict $i$ to a subset $P$ of $\mathbb{N}$ such that $\omega(P)=1$ and the assumption of the lemma is true whenever $i\in P$. Let $R_i$ be the $\delta$-radius of $\gamma_i$.  Let $\epsilon_i$, $\epsilon'_i$ and $\epsilon^0_i$ be the intersection of a geodesic ray (line) from $\xi_i$ with $\partial B_i(h)$,  $\partial B_i(h')$ and the boundary of the $\delta$-ball (horoball) of $\gamma_i$, respectively. Then $d_{\mathbb{H}}(\epsilon_i, f_i(h)\epsilon_i)=\delta$ and $d_{\mathbb{H}}(\epsilon^0_i, f_i(h)\epsilon^0_i)\leq sd_{\mathbb{H}}(\epsilon^0_i, \gamma_i\epsilon^0_i)=s\delta$. So by Gromov hyperbolicity of $\mathbb{H}$, we have $d_{\mathbb{H}}(\epsilon_i, \epsilon^0_i)=\frac{1}{2}d_{\mathbb{H}}(\epsilon^0_i, f_i(h)\epsilon^0_i)\leq \frac{1}{2}s\delta+4\delta$. Similarly, we have $d_{\mathbb{H}}(\epsilon'_i, \epsilon^0_i)\leq \frac{1}{2}t\delta+4\delta$. So $d_{\mathbb{H}}(\epsilon_i, \epsilon'_i)\leq \frac{1}{2}(s+t)\delta$. Hence $[\{\epsilon'_i\}]=[\{\epsilon_i\}]$. Note that $[\{\epsilon'_i\}]$ and $[\{\epsilon_i\}]$ are points in boundaries of ${\rm Fix}_{\mathcal{X}}(h')$ and ${\rm Fix}_{\mathcal{X}}(h)$, respectively. Since ${\rm Fix}_{\mathcal{X}}(h)$ and ${\rm Fix}_{\mathcal{X}}(h')$ are both balls (horoball) with the same center, we have ${\rm Fix}_{\mathcal{X}}(h)={\rm Fix}_{\mathcal{X}}(h')$.
\end{proof}

\begin{lemma}\label{nondearcfixed}
Let $h, h'\in H$ and suppose ${\rm Fix}_{\mathcal{X}}(h')$ is a proper subset of ${\rm Fix}_{\mathcal{X}}(h)$. Then there exists a non-degenerate segment in the minimal $G$-invariant subtree $\mathcal{T}$ of $\mathcal{X}$ fixed by $h$ but not $h'$.
\end{lemma}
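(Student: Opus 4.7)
The plan is to translate the strict inclusion ${\rm Fix}_{\mathcal{X}}(h') \subsetneq {\rm Fix}_{\mathcal{X}}(h)$ into a comparable gap that survives inside $\mathcal{T}$, and then read off the required segment directly from the gap.

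First I would set up the geometry. Because $h, h' \in H$ commute and $H$ is an elliptic or parabolic subgroup, their images share a common fixed point $\xi_i \in \mathbb{H}\cup\partial\mathbb{H}$ for each $i$, giving a common center $\xi = [\{\xi_i\}] \in \mathcal{X}\cup\partial\mathcal{X}$. Lemma \ref{ball} (when $\xi\in\mathcal{X}$) and Lemma \ref{horoball} (when $\xi\in\partial\mathcal{X}$) then realize ${\rm Fix}_{\mathcal{X}}(h)$ and ${\rm Fix}_{\mathcal{X}}(h')$ as concentric closed balls (resp.\ horoballs) at $\xi$ with radii $R>R'$ (resp.\ Busemann levels $s>s'$) forced by the proper inclusion. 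Nonemptiness of $F_{\mathcal{T}} := {\rm Fix}_{\mathcal{X}}(h)\cap\mathcal{T}$ and $F'_{\mathcal{T}} := {\rm Fix}_{\mathcal{X}}(h')\cap\mathcal{T}$ follows from a standard projection argument: for any $y\in\mathcal{T}$ the segment $[y, hy]$ is contained in $\mathcal{T}$ and passes through the closest point of the convex set ${\rm Fix}_{\mathcal{X}}(h)$ to $y$, so that projection lies in $\mathcal{T}$; the same applies to $h'$.

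Second, I would show that $F_{\mathcal{T}}$ and $F'_{\mathcal{T}}$ are concentric closed balls (or horoballs) in $\mathcal{T}$ inheriting the same gap $R-R'>0$. Writing $\xi^{*} = \pi_{\mathcal{T}}(\xi)$ in Case 1 (and using the restriction of the Busemann function to $\mathcal{T}$ in Case 2), the tree identity $d(\xi, y) = d(\xi,\xi^{*}) + d(\xi^{*}, y)$ valid for every $y\in\mathcal{T}$ yields $F_{\mathcal{T}} = \{y\in\mathcal{T}\,:\,d(\xi^{*}, y)\leq R - d(\xi,\xi^{*})\}$ and analogously for $F'_{\mathcal{T}}$ with radius $R' - d(\xi,\xi^{*})$. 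The positive gap $R - R'$ is thereby preserved within $\mathcal{T}$. Given this, whenever $\mathcal{T}$ contains a point $y$ with $d(\xi^{*}, y) > R' - d(\xi,\xi^{*})$, the sub-arc of the geodesic $[y,\xi^{*}]$ lying at distance in the interval $\bigl(R' - d(\xi,\xi^{*}),\,\min\{d(\xi^{*}, y),\, R - d(\xi,\xi^{*})\}\bigr]$ from $\xi^{*}$ sits in $F_{\mathcal{T}} \setminus F'_{\mathcal{T}}$, giving the desired non-degenerate segment in $\mathcal{T}$ fixed by $h$ but not by $h'$.

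The main obstacle is to rule out the degenerate case $\mathcal{T}\subseteq {\rm Fix}_{\mathcal{X}}(h')$, in which $h$ and $h'$ would both act trivially on $\mathcal{T}$ and no segment could distinguish them. I expect to handle this by exploiting the choice of basepoint: since $x_i$ is centrally located for $f_i$ with respect to $S$ and $\mu_i = |f_i|_S$, some generator $g_{0}\in S$ realizes $d(x, g_{0} x) = 1$ in the rescaled limit, so $\mathcal{T}$ already has diameter at least $1$. Combining this with the Margulis-domain estimates of Section \ref{s:Bounded} (in the spirit of Lemma \ref{boudnary disct}) and with Assumption \ref{SA2} ($ker^{\omega}(f_{i}) = \{1\}$ and $G$ not abelian)---which together force the $G$-orbit of $x$ to be genuinely spread out rather than confined to a parabolic/elliptic domain around $\xi$---should exclude $\mathcal{T}\subseteq {\rm Fix}_{\mathcal{X}}(h')$ whenever the proper inclusion ${\rm Fix}_{\mathcal{X}}(h')\subsetneq {\rm Fix}_{\mathcal{X}}(h)$ is assumed, completing the proof.
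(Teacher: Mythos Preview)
Your first two steps are essentially sound and close in spirit to the paper's argument: both rest on the nested ball/horoball description of the fixed sets (Lemmas~\ref{ball} and~\ref{horoball}) and on the ``projection/midpoint'' trick showing that the nearest point of ${\rm Fix}_{\mathcal{X}}(h')$ to any $z\in\mathcal{T}$ lies on $[z,h'z]\subset\mathcal{T}$. The genuine gap is your treatment of the ``main obstacle''. You propose to exclude $\mathcal{T}\subseteq {\rm Fix}_{\mathcal{X}}(h')$ via Margulis-domain estimates in the spirit of Lemma~\ref{boudnary disct}, but that lemma lives entirely in the bounded regime ${\rm lim}^\omega|f_i|_S<\infty$, the opposite of the standing hypothesis of Sections~\ref{rtree}--\ref{es}. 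Knowing that $\mathcal{T}$ has diameter at least $1$ (from the centrally located basepoint) does not by itself prevent $\mathcal{T}$ from sitting inside the possibly enormous ball or horoball ${\rm Fix}_{\mathcal{X}}(h')$; nothing in your sketch actually produces a point of $\mathcal{T}$ outside that set, and the tools you cite do not supply one.

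The paper disposes of this obstacle in one line and with no analytic estimates: if $\mathcal{T}\subseteq {\rm Fix}_{\mathcal{X}}(h)$ then $h$ lies in the kernel of the $G$-action on $\mathcal{T}$, which the paper treats as a contradiction (consistently with the faithfulness already invoked in the proof of Lemma~\ref{pointsinas}). With a point $z\in\mathcal{T}\setminus{\rm Fix}_{\mathcal{X}}(h)$ in hand, the paper then bypasses your intermediate description of $F_{\mathcal{T}}$ as a ball about $\pi_{\mathcal{T}}(\xi)$: it simply intersects the ray $[z,\xi_i]$ with $\partial B_i(h)$ and $\partial B_i(h')$ to obtain $\epsilon$ and $\epsilon'$, and shows $[\epsilon,\epsilon']\subset\mathcal{T}$ by observing that $\epsilon'$ must be the midpoint of $[z,h'z]$ (the same tripod argument you used for nonemptiness). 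This is more direct and also avoids defining $\pi_{\mathcal{T}}(\xi)$, which is a nuisance since $\mathcal{T}$ is not known to be closed in $\mathcal{X}$ and $\xi$ may lie in $\partial\mathcal{X}$.
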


\begin{proof}
Note that $\mathcal{T}$ is not contained in ${\rm Fix}_{\mathcal{X}}(h)$ since otherwise $h$ is in the kernel of the action of $G$ on $\mathcal{T}$. Let $z=[\{z_i\}]\in \mathcal{T}-{\rm Fix}_{\mathcal{X}}(h)$. Then by Lemma \ref{efl}, $z_i$ is not in $B_i(h)$ \walmost surely. Let $\epsilon_i$ and $\epsilon'_i$ be the intersection of the segment $[\xi_i, z_i]$ with $\partial B_i(h)$ and $\partial B_i(h')$, respectively. Then $\epsilon=[\{\epsilon_i\}]$ and $\epsilon'=[\{\epsilon'_i\}]$ are points in ${\rm Fix}_{\mathcal{X}}(h)$ and ${\rm Fix}_{\mathcal{X}}(h')$ closest to $z$, respectively. Since $h\in \partial {\rm Fix}_{\mathcal{X}}(h)$ and ${\rm Fix}_{\mathcal{X}}(h)$ contains ${\rm Fix}_{\mathcal{X}}(h')$ properly, we know that $\epsilon\notin {\rm Fix}_{\mathcal{X}}(h')$. Hence $[\epsilon, \epsilon']$ is fixed by $h$ but not by $h'$. To finish the proof, we now show that $[\epsilon, \epsilon']\subset \mathcal{T}$.  Since $\epsilon$ is on the segment $[z, \epsilon']$ and $z\in \mathcal{T}$, it suffices to show that $\epsilon'\in\mathcal{T}$. Consider the geodesic tripod $[z, h' z, \epsilon']$. Since $h'$ fixes $\epsilon'$, it also fixes $y$, the center of the $[z, h' z, \epsilon']$. Clearly $y$ is closer to $z$ than $\epsilon'$ is unless $y=\epsilon'$. Since $\epsilon'$ is the point in ${\rm Fix}_{\mathcal{X}}(h')$ closest  to $z$. So $y=\epsilon$ and $[z, h' z, \epsilon']$ is actually the geodesic from $z$ to $h'z$ with $\epsilon'$ as the mid-point. Since $\mathcal{T}$ is $G$-invariant, we know that $[z, h'z]$ and hence $\epsilon'$ is in $\mathcal{T}$. 
\end{proof}

\begin{lemma}\label{Ell and para has fixed point}
Suppose none of the non-degenerate segment in the minimal $G$-invariant subtree $\mc{T}$ of $\mathcal{X}$ has elliptic or parabolic stabilizer. Then every elliptic or parabolic subgroup $H$ of $G$ has a (unique) fixed point in $\mc{T}$. 
\end{lemma}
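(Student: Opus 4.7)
\emph{Proof proposal.} The plan is threefold: (1) for each non-trivial $h \in H$, produce a fixed point of $h$ in $\mc{T}$ by projecting a fixed point from the ambient $\mc{X}$; (2) use the hypothesis to upgrade this to a \emph{unique} fixed point of $h$; and (3) use the abelianness of $H$ to conclude that all these unique fixed points coincide.

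For step (1), I would begin by noting that Lemma \ref{ball} (in the ball case) or Lemma \ref{horoball} (in the horoball case) guarantees that ${\rm Fix}_{\mc{X}}(h)$ is a non-empty closed (horo)ball in $\mc{X}$. Since $\mc{T}$ is a closed $G$-invariant subtree of the $\mathbb{R}$-tree $\mc{X}$, the closest-point projection $\pi \colon \mc{X} \to \mc{T}$ is well defined and $G$-equivariant, so for any $y \in {\rm Fix}_{\mc{X}}(h)$ the point $\pi(y)$ satisfies $h \pi(y) = \pi(h y) = \pi(y)$ and thus lies in ${\rm Fix}_{\mc{T}}(h) := {\rm Fix}_{\mc{X}}(h) \cap \mc{T}$, which is therefore non-empty.

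For step (2), suppose ${\rm Fix}_{\mc{T}}(h)$ contained two distinct points $p, q$. Then $h$ would fix the non-degenerate segment $[p,q] \subset \mc{T}$. The pointwise stabilizer $A$ of $[p,q]$ is abelian by Lemma \ref{arc sta abel} and contains the non-trivial elliptic (resp.\ parabolic) element $h$. In ${\rm PSL}(2,\mathbb{R})$, any element commuting with a non-trivial elliptic (resp.\ parabolic) element shares its unique fixed point in $\mathbb{H}$ (resp.\ $\partial\mathbb{H}$), so $f_i(A)$ consists entirely of elliptic (resp.\ parabolic) elements $\omega$-almost surely, which means $A$ is itself an elliptic (resp.\ parabolic) subgroup of $G$ in the sense of the paper, contradicting the hypothesis. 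Hence ${\rm Fix}_{\mc{T}}(h) = \{p(h)\}$ is a single point.

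For step (3), since $H$ is abelian, any $h' \in H$ commutes with $h$, so $h(h' p(h)) = h'(h p(h)) = h' p(h)$, which shows $h' p(h) \in {\rm Fix}_{\mc{T}}(h) = \{p(h)\}$. Therefore every element of $H$ fixes $p(h)$, and the uniqueness of this $H$-fixed point is immediate by applying the single-point conclusion of step (2) to any non-trivial element of $H$. The only real obstacle is step (2): this is the one place the hypothesis on arc stabilizers is used, and its correctness hinges on the algebraic observation that an abelian subgroup of $G$ containing a non-trivial elliptic or parabolic element must itself be elliptic or parabolic; everything else is standard $\mathbb{R}$-tree bookkeeping.
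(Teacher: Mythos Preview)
Your proof is correct and rests on the same ideas as the paper's, but the two are organized differently, and there is one small technical point worth flagging.

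The paper first invokes Lemma~\ref{nondearcfixed} together with the hypothesis to conclude that ${\rm Fix}_{\mc X}(h)={\rm Fix}_{\mc X}(h')$ for all $h,h'\in H$, so there is a single closed (horo)ball $D$ fixed pointwise by all of $H$; it then shows $D\cap\mc T$ is a single point by taking $x\in\mc T$, projecting $x$ onto the \emph{closed} set $D$, and checking via a tripod argument that this projection lies on $[x,hx]\subset\mc T$. You reverse both moves: you project a point of $D$ onto $\mc T$ to produce a fixed point of each $h$, and you postpone the passage from a single $h$ to all of $H$ to the end, handling it by the commuting trick in step~(3) rather than via Lemma~\ref{nondearcfixed}. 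Your step~(2) also makes explicit something the paper leaves tacit, namely that an abelian arc stabilizer containing a nontrivial elliptic (or parabolic) element is itself elliptic (or parabolic).

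The one soft spot is your assertion in step~(1) that $\mc T$ is closed, which you need for the closest-point projection $\pi\colon\mc X\to\mc T$ to exist. Minimal invariant subtrees of $\mathbb R$-trees need not be closed in general, and the paper never claims closedness. The paper's device of projecting in the \emph{other} direction---from a point $x\in\mc T$ onto the manifestly closed set $D={\rm Fix}_{\mc X}(h)$, and then observing that the projected point already lies on $[x,hx]\subset\mc T$---sidesteps this entirely. Your argument is easily repaired the same way, so this is a technicality rather than a genuine gap.
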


\begin{proof}
By the assumption of the lemma and Lemma \ref{nondearcfixed}, we have ${\rm Fix}_{\mathcal{X}}(h')={\rm Fix}_{\mathcal{X}}(h)$ for any $h, h'\in H$. Let $D={\rm Fix}_{\mathcal{X}}(h)\subset \mc{X}$. Then $D$ is fixed pointwise by every element of $H$ and every point outside of $D$ is not fixed by any element of $H$. By the assumption of the lemma, $D\cap \mc{T}$ contains at most one point.  We now show that $\mc{T}\cap D$ contains a single point. Let $x\in \mc{T}$ and $h\in H$. Let $d\in D$ be the point in $D$ closest to $x$. Then $h$ fixes the center $o$ of the tripod with end points $x$, $hx$, and $d$. So $o=d\in D$. On the other hand, since $\mc{T}$ is $G$-invariant, it contains the geodesic $[x, hx]$ and hence also $o$. Therefore, $\mc{T}\cap D=o$.    
\end{proof}

When none of the non-degenerate segment in the minimal $G$-invariant subtree $\mc{T}$ of $\mathcal{X}$ has elliptic or parabolic stabilizer, the Rips machine \cite[Theorem 5.1]{Guirardel} can be applied to the action of $G$ on $T$ and we can run the shortening argument. (See Section \ref{hs} for the details. )

In the case where there is a non-degenerate segment in $\mc{T}$ fixed by an elliptic (or parabolic ) subgroup $H$, we use a construct by Groves, Manning and Wilton to ``separate'' $H$ from $G$.

Let $J\subset \mathcal{T}$ be a non-degenerate segment whose non-trivial stabilizer $H_J\subset L$ is elliptic (parabolic). Let $H$ be the maximal elliptic (parabolic) subgroup of $G$ containing $H_J$. Let $\xi_i$ be the fixed point of $f_i(H)$ and let $\xi=[\{\xi_i\}]\in\mc{X}\cup\partial \mc{X}$.

Define a subtree $Y$ of $\mathcal{X}$ by $Y=\text{The closure of }\cup_{h\in H}{\rm Fix}_{\mathcal{X}}(h)$. We call $Y$ the {\em fixed ball} of $H$ in $\mathcal{X}$. Note that $Y$ intersect $\mathcal{T}$ in a non-degenerate subtree since both $Y$ and $\mathcal{T}$ contain $J$.  Also note that $Y$ is a ball ($\xi\in\mathcal{X}$) or a horoball ($\xi\in\partial\mathcal{X}$) centered at $\xi$ since ${\rm Fix}_{\mathcal{X}}(h)$ is a ball (or a horoball) for all $h\in H$.

\begin{lemma}\label{intersectionapoint}
If $g\in G\backslash H$, then $gY$ and $Y$ intersect at at most one point. In particular, if $y\in gY\cap Y$, then $y\in \partial Y$. 
\end{lemma}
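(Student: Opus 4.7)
The plan is to prove both assertions by contradiction, using that arc stabilizers of $\mathcal{X}$ are abelian (Lemma \ref{arc sta abel}) together with the maximality of $H$. First I would show $gY \cap Y$ cannot contain a non-degenerate arc, and then deduce the ``in particular'' clause by checking that any $y \in gY \cap Y$ in the interior of $Y$ would force such an arc.

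For the main claim, suppose $gY \cap Y$ contains two distinct points. Since $Y$ and $gY$ are closed balls or horoballs in an $\mathbb{R}$-tree, they are convex subtrees, so their intersection is a subtree containing a non-degenerate arc. Pick $y$ in the interior of that arc. The first observation is that $y$ must be interior to both $Y$ and $gY$: from a point on $\partial Y$ the only direction staying in $Y$ is the direction towards $\xi$ (moving from a boundary point of a ball or horoball in an $\mathbb{R}$-tree in any other direction strictly increases the distance from the center, by the tripod law), so such a point cannot admit the two opposite arc-directions in $Y$ that $y$ does. Lemmas \ref{ball} and \ref{horoball} then supply nontrivial $h \in H$ and $h_0 \in H$ whose fixed sets contain metric neighborhoods of $y$, so a small sub-arc around $y$ is fixed by both $h$ and $h' := gh_0g^{-1}$. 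Lemma \ref{arc sta abel} then gives $[h, h'] = 1$ in $G$.

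Next I would push this into $\Gamma_i$. Since $ker^\omega(f_i) = \{1\}$, $f_i(h)$ and $f_i(h')$ are nontrivial commuting elliptic (respectively parabolic) elements of $\Gamma_i$ \walmost surely, and hence share a unique fixed point in $\mathbb{H}$ (resp.\ $\partial \mathbb{H}$). As $f_i(h)$ fixes $\xi_i$ and $f_i(h')$ fixes $f_i(g)\xi_i$, this yields $f_i(g)\xi_i = \xi_i$ \walmost surely, so $f_i(g)$ centralizes $f_i(H)$ \walmost surely, and therefore $g$ centralizes $H$ in $G$. A short case analysis---using that an element of ${\rm PSL}(2, \mathbb{R})$ commuting with a nontrivial elliptic (parabolic) element is itself elliptic (parabolic) with the same fixed point---then shows every nontrivial element of $\langle H, g\rangle$ is elliptic (respectively parabolic). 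Thus $\langle H, g\rangle$ is an elliptic (parabolic) subgroup of $G$ strictly containing $H$, contradicting the maximality of $H$.

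For the ``in particular'' clause, I would suppose $y \in gY \cap Y$ with $y$ in the interior of $Y$ and produce another point in $gY \cap Y$, which would contradict the main claim. If $y$ is also interior to $gY$ the main claim directly applies. Otherwise $y \in \partial(gY)$, and travelling a short distance from $y$ along the geodesic (or geodesic ray) towards $g\xi \in \mathcal{X} \cup \partial\mathcal{X}$ stays inside $gY$; since $y$ is interior to $Y$, a sufficiently short initial portion of this segment also lies in $Y$, producing a non-degenerate intersection. The hardest part will be the case analysis in the push-forward to $\Gamma_i$: one must rule out that $f_i(g)$ could be hyperbolic while sharing a fixed point with a parabolic $f_i(h)$, and confirm $gh \neq 1$ in $G$ for $h \in H$ nontrivial (which holds because $g \notin H$), so that $\langle H, g\rangle$ is genuinely elliptic or parabolic rather than secretly containing hyperbolic or trivial elements.
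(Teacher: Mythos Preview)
Your proposal is correct and close in spirit to the paper's proof, but the mechanics differ in an interesting way. The paper finds a \emph{single} $h\in H$ that fixes both a sub-segment $Q\subset gY\cap Y$ and $g^{-1}Q$ (by shrinking $Q$ so that both lie strictly inside $Y$), then computes directly that $[h,g]$ fixes $Q$: for $q\in Q$ one has $hgh^{-1}g^{-1}q = hg(g^{-1}q) = hq = q$. From there, arc stabilizers abelian gives $[[g,h],h]=1$, hence $[g,h]\in H$, hence $ghg^{-1}\in H$, and finally $f_i(g)$ normalizes the maximal elliptic/parabolic subgroup containing $f_i(h)$, forcing $g\in H$. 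Your route instead produces two (possibly different) elements $h,h_0\in H$ and observes that $h$ and $gh_0g^{-1}$ share a fixed arc, so they commute; you then compare fixed points in $\mathbb{H}\cup\partial\mathbb{H}$ to get $f_i(g)\xi_i=\xi_i$ directly. Both arguments land at the same place.

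What the paper's packaging buys is that the step ``$ghg^{-1}\in H$'' already puts $f_i(g)$ inside the (cyclic, hence abelian) normalizer of the relevant elliptic/parabolic subgroup of the Fuchsian group $\Gamma_i$, so there is no separate case analysis. Your flagged ``hardest part''---ruling out that $f_i(g)$ could be hyperbolic sharing a boundary fixed point with a parabolic $f_i(h)$---is genuinely needed in your route, but it is a standard consequence of discreteness (if a discrete group contains a parabolic $p$ and a hyperbolic $\gamma$ sharing a fixed point, then $\gamma^n p\gamma^{-n}\to 1$ nontrivially). Once you know the stabilizer of $\xi_i$ in $\Gamma_i$ is purely elliptic or purely parabolic (hence cyclic), your conclusion $g\in H$ follows immediately, and the detour through ``$g$ centralizes $H$'' and ``$\langle H,g\rangle$ is elliptic/parabolic'' is unnecessary. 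Your treatment of the ``in particular'' clause is fine and in fact more explicit than the paper's.
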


\begin{proof}
Suppose the lemma is not true. Let $Q$ be a non-degenerate segment in $gY\cap Y$. Since $Q\subset gY$, we have $g^{-1}Q\subset Y$. Let $m={\rm sup}\{B(y)\mid y\in Q\cup g^{-1}Q\}$. Since  $Q\cup g^{-1}Q\subset Y$, we have $m\leq s$. In fact if we replace $Q$ by a non-degenerate subsegment of itself, we can assume $m< s$. Thus, there exists nontrivial $h\in H$ that fixes $Q\cup g^{-1}Q$. Then for any $q\in Q$ we have $[h, g]q=hgh^{-1}g^{-1}q=hgg^{-1}q=hq=q$. Hence $[g,h]$ fixes $Q$. Therefore by Lemma \ref{arc sta abel} we have $[[g,h], h]=1$, which implies $[g,h]\in H$. Hence $ghg^{-1}\in H$. This implies that \walmost surely $f_i(ghg^{-1})$ and $f_i(h)$ is in the same elliptic (parabolic) subgroup of $\Gamma_i$. Therefore $f_i(g)$ must be in the same elliptic (parabolic) subgroup of $\Gamma_i$ \walmost surely, which implies that   $g\in H$. Contradiction.  
\end{proof}

\begin{corollary}\label{preserveY}
If $g\in G$ preserves $Y$, then $g\in H$. 
\end{corollary}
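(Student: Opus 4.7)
The plan is to deduce this as an essentially immediate consequence of Lemma \ref{intersectionapoint}, which has already done all the real work. The strategy is contrapositive: assume $g \in G$ preserves $Y$, so $gY = Y$, and conclude that the hypothesis $g \notin H$ would force $Y \cap gY$ to be a single point, contradicting the fact that $Y$ is a non-degenerate subtree.

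First I would recall that $Y$ contains the non-degenerate segment $J$ by construction (since $H_J \subseteq H$ and every element of $H_J$ fixes $J$, we have $J \subseteq \mathrm{Fix}_{\mathcal{X}}(h)$ for some $1 \neq h \in H_J \subseteq H$, hence $J \subseteq Y$). In particular $Y$ is not a single point. Next I would write $gY \cap Y = Y$ using the hypothesis $gY = Y$, so this intersection is a non-degenerate subtree of $\mathcal{X}$.

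Then I would apply Lemma \ref{intersectionapoint}: if we had $g \in G \setminus H$, the lemma would force $gY \cap Y$ to contain at most one point, which contradicts the previous paragraph. Therefore $g \in H$, as required.

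I do not expect any real obstacle here; this is a clean contrapositive argument, and the only thing to verify carefully is the (already observed) fact that $Y$ is non-degenerate, which is guaranteed because $Y \cap \mathcal{T}$ contains the segment $J$ discussed just before Lemma \ref{intersectionapoint}.
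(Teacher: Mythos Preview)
Your argument is correct and is exactly the intended one: the paper states this as an immediate corollary of Lemma~\ref{intersectionapoint} without further proof, and your contrapositive (using that $Y \supseteq J$ is non-degenerate, so $gY \cap Y = Y$ cannot be at most a point unless $g \in H$) is precisely what is meant.
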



A {\em star} is the complete bipartite graph $K_{1,n}$. The vertex connected to all the others is called the {\em center vertex}. In the following theorem, we allow multiple edges between the center vertex and the other vertices. 

\begin{theorem}[Groves-Manning-Wilton]\label{GMWtree}
Assume \ref{standing assumption}. Suppose $G$ is not abelian and is finitely generated relative to a collection of elliptic or parabolic subgroups $\{H_1, \dots, H_k\}$.  Let $S$ be a finite generating set of $G$ relative to $\{H_1, \dots, H_k\}$. Suppose 
\begin{enumerate}
\item the $\omega$-kernel of $\{f_i\}$ is trivial. 
\item $|f_i|_S\rightarrow \infty$.  
\item $\{f_i\}$ induces an action of $G$ on the asymptotic cone $\mc{X}$ of $\mathbb{H}$ defined by $\{|f_i|_S\}$ and centrally located points $\{x_i\}$. 
\item The minimal $G$-invariant subtree $\mc{T}$ of $\mc{X}$ contains a non-degenerate segment with elliptic (or parabolic) stabilizer $H'\subset G$. 
\end{enumerate}
Then $G$ admits a non-trivial splitting $\mathbb{G}$ with the following properties:
\begin{enumerate} 
\item The underlying graph of $\mathbb{G}$ is a star;
\item The center vertex group $H$ is the maximal abelian group containing $H'$; 
\item All the edge groups are conjugates of $H_0$, where $H_0$ is the point-wise stabilizer of the fixed ball $Y$ of $H$ in $\mathcal{X}$. 
\item If $H_0\neq \{1\}$, each edge group is maximal abelian in the non-center vertex group containing it.
\item If a subgroup $G_0$ of $G$ fixes a point $x\in \mc{X}$, then it is conjugate to a subgroup of a vertex group of $\mathbb{G}$.
\item If $E$ is a parabolic or elliptic subgroup of $G$, then $E$ is conjugate to a subgroup of a vertex group of $\mathbb{G}$.
\item Let $E$ be a parabolic or elliptic subgroup not conjugating to a subgroup of $H$. If a non-trivial subgroup subgroup of $E$ and a subgroup $G_0$ of $G$ both fix the same point $x\in \mc{X}$, then $E$ and $G_0$ are conjugate to the same non-center vertex group of $\mathbb{G}$ by a common element of $G$. 
\end{enumerate}
\end{theorem}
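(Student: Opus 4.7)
The plan is to produce the splitting $\mathbb{G}$ as the quotient-of-groups structure of a bipartite Bass--Serre tree $T^*$ on which $G$ acts. Let the \emph{white vertices} of $T^*$ be the $G$-translates $\mc{Y}=\{gY:g\in G\}$ of the fixed ball, which by Corollary \ref{preserveY} are in canonical bijection with $G/H$, and let the \emph{black vertices} be the points $p\in\mc{X}$ that lie in at least two distinct translates of $Y$; put an edge between a white vertex $gY$ and a black vertex $p$ precisely when $p\in gY$. Then $G$ acts naturally on $T^*$ with $\mathrm{Stab}_G(Y)=H$. The graph $T^*$ is a tree: connectedness reduces to the fact that the union of translates of $Y\cap\mc{T}$ is connected in the $\mathbb{R}$-tree $\mc{T}$ (after minimality considerations), and acyclicity follows from Lemma \ref{intersectionapoint}, as any cycle in $T^*$ would fold into a nontrivial loop inside $\mc{X}$ through distinct translates of $Y$. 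Since $G$ acts transitively on $\mc{Y}$, the quotient graph $\mathbb{G}=T^*/G$ has a unique white-vertex orbit---the center of the star carrying the group $H$---and one leaf per $H$-orbit of black vertices incident to $Y$; multiple edges may join a given leaf to the center since a single branch point $p$ can lie on several translates of $Y$ under the $H$-action.

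To identify the edge groups, observe that the edge joining $Y$ to a black vertex $p$ has stabilizer $\mathrm{Stab}_H(p)$. Because $H$ is abelian and fixes $\xi$, an element $h\in H$ fixing $p$ also fixes the geodesic segment (or ray) $[\xi,p]$, hence fixes pointwise the sector of $Y$ swept out by $H$-translates of $[\xi,p]$; maximality of $H$ as an abelian subgroup forces this sector to exhaust $Y$, so $\mathrm{Stab}_H(p)=H_0$. Consequently every edge group is a $G$-conjugate of $H_0$. When $H_0\neq\{1\}$, let $V$ be a non-center vertex group containing an edge group identified with $H_0$, and let $A\subset V$ be an abelian subgroup with $H_0\subset A$; any $a\in A$ commutes with any nontrivial $h\in H_0$, so by commutative transitivity (Lemma \ref{l:comm tran}) $a$ lies in the unique maximal abelian subgroup of $G$ containing $h$. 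That maximal abelian subgroup contains $H$ and hence equals $H$ (since $H$ itself is maximal abelian), so $a\in H\cap V=\mathrm{Stab}_H(p)=H_0$, proving maximality of $H_0$ in $V$.

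Properties (5)--(7) follow from standard Bass--Serre arguments combined with the tree-theoretic input of Section \ref{rtree}. For (5), a subgroup $G_0$ fixing $x\in\mc{X}$ acts on $T^*$ without inversions, and preserves the finite path of $T^*$ tracking the sequence of translates $gY$ met by a geodesic from a reference vertex toward $x$; hence $G_0$ fixes a vertex of $T^*$, placing it in a vertex group of $\mathbb{G}$. For (6), an elliptic or parabolic subgroup $E$ of $G$ is contained in some maximal such subgroup $\widetilde E$ with fixed ball $Y_{\widetilde E}\subset\mc{X}$; either $Y_{\widetilde E}$ equals a $G$-translate of $Y$, giving $E\subset gHg^{-1}$, or $Y_{\widetilde E}$ meets each $gY$ in at most one boundary point, in which case $E$ fixes a black vertex of $T^*$. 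The main obstacle is property (7): when $E$ (not conjugate into $H$) and $G_0$ share a fixed point $x\in\mc{X}$, I need to show that a single non-center vertex group contains both up to a common conjugation. This reduces to showing that the black vertex of $T^*$ jointly stabilized by $E$ and $G_0$ is uniquely determined by $x$, a fact I would establish by combining Lemma \ref{intersectionapoint} with the maximality of $E$ and the abelianness of $H$ to pin down the correspondence between fixed points in $\mc{X}$ and stabilized black vertices of $T^*$.
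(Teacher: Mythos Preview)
Your construction of the bipartite tree $T^*$ is not correct, and this is the central gap. You take the black vertices to be the \emph{points} of $\mc{X}$ lying in two distinct translates of $Y$. But there is no reason whatsoever that the translates $\{gY\}$ should cover $\mc{T}$, or even that their union should be connected. If $g_1Y$ and $g_2Y$ are two translates whose convex hull in $\mc{T}$ passes through a nondegenerate arc lying in \emph{no} translate of $Y$, then in your graph $T^*$ there is simply no path from the white vertex $g_1Y$ to the white vertex $g_2Y$: no black vertex is produced. Your justification (``connectedness reduces to the fact that the union of translates of $Y\cap\mc{T}$ is connected in $\mc{T}$ after minimality considerations'') is exactly the claim that needs proving, and it is false in general. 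The minimal tree $\mc{T}$ can certainly contain long segments with trivial or hyperbolic stabilizer, disjoint from every $gY$.

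The paper's fix is to take the second type of vertex to be not intersection \emph{points} but rather the connected \emph{components} of $\mc{T}_Y \setminus \bigcup_{g} \mathrm{int}(gY)$, where $\mc{T}_Y = \mc{T}\cup\bigcup_g gY$. This guarantees connectedness of the resulting simplicial tree $T_Y$ tautologically, since every point of $\mc{T}_Y$ lies either in some $gY$ or in such a component, and adjacent pieces meet. It also gives the correct non-center vertex groups: they are setwise stabilizers of these components, which can be large non-abelian groups, not merely stabilizers of a single intersection point as in your picture. Your arguments for (5)--(7) then break down as written, because the vertex of $T^*$ associated to a point $x\in\mc{X}$ lying outside every translate of $Y$ is not defined in your model; in the paper's model one takes the closest point $x'\in\mc{T}_Y$, which lands either in some $gY$ or in some complement component, and this is what makes (5)--(7) go through. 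Finally, you do not verify non-triviality of the splitting; the paper does this by checking that $G$ can fix neither a type-(1) vertex (since $H$ is abelian and $G$ is not) nor a type-(2) vertex (since such a component cannot contain the minimal tree $\mc{T}$, as $Y\cap\mc{T}$ is nondegenerate).
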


A splitting $\mathbb{G}$ is called an {\em elliptic (or parabolic) splitting} of $G$ relative to $\{H_1, \dots, H_k\}$ induced by $\{f_i\}$ if it is obtained by applying the above theorem. 

\begin{proof}
Let $Y$ be the fixed ball of $H$. Then $Y$ contains the non-degenerate segment fixed by $H'$. Let $\mathcal{T}_Y=\mathcal{T}\cup (\bigcup_{g\in G}gY)$. This is the smallest $G$-invariant subtree of $\mathcal{X}$ containing $Y$.  We define a simplicial tree $T_{Y}$ as follow:
There are two types of vertices:  
\begin{enumerate}
\item Vertices corresponding to translates of $Y$.
\item Vertices corresponding to components of $\mathcal{T}_Y-(\bigcup_{g\in G}int(gY))$, where $int(gY)$ is the topological interior of $gY$ as a subspace of $\mathcal{X}$. 
\end{enumerate}

Edges: There is an edge between a vertex of the first type and one of the second type if and only if they (the corresponding subsets of $\mc{X}$) intersect. There is no edge between vertices of the same type. 

It is clear from the definition that $T_{Y}$ is a bipartite graph, with one color being the first type of vertices and the other color being the second type. Note that $T_{Y}$ has no loop with two edges since there is at most one edge between any two points in $T_Y$. Any loop in $T_{Y}$ with more than two edges defines a non-trivial loop in $\mathcal{T}$, which can not exist as $\mathcal{T}$ is an $\mathbb{R}$-tree. So $T_{Y}$ is a tree. One can check that the action of $G$ on $\mathcal{T}_Y$ induces an action of $G$ on $T_{Y}$, which preserves the types of vertices. As a result, the $G$-action on $T_{Y}$ has no inversion as the types of the two vertices of an edge are different.

We now show that $G$ does not have a global fixed point in $T_{Y}$. Suppose there is point $v\in T_{Y}$ fixed by $G$. Then we can assume $v$ is a vertex since the $G$-action has no inversion. Suppose $v$ is a first type vertex. Then $G$ preserve $gY$ for some $g\in G$. Hence by Corollary \ref{preserveY}, $G=gHg^{-1}$. This is impossible as $H$ is abelian and $G$ is not. Now suppose $v$ is a second type vertex. Then $G$ preserves the proper subtree $T$ of $\mathcal{T}_Y$ corresponding to $v$. Hence $T$ is a $G$-invariant subtree of $\mathcal{X}$ and hence $T$ contains $\mathcal{T}$. By the definition of $T_{Y}$ the intersection of $Y$ and  $T$ contains at most one point.  However by definition of $Y$, the intersection of $Y$ and $\mathcal{T}$ is a non-degenerate subtree. Therefore $T$ can not contain $\mc{T}$. Contradiction. 

Let $\mathbb{G}$ be the graph of groups decomposition corresponding to the action of $G$ on $T_Y$. Since the action of $G$ on $T_Y$ is non-trivial, $\mathbb{G}$ is non-trivial. Note that all type-(1) vertices in $T_Y$ are in the same $G$-orbit since they correspond to $G$-translates of $Y$. Denote the vertex in $\mathbb{G}$ corresponding to this $G$-orbit by $v$ and without loss of generality we can assume that the vertex group at $v$ is the set-wise stabilizer of $Y$, which is $H$. Since $T_Y$ is a connected bipartite graph, the quotient of the type-(2) vertices are all connected to $v$ and have no edges between each other. So the underlying graph of $\mathbb{G}$ is a star and $v$ is the center vertex. Hence Property (1) and (2) are proved. 

Let $e$ be an edge of $T_Y$ and let $H_e$ be the stabilizer of $e$ in $G$.  Then $H_e$ preserves $gY$ for some $g\in G$ and a component $T_e$ of $\mathcal{T}_Y-(\bigcup_{g\in G}int(gY))$. Hence $H_e$ fixes the point $gY$ and $T_e$ intersect at.  By Corollary \ref{preserveY} ${\rm stab}(e)\subset gHg^{-1}$. Therefore the fixed point set of $H_e$ is a ball (or a horoball) with the same center as $gY$. Since $H_e$ fixes the point of intersection of $T_e$ and $Y$, which is in the topological boundary of $gY$, we know that $H_e$ fixes $gY$. So $H_e\subset gH_0g^{-1}$. On the other hand, $gH_0g^{-1}$ fixes $gY$ and hence it preserves the component corresponding to the type-(2) vertex of $e$. Therefore,  $gH_0g^{-1}\subset H_e$. So $gH_0g^{-1}= H_e$ and Property (3) is proved. 

Since $H$ is a maximal abelian group in $G$ and $G$ is commutative translative by Lemma \ref{l:comm tran},  $H$ contains all abelian subgroups containing $H_e\neq \{1\}$. Hence $H_e$ has to be maximal abelian in the other vertex group containing it. Property (4) follows. 

We now prove (5). $G_0$ fixes the point $x'$ in $\mathcal{T}_Y$ closest to $x$. If $x'$ is in the interior of some translate of $Y$, then $G_0$ fixes the vertex in $T_Y$ corresponding to that translate of $Y$. Otherwise, there is connected component of $\mathcal{T}_Y-(\bigcup_{g\in G}int(gY))$ that contains $x'$. In this case, $G_0$ fixes the vertex in $T_Y$ corresponding to this component. Hence $G_0$ is conjugate to a subgroup of a vertex group of $\mathbb{G}$. 

Let $E$ be a parabolic (or elliptic) subgroup of $G$. Suppose $E$ is not conjugate to a subgroup of $H$. (Otherwise the  (6) is obviously true. )  Let $E_1\subset E_2\subset E$ be non-trivial finitely generated subgroups. Then $E_1$ fixes a point $x\in \mc{X}$ since $E$ is parabolic (or elliptic). Hence $E_1$ fixes a vertex $v$ of $T_Y$ by the previous paragraph.  Similarly, $E_2$ fixes a vertex $u$ of $T_Y$. Suppose $u\neq v$. Since $E_1$ fixes both $u$ and $v$, it fixes a vertex corresponding to a translate of $Y$. Hence $E_1$ is conjugate to a subgroup of $H$. Since $H$ is maximal abelian, $E$ is abelian and $E_1$ is non-trivial, we know that $E$ is conjugate to a subgroup of $H$. Contradiction. So $u=v$. Hence all finitely generated subgroups of $E$ fix $v$. Then Property (5), $H$ is conjugate to a subgroup of a vertex group of $\mathbb{G}$. 

We now prove (7). Let $E'$ be the non-trivial subgroup of $E$ fixing $x$. By the argument of in the proof of (5), the fact that both $E'$ and $G_0$ fix $x\in \mc{X}$ implies that both $E'$ and $G_0$ fix the same vertex $v$ of $T_Y$. Note that $v$ is a type (2) vertex since $E'$ is not conjugate to a subgroup of $Y$. By the argument in the proof of (6), $E$ also fixes $v$ since $E$ is not conjugate to a subgroup of $H$. (7) follows. 
\end{proof}

We now take a closer look at the edge groups and the center vertex group of elliptic (and parabolic) splittings. Note that each of them is either elliptic or parabolic.  We derive some properties of them that we need.  

\begin{definition}
We call an elliptic subgroup $H\subset G$ a {\em small elliptic subgroup} if for any $1\neq h\in H$ the \w limit of the angle of the rotation of $f_i(h)$ is zero.
\end{definition}

We omit the proof of the following easy lemma. 
\begin{lemma}
Small elliptic subgroups are torsion free.
\end{lemma}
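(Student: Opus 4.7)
The plan is to argue by contradiction: suppose $H$ is a small elliptic subgroup containing a non-trivial torsion element $h$ of order $n \geq 2$. The goal is to show that this forces $f_i(h)$ to be a rotation by an angle bounded below by a positive constant independent of $i$, contradicting smallness.

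First I would invoke Assumption \ref{SA2}, which gives $ker^\omega(f_i) = \{1\}$, so that since $h \neq 1$ we have $f_i(h) \neq 1$ $\omega$-almost surely. On the other hand, $f_i(h)^n = f_i(h^n) = 1$, so $f_i(h)$ is a non-trivial element of finite order dividing $n$. Since $H$ is elliptic, $f_i(h)$ is an elliptic element of ${\rm PSL}(2, \mathbb{R})$ (for all but finitely many $i$), and hence is conjugate to a rotation by some angle $\theta_i$. The order of $f_i(h)$ being a divisor of $n$ and greater than $1$ forces $\theta_i$ (taken in $[0,\pi]$ after the usual identification) to lie in the finite set $\{2\pi k / m : m \mid n, \ 1 \leq k < m\}$, which admits a positive lower bound $\theta_0 = \theta_0(n) > 0$.

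The final step is then to observe that ${\rm lim}^\omega \theta_i \geq \theta_0 > 0$, directly contradicting the definition of a small elliptic subgroup, which requires ${\rm lim}^\omega \theta_i = 0$ for every non-trivial $h \in H$. Therefore no such torsion element can exist, and $H$ is torsion free.

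There is essentially no obstacle here: the only mild point to be careful about is the convention for the rotation angle in ${\rm PSL}(2,\mathbb{R})$ (versus ${\rm SL}(2,\mathbb{R})$) and the normalization of $\theta$ into an interval on which ``angle tends to zero'' is meaningful, but under any reasonable convention a non-trivial element of order dividing $n$ has angle bounded away from $0$ by a constant depending only on $n$, which is all we need.
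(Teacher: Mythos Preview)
Your proof is correct and is exactly the natural argument; the paper in fact omits the proof of this lemma entirely, calling it easy, so there is nothing to compare against.
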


\begin{lemma}\label{smallrotation}
Let $\mathbb{G}$ be an elliptic splitting. Then the edge group $H_0$ of $\mathbb{G}$ is small elliptic. In particular, $H_0$ is torsion free.
\end{lemma}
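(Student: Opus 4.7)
The plan is to fix any nontrivial $h \in H_0$ and show that the $\omega$-limit of the rotation angle $\theta_i$ of $f_i(h)$ equals zero. The torsion-free conclusion will then follow immediately from the unnamed lemma just above this statement (small elliptic implies torsion-free), so the whole task reduces to the angle bound.

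First I would note that since $H_0 \subset H$ and $H$ is elliptic, the element $h$ is elliptic, so $\omega$-almost surely $f_i(h)$ is a nontrivial rotation about $\xi_i$ by some angle $\theta_i \in (0,\pi]$ (taking $\theta_i \leq \pi$ by convention). By definition of $H_0$ as the pointwise stabilizer of the fixed ball $Y$, we have ${\rm Fix}_{\mathcal{X}}(h) \supseteq Y$. The next step is to show that $Y$ has positive radius (or positive Busemann cutoff, if $\xi \in \partial \mathcal{X}$). Indeed, the hypothesis of Theorem \ref{GMWtree} provides a nontrivial $h' \in H'$ fixing a non-degenerate segment $J \subset \mathcal{T}$; then $J \subseteq {\rm Fix}_{\mathcal{X}}(h') \subseteq Y$ since $h' \in H$. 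Because ${\rm Fix}_{\mathcal{X}}(h')$ is a ball (respectively horoball) centered at $\xi$ containing the non-degenerate segment $J$, its radius $R$ is positive.

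Next I would apply Lemma \ref{ball} (or Lemma \ref{horoball} in the horoball case) to $h$: the radius of ${\rm Fix}_{\mathcal{X}}(h)$ equals $\lim^\omega r_i/\mu_i$, where $r_i$ is the hyperbolic radius of the $\delta$-ball $B_i(h)$. Since ${\rm Fix}_{\mathcal{X}}(h) \supseteq Y$, this quantity is at least $R > 0$. Because $\mu_i \to \infty$, it follows that $r_i \to \infty$ $\omega$-almost surely.

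Finally I would combine this with the standard displacement formula for an elliptic isometry of $\mathbb{H}$: a rotation of angle $\theta$ about a point moves any point at hyperbolic distance $r$ from that fixed point by a distance $d$ satisfying $\sinh(d/2) = \sinh(r)\,|\sin(\theta/2)|$. By definition of $B_i(h)$, its boundary radius satisfies $\sinh(\delta/2) = \sinh(r_i)\,|\sin(\theta_i/2)|$. Since $r_i \to \infty$ forces $\sinh(r_i) \to \infty$ while the left side is the fixed constant $\sinh(\delta/2)$, we conclude $|\sin(\theta_i/2)| \to 0$ $\omega$-almost surely, hence $\theta_i \to 0$. Thus $H_0$ is small elliptic, and therefore torsion-free. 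The only real step requiring care is confirming that in the case $\xi \in \partial \mathcal{X}$ (so $Y$ is a horoball) the same passage from positive ``size'' of $Y$ to $r_i \to \infty$ goes through via Lemma \ref{horoball}, but the Busemann depth plays the role of the radius and the rest of the argument is identical.
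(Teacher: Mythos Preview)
Your proof is correct and follows essentially the same approach as the paper's. Both arguments exploit that $Y$ has positive diameter (because it contains the non-degenerate segment $J$), deduce that the hyperbolic distance from boundary points of $B_i(h)$ to the center $\xi_i$ tends to infinity, and then observe that a rotation with angle bounded away from zero would move such distant points too far to be compatible with the definition of $B_i(h)$. The only cosmetic difference is that you invoke the exact displacement formula $\sinh(d/2)=\sinh(r)\lvert\sin(\theta/2)\rvert$ to force $\theta_i\to 0$ directly, whereas the paper argues by contradiction via the cruder inequality $d_{\mathbb{H}}(f_i(h)y_i,y_i)\geq d_{\mathbb{H}}(y_i,\xi_i)$ for large $d_{\mathbb{H}}(y_i,\xi_i)$ when the angle is bounded below.
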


\begin{proof}
Suppose the lemma is not true. Then for some $h\in H_0$, there exists $\theta>0$ such that the angle of the rotation $f_i(h)$ is greater than $\theta$ \walmost surely. Since $h\in H_0$, we know that $h$ fixes $Y$. Let $y=[\{y_i\}]\in\partial Y$. Then the \w limit of $d_i(f_i(h)y_i, y_i)=\frac{1}{\mu_i}d_{\mathbb{H}}(f_i(h)y_i, y_i)$ is zero. Let $\xi_i$ be the fixed point of $f_i(H_0)$. Since $Y$ has positive diameter, we know that the \w llimit of $d_i(y_i, \xi_i)=\frac{1}{\mu_i}d_{\mathbb{H}}(y_i, \xi_i)$ is greater than $c$ for some $c>0$ ($c$ could be infinity). Since the angle of rotation of $f_i(h)$ is greater than a positive number $\theta$, we have $d_{\mathbb{H}}(f_i(h)(y_i), y_i)\geq d_{\mathbb{H}}(y_i, \xi_i)$ whenever $d_{\mathbb{H}}(y_i, \xi_i)$ is big enough. Therefore we can not have ${\rm lim}^{\omega}\frac{1}{\mu_i}d_{\mathbb{H}}(f_i(h)y_i, y_i)=0$ while ${\rm lim}^{\omega}\frac{1}{\mu_i}d_{\mathbb{H}}(y_i, \xi_i)=c$ for some $c>0$. Contradiction. 
\end{proof}

\begin{lemma}\label{torsionfree}
Let $\mathbb{G}$ be an elliptic splitting with edge group $H_0$ and center vertex group $H$. Suppose $H$ is small elliptic. Then $H/H_0$ is torsion free. 
\end{lemma}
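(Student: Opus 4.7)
The plan is to reduce the statement to proving: if $h \in H$ and $h^n \in H_0$ for some $n \geq 1$, then $h \in H_0$. Since $H_0$ is by definition the pointwise stabilizer of the fixed ball $Y \subset \mathcal{X}$ of $H$, this amounts to showing $Y \subseteq {\rm Fix}_{\mathcal{X}}(h)$ under the hypothesis $Y \subseteq {\rm Fix}_{\mathcal{X}}(h^n)$.

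First I would exploit the abelian and elliptic nature of $H$: \walmost surely every nontrivial element of $f_i(H)$ is a rotation about a common fixed point $\xi_i \in \mathbb{H}$. Writing $\theta_i \in (-\pi, \pi]$ for the rotation angle of $f_i(h)$, the small elliptic hypothesis forces ${\rm lim}^\omega |\theta_i| = 0$, and the crucial consequence is that \walmost surely $|n\theta_i| < \pi$. Thus $f_i(h^n)$ is a rotation about $\xi_i$ with angle exactly $n\theta_i$, with no wrap-around modulo $2\pi$.

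Next I would apply the standard hyperbolic rotation formula $\sinh(d_{\mathbb{H}}(p, gp)/2) = \sinh(d_{\mathbb{H}}(p, \xi_i))\,|\sin(\theta/2)|$, valid for any rotation $g$ about $\xi_i$ by angle $\theta$, to both $f_i(h)$ and $f_i(h^n)$. Because $\sin$ is monotone increasing on $[0, \pi/2]$ and, \walmost surely, $|\theta_i|/2 \leq |n\theta_i|/2 \leq \pi/2$, this yields the key comparison $d_{\mathbb{H}}(p, f_i(h)p) \leq d_{\mathbb{H}}(p, f_i(h^n)p)$ for every $p \in \mathbb{H}$.

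Finally, given any $y \in Y$, I would choose representatives $y = [\{y_i\}]$ with $y_i \in B_i(h^n)$ \walmost surely, using Lemma \ref{efl} together with $Y \subseteq {\rm Fix}_{\mathcal{X}}(h^n)$. Dividing the displacement comparison by $\mu_i$ and taking $\omega$-limits then yields ${\rm lim}^\omega d_{\mathbb{H}}(y_i, f_i(h)y_i)/\mu_i = 0$, so $y \in {\rm Fix}_{\mathcal{X}}(h)$. Hence $Y \subseteq {\rm Fix}_{\mathcal{X}}(h)$ and so $h \in H_0$, proving the lemma. I do not anticipate a real obstacle: the small elliptic hypothesis is tailor-made to bound $|\theta_i|$ below $\pi/n$ \walmost surely, which is precisely what prevents any wrap-around issue in comparing $f_i(h)$ with $f_i(h^n)$.
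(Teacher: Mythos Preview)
Your proof is correct and follows essentially the same approach as the paper: both arguments hinge on the observation that, because $H$ is small elliptic, the rotation angle of $f_i(h)$ is \walmost surely small enough that $f_i(h^n)$ displaces points at least as far as $f_i(h)$ does. The paper phrases this as a terse contrapositive (if $h\notin H_0$ moves some $y\in Y$, then so does $h^d$), while you give the direct version with the explicit hyperbolic rotation formula, but the content is the same.
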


\begin{proof}
Suppose $H/H_0$ has torsion. Then there exists $h\in H-H_0$ such that $h^d\in H_0$ for some integer $d$. Since $h\notin H_0$, there exists $y\in Y$, such that $hy\neq y$. By assumption of the lemma, the angle of the rotation $f_i(h)$ goes to zero as $i$ goes to infinity over a full $\omega$-measure set. As a result the angle of the rotation $f_i(h^d)$ is greater than the angle of the rotation $f_i(h)$ \walmost surely. Hence $h^dy\neq y$. This contradicts the fact that $h^d\in H_0$.
\end{proof}

\begin{lemma}\label{p-torsionfree}
Let $\mathbb{G}$ be a parabolic splitting with edge group $H_0$ and center vertex group $H$. Then $H/H_0$ is torsion free. 
\end{lemma}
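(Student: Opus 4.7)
The plan is to imitate the proof of Lemma \ref{torsionfree} for the elliptic case, replacing the small-rotation input (which is why that result needs the small-elliptic hypothesis) with the unconditional fact that iterating a parabolic isometry of $\mathbb{H}$ strictly increases the displacement of every non-fixed point. Because there is no parabolic analogue of a ``large rotation,'' no extra hypothesis on $H$ is needed.

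First I would argue by contradiction: assume $H/H_0$ has torsion, and choose $h\in H\setminus H_0$ with $h^d\in H_0$ for some integer $d\geq 2$. Since $h$ does not pointwise-fix $Y$, there is a point $y=[\{y_i\}]\in Y$ with $hy\neq y$, while $h^d\in H_0$ forces $h^d y = y$. Next I would invoke the parabolic hypothesis on $\mathbb{G}$: the center vertex group $H$ is parabolic, so \walmost surely $f_i(h)$ is a parabolic isometry of $\mathbb{H}$. After conjugating $f_i(h)$ to the normal form $z\mapsto z+1$ in the upper half plane, a point at hyperbolic ``height'' $r$ above the parabolic fixed point is moved by $2\sinh^{-1}(1/(2r))$ under $f_i(h)$ and by $2\sinh^{-1}(d/(2r))$ under $f_i(h)^d=f_i(h^d)$. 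Consequently, \walmost surely
\begin{equation*}
d_{\mathbb{H}}(y_i, f_i(h^d)y_i)\;\geq\; d_{\mathbb{H}}(y_i, f_i(h)y_i).
\end{equation*}

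Dividing by $\mu_i$ and taking the $\omega$-limit converts this into $d(y,h^d y)\geq d(y,hy)$. The left-hand side is $0$ since $h^d y=y$, forcing $hy=y$ and contradicting the choice of $y$. I do not anticipate any substantive obstacle; the only point requiring care is that the parabolic displacement inequality be pointwise (and for every positive power), not merely asymptotic, which is immediate from the closed-form expression above. All other ingredients—the identification of an unfixed point $y\in Y$ and the passage from displacement in $\mathbb{H}$ to distance in $\mathcal{X}$ via $\mu_i$-rescaling—are already in place from the setup of Section \ref{es}.
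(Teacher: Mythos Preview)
Your proposal is correct and follows essentially the same argument as the paper: assume a torsion element $h\in H\setminus H_0$ with $h^d\in H_0$, pick $y\in Y$ not fixed by $h$, and use that a parabolic isometry's $d$-th power moves every point at least as far as the isometry itself to conclude $h^dy\neq y$, a contradiction. The paper states the displacement inequality without justification, while you supply the explicit formula $2\sinh^{-1}(k/(2r))$; this extra detail is fine but not needed, since the monotonicity is immediate from the fact that a parabolic conjugated to $z\mapsto z+t$ has displacement strictly increasing in $|t|$.
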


\begin{proof}
Suppose $H/H_0$ has torsion. Then there exists $h\in H-H_0$ such that $h^d\in H_0$ for some integer $d$. Since $h\notin H_0$, there exists $y\in Y$, such that $hy\neq y$. For any $y_i\in \mathbb{H}$, $f_i(h^d)$ moves $y_i$ further than $f_i(h)$ does. Hence $hy\neq y$ implies $h^dy\neq y$. This contradicts the fact that $h^d\in H_0$.
\end{proof}

\section{\large Good relative generating set}\label{s:rf}

In this section, we assume \ref{standing assumption} and \ref{SA2}.

 As explained in Section \ref{s:sandp}, in order to prove Theorem \ref{MT2}, we need to run the shortening argument on non-abelian vertex groups of elliptic (or parabolic) splittings of a finitely generated group $G$. However, these vertex groups are only know to be finitely generated relative to the adjacent edge groups and are not known to be finitely generated. Since translation length is only well defined with respect to a finite subset, not having a finite generating set causes the following problems: Let $G_v$ be a non-abelian vertex groups of elliptic (or parabolic) splittings of $G$ and $S$ be any finite subset of $G_v$.
\begin{enumerate}
\item  If ${\rm lim}^\omega|f_i|_S=\infty$, $\{f_i\}$ may not induce an action of $G_v$ on the asymptotic cone of $\mathbb{H}$ defined by $\frac{1}{|f_i|_S}d_{\mathbb{H}}$. However such an action is a key ingredient in the shortening argument;
\item If ${\rm lim}^\omega|f_i|_S<\infty$, one may still find another finite subset $S'\subset G_v$ such that  $\{|f_i|_{S'}\}$ is unbounded. Hence we still can not run the argument in Section \ref{s:Bounded} to produce a factoring map for $f_i|_{G_v}$. 
\end{enumerate} 
The goal of this section is to prove Proposition \ref{relativewelldefined}, which deals with the above problems. First we set up the notations. 
   
Suppose a countable group $G$ is finitely generated relative to a finite collection of pair-wise non-conjugate maximal elliptic or parabolic subgroups $\mathcal{H}=\{H^1, \dots, H^k\}$. Suppose further that if $H^j$ is elliptic then it is small elliptic. Let $S_0$ be a finite subset of $G$. Let $\mu_i=|f_i|_{S_0}$ and $d_i=\frac{1}{\mu_i}d_{\mathbb{H}}$. Let $x_i$ be a centrally located point in $\mathbb{H}$ of $f_i$ with respect to $S_0$ (Lemma \ref{l:cent located points exits}).  We call $S_0$ a {\em good generating set of $G$ relative to $\{H^1, \dots, H^k\}$} with respect to $\{f_i\}$ if the following are true:
\begin{enumerate}
\item $S_0$ intersects each $H^j$ non-trivially.
\item $S_0$ generates $G$ together with $\{H^1, \dots, H^k\}$. 
\item For each $h\in H^j$, we have that $d_i(x_i, f_i(h)x_i)$ has finitely $\omega$-limit.
\end{enumerate}
Having a good generating set $S_0$ solves the two problems pointed out in the first paragraph: If $\{\mu_i\}$ is bounded, then $|f_i|_{S}$ is \w bounded for any finite subset $S$ of $G$. If $\{\mu_i\}$ is unbounded, then $G$ admits a well defined action induced by $\{f_i\}$ on the asymptotic cone $\mathcal{X}$ of $\mathbb{H}$ defined by $\{x_i\}$,$\{\mu_i\}$ and $\omega$. 

While finite subsets satisfying the first two conditions of good generating set are easy to find, one might not be able to find one satisfying the last condition. Let $S_0$ be a finite subset satisfying the first two conditions of good generating set. Let $H^j_0$ be the subset of $H^j$ consisting of elements $h$ such that $d_i(x_i, f_i(h)x_i)$ is \w bounded. Note that $H^j_0$ is a subgroup. Let $G_0$ be the subgroup of $G$ generated by $S_0$ and $\mathcal{H}_0=\{H^1_0, \dots, H^k_0\}$.  Note that $S_0$ is a good generating set for $G_0$ relative to $\mathcal{H}_0$. The following proposition says that having a good generating set for $G_0$ is ``good enough'': 

\begin{proposition}\label{relativewelldefined}
With the above notation and assumption,
$G=G_0*_{(H_0^1,\dots, H_0^k)}(H^1, \dots, H^k)$.  
\end{proposition}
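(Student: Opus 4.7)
Let $\widetilde{G} = G_0 *_{(H_0^1, \dots, H_0^k)}(H^1, \dots, H^k)$ denote the fundamental group of the star graph of groups with center vertex group $G_0$, leaf vertex groups $H^j$ and edge groups $H_0^j$. Since $H_0^j \subseteq G_0 \cap H^j$ in $G$, the inclusions of vertex groups into $G$ assemble into a homomorphism $\Phi\colon \widetilde{G} \to G$, which is surjective by condition $(2)$ on $S_0$, so the task is to prove $\Phi$ injective. As a preliminary I verify the identity $G_0 \cap H^j = H_0^j$ inside $G$: the inclusion $\supseteq$ is immediate; conversely, any $h \in G_0 \cap H^j$ is a word in $S_0 \cup \bigcup_\ell H_0^\ell$, and each such letter $d_i$-displaces $x_i$ by an $\omega$-bounded amount (elements of $S_0$ by at most $1$ since $\mu_i = |f_i|_{S_0}$, elements of $H_0^\ell$ by definition), so $d_i(x_i, f_i(h) x_i)$ is $\omega$-bounded and $h \in H_0^j$. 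This identity allows every element of $\widetilde{G}$ to be put in the Bass-Serre normal form $w = g_0 h_1 g_1 \cdots h_n g_n$ with $h_\ell \in H^{j_\ell} \setminus H_0^{j_\ell}$, $g_\ell \in G_0$, and $g_\ell \notin H_0^{j_\ell}$ whenever $j_\ell = j_{\ell+1}$; such $w$ is trivial in $\widetilde{G}$ only if $n = 0$ and $g_0 = 1$.

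Suppose for contradiction that $\Phi(w) = 1$ for some non-trivial reduced $w$. If $n = 0$ then $g_0 \ne 1$ contradicts the inclusion $G_0 \hookrightarrow G$. For $n \ge 1$, since $ker^\omega(f_i) = \{1\}$ by Assumption \ref{SA2}, it suffices to show that $f_i(w) x_i \ne x_i$ \walmost surely. I will do this by a ping-pong argument in $\mathbb{H}$. Let $\xi_i^j \in \mathbb{H} \cup \partial \mathbb{H}$ be the fixed point of $f_i(H^j)$; the $\xi_i^j$ for distinct $j$ are pairwise distinct \walmost surely, since otherwise two distinct $H^j, H^\ell$ would sit inside a common maximal elliptic or parabolic subgroup of $\Gamma_i$, contradicting their maximality and non-conjugacy. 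Choose Margulis regions $D_i^j$ centered at $\xi_i^j$ so that, by Lemma \ref{l:margulis}(3), the translates $f_i(\gamma) D_i^j$ for $\gamma \notin H^j$ are pairwise disjoint from $D_i^j$. The ping-pong then uses three inputs: (a) every $g \in G_0$ has $\omega$-bounded $d_i$-displacement at $x_i$; (b) every $h \in H^j \setminus H_0^j$ sends a $d_i$-bounded neighborhood of $x_i$ deep into $D_i^j$, with $d_i(x_i, f_i(h) x_i) \to \infty$ \walmost surely; (c) the non-cancellation condition on $w$ prevents two consecutive letters from the same $H^{j_\ell}$ from being absorbed via an intermediate $g_\ell \in H_0^{j_\ell}$. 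Applying the letters of $w$ from right to left, the trajectory of $x_i$ enters a nested sequence of $G$-translates of Margulis regions which, by Lemma \ref{l:margulis}, remain $d_i$-far from $x_i$; hence $d_i(x_i, f_i(w) x_i) \to \infty$ \walmost surely, the required contradiction.

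The main technical obstacle is that the naive isometry estimate $d_i(y, f_i(g) y) \le d_i(x_i, f_i(g) x_i) + 2 d_i(x_i, y)$ allows $f_i(g)$ for $g \in G_0$ to have arbitrarily large $d_i$-displacement at points $y$ far from $x_i$, threatening to undo the divergence produced by an earlier $h$-step. The resolution is to replace absolute displacement bounds by Margulis separation bounds: one tracks which $G$-translate of $D_i^j$ the trajectory currently occupies rather than its precise location, and each application of $f_i(g_\ell)$ either preserves the current translate (when $g_\ell \in H_0^{j_\ell}$, which normal-form allows only if $j_\ell \ne j_{\ell+1}$) or carries it to a disjoint $G$-translate (when $g_\ell \notin H^{j_\ell}$, which forces $f_i(g_\ell) D_i^{j_\ell}$ to be disjoint from $D_i^{j_\ell}$ by Lemma \ref{l:margulis}(3)). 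Uniform separation of distinct $G$-translates of $D_i^j$ then yields a $d_i$-lower bound on the distance from $x_i$ at each stage, completing the ping-pong and hence the proof.
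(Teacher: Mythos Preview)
Your approach has a genuine geometric gap that breaks the ping-pong before it can start.

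Claim~(b) is false as stated. For $h\in H^j$, the isometry $f_i(h)$ is elliptic or parabolic with fixed point $\xi_i^j$, and such isometries \emph{preserve} every ball (respectively horoball) centred at $\xi_i^j$: the radial distance (or horodistance) of any point to $\xi_i^j$ is unchanged. Thus $f_i(h)$ cannot send a neighbourhood of $x_i$ ``deep into $D_i^j$''; it slides points along the sphere or horocycle they already sit on. The large displacement $d_i(x_i,f_i(h)x_i)\to\infty$ is tangential, not radial, and produces no nesting of Margulis regions. Consequently the phrase ``nested sequence of $G$-translates of Margulis regions'' is incoherent --- distinct $\Gamma_i$-translates of a Margulis region are pairwise disjoint (Lemma~\ref{l:margulis}(\ref{mag3})), never nested.

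Your third paragraph does not rescue this. You propose to track which translate of $D_i^{j}$ the trajectory occupies, but you never establish that the trajectory is inside any such translate after the first $h$-step, and even if it were, knowing the trajectory lies in some translate $f_i(\gamma)D_i^j$ disjoint from $D_i^j$ gives no $d_i$-lower bound on its distance to $x_i$: the separation of Margulis regions is by a fixed $d_{\mathbb H}$-amount, which vanishes after rescaling by $1/\mu_i$. Meanwhile a $g_\ell\in G_0$ may contain factors from $H_0^{j'}$ for various $j'$, each an elliptic or parabolic element that can move far-away points arbitrarily; bounded displacement at $x_i$ alone gives you no control over what $f_i(g_\ell)$ does to a point already far from $x_i$.

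The paper avoids these difficulties by passing to asymptotic cones rather than arguing directly in $\mathbb H$. For each finite $R\subset\bigcup_j(H^j\setminus H_0^j)$ one shows $|f_i|_{S_0\cup R}\to\infty$ (Lemma~\ref{d}), builds the $\mathbb R$-tree $\mathcal X_R$, and then applies the Groves--Manning--Wilton construction (Theorem~\ref{GMWtree}) iteratively to split off one abelian factor at a time from the finitely generated subgroup $\bar G=\langle S_0\cup R\rangle$ (Lemma~\ref{fgswd}). The splitting of $G$ itself then follows by exhausting $G$ as a union of such $\bar G$. The point is that in the $\mathbb R$-tree limit the fixed sets of the $H^j$ become genuine balls or horoballs with clean boundaries (Lemmas~\ref{ball}, \ref{horoball}), and the Bass--Serre tree of the splitting is built from these, replacing the unworkable ping-pong in $\mathbb H$ by an honest simplicial tree action.
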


We prove the above lemma in the rest of this section. We need the following two technical lemmas. 

\begin{lemma}\label{d}
With the notation above, let $R\subset \cup_{j}(H^j-H^j_0)$ be finite. Let $\nu_i$ be the translation length of $f_i$ with respect to $S_R=S_0\cup R$. Let $\mu_i=|f_i|_{S_0}$ and $x_i$ be a centrally located point in $\mathbb{H}$ of $f_i$ with respect to $S_0$. Then there is a centrally located point $y_i$ of $f_i$ with respect to $S_R$ such that: 
\begin{enumerate}
\item ${\rm lim}^{\omega}\nu_i=\infty$
\item ${\rm lim}^{\omega}d_{\mathbb{H}}(x_i, y_i)/\nu_i<\infty$. 
\item ${\rm lim}^{\omega}\mu_i/\nu_i= 0$.
\end{enumerate}
\end{lemma}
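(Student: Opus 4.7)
The driver is the defining property of $H^j_0$: for every $h \in R \subset \bigcup_j (H^j \setminus H^j_0)$, ${\rm lim}^\omega d_\mathbb{H}(f_i(h)x_i, x_i)/\mu_i = \infty$. Since $R$ is finite, the same holds for $D_i := \max_{h \in R} d_\mathbb{H}(f_i(h) x_i, x_i)$. My plan is to leverage this single fact to show $\nu_i \gtrsim D_i/2$, from which all three conclusions fall out.

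First I verify that a centrally located point $y_i$ of $f_i$ with respect to $S_R$ exists $\omega$-almost surely, via Lemma \ref{l:cent located points exits}: since $x_i$ is already centrally located for $S_0 \subset S_R$, the image $f_i(\langle S_0\rangle)$ is not contained in a parabolic subgroup, and hence neither is $f_i(\langle S_R\rangle)$. Set $r_i = d_\mathbb{H}(x_i, y_i)$ and pick $h_0 \in R$ realizing the maximum $D_i$. Applying the triangle inequality to the quadrilateral $x_i, y_i, f_i(h_0)y_i, f_i(h_0)x_i$ (two of whose opposite sides have length $r_i$, since $f_i(h_0)$ is an isometry) gives
\[
\nu_i \;\geq\; d_\mathbb{H}(f_i(h_0)y_i, y_i) \;\geq\; D_i - 2r_i. \qquad (*)
\]

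Second I bound $r_i$ from above. Write $L(y) := \max_{s \in S_0} d_\mathbb{H}(f_i(s)y, y)$, so $L(x_i) = \mu_i$ and $L(y_i) \leq \nu_i$. Pick $s_0 \in S_0$ realizing $d_\mathbb{H}(f_i(s_0)x_i, x_i) = \mu_i$; I may assume $f_i(s_0)$ is hyperbolic or elliptic, for otherwise $f_i(\langle S_0\rangle)$ would be purely parabolic and $x_i$ would not exist by Lemma \ref{l:cent located points exits}. By the explicit displacement formulas in $\mathbb{H}^2$, the min-set $M$ of $d_\mathbb{H}(f_i(s_0)\cdot, \cdot)$ lies within $\mu_i/2$ of $x_i$, so $d_\mathbb{H}(y_i, M) \geq r_i - \mu_i/2$. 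The standard rate-$2$ linear growth bound $d_\mathbb{H}(f_i(s_0)y, y) \geq 2 d_\mathbb{H}(y, M) - C$, valid for any hyperbolic or elliptic isometry of $\mathbb{H}^2$ with a universal constant $C$, then yields
\[
\nu_i \;\geq\; L(y_i) \;\geq\; d_\mathbb{H}(f_i(s_0)y_i, y_i) \;\geq\; 2r_i - \mu_i - C. \qquad (**)
\]

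Adding $(*)$ and $(**)$ eliminates $r_i$ and gives $2\nu_i \geq D_i - \mu_i - C$, i.e.\ $\nu_i \geq \tfrac{1}{2}D_i - O(\mu_i)$. Since ${\rm lim}^\omega D_i/\mu_i = \infty$, this at once yields (1) (${\rm lim}^\omega \nu_i = \infty$) and (3) (${\rm lim}^\omega \mu_i/\nu_i = 0$). Rearranging $(**)$ gives $r_i \leq \tfrac{1}{2}\nu_i + O(\mu_i)$, whence $d_\mathbb{H}(x_i, y_i)/\nu_i \leq \tfrac{1}{2} + O(\mu_i)/\nu_i$, and by (3) the $\omega$-limit is at most $\tfrac{1}{2}$, proving (2). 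The main technical obstacle is the linear growth estimate $(**)$: displacement functions in $\mathbb{H}^2$ are only $2$-Lipschitz and convex in general, and extracting the rate-$2$ lower bound requires the explicit hyperbolic-plane formulas together with the exclusion of the purely-parabolic case for $\langle f_i(S_0)\rangle$, which is forced by the existence of the centrally located point $x_i$.
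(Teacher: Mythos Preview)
Your architecture is clean, but inequality $(**)$ is not justified and in fact fails for exactly the elements that matter in this setting.

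Three specific problems with $(**)$:
\begin{enumerate}
\item The reduction ``I may assume $f_i(s_0)$ is hyperbolic or elliptic, for otherwise $f_i(\langle S_0\rangle)$ would be purely parabolic'' is a non sequitur: the maximizer $s_0$ being parabolic does not force all of $f_i(S_0)$ to share a parabolic fixed point, so you cannot exclude this case.
\item Even for elliptic $f_i(s_0)$, the claim ``the min-set $M$ lies within $\mu_i/2$ of $x_i$'' is false when the rotation angle is small. If $f_i(s_0)$ has angle $\theta_i$, a point with displacement $\mu_i$ sits at distance roughly $\mu_i/2 + \log(1/\sin(\theta_i/2))$ from the fixed point, which is unbounded as $\theta_i\to 0$. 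This is precisely the ``small elliptic'' regime imposed on the $H^j$ in this section.
\item Correspondingly, the ``universal constant $C$'' in the rate-$2$ bound $d(\gamma y,y)\geq 2d(y,M)-C$ is not universal; for an elliptic of angle $\theta_i$ it is $-2\log\sin(\theta_i/2)$, which blows up.
\end{enumerate}

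The underlying geometric obstruction is this: $y_i$ may well be dragged close to the fixed point $\xi_i$ of some $H^{j}$ (exactly what happens when $h_0\in R\cap H^{j}$ has huge displacement at $x_i$). Then every $s_0\in S_0\cap H^{j}$ has \emph{small} displacement at $y_i$, and $(**)$ applied to such $s_0$ gives nothing. To get a linear lower bound on $L_0(y_i)$ you would need an element of $S_0$ whose min-set lies on the \emph{opposite side} of $x_i$ from $y_i$, and you have not produced one.

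The paper sidesteps these constants entirely. For (2) it argues by contradiction in the asymptotic cone $\mathcal{X}_R$: if $r_i/\nu_i\to\infty$ then $y_i$ must be pulled toward some $\xi_i$, and then the element $h\in S_0\cap H^j$ (same $H^j$) has bounded displacement along the entire ray $[y,x)$ in $\mathcal{X}_R$, forcing $h$ to fix all of $\mathcal{X}_R$ and hence lie in $\ker^\omega(f_i)=\{1\}$. Part (3) then follows from (2) by precisely your triangle-inequality estimate $(*)$, and (1) is handled separately using Margulis domains when $\mu_i$ stays bounded.
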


\begin{proof}
We first show that ${\rm lim}^{\omega}\nu_i=\infty$. Since $S_0\subset S_R$, we have $\nu_i\geq\mu_i$.  If ${\rm lim}^{\omega}\mu_i=\infty$, then ${\rm lim}^{\omega}\nu_i=\infty$. Now consider the case when $\mu_i$ has finite $\omega$-limit. Suppose $\nu_i$ also has finite $\omega$-limit. Let $H_i$ be the maximal elliptic (or parabolic) subgroup of $\Gamma_i$ containing $f_i(H^1)$. Let $D_i=D(H_i)$ be the Margulis domain  associated to $H_i$ (See Section \ref{s:Bounded}).  Since $S_0$ contains an element inside of $H^1$ and also an element outside of $H^1$, we know from Lemma \ref{boudnary disct} that $d_{\mathbb{H}}(x_i, \partial D_i)$ has finite $\omega$-limit.  

Since we assume that $\nu_i$ has finite $\omega$-limit, the same argument as above shows that $d_{\mathbb{H}}(y_i, \partial D_i)$ also has finite $\omega$-limit. As a result, $|d_{\mathbb{H}}(x_i,  f_i(h')x_i)-d_{\mathbb{H}}(y_i,  f_i(h')y_i)|$ has finite $\omega$-limit for any $h'\in R$. Since $d_{\mathbb{H}}(y_i,  f_i(h')y_i)\leq \nu_i$ has finite $\omega$-limit, $d_{\mathbb{H}}(x_i,  f_i(h')x_i)$ also has finite $\omega$-limit. This contradicts the definition of $h'$. So ${\rm lim}^{\omega}\nu_i=\infty$. 

We now prove (2) . Suppose (2) is not true. We claim that for some $h_0\in R$ $y_i$ is closer to the fixed point of $f_i(h_0)$ than $x_i$ is. (Proof of the claim: If $y_i$ is NOT closer to the fixed point of $f_i(h)$ than $x_i$ is for all $h\in R$, then we have $d_{\mathbb{H}}(x_i, f_i(h)x_i)\leq d_{\mathbb{H}}(y_i, f_i(h)y_i)$ for all $h\in R$.  Then $x_i$ could have been chosen as a centrally located point with respect to $S_R$ and (1) follows.) We consider the case where $h_0$ is elliptic. Let $\xi_i$ denote the fixed point of $f_i(h_0)$ in $\mathbb{H}$. 

 Let $\mathcal{X}_R$ be the asymptotic cone of the hyperbolic plane $\mathbb{H}$ defined by $\{y_i\}$, $\{\nu_i\}$ and $\omega$. Denote its metric by $d^R$. Let $y=[\{y_i\}]$. Since ${\rm lim}^{\omega}d_{\mathbb{H}}(x_i, y_i)/\nu_i=\infty$, we know that $x=[\{x_i\}]$ is in the boundary of $\mathcal{X}_R$. Denote the geodesic ray from $y$ to $x$ by $[y, x]$.  Let $h\in S_0$ such that $h$ and $h_0$ are in the same elliptic subgroup of $G$. Let $z_i$ be any point in the geodesic segment $[x_i, y_i]$, then $z_i$ is closer to $\xi_i$ than $x_i$. Since $f_i(h)$ is elliptic(parabolic) with fixed point $\xi_i$, we have $d_{\mathbb{H}}(hz_i, z_i)\leq d_{\mathbb{H}}(hx_i, x_i)\leq \mu_i\leq \nu_i$. Suppose $z=[\{z_i\}]$ is in $\mc{X}_R$.  We have $d^R(hz, z)={\rm lim}^{\omega}d_{\mathbb{H}}(hz_i, z_i)/\nu_i \leq1$. So $h$ moves the ray from $y$ to $x$ by a bounded amount. This implies that $h$ fixes $x$. On the other hand, $h$ fixes  $\xi=[\{\xi_i\}]\in \mathcal{X}_R\cup\partial \mathcal{X}_R$. Hence by Lemma \ref{ball} (or Lemma \ref{horoball}) it fixes all points in $\mathcal{X}_R$. Therefore $h$ is in the $\omega$-kernel of $\{f_i\}$, contradicting the assumption at the beginning of the section. The case where $h_0$ is parabolic is similar. So (2) is proved. 

Now we prove (3). Let $h_0\in R$ and Let $l_i=d_{\mathbb{H}}(h_0x_i, x_i)$. Since $h_0\in R$, we know that ${\rm lim}^{\omega}\mu_i/l_i= 0$. Consider the geodesic rectangle $[x_i, y_i, h_0y_i, h_0x_i]$. By triangle inequality, we have 
\begin{eqnarray*}
l_i=d_{\mathbb{H}}(h_0x_i, x_i)&\leq& d_{\mathbb{H}}(x_i, y_i)+d_{\mathbb{H}}(y_i, h_0y_i)+d_{\mathbb{H}}(h_0x_i, h_0y_i)\\ &\leq& 2d_{\mathbb{H}}(x_i, y_i)+d_{\mathbb{H}}(y_i, h_0y_i)
\end{eqnarray*}
By (1), we have $d_{\mathbb{H}}(x_i, y_i)\leq D\nu_i$ for some $D$ $\omega$-almost surely. By definition of $\nu_i$, we have $d_{\mathbb{H}}(y_i, h_0y_i)\leq \nu_i$. Hence we have: 
\begin{eqnarray*}
l_i&\leq& 2d_{\mathbb{H}}(x_i, y_i)+d_{\mathbb{H}}(y_i, h_0y_i)\\ &\leq&  (2D+1)\nu_i
\end{eqnarray*}
Therefore $\omega$-almost surely 
\begin{equation*}
\frac{\mu_i}{\nu_i}\leq \frac{(2D+1)\mu_i}{l_i},
\end{equation*}
which implies (3).
\end{proof}


\begin{lemma}\label{fgswd}
With the notation above, let $R\subset  \cup_{j}(H^j-H^j_0)$ be finite. Let $\bar G$ be the subgroup of $G$ generated by $S_R=S_0\cup R$. Let $\bar H^j=\bar G\cap H^j$. Then $\bar G$ is isomorphic to $G_0$ amalgamated with $\bar H^j$ along $H^j_0$ for $j=1, \dots, k$.   
\end{lemma}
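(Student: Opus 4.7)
The plan is to realize $\bar{G}$ as the fundamental group of a graph of groups isomorphic to the proposed amalgamated product by applying a simultaneous version of the Groves--Manning--Wilton construction of Theorem \ref{GMWtree} to the $\bar{G}$-action on an appropriate asymptotic cone.

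First I will apply Lemma \ref{d} to $R$, producing centrally located points $y_i$ of $f_i$ with respect to $S_R=S_0\cup R$ such that $\nu_i:=|f_i|_{S_R}\to\infty$, $d_{\mathbb{H}}(x_i,y_i)=O(\nu_i)$, and $\mu_i/\nu_i\to 0$. Let $\mathcal{X}_R$ be the asymptotic cone defined by $(y_i,\nu_i,\omega)$. Because $S_R$ generates $\bar{G}$ with translation length bounded by $\nu_i$ at $y_i$, the sequence $\{f_i|_{\bar{G}}\}$ induces a well-defined isometric action of $\bar{G}$ on $\mathcal{X}_R$. Writing any $g\in G_0$ as a word in $S_0\cup\bigcup_j H_0^j$ and bounding displacement letter by letter gives $d_{\mathbb{H}}(x_i,f_i(g)x_i)=O(\mu_i)$; dividing by $\nu_i$ and using $\mu_i/\nu_i\to 0$ shows $d^R(gx,x)=0$, so $G_0$ fixes the point $x=[\{x_i\}]\in\mathcal{X}_R$. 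The same displacement estimate forces $G_0\cap H^j=H_0^j$, whence $G_0\cap\bar{H}^j=H_0^j$ for every $j$. By commutative transitivity (Lemma \ref{l:comm tran}), each $\bar{H}^j$ is maximal abelian in $\bar{G}$ and is elliptic or parabolic there, with fixed ball or horoball $Y^j\subset\mathcal{X}_R$ containing $x$ (since $H_0^j\subset\bar{H}^j$ fixes $x$).

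Next I will form a bipartite simplicial $\bar{G}$-tree $T$ mimicking the proof of Theorem \ref{GMWtree}, but separating simultaneously all translates $\{gY^j:g\in\bar{G},\ 1\le j\le k\}$: let $\mathcal{T}_Y$ be the smallest $\bar{G}$-invariant subtree of $\mathcal{X}_R$ containing every $gY^j$ together with $x$, take type-1 vertices to be the translates $gY^j$ and type-2 vertices to be the connected components of $\mathcal{T}_Y\setminus\bigcup g\cdot\mathrm{int}(Y^j)$, and connect a type-1 to a type-2 vertex when the corresponding subsets meet. The verifications of Lemma \ref{intersectionapoint} and Corollary \ref{preserveY} carry over verbatim to every pair of distinct translates $gY^j,g'Y^{j'}$, so $T$ is a tree on which $\bar{G}$ acts without inversions and the stabilizer of $gY^j$ equals $g\bar{H}^jg^{-1}$. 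Let $v_0$ denote the type-2 vertex whose component contains $x$; by the preceding paragraph, $G_0\subseteq\mathrm{Stab}(v_0)$.

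The main obstacle, and the step requiring care, is to prove $\mathrm{Stab}(v_0)=G_0$ and that $T/\bar{G}$ is the star with center $v_0$ and leaves indexed by the conjugacy classes of the $\bar{H}^j$. The edge stabilizer of $[v_0,Y^j]$ is contained in $\mathrm{Stab}(v_0)\cap\bar{H}^j\subseteq G_0\cap\bar{H}^j=H_0^j$ and obviously contains $H_0^j$, so it equals $H_0^j$. For the inclusion $\mathrm{Stab}(v_0)\subseteq G_0$, given $g\in\mathrm{Stab}(v_0)$ I will write $g=u_1\cdots u_L$ as a word in $S_R^{\pm 1}$ and track the resulting path $x,f_i(u_L)x,f_i(u_{L-1}u_L)x,\dots,f_i(g)x$ inside $T$: each occurrence of a letter from $R$ forces a traverse into a type-1 vertex, so the path can close at $v_0$ only if these traverses cancel in consecutive pairs; the cancellations permit rewriting $g$ as a product of $G_0$-generators and edge-stabilizer elements $H_0^j$, hence $g\in G_0$. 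Connectivity of $T/\bar{G}$ as a star will follow from the fact that $\bar{G}\cdot x$ generates $\mathcal{T}_Y$, which forces every type-2 vertex to lie in the $\bar{G}$-orbit of $v_0$. Bass--Serre theory applied to $\bar{G}\curvearrowright T$ then yields the desired isomorphism $\bar{G}\cong G_0*_{(H_0^1,\dots,H_0^k)}(\bar{H}^1,\dots,\bar{H}^k)$.
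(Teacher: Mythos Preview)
Your simultaneous construction is natural in spirit, but there is a genuine gap at the identification $\mathrm{Stab}(v_0)=G_0$ (and hence at the computation of the edge groups). The problem is a mismatch of scales: membership in $H_0^j$ is governed by boundedness of $d_{\mathbb{H}}(x_i,f_i(h)x_i)/\mu_i$, whereas fixing $x$ in $\mathcal{X}_R$ only asks that $d_{\mathbb{H}}(x_i,f_i(h)x_i)/\nu_i\to 0$. Since $\mu_i/\nu_i\to 0$, there is room for elements $r\in R\cap(H^j\setminus H_0^j)$ that nonetheless fix $x$ in $\mathcal{X}_R$ and hence fix $v_0$. Thus your key claim that ``each occurrence of a letter from $R$ forces a traverse into a type-1 vertex'' is unjustified: such a letter may stabilize $v_0$. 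For the same reason the pointwise stabilizer of $Y^j$ in $\bar H^j$, which is what the edge group actually is by the argument of Theorem~\ref{GMWtree}(3), can strictly contain $H_0^j$; your inclusion $\mathrm{Stab}(v_0)\cap\bar H^j\subseteq H_0^j$ then fails. A secondary issue is that $x$ could lie in $\mathrm{int}(Y^j)$ for some $j$ (nothing forces $x$ to sit on the boundary of every fixed ball simultaneously), so the type-2 vertex $v_0$ need not even be well defined.

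The paper avoids all of this by not attempting to nail the edge groups in one shot. It applies Theorem~\ref{GMWtree} once, obtaining $\bar G=\bar G^1*_{E_1}\bar H^1$ with some edge group $E_1$ that may strictly contain $H_0^1$; uses part~(7) of that theorem to place $G_0,\bar H^2,\dots,\bar H^k$ and $E_1$ together into the non-abelian vertex $\bar G^1$; and then repeats the construction on $\bar G^1$. Each iteration produces a surjection from $\bar G$ onto a free abelian group of strictly larger rank (Lemmas~\ref{torsionfree} and \ref{p-torsionfree} guarantee the new quotient $\bar H^j/E_j$ or $E_1/E_1^1$ is torsion-free of positive rank), so the process terminates, and at termination the non-abelian vertex has shrunk to $G_0$ with edge groups exactly $H_0^j$. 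The rank-termination argument is precisely what substitutes for the direct identification you are trying to make.
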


\begin{proof}
The plan is to obtain the splitting we want by applying Theorem \ref{GMWtree} multiple times. 

Let $\nu_i$ be the translation length of $f_i$ with respect to $S_R$ and let $y_i$ be a corresponding centrally located point. By (1) of Lemma \ref{d} ${\rm lim}^{\omega}\nu_i=\infty$. Hence the asymptotic cone $\mathcal{X}_R$ of $\mathbb{H}$ defined by $\{y_i\}$, $\{\nu_i\}$ and $\omega$ is an $\mathbb{R}$-tree. Let $d^R$ be the metric of $\mathcal{X}_R$. Since $S_R$ generates $\bar G$, $\{f_i\}$ induces an action of $\bar G$ on $\mc{X}_R$. By (1) of Lemma \ref{d} we have $x=[\{x_i\}]\in \mc{X}$. Since $S_R$ is finite, there exists $h_0\in S_R$ such that $d_{\mathbb{H}}(x_i, f_i(h_0)x_i)$ is great than $d_{\mathbb{H}}(x_i, f_i(h)x_i)$ for any $h\in S_R$ \walmost surely.    Hence $d_{\mathbb{H}}(x_i, f_i(h_0)x_i)\geq {\rm max}_{h\in S_R}\{d_{\mathbb{H}}(x_i, f_i(h)x_i)\}\geq \nu_i$ \walmost surely. (The second inequality follows from the definition of translation length.) So $d^R(x, h_0x)={\rm lim}^\omega d_{\mathbb{H}}(x_i, f_i(h_0)x_i)/\nu_i\geq 1$. Hence $x\notin {\rm Fix}(h_0)$. Without loss of generality, we assume $h_0\in \bar H^1$. Note that $h_0\in R$. Let $h\in S_0\cap \bar H^1$.  Then we have $d_{\mathbb{H}}(x_i, f_i(h)x_i)\leq \mu_i$. Hence by Lemma \ref{d} we have ${\rm lim}^{\omega}d_{\mathbb{H}}(f_i(h)x_i, x_i)/\nu_i\leq {\rm lim}^{\omega}\mu_i/\nu_i=0$. So $d^R(x, hx)=0$. Hence $x\in {\rm Fix}_{\mathcal{X}}(h)$.  So $ {\rm Fix}_{\mc{X}}(h_0)$ is a proper subset of ${\rm Fix}_{\mathcal{X}}(h)$. Therefore, by Lemme \ref{nondearcfixed}, $h$ fixes a non-degenerate segment in $\mathcal{T}$, the minimal $\bar G$-invariant subtree of $\mathcal{X}_R$. Therefore Theorem \ref{GMWtree} applies and yields an elliptic (or parabolic) splitting $\mathbb{G}$ of $\bar G$. Since $H^1$ is a maximal abelian subgroup of $G$, $\bar H^1$ is a maximal abelian subgroup of $\bar G$. Since $h\in \bar H^1$, we know that the center vertex group of $\mathbb{G}$ is $\bar H^1$. Let $E_1$ be the edge group of $\mathbb{G}$. By assumption if $\bar H^1$ is elliptic, then $\bar H^1$ is a small elliptic subgroup. Hence by Lemma \ref{torsionfree}, $\bar H^1/E_1$ is torsion free. If $\bar H^1$ is parabolic, $\bar H^1/E_1$ is also torsion free by Lemma \ref{p-torsionfree}. Since $\bar G$ is finitely generated, $\bar H^1/E_1$ is finitely generated and hence a free abelian group. Note that the rank of $\bar H^1/E_1$ is at least one since $h_0\notin E_1$.  Note also that $\mathbb{G}$ induces an epimorphism from $\bar G$ to the free abelian group $\bar H^1/E_1$ by collapsing all the vertex groups of $\mathbb{G}$ except the center vertex group. 

Our next goal is to show that $G_0,\bar H^2, \dots, \bar H^k$ are all in (a conjugate of) a non-abelian vertex group of $\mathbb{G}$. By (7) of Theorem \ref{GMWtree}, it suffices to show that $G_0$ fixes a point in $\mc{X}_R$ and each of $\bar H^2, \dots, \bar H^k$ has a non-trivial subgroup fixing the same point. By definition of $G_0$, for every $g\in G_0$, we have ${\rm lim}^\omega d_\mathbb{H}(f_i(g)x_i, x_i)/\mu_i<\infty$. This means that \walmost surely $d_{\mathbb{H}}(f_i(g)x_i, x_i)\leq D\mu_i$ for some $D$ independent of $i$. Hence we have 
\begin{equation*}
d_{\mathbb{H}}(f_i(g)x_i, x_i)/\nu_i\leq D\mu_i/\nu_i. 
\end{equation*}
Therefore by (2) of Lemma \ref{d}, we have ${\rm lim}^{\omega}d_{\mathbb{H}}(f_i(g)x_i, x_i)/\nu_i= 0$. So $d^R(gx, x)=0$ for all $g\in G_0$. Hence $G_0$ fixes $x$. Since $\bar H^j$ intersects $G_0$ in a non-trivial subgroup, $\bar H^j$ has a non-trivial subgroup fixing $x$. Hence we know that $G_0,\bar H^2, \dots, \bar H^k$ are all in (a conjugate of) a non-abelian vertex group of $\mathbb{G}$. Since $E_1$ is in all non-abelian vertex groups of $\mathbb{G}$, the group $\bar G^1=<G_0,E_1, \bar H^2, \dots, \bar H^k>$ is in a non-abelian vertex group of $\mathbb{G}$.

Note that $\mathbb{G}$ induces a splitting of $<\bar G^1, \bar H^1>\subset \bar G$ as $\bar G^1*_{E_1}\bar H^1$. Since $\bar G^1$ and $\bar H^1$ generate $\bar G$. We have $\bar G=\bar G^1*_{E_1}\bar H^1$.

As long as $\bar G^1\neq G_0$,  we can repeat the above argument for $(\bar G^1, \{E_1, \bar H^2, \dots, \bar H^k\})$, producing one of the following: 
\begin{enumerate}
\item $\bar G^1=\bar G^2*_{E_j}\bar H^j$, where $\bar G^2=<G_0, E_1, \bar H^2, \dots, \bar H^{j-1}, E_j, \bar H^{j+1}, \dots,  \bar H^k> $and $\bar H^j/E_j$ is free abelian with positive rank. 
\item $\bar G^1=\bar G^2*_{E_1^1}\bar E_1$, where $\bar G^2=<G_0, E_1^1, \bar H^2, \dots, \bar H^k> $ and $\bar E_1/E_1^1$ is free abelian with positive rank. 
\end{enumerate} 
Each of this case we get a refinement $\mathbb{G}_1$ of $\mathbb{G}$:
\begin{enumerate}
\item $\bar G=\bar G^2*_{(E_1, E_j)}(\bar H^1, \bar H^j)$, where $\bar G^2=<G_0, E_1, \bar H^2, \dots, \bar H^{j-1}, E_j, \bar H^{j+1}, \dots,  \bar H^k> $. 
\item $\bar G=\bar G^2*_{E_1^1}\bar H^1$, where $\bar G^2=<G_0, E_1^1, \bar H^2, \dots, \bar H^k> $.
\end{enumerate} 

In either case,  $\mathbb{G}_1$ induces epimorphism from $G$ to a free abelian group (either $\bar H^1/E_1\oplus \bar H^j/E_j$ or $\bar H^1/E_1^1$) with rank strictly greater than $\bar H^1/E_1$. We see that on one hand we can keep refining $\mathbb{G}$ as long as the non-abelian vertex group is not $G_0$ and on the other hand this refining process has to stop after finitely many steps as it produces epimorphims from $\bar G$ to free abelian groups with bigger and bigger ranks. As a result, at the end of the above process, we get the splitting of $\bar G$ as the amalgamated product of $G_0$ with $(\bar H^1, \dots, \bar H^k)$ over $(H^1_0, \dots, H^k_0)$. 
\end{proof}

\begin{proof}[Proof of Proposition \ref{relativewelldefined}]
 Let $\iota: G_0*_{(H_0^1, \dots, H_0^k)}(H^1, \dots, H^k)\rightarrow G$ be the map induced by the inclusions from $G_0$ and $\{H^1, \dots, H^k\}$ to $G$. Since $G_0$ and $\{H^1, \dots, H^k\}$  generate $G$, we know $\iota$ is surjective. By Lemma \ref{fgswd}, we know that for any finite set $R\subset  \cup_{j}(H^j-H^j_0)$, the restriction of $\iota$ to the subgroup of $G$ generated by $G_0$ and $R$ is injective. Since the union of all such subgroups of $G$ is $G$, so $\iota$ is injective. 
\end{proof}

\section{\large Maximal special abelian splitting}\label{msas}

In this section, we assume \ref{standing assumption} and \ref{SA2}. We further assume $G$ is finitely generated.

As explained in Section \ref{s:sandp}, we want to run the shortening argument in the non-abelian vertex groups of an elliptic (parabolic) splitting $\mathbb{G}$ of $G$. However Theorem \ref{GMWtree} might apply again to these vertex groups, in which case we need to refine $\mathbb{G}$ using the elliptic (or parabolic) splittings produced by Theorem \ref{GMWtree} before we can try to run the shortening argument again.  We introduce a class of splittings (Definition \ref{ssas}) which includes all the refinements of elliptic (parabolic) splittings mentioned above and is closed under further refinement induced by Theorem \ref{GMWtree}. The goal of this section is to show that refining splittings in this class using Theorem \ref{GMWtree} increases the ``complexity'' of the splittings (Proposition \ref{coreproposition}). We measure the complexity of splittings in two different ways: the number of vertices and AFF-rank (Definition \ref{d: AFF-rank}) and we show that there is an upper bound on both of these complexities for the splittings that we consider and hence the refining process eventually stops.

\begin{definition}
Let $H$ be an elliptic (parabolic) subgroup of $G$ and $h, h'\in H$. We say $h$ is {\em smaller than or equal to }$h'$ (or $h'$ is {\em greater  than or equal to }$h$ ) with respect to $\{f_i\}$ and write  $h\leq h'$ (or $h'\geq h$) if the translation length of $f_i(h)$ on any (and hence all ) circle (horocircle) centered at the fixed point of $f_i(H)$ is smaller than or equal to that of $f_i(h')$  \walmost surely. 
\end{definition}

Note that the relation $\leq$ is preserved when passing to subsequences of $\{f_i\}$ with full $\omega$-measure, i.e. if $\{f_{l_i}\}$ is a subsequence with full $\omega$-measure, then $h\leq h'$ with respect to $\{f_i\}$ if and only if $h\leq h'$ with respect to $\{f_{l_i}\}$. 

\begin{definition}
Let $H$ be an elliptic (or parabolic) subgroup of $G$. We say that a subgroup $A$ of $H$ is a {\em $\leq$-closed subgroup of $H$}  if $h'\in A$, $h\in H$ and $h\leq h'$ imply $h\in A$. 
\end{definition}

\begin{lemma}\label{monotone}
Let $H$ be an elliptic (or parabolic) subgroup of $G$ and $A_1$ and $A_2$ be $\leq$-closed subgroup of $H$. Then either $A_1\subset A_2$ or $A_2\subset A_1$. 
\end{lemma}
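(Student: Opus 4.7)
The plan is to observe that the binary relation $\leq$ is a total preorder on any elliptic (or parabolic) subgroup $H$, after which the lemma becomes a formal consequence of $\leq$-closedness.

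First I would verify totality. Given $h, h' \in H$, let $T_i(h)$ and $T_i(h')$ be the translation lengths of $f_i(h)$ and $f_i(h')$ on a chosen circle (or horocircle) centered at the common fixed point of $f_i(H)$. Because $f_i(H)$ lies in an elliptic or parabolic subgroup of $\Gamma_i$ (a one-parameter family), the sign of $T_i(h) - T_i(h')$ is independent of the chosen circle, so the comparison is well-defined for each $i$. Now set $A = \{i : T_i(h) \leq T_i(h')\}$ and $B = \{i : T_i(h) \geq T_i(h')\}$. Then $A \cup B = \mathbb{N}$, so finite additivity of $\omega$ forces $\omega(A) = 1$ or $\omega(B) = 1$; equivalently $h \leq h'$ or $h' \leq h$.

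With totality in hand, I would argue by contradiction. Suppose $A_1 \not\subset A_2$ and $A_2 \not\subset A_1$, and choose $h_1 \in A_1 \setminus A_2$ and $h_2 \in A_2 \setminus A_1$. By totality, after possibly swapping the roles of the indices, we may assume $h_1 \leq h_2$. But $h_2 \in A_2$ and $A_2$ is $\leq$-closed, so the definition forces $h_1 \in A_2$, contradicting the choice of $h_1$.

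There is no real obstacle here; the only point to be careful about is the totality claim, which depends on the geometric fact that the inequality between $T_i(h)$ and $T_i(h')$ does not depend on the auxiliary circle or horocircle, so that the relation $\leq$ is a well-defined preorder before taking ultrafilter limits. Once that is noted, the argument is purely formal.
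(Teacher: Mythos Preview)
Your proof is correct and follows essentially the same route as the paper: both argue by contradiction, picking $h_1\in A_1\setminus A_2$ and $h_2\in A_2\setminus A_1$, then using totality of $\leq$ to get (say) $h_1\leq h_2$, whence $\leq$-closedness of $A_2$ forces $h_1\in A_2$. You actually give more detail than the paper does, since you justify totality via the ultrafilter dichotomy whereas the paper simply asserts that $\leq$ is a linear order on $H$.
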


\begin{proof}
Suppose neither $A_1$ nor $A_2$ contains the other. Then there exists $h_1\in A_1$ and $h_2\in A_2$ such that $h_1\notin A_2$ and $h_2\notin A_1$. Since $\leq$ defines a linear order on all the elements of $H$, we have either $h_1\leq h_2$ or $h_2\leq h_1$. Without loss of generality we assume $h_1\leq h_2$. Since $A_2$ is $\leq$-closed, we have $h_1\in A_2$. Contradiction. 
\end{proof}

\begin{lemma}\label{torsionfreeclosed}
Suppose $A$ is a small elliptic (or parabolic) subgroup of $G$ and $E$ is a $\leq$-closed subgroup of $A$. Then $A/E$ is torsion free.  
\end{lemma}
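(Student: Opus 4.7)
The plan is to show that for any $h \in A$ and any integer $d \geq 1$ with $h^d \in E$, one has $h \leq h^d$; the conclusion then follows immediately from the $\leq$-closedness of $E$, since $h^d \in E$ and $h \leq h^d$ force $h \in E$.

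To prove $h \leq h^d$, I would work $\omega$-almost surely with $f_i(A)$, which lies inside a maximal elliptic (respectively parabolic) subgroup $\tilde A_i$ of $\Gamma_i$ with fixed point $\xi_i$. Fix a circle (horocircle) $C_i$ centered at $\xi_i$, and let $\ell_i(g)$ denote the translation length of $f_i(g)$ on $C_i$. In the parabolic case, $\tilde A_i$ is infinite cyclic; if $f_i(h) = \tau_i^m$ for a generator $\tau_i$, then $f_i(h^d) = \tau_i^{md}$, and the translation length on a horocircle scales linearly with the power, so $\ell_i(h^d) = d\,\ell_i(h) \geq \ell_i(h)$. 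This holds $\omega$-almost surely, so $h \leq h^d$.

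In the small elliptic case, let $\theta_i \in [0,\pi]$ be the rotation angle of $f_i(h)$. Since $A$ is small elliptic, $\lim^\omega \theta_i = 0$, so $\omega$-almost surely $d\theta_i < \pi$, and hence $f_i(h^d)$ is a rotation by angle $d\theta_i$ about $\xi_i$ (no wrap-around). The translation length of a rotation by angle $\theta$ on a circle of hyperbolic radius $r$ about its center is a strictly increasing function of $\theta$ on $[0,\pi]$ (given by $2\sinh(r)\sin(\theta/2)$ up to the appropriate hyperbolic formula, but monotonicity is what matters), so $\ell_i(h) \leq \ell_i(h^d)$ $\omega$-almost surely, giving $h \leq h^d$.

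The only real subtlety is the ``no wrap-around'' point in the elliptic case, and this is precisely what the small-elliptic hypothesis buys us; without it, $d\theta_i$ could exceed $\pi$ and the monotonicity would fail. Once $h \leq h^d$ is established in both cases, the $\leq$-closed hypothesis on $E$ finishes the argument: every preimage in $A$ of a torsion element of $A/E$ must itself lie in $E$, so $A/E$ is torsion free.
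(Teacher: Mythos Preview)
Your proof is correct and follows essentially the same approach as the paper's: both establish $h \leq h^d$ by using the small-elliptic hypothesis to ensure the rotation angle of $f_i(h^d)$ stays below $\pi$ (so translation length on the circle increases with the power), and then invoke $\leq$-closedness of $E$ to conclude $h\in E$. The paper phrases it as a contradiction argument and leaves the parabolic case as ``similar,'' whereas you give a direct argument and spell out the parabolic case explicitly, but the underlying idea is identical.
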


\begin{proof}
Suppose $A/E$ has torsion. Then there exists $h\in A-E$ such that $h^d\in E$ for some $d\leq 2$. Since $A$ is small elliptic, the \w limit of the angle of rotation of $f_i(h)$ is zero.  Hence \walmost surely the translation length of $f_i(h^d)=f_i(h)^d$ on the circle  centered at the fixed point of $f_i(A)$ is larger than that of $f_i(h)$. Therefore $h\leq h^d$. Since $E$ is $\leq$-closed and $h^d\in E$, we have $h\in E$. Contradiction. The proof of the case where $A$ is parabolic is similar. 
\end{proof}

\begin{definition}\label{ssas}
A splitting $\mathbb{G}$ of $G$ is called a {\em (small) special abelian splitting} of $G$ with respect to $\{f_i: G\rightarrow \Gamma_i\}$ if the following are true:  
\begin{enumerate}
\item $\mathbb{G}$ is bipartite graph with one type of vertex groups being abelian and the other non-abelian. 
\item The abelian vertex groups are (small) elliptic or parabolic with respect to $f_i$. 
\item Edge groups adjacent to an elliptic abelian vertex group are small elliptic and each edge group is a $\leq$-closed subgroup of the abelian vertex group containing it.
\item Each edge group is maximal abelian in the adjacent non-abelian vertex groups . 
\item Any two edge groups in a non-abelian vertex group $N$ are not conjugate to each other in $N$.
\end{enumerate} 
\end{definition}

\begin{definition}\label{ssasrefinement}
Given a special abelian splitting $\mathbb{G}$ of $G$ and an elliptic or parabolic splitting $\mathbb{N}$ of a non-abelian vertex group $N$ of $\mathbb{G}$ relative to its adjacent edge groups. Use $\mathbb{N}$ to refine $\mathbb{G}$ to obtain a splitting $\mathbb{G}_I$. Collapse any edge connecting two abelian vertex groups of $\mathbb{G}_I$. We call the resulting splitting of $G$ the {\em EP-refinement of $\mathbb{G}$ induced by $\mathbb{N}$} and denote it by $\mathbb{G}(\mathbb{N})$. If $\mathbb{N}$ is an amalgamated product and its center abelian vertex group is conjugate to an adjacent edge group of $N$, then we call $\mathbb{G}(\mathbb{N})$ an {\em one-edge EP-refinement of $\mathbb{G}$}.
\end{definition}

\begin{remark}\label{r:collapsing}
Note that the center vertex group $H$ of $\mathbb{N}$ is maximal abelian in $N$ and all adjacent edge groups of $N$ in $\mathbb{G}$ are also maximal abelian in $N$. If $H$ is conjugate to an adjacent edge group $A$ of $N$ in $\mathbb{G}$, then the edge $e$ corresponding to $A$ is connected to the vertex corresponding to $H$ and $e$ is collapsed when we obtain $\mathbb{G}(\mathbb{N})$ from $\mathbb{G}_I$. Note that there is at most one such edge by (5) of Definition \ref{ssas}. Since both $A$ and $H$ are maximal abelian, $A=H$. Hence the vertex group resulting from collapsing $e$ equals the other abelian vertex group adjacent to $e$. 
\end{remark}

\begin{lemma}
An elliptic (or parabolic) splitting is a special abelian splitting. The class of special abelian splittings is closed under refinement induced by elliptic or parabolic splittings of non-abelian vertex groups relative to their adjacent edge groups. 
\end{lemma}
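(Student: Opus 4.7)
The plan is to verify directly the five conditions of Definition~\ref{ssas} for each of the two claims.

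For the first claim, let $\mathbb{G}$ be an elliptic or parabolic splitting, with center vertex group $H$ (the maximal abelian subgroup containing some elliptic or parabolic $H'$) and edge groups conjugate to the pointwise stabilizer $H_0$ of the fixed ball $Y$. I would first note that a star is bipartite and place $H$ on the abelian side; the non-center vertex groups are non-abelian, since otherwise the maximality clause in Theorem~\ref{GMWtree}(4) would force the edge group to equal the whole vertex group and the corresponding edge could be collapsed. Condition~(2) is the construction of $H$, and condition~(4) is Theorem~\ref{GMWtree}(4). For condition~(3), Lemma~\ref{smallrotation} gives small-ellipticity of $H_0$ in the elliptic case, and $\leq$-closedness follows from Lemmas~\ref{ball} and \ref{horoball}: if $h\le h'$ in $H$ then $\mathrm{Fix}_{\mathcal{X}}(h)\supset\mathrm{Fix}_{\mathcal{X}}(h')$, so $h'\in H_0$ forces $h\in H_0$. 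For condition~(5), I would use commutative transitivity of $G$ (Lemma~\ref{l:comm tran}): if two distinct adjacent edges at a non-center vertex group $N_1$ had edge groups conjugate in $N_1$ by some $n$, then conjugating a non-trivial element of the first edge group lands it in two distinct conjugates of $H$, forcing (by commutative transitivity) those conjugates to coincide and the corresponding translates of $Y$ to lie in the same $N_1$-orbit --- contradicting the distinctness of the two edges in $\mathbb{G}$; when $H_0=\{1\}$ the splitting is a free product and the condition holds trivially.

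For the second claim, given a special abelian splitting $\mathbb{G}$, a non-abelian vertex group $N$, and an elliptic or parabolic splitting $\mathbb{N}$ of $N$ relative to its adjacent edge groups, I would form the Bass--Serre refinement $\mathbb{G}_I$ and pass to $\mathbb{G}(\mathbb{N})$ by collapsing edges between pairs of abelian vertex groups, as in Definition~\ref{ssasrefinement}. Theorem~\ref{GMWtree}(6) sends each adjacent edge group $A$ of $N$ to a conjugate inside a vertex group of $\mathbb{N}$; maximality of $A$ in $N$ combined with maximality of the center $H_v$ of $\mathbb{N}$ in $N$ forces $A$ either to be conjugate to $H_v$ --- in which case the corresponding edge of $\mathbb{G}_I$ sits between two abelian vertices and is collapsed, merging the two into a single abelian vertex by Remark~\ref{r:collapsing} --- or to sit as a maximal abelian subgroup of a non-center vertex $N'$ of $\mathbb{N}$ (any abelian overgroup in $N'\subset N$ is abelian in $N$). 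This immediately yields conditions~(1), (2), and~(4) of Definition~\ref{ssas} for $\mathbb{G}(\mathbb{N})$.

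Condition~(3) for edges inherited from $\mathbb{G}$ is unchanged, and for edges of $\mathbb{N}$ it follows from the first claim applied to $\mathbb{N}$ (even when their abelian endpoint has been merged with an abelian vertex of $\mathbb{G}$). The main obstacle, and where I expect the most care is needed, is condition~(5). If two distinct adjacent edge groups at a non-abelian vertex $N'$ of $\mathbb{G}(\mathbb{N})$ were conjugate in $N'$, they would a fortiori be conjugate in $N$; if both come from $\mathbb{G}$ this contradicts (5) for $\mathbb{G}$, and if both come from $\mathbb{N}$ this contradicts (5) for $\mathbb{N}$ (established above). In the mixed case, an edge group $B$ of $\mathbb{N}$ is a conjugate of the pointwise stabilizer $H_0$ of the fixed ball of $H_v$, so its conjugacy in $N$ to a maximal abelian $A$ from $\mathbb{G}$ would place $A$ inside a conjugate of $H_v$; by maximality of $A$ in $N$ it would then equal that conjugate, contradicting the fact that $A$'s edge was not collapsed. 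Tracking these conjugacies through the Bass--Serre refinement together with the bookkeeping of which edges get collapsed is the bulk of the work.
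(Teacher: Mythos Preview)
Your proposal is correct and follows essentially the same route as the paper: both verify conditions (1)--(5) of Definition~\ref{ssas} directly, invoking Theorem~\ref{GMWtree}, Lemma~\ref{smallrotation}, and Remark~\ref{r:collapsing}. You supply more detail than the paper in a few places --- notably an explicit commutative-transitivity argument for condition~(5) in the first claim, and a full unpacking of the mixed case for~(5) in the second claim --- where the paper simply asserts these follow from Theorem~\ref{GMWtree} or Remark~\ref{r:collapsing}.

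One minor slip: your remark that ``when $H_0=\{1\}$ \ldots the condition holds trivially'' is backwards --- trivial edge groups are trivially conjugate, so (5) could fail at a non-center vertex with multiple edges. The paper's proof glosses over the same point; it is harmless in context because the lemma is only applied downstream under a freely-indecomposable hypothesis (see the proof of Theorem~\ref{MT2} and Proposition~\ref{coreproposition}), which forces $H_0\neq\{1\}$, and then your commutative-transitivity argument goes through.
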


\begin{proof}
Let $\mathbb{G}$ be a elliptic (or parabolic) splitting of $G$. It follows directly from Theorem \ref{GMWtree} that $\mathbb{G}$ satisfies (1), (2), (4) and (5) of Definition \ref{ssas}. If the center vertex group $H$ of $\mathbb{G}$ is parabolic, then $\mathbb{G}$ clearly satisfies (3).  Suppose $H$ is elliptic. Let $H_0$ be the edge group. By Lemma \ref{smallrotation}, $H_0$ is small elliptic. We claim that $H_0$ is a $\leq$-closed subgroup of $H$. (Proof of the claim: Let $Y$ be the fixed ball of $H$. Then $h\in H_0$ if and only if $h$ fixes $Y$ point-wise. Suppose $h\in H_0$ and $h'\leq h$. Then since the rotation angle of $f_i(h')$ is smaller than that of $f_i(h)$ \walmost surely, we know that $h'$ fixes $Y$ and hence $h'\in H_0$. ) As a result, $\mathbb{G}$ also satisfies (3). Therefore $\mathbb{G}$ is a special abelian splitting. 

Now let $\mathbb{G}$ be a special abelian splitting and $\mathbb{N}$ be an elliptic or parabolic splitting of one of $\mathbb{G}$'s non-abelian vertex groups $N$. Consider $\mathbb{G}(\mathbb{N})$.  From Remark \ref{r:collapsing} it is easy to see that $\mathbb{G}(\mathbb{N})$ satisfies (1)and (2) of Definition \ref{ssas}. Note that $\mathbb{G}$ and $\mathbb{N}$ satisfy (3). Hence each edge group of $\mathbb{G}(\mathbb{N})$ satisfies (3) as it equals an edge group of either $\mathbb{G}$ or $\mathbb{N}$. Let $N'$ be a non-abelian vertex group of $\mathbb{N}$. An edge groups of $N'$ is either an edge group of $\mathbb{G}$, which is maximal abelian in $N$ and hence in $N'$, or an edge group of $\mathbb{N}$, which is also maximal abelian in $N'$ by Theorem \ref{GMWtree}. So $\mathbb{G}(\mathbb{N})$ satisfies (4). Let $E_1$ and $E_2$ be adjacent edge groups of $N'$. If $E_1$ and $E_2$ are both edge groups of $\mathbb{G}$, then they are not conjugate to each other in $N$ and hence $N'$ as $\mathbb{G}$ satisfies (5). If $E_1$ and $E_2$ are both edge groups of $\mathbb{N}$, then they satisfy (5) by Theorem \ref{GMWtree}. Suppose $E_1$ is an edge group of $\mathbb{G}$ and $E_2$ is an edge group of $\mathbb{N}$. If  they are conjugate to each other in $N'$, then by Remark \ref{r:collapsing} they are the same edge group. Contradiction. Therefore $\mathbb{G}(\mathbb{N})$ also satisfies (5).  
\end{proof}

Our next goal is to give an upper bound on the number of edges of a Special abelian splitting of $G$. 

\begin{definition}
Let $\mathbb{G}$ be a splitting of a group $G$ and $T$ be its Bass-Serre tree. We say that $\mathbb{G}$ and $T$ are {\em $k$-acylindrical} if 
\begin{enumerate}
\item $\mathbb{G}$ is reduced.
\item $T$ is minimal. 
\item For any non-trivial element $g\in G$, the set of points in $T$ fixed by $g$ has diameter bounded by $k$.
\end{enumerate}
\end{definition}

Recall that a splitting (graph of groups decomposition) $\mathbb{G}$ of a group $G$ is {\em reduced} if the vertex group of every vertex of valence at most 2 properly contains the edge group(s) of the edge(s) incident to it.

\begin{theorem}\label{AA}(Acylindrical Accessibility: \cite[Theorem 4.1]{Sela1}, Weidmann \cite{Weidmann}).
Let $G$ be a non-cyclic freely indecomposable finitely generated group and let $T$ be a minimal $k$-acylindrical simplicial $G$-tree. Then $T/G$ has at most $K$ edges, where $K$ depends only on $k$ and $G$. 
\end{theorem}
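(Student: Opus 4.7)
The plan is to bound the edge count of $T/G$ by a rank-counting argument: roughly, many edges in $T/G$ should translate into many ``independent'' generators needed for $G$, and since the minimal number of generators of $G$ is a fixed invariant, this yields the desired bound $K=K(k,G)$. I will use the structural hypotheses (reducedness, minimality, free indecomposability) to ensure that every edge carries non-trivial information, and then use $k$-acylindricity to decouple the contributions of distant edges.

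First I would exploit the structural hypotheses. Reducedness of $\mathbb{G}$ means that at each valence-one vertex the vertex group strictly contains the incident edge group, and at valence-two vertices at least one incident edge group is a proper subgroup; free indecomposability of $G$ rules out edges with trivial stabilizers (as such an edge would produce a free splitting); minimality of $T$ means every edge of $T$ lies on a translation axis. Together these say that at every edge some non-trivial element witnesses the splitting, and in particular for every edge $e$ of $T/G$ I can pick elements of the two incident vertex groups lying outside the edge group.

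Next I would invoke $k$-acylindricity: for any reduced path in $T$ of combinatorial length greater than $k$, the pointwise stabilizer is trivial. This means that stabilizers of distant edges in $T$ intersect trivially, so the elements picked above, when associated to edge orbits that are far enough apart in $T$, behave independently. A Bass-Serre ping-pong argument then produces, from $N$ sufficiently-spaced edge orbits, a free subgroup (or more generally a free product subgroup realizing independent generators of $G$) whose rank grows linearly with $N$.

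The main step, following the Sela--Weidmann strategy, is to make this quantitative via a Stallings-folding / Nielsen-equivalence argument: any generating tuple for $G$ of minimal size, viewed as a labelled $G$-tree folding onto $T$, must eventually ``cover'' every edge orbit of $T/G$ after finitely many Nielsen moves, and $k$-acylindricity caps the number of foldings that can be absorbed into a single edge orbit (roughly by $k$ plus invariants of the vertex groups). Combining these two bounds gives a linear lower bound on the number of generators of $G$ in terms of the number of edges of $T/G$, and hence the required $K=K(k,G)$. The hard part will be controlling the folding/absorption step in the presence of large vertex and edge stabilizers: without acylindricity, foldings could be absorbed indefinitely into vertex groups, so the quantitative dependence of ``absorbable foldings'' on $k$ is the heart of the argument and the main technical obstacle.
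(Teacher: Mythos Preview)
The paper does not prove this theorem at all: it is quoted as a black box from the literature, with citations to \cite[Theorem 4.1]{Sela1} and \cite{Weidmann}, and is then applied (via Proposition~\ref{2aa}) to bound the number of edges in special abelian splittings. So there is no ``paper's own proof'' to compare against.

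Your sketch is in the spirit of Weidmann's approach \cite{Weidmann}, which does proceed by a Nielsen-method/folding argument on generating tuples for $G$ acting on $T$, using acylindricity to control how many Nielsen moves can be absorbed before the tuple is forced to reveal new edge orbits. That said, your outline is quite vague at exactly the point you flag as the ``main technical obstacle'': the quantitative control on absorption requires a careful induction on a complexity for partitioned tuples, and your description does not indicate how that induction is set up or why it terminates with a bound depending only on $k$ and the rank of $G$. Sela's original argument \cite{Sela1} is rather different in flavor (it passes through limits of actions and Rips theory rather than a direct Nielsen count). If you intend to actually supply a proof here rather than cite the result, you would need to commit to one of these routes and fill in the structural induction; as written, your proposal is a plausible roadmap but not yet a proof.
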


\begin{proposition}\label{2aa}
Special abelian splittings of $G$ are 2-acylindrical. 
\end{proposition}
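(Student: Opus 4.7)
The plan is to verify the three conditions in the definition of $2$-acylindricity. Minimality of the Bass-Serre tree $T$ can be arranged by passing to the minimal $G$-invariant subtree, and reducedness by collapsing any edge whose edge group coincides with an incident vertex group; since edge groups are abelian, such a collapse can only absorb an abelian vertex into its non-abelian neighbor and therefore preserves all the conditions of Definition \ref{ssas}. The substantive content is therefore the diameter bound on fixed-point sets of non-trivial elements.

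The plan is to argue by contradiction. Suppose some $1\neq g\in G$ fixes a geodesic path $v_0-v_1-v_2-v_3$ of length $3$ in $T$. By Definition \ref{ssas}(1) the splitting $\mathbb{G}$ is bipartite with abelian and non-abelian vertex groups, so the types alternate along the path and at least one interior vertex, say $v_1$ (the argument at $v_2$ is symmetric), is non-abelian; let $N_1$ denote its vertex group. The two distinct edges $v_0v_1$ and $v_1v_2$ incident to $v_1$ carry edge groups $E_1, E_2 \subset N_1$, and $g\in E_1\cap E_2$ because $g$ fixes both edges. By Definition \ref{ssas}(4), both $E_1$ and $E_2$ are maximal abelian subgroups of $N_1$.

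To turn this into a contradiction, I will invoke commutative transitivity. By Assumption \ref{SA2} we have $\ker^\omega(f_i)=\{1\}$, so $G$ is a ${\rm PSL}(2,\mathbb{R})$-discrete limit group and hence commutative transitive by Lemma \ref{l:comm tran}. Since $g\neq 1$ commutes with every element of $E_1$ and with every element of $E_2$, commutative transitivity forces every element of $E_1$ to commute with every element of $E_2$, so $\langle E_1,E_2\rangle$ is an abelian subgroup of $N_1$ containing both $E_1$ and $E_2$. Maximality of $E_1$ in $N_1$ then gives $E_1=\langle E_1,E_2\rangle=E_2$, which directly contradicts Definition \ref{ssas}(5) forbidding two distinct edge groups at the same non-abelian vertex from being conjugate (in particular, equal) in that vertex group.

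The main obstacle is essentially locating the right structural hypotheses rather than any technical difficulty: conditions (4) and (5) of Definition \ref{ssas} are tailored precisely so that, combined with the commutative transitivity of $G$ coming from Lemma \ref{l:comm tran}, any length-$3$ fixed path would force two distinct edge groups at a common non-abelian vertex to coincide. The only minor bookkeeping is confirming that reduction and passage to the minimal subtree preserve membership in the class of special abelian splittings, which is a short check using the bipartite structure and the fact that edge groups are already abelian, so the only edges ever collapsed are those connecting an abelian vertex of valence $\leq 2$ to a non-abelian neighbor.
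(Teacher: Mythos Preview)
Your argument is essentially the paper's argument up to the point where you deduce $E_1=E_2$, but there is a genuine gap in the final step. Condition~(5) of Definition~\ref{ssas} concerns distinct edges of the graph of groups $\mathbb{G}$, not distinct edges of the Bass--Serre tree $T$: it says that for two \emph{different} edges of $\mathbb{G}$ incident to a non-abelian vertex $N$, their edge groups are not $N$-conjugate. The edges $v_0v_1$ and $v_1v_2$ of $T$ may very well project to the \emph{same} edge of $\mathbb{G}$, in which case their stabilisers $E_1,E_2$ are $N_1$-conjugate by construction, say $E_2=tE_1t^{-1}$ with $t\in N_1\setminus E_1$, and condition~(5) gives no contradiction. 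Your conclusion $E_1=E_2$ then only says that $t$ \emph{normalises} the maximal abelian subgroup $E_1$ without lying in it. Commutative transitivity by itself does not exclude this: the infinite dihedral group is commutative transitive, and the reflection normalises the maximal abelian rotation subgroup without belonging to it.

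The paper closes this gap by going back to the defining sequence $\{f_i\}$. Since $E_1$ is elliptic or parabolic, $\omega$-almost surely $f_i(E_1)$ lies in a maximal elliptic or parabolic subgroup $E_i\subset\Gamma_i$; the relation $tE_1t^{-1}=E_1$ forces $f_i(t)$ to conjugate a nontrivial element of $E_i$ into $E_i$, and in ${\rm PSL}(2,\mathbb{R})$ this means $f_i(t)\in E_i$. Hence $f_i([t,h])=1$ $\omega$-almost surely for every $h\in E_1$, so $[t,h]\in\ker^\omega(f_i)=\{1\}$, and then maximal abelianness of $E_1$ in $N_1$ gives $t\in E_1$, the desired contradiction. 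You need this extra step (or an equivalent malnormality argument) to finish.
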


\begin{proof}
Let $\mathbb{G}$ be an special abelian splitting of $G$ and let $T$ be its Bass-Serre tree. Suppose $1\neq g\in G$ fixes three consecutive edges $e_1$, $e_2$ and $e_3$ of $T$ and let $v$ and $w$ be $e_1\cap e_2$ and $e_2\cap e_3$, respectively. Without loss of generality, we can assume $v$ have non-abelian stabilizer $G_v$ since vertices of the same type are not adjacent to each other. Let $H_1$ and $H_2$ be the stabilizers of $e_1$ and $e_2$, respectively.  Then $g\in H_1\cap H_2$. Note that $H_1$ and $H_2$ are both maximal abelian in $G_v$. Since $g\neq 1$ and $G_v\subset G$ is commutative transitive, we know that $H_1=H_2$. By (8) of Definition \ref{ssas} this can only happen when $H_1$ and $H_2$ correspond to the same edge in $\mathbb{G}$. Hence $H_1$ is a conjugate of $H_2$ by some $t\in G_v-H_1$. Let $h\in H_1$. Then \walmost surely $f_i(h)$ is in some elliptic (parabolic) subgroup $E_i$ of $\Gamma_i$. Let $h'=tht^{-1}$. Then $h'\in H_2=H_1$. So $f_i(h')\in E_i$ \walmost surely. Hence \walmost surely $f_i(t)$ conjugate an element in $E_i$ to an element in $E_i$. This implies that $f_i(t)\in E_i$. Therefore $t\in H_1$, which contradicts our choice of $t$.  Therefore $g=1$. 
\end{proof}

\begin{corollary}\label{boundonnumofedges}
Suppose $G$ is freely indecomposable. Then the number of edges of a special abelian splitting of $G$ is no more than $K$, where $K$ depends only on $G$. 
\end{corollary}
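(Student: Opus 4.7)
The proof is a short application of the Acylindrical Accessibility Theorem \ref{AA}, building on Proposition \ref{2aa}. The plan is as follows.

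Let $\mathbb{G}$ be a special abelian splitting of $G$ with Bass-Serre tree $T$. By Proposition \ref{2aa}, $\mathbb{G}$ is 2-acylindrical; in particular, $T$ is a minimal simplicial $G$-tree (without inversion) on which every non-trivial element of $G$ fixes a subtree of diameter at most 2. The hypotheses of Theorem \ref{AA} are thus satisfied with $k = 2$, provided $G$ is non-cyclic, finitely generated, and freely indecomposable. Finite generation is part of the standing assumption at the start of this section, free indecomposability is given in the hypothesis of the corollary, and non-cyclicity follows from Assumption \ref{SA2}, which states that $G$ is not abelian (and hence in particular not cyclic).

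Applying Theorem \ref{AA} then yields a bound $K$ on the number of edges of $T/G$ depending only on $k$ and $G$. Since $k = 2$ is a universal constant, $K$ depends only on $G$, which is exactly the assertion of the corollary. There are no substantive obstacles to this argument: the essential technical content is already packaged into Proposition \ref{2aa} (which establishes 2-acylindricity, exploiting conditions (4) and (5) of Definition \ref{ssas} together with commutative transitivity of $G$) and Theorem \ref{AA} (Sela-Weidmann acylindrical accessibility). One small sanity check worth performing is that reducing or passing to the minimal invariant subtree -- if ever needed to match the exact form of the $k$-acylindrical hypothesis -- preserves the defining properties (1)-(5) of a special abelian splitting; this follows because, by condition (4), non-abelian vertex groups strictly contain their adjacent edge groups, so no non-abelian vertex can be collapsed, and the remaining (abelian-vertex) collapses are easily seen to be compatible with the other conditions of Definition \ref{ssas}.
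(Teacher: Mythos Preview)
Your approach is the same as the paper's --- 2-acylindricity from Proposition~\ref{2aa}, then Theorem~\ref{AA} --- but there is a genuine gap in how you handle the \emph{reduced} hypothesis of Theorem~\ref{AA}. The paper's definition of $k$-acylindrical includes ``$\mathbb{G}$ is reduced'', and the proof of Proposition~\ref{2aa} only establishes the diameter bound (condition~(3)); it does not show reducedness. In fact a special abelian splitting need not be reduced: nothing in Definition~\ref{ssas} prevents an abelian vertex $A$ of valence at most two from coinciding with one of its adjacent edge groups. So you cannot apply Theorem~\ref{AA} directly to $\mathbb{G}$.

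Your closing ``sanity check'' does not repair this. First, the claim that reducing preserves properties (1)--(5) is false: collapsing an edge at a valence-two abelian vertex $A$ whose edge group equals $A$ merges $A$ into an adjacent non-abelian vertex, and the surviving edge now joins two non-abelian vertices, destroying the bipartite structure~(1). Second, and more to the point, even if the reduced splitting were again special abelian, bounding \emph{its} edges by Theorem~\ref{AA} says nothing about the edges you threw away; you must control how many edges are lost. The paper does exactly this: at each valence-two abelian vertex with an edge group equal to the vertex group it collapses \emph{one} of the two edges, obtaining a reduced (still 2-acylindrical) splitting $\mathbb{G}'$ with at least half as many edges as $\mathbb{G}$; Theorem~\ref{AA} then gives a bound $K'$ for $\mathbb{G}'$, and $K=2K'$ bounds $\mathbb{G}$.
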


\begin{proof}
Let $\mathbb{G}$ be a special abelian splitting of $G$. For each valence two abelian vertex group $A$, collapse one of its adjacent edge if the corresponding edge group equals $A$. Note that if both of $A$'s adjacent edge groups equal $A$, we still only collapse one of them. Let $\mathbb{G}'$ be the resulting splitting. Note that $\mathbb{G}'$ is reduced and its number of edges is at least half of that of $\mathbb{G}$. Since $\mathbb{G}$ is 2-acylindrical by Lemma \ref{2aa}, $\mathbb{G}'$ is also two 2-acylindrical. Hence by Theorem \ref{AA} there is an upper bound $K'$ on the number of edges of $\mathbb{G}'$ and $K'$ depends only on $G$. Hence $K=2K'$ is an upper bound on the number of edges of $\mathbb{G}$. 
\end{proof}

Our next goal is to define the second type of complexity of special abelian splittings and show that there is an upper bound on it.

\begin{definition}\label{d: AFF-rank}
Let $\mathbb{G}$ be a special abelian splitting of $G$. Let $A$ be an abelian vertex group of $\mathbb{G}$ and let $E$ be the subgroup of $A$ generated by all adjacent edge groups. We call the rank of $A/E$ the {\em abelian rank of $\mathbb{G}$ at $A$}.  We define the {\em abelian rank} of $\mathbb{G}$, denoted by ${\rm Rank}_A(\mathbb{G})$, to be the sum of the abelian rank of $\mathbb{G}$ at each of its abelian vertex group. 
\end{definition}

\begin{remark}
By Lemma \ref{monotone} and Definition \ref{ssas}, the adjacent edge groups of $A$ form a monotone sequence in terms of containment. Hence the subgroup of $A$ generated by all adjacent edge groups equals the maximal adjacent edge group of $A$. 
\end{remark}

\begin{lemma}\label{boundedAFF}
Let  $\mathbb{G}$ be a special abelian splitting. Then ${\rm Rank}_A(\mathbb{G})\leq D$, where $D$ is the rank of $G$. 
\end{lemma}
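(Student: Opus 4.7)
The plan is to exhibit a surjection from $G$ onto a finitely generated abelian group whose rank equals ${\rm Rank}_A(\mathbb{G})$, and then invoke the elementary fact that any abelian quotient of a $D$-generated group has rank at most $D$. Concretely, for each abelian vertex group $A$ of $\mathbb{G}$, let $E_A$ be the subgroup generated by all edge groups adjacent to $A$; by Lemma \ref{monotone} together with condition (3) of Definition \ref{ssas}, the adjacent edge groups form a chain, so $E_A$ is actually the maximal adjacent edge group and in particular is itself $\leq$-closed in $A$. Since $G$ is finitely generated and $\mathbb{G}$ is a finite graph of groups, each vertex group is finitely generated relative to its adjacent edge groups, so $A/E_A$ is a finitely generated abelian group, of rank equal to the abelian rank of $\mathbb{G}$ at $A$.

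I would then use the universal property of the fundamental group of a graph of groups to construct a homomorphism
\[
\phi \co G \longrightarrow \bigoplus_{v \in V_{ab}(\mathbb{G})} A_v/E_{A_v}
\]
(sum over the abelian vertices of $\mathbb{G}$) as follows: on each abelian vertex group $A_v$, take $\phi$ to be the quotient $A_v \twoheadrightarrow A_v/E_{A_v}$ sitting inside the direct sum as the $v$-th summand; on each non-abelian vertex group, take $\phi$ to be trivial; and send every stable letter to $0$. To verify that this assembles into a well-defined homomorphism, one only has to check compatibility at each edge. Since $\mathbb{G}$ is bipartite by condition (1) of Definition \ref{ssas}, every edge joins an abelian vertex $v$ to a non-abelian vertex $u$; the edge group embeds into $A_v$ inside $E_{A_v}$ (by the very definition of $E_{A_v}$) and hence maps to $0$ in $A_v/E_{A_v}$, while it maps to $0$ from the non-abelian side by construction. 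Compatibility holds, and because the target is abelian, no conjugation relations from stable letters cause any further constraints.

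The homomorphism $\phi$ is surjective because its image contains each direct summand $A_v/E_{A_v}$ (as the image of $A_v$). The target is a finitely generated abelian group of rank $\sum_{v} \mathrm{rank}(A_v/E_{A_v}) = {\rm Rank}_A(\mathbb{G})$. Since $G$ is generated by $D$ elements, so is its abelianization, and therefore so is every abelian quotient; hence
\[
{\rm Rank}_A(\mathbb{G}) \leq D.
\]
The only mild technical point is verifying that $A_v/E_{A_v}$ is finitely generated and that the vertex-wise assignment really extends to a homomorphism on $G$, both of which follow immediately from Bass--Serre theory and the bipartite structure of $\mathbb{G}$; I do not anticipate any serious obstacle.
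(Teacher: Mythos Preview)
Your proof is correct and follows essentially the same approach as the paper: the paper's proof is a one-line version of yours, saying that ``collapsing all non-abelian vertex groups of $\mathbb{G}$ induces a surjective homomorphism from $G$ to $\bigoplus A/E$'' and concluding immediately. Your version supplies the details the paper omits---the edge-compatibility check via the bipartite structure, the verification that each $A/E_A$ is finitely generated, and the explicit use of the universal property---but the underlying idea is identical.
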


\begin{proof}
Let $D$ be the rank of $G$. Collapsing all non-abelian vertex groups of $\mathbb{G}$ induces a surjective homomorphism from $G^s$ to $\oplus A/E$, where the direct sum is over all abelian vertex group $A$ of $\mathbb{G}$ and its maximal adjacent edge group $E$. It follows that ${\rm Rank}_{A}(\mathbb{G})\leq D$. 
\end{proof}

\begin{definition}\label{goodrefine}
Given a special abelian splitting $\mathbb{G}$ of $G$ and a non-abelian vertex group $N$ of $\mathbb{G}$. Let $S_{N}$ be a finite subset of $N$. Suppose $S_{N}$ intersects with each of the adjacent edge groups of $N$ in $\mathbb{G}$ and it generates $N$ together with them.  Use Proposition \ref{relativewelldefined} to obtain a splitting of $N$ relative to its adjacent edge groups. Use $\mathbb{N}$ to refine $\mathbb{G}$ to obtain a splitting $\mathbb{G}_I$. Collapse any edge connecting two abelian vertex groups of $\mathbb{G}_I$. We call the resulting splitting of $G$ a {\em good-refinement of $\mathbb{G}$ with respect to $N$} and denote it by $\mathbb{G}(N)$. 
\end{definition}

\begin{remark}
Although $\mathbb{G}(N)$ depends on the choice of $S_N$, the difference caused by different choices of $S_N$ is not important to our application of good refinement. Also, such $S_N$ always exists. This are the reasons why we do not mention $S_N$ is the terminology and notation of good-refinement. 
\end{remark}

The point of Good-refinement is that $S_N$ is now a good relative generating set for the vertex group of $\mathbb{G}(N)$ containing it.  Note that a good refinement preserves the underlying graph and all abelian vertex groups and it replaces a non-abelian vertex group and its adjacent edge groups by their subgroups. This observation yields the following:  

\begin{lemma}
With the notation as in Definition \ref{goodrefine}, we have ${\rm Rank}_{A}(\mathbb{G})\leq {\rm Rank}_{A}(\mathbb{G}(N))$. 
\end{lemma}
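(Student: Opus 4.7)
The plan is to unpack the good-refinement construction and then compare, vertex by vertex, the edge-group data at each abelian vertex of $\mathbb{G}$ with its counterpart in $\mathbb{G}(N)$.

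First I would make the construction explicit. By Proposition \ref{relativewelldefined}, the splitting $\mathbb{N}$ of $N$ used in the construction has a star-shaped underlying graph with central vertex group $G_0$ and leaves $H^1,\dots,H^k$; here the $H^j$ are precisely the adjacent edge groups of $N$ in $\mathbb{G}$ and the edge groups are the subgroups $H_0^j\subset H^j$. When $\mathbb{N}$ is inserted into $\mathbb{G}$ to form $\mathbb{G}_I$, each edge of $\mathbb{G}$ joining an abelian vertex group $A$ to $N$ through edge group $H^j$ is subdivided and passes through a new vertex carrying the group $H^j$. The half of the subdivided edge that connects $A$ to this new $H^j$-vertex has edge group $H^j$ equal to the vertex group $H^j$, and it joins two abelian vertex groups, so it is collapsed in the final step of the construction. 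Since $H^j$ sits inside $A$ via the original boundary homomorphism, this collapse identifies the new $H^j$-vertex with $A$, leaving $A$ unchanged.

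As the paper's preceding remark asserts, the net effect is that $\mathbb{G}(N)$ has the same underlying bipartite graph and the same abelian vertex groups as $\mathbb{G}$; only $N$ is replaced by the subgroup $G_0$, and each adjacent edge group $H^j$ of the old $N$-vertex is replaced by $H_0^j\subset H^j$. Fixing an abelian vertex group $A$ of $\mathbb{G}$, let $E_A$ and $E'_A$ denote its maximum adjacent edge groups in $\mathbb{G}$ and $\mathbb{G}(N)$ respectively (well defined by Lemma \ref{monotone} and the remark after Definition \ref{d: AFF-rank}). Any adjacent edge group of $A$ that was connected to $N$ via $H^j$ is replaced by $H_0^j\subset H^j$, while all other adjacent edge groups are unchanged; therefore $E'_A\subset E_A$. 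The induced surjection $A/E'_A\twoheadrightarrow A/E_A$ then gives $\mathrm{rank}(A/E'_A)\geq \mathrm{rank}(A/E_A)$, and summing over all abelian vertex groups yields ${\rm Rank}_A(\mathbb{G})\leq {\rm Rank}_A(\mathbb{G}(N))$.

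The only delicate point I would need to verify carefully is that the collapsing step truly identifies every new $H^j$-vertex with an \emph{existing} abelian vertex group of $\mathbb{G}$, rather than leaving it as a brand-new abelian vertex in $\mathbb{G}(N)$; this follows from the bipartite structure of $\mathbb{G}$, which forces every edge incident to $N$ to come from an abelian vertex group, together with condition (5) of Definition \ref{ssas} that rules out spurious coincidences between adjacent edge groups of $N$.
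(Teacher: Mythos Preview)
Your proof is correct and follows the same approach as the paper. The paper does not give a separate proof of this lemma; instead it records, just before the statement, the key observation that a good refinement preserves the underlying graph and all abelian vertex groups while replacing the chosen non-abelian vertex group and its adjacent edge groups by subgroups---exactly what you establish in detail. Your explicit unpacking of how Proposition~\ref{relativewelldefined} inserts the star and how the collapsing step merges each $H^j$-leaf back into its neighbouring abelian vertex is a faithful expansion of that observation, and your final rank comparison via the surjection $A/E'_A\twoheadrightarrow A/E_A$ is the intended conclusion.
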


The last task of this section is the following technical result.

\begin{proposition}\label{coreproposition}
Assume \ref{standing assumption} and \ref{SA2} and $G$ is freely indecomposable. Let $\mathbb{G}$ be a special abelian splitting of $G$ with respect to $\{f_i\}$. Suppose 
\begin{enumerate}
\item For each non-abelian vertex group $N$,  a good relative generating set $S_N$ relative to the adjacent subgroups of $N$ exists.  
\item $|f_i|_{S_N}\rightarrow\infty$ for at least one non-abelian vertex group $N$. 
\item Theorem \ref{GMWtree} applies to any non-abelian vertex groups $N$ with $|f_i|_{S_N}\rightarrow\infty$ relative to its adjacent edge groups.
\end{enumerate}
Then there exists a special abelian splitting $\mathbb{G}'$ of $G$, which either has more edges or has bigger abelian rank than $\mathbb{G}$.  
\end{proposition}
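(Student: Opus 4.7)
The plan is to produce $\mathbb{G}'$ as an EP-refinement of $\mathbb{G}$. By assumption (2), fix a non-abelian vertex group $N$ of $\mathbb{G}$ with $|f_i|_{S_N}\to\infty$; by assumption (3), Theorem \ref{GMWtree} applied to $N$ (relative to its adjacent edge groups in $\mathbb{G}$, via the good relative generating set $S_N$) produces a non-trivial elliptic or parabolic splitting $\mathbb{N}$ of $N$ with abelian center vertex group $H$ and edge group $H_0 \subsetneq H$. Set $\mathbb{G}' := \mathbb{G}(\mathbb{N})$ in the sense of Definition \ref{ssasrefinement}; the lemma immediately following that definition guarantees that $\mathbb{G}'$ is again a special abelian splitting of $G$.

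Next I would analyse the complexity by a case analysis on the number of collapses produced by the EP-refinement. The refinement replaces $N$ by the graph $\mathbb{N}$ and redirects each edge of $\mathbb{G}$ formerly incident to $N$ into a vertex of $\mathbb{N}$; the only new abelian vertex is the center $H$, so an edge $e$ of $\mathbb{G}$ with edge group $E_e$ can contribute a collapse only when $E_e$ is conjugate into $H$. Since $E_e$ is maximal abelian in $N$ by property (4) of Definition \ref{ssas}, any such containment forces $E_e$ to be conjugate to $H$ in $N$, and property (5) of Definition \ref{ssas} then bounds the number of such edges by one. Hence at most one collapse occurs, and $|E(\mathbb{G}')| > |E(\mathbb{G})|$ unless $\mathbb{N}$ has exactly one edge and exactly one collapse takes place, which is precisely the one-edge EP-refinement situation of Definition \ref{ssasrefinement}.

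In the one-edge case I would show that the abelian rank strictly increases. By Remark \ref{r:collapsing}, the collapse merges $H$ into an adjacent abelian vertex $A$ of $\mathbb{G}$ along an edge group $F$ with $F = H$, and its net effect on $A$ is to replace the chain element $F$ among the adjacent edge groups of $A$ by the strictly smaller $\leq$-closed subgroup $H_0 \subsetneq F$. Lemma \ref{monotone} arranges the adjacent edge groups at $A$ into a totally ordered chain under containment, and Lemma \ref{torsionfreeclosed} ensures that the quotient of $A$ by each $\leq$-closed subgroup is torsion free, so strictly shrinking the maximum of this chain strictly increases the rank of $A$ modulo the maximal adjacent edge group, yielding ${\rm Rank}_A(\mathbb{G}') > {\rm Rank}_A(\mathbb{G})$. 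The main obstacle is verifying that $F$ is in fact the maximum of the chain at $A$, since otherwise replacing a non-maximal chain element would leave both the edge count and the abelian rank unchanged; I expect to handle this either by invoking the maximality built into the construction of $H$ in Theorem \ref{GMWtree} (as the maximal abelian subgroup of $N$ containing the segment stabilizer $H'$), or by an appropriate choice of $H'$ feeding Theorem \ref{GMWtree} so as to avoid this degenerate configuration.
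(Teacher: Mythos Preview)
Your edge-count analysis is fine, and you correctly isolate the one-edge EP-refinement as the only delicate case. The genuine gap is exactly the obstacle you flag at the end: there is no reason for the collapsed edge group $F=H$ to be the maximum of the $\leq$-chain at the abelian vertex $A$ (the paper calls it $B$), and neither of your proposed fixes works. The maximality of $H$ in Theorem \ref{GMWtree} is maximality as an abelian subgroup of $N$; it says nothing about how the other adjacent edge groups of $B$ --- which live in \emph{other} non-abelian vertex groups $N_1,\dots,N_m$ --- compare to $H_0$ inside $B$. Likewise, the freedom in choosing the segment $H'$ that feeds Theorem \ref{GMWtree} only moves you among maximal abelian subgroups of the fixed $N$; it cannot force edge groups coming from $N_j\neq N$ to be small.

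The paper's resolution is substantially more work than a single EP-refinement. First, one must choose $N$ to have the \emph{largest} translation length among all non-abelian vertex groups (you pick an arbitrary one). This choice is what powers a comparison between asymptotic cones at different scales: if some other edge group $A_j$ at $B$ properly contains the new edge group $E$, then elements of $A_j\smallsetminus E$ move boundary points of the fixed ball in $\mc{X}(S_N)$ by a definite amount, and since $|f_i|_{S_{N_j}}\le D|f_i|_{S_N}$, the same elements move the corresponding points in $\mc{X}(S_{N_j})$ by at least as much. Via Lemma \ref{nondearcfixed} this forces Theorem \ref{GMWtree} to apply to $N_j$ as well, producing a further one-edge EP-refinement that shrinks $A_j$ to a subgroup of $E$. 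One then iterates over all $A_1,\dots,A_m$, interleaving with Good refinements to restore good relative generating sets. Only after this sequence of refinements is the maximal edge group at $B$ contained in $E$, and only then does the abelian-rank inequality go through (using that $A/E$ is nontrivial torsion-free, where $A$ is the old edge group, which is small elliptic or parabolic by property (3) of Definition \ref{ssas}).
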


\begin{proof}

If a non-abelian vertex group $N$ has an elliptic or parabolic splitting $\mathbb{N}$ with at least two edges, then $\mathbb{G}(\mathbb{N})$, the EP-refinement of $\mathbb{G}$ induced by $\mathbb{N}$, has at least one more edge than $\mathbb{G}$. If $\mathbb{N}$ is a one-edge splitting with non-abelian vertex group $N'$ and all adjacent edge groups of $N$ are conjugate to subgroups of $N'$, then $\mathbb{G}(\mathbb{N})$ also has one more edge than $\mathbb{G}$. Hence the statement of the Proposition is true in the above cases. 

For the rest of the proof, we assume that any elliptic or parabolic splitting of any non-abelian vertex group $N$ is a one-edge splitting whose abelian vertex group is conjugate to an adjacent edge group of $N$. Under this assumption any EP-refinement of $\mathbb{G}$ has the same underlying graph as $\mathbb{G}$. The effect of an EP-refinement is the following: non-abelian vertex groups and edge groups are replaced by their subgroups; the boundary monomorphisms are restricted to the corresponding subgroup and modified by a conjugation. 

We think of all vertex groups and edge groups of $\mathbb{G}$ as subgroups of $G$. 
Fix a maximal subtree of $T$ of the underlying graph of $\mathbb{G}$. For any edge in $T$, we identify the corresponding edge group with its images under the boundary monomorphisms in both of its adjacent vertex groups. For any edge not in $T$, we identify the corresponding edge group with its image in the adjacent abelian vertex group.  

Let $N$ be a non-abelian vertex group of $\mathbb{G}$ such that \walmost surely $| f_i |_{S_N}\geq | f_i |_{S_{N'}}$ for any non-abelian vertex group $N'$ of $\mathbb{G}$.  By the assumption of the proposition, Theorem \ref{GMWtree} can be applied to $N$ relative to its adjacent edge groups. Let $\mathbb{N}$ be the resulting one-edge elliptic or parabolic splitting, whose center vertex group, non-abelian vertex group and edge group are denoted by $A$, $N_0$ and $E$, respectively. By assumption one of the adjacent edge groups of $N$ is $A$ up to conjugation. Going from $\mathbb{G}$ to $\mathbb{G}(\mathbb{N})$, $N$ is replaced by $N_0$ and $A$ is replaced by $E$. Let $B$ be the abelian vertex group of $\mathbb{G}(\mathbb{N})$ adjacent to the edge group $E$. Let $\mathbb{G}_1$ be a Good refinement of $\mathbb{G}(\mathbb{N})$ with respect to $N_0$. Let $E', A_1, \dots, A_m$ be edge groups in $\mathbb{G}_1$ adjacent to $B$. Here $E'$ is a subgroup of $E$ and corresponds to the same edge as $E$.   Our goal is to show that performing a sequence of EP-refinements and Good-refinements on $\mathbb{G}_1$ can produce a splitting $\mathbb{G}'$ satisfying:
\begin{enumerate}
\item All abelian vertex groups are not changed.
\item The adjacent edge groups of all abelian vertex groups are either not changed or replaced by a proper subgroup. 
\item Each $A_j$ is replaced by a subgroup of $E$.     
\end{enumerate}

 Note that (1) and (2) are obvious consequences of EP-refinements and Good-refinements. 
 
 The key in proving (3) is the following: Since $\mathbb{N}$ is a non-trivial one-edge splitting, $A$ properly contains $E$. since $G$ is freely indecomposable, $E\neq\{1\}$.  Let $\mathcal{X}$ be the asymptotic cone of $\mathbb{H}$ defined by $\{| f_i |_{S_N}\}$, $\{x_i\}$ and $\omega$, where $x_i$ is a centrally located point with respect to $f_i$ and $S_N$. Let $h\in A-E$ and $1\neq h'\in E$. By Theorem \ref{GMWtree}, $h$ does not fixes points on the boundary of ${\rm Fix}_{\mc{X}}(h')$. Note that by Lemma \ref{ball} or Lemma \ref{horoball}, points on the boundary of ${\rm Fix}_{\mc{X}}(h')$ has the form $y=[\{y_i\}]$, where $y_i\in\partial B_i(h')$. Therefore for any $y_i\in B_i(h')$, we have 
 \begin{equation}\label{e:move y}
 {\rm lim}^{\omega}d_{\mathbb{H}}(y_i, f_i(h)y_i)/| f_i |_{S_N}>0. 
\end{equation}

  We now using EP-refinement to ``shrink'' $A_1$ to a subgroup of $E$:  Since both $A_1$ and $E$ are $\leq$-closed, by Lemma \ref{monotone}, we have either $A_1\subset E$ or $E\subset A_1$. Suppose $A_1$ properly contains $E$.  (Otherwise we are done.) Let $N_1$ be the non-abelian vertex group of $\mathbb{G}_1$ adjacent to $A_1$. Note that $N_1$ is a subgroup of a non-abelian vertex group $N'$ adjacent to $B$ in $\mathbb{G}$. Let $S_1$ be  a finite good relative generating set of $N_1$. Since $S_{N'}$ is a good generating set of $N'$, for each $g\in S_1$, we have $d_{\mathbb{H}}(x_i, gx_i)\leq D_g|f_i|_{S_{N'}}$, where $D_g$ is independent of $i$. Hence we have ${\rm max}_{g\in S_1}d_{\mathbb{H}}(x_i, gx_i)\leq D|f_i|_{S_{N'}}$ for some $D$ independent of $i$. Therefore we have $|f_i|_{S_1}\leq D|f_i|_{S_{N'}}$. By our choice of $N$, we have $|f_i|_{S_{N'}}\leq |f_i|_{S_N}$. As a result, we have
 \begin{equation}\label{e:tran len compar}
 |f_i|_{S_1}\leq D|f_i|_{S_N}
 \end{equation}
 
 Let $b$ be the boundary monomorphism from $A_1$ to $N_1$.  Then $b(A_1)=tA_1t^{-1}$. Note that $t=1$ if and only if $A_1$ corresponds to an edge in $T$.  Since $A_1$ properly contains $E$ and $E\neq \{1\}$, we have $h_1\in b(A_1)-b(E)$ and $1\neq h'_1\in b(E)$. Then $h_1=tht^{-1}$ and $h'_1=th't^{-1}$ for some $h\in A_1-E$ and $1\neq h'\in E$. If $h\in A$, (\ref{e:move y}) holds. Suppose $h\in A_1-A$. Since $A$ is $\leq$-closed, $h$ moves points in $\partial B_i(h')$ further than any element in $A$ does. Hence again (\ref{e:move y}) holds. Note that $h_1$ moves points on $\partial B_i(h'_1)$ the same amount as $h$ moves points on $\partial B_i(h')$. Hence by (\ref{e:move y}) and (\ref{e:tran len compar}) for any $y'_i\in \partial B_i(h'_1)$ we have 
 \begin{equation}\label{e:same ttttt}
 {\rm lim}^{\omega}d_{\mathbb{H}}(y'_i, f_i(h_1)y'_i)/|f_i|_{S_1}\geq {\rm lim}^{\omega}d_{\mathbb{H}}(y_i, f_i(h)y_i)/D|f_i |_{S_N}>0. 
 \end{equation}
 
Consider the case where ${\rm lim}^\omega |f_i|_{S_1}=\infty$. Let $\mc{X}_1$ be the asymptotic cone of $\mathbb{H}$ defined by $\{| f_i |_{S_1}\}$, $\{x^1_i\}$ and $\omega$, where $x^1_i$ is a centrally located point with respect to $f_i$ and $S_1$. (\ref{e:same ttttt}) implies that there exists $y\in {\rm Fix}_{\mathcal{X}_1}(h'_1)$ such that $y\notin{\rm Fix}_{\mathcal{X}_1}(h_1)$. Hence by Lemma \ref{nondearcfixed}, Theorem \ref{GMWtree} can be applied to $N_1$ to produce an elliptic or parabolic splitting $\mathbb{N}_1$ with center vertex group being $b(A_1)$. By assumption of our proof, $\mathbb{N}_1$ is a one edge splitting. Since ${\rm Fix}_{\mathcal{X}_1}(h_1)$ is a proper subset of ${\rm Fix}_{\mathcal{X}_1}(h'_1)$ for any $h_1\in b(A_1)-b(E)$, the edge group of $\mathbb{N}_1$ is contained in $b(E)$. Hence the one-edge EP refinement of $\mathbb{G}_1(\mathbb{N}_1)$ replaces $A_1$ by a subgroup of $E$. We then perform a Good refinement to $\mathbb{G}_1(\mathbb{N}_1)$ to obtain $\mathbb{G}_2$.

Now suppose $|f_i|_{S_1}$ has finite $\omega$-limit.  Let $S$ be a finite subset of $N_1$ containing $h_1, h_1'$ but not contained in $A_1$. Let $M=<S>$. Then $M$ is not abelian since $S$ is not contained in $A_1$. Since $S_1$ is a good relative generating set, the fact that $|f_i|_{S_1}$ has finite $\omega$-limit implies that $|f_i|_{S}$ has finite $\omega$-limit. Apply Theorem \ref{strongversionofboundedcase} to $\{f_i|_M\}$, we have $f_i=\pi^i\circ \tilde c_i\circ f'$, where $\tilde c_i$ is a conjugation  of $\Gamma$ and $f': M\rightarrow \Gamma$. Since $f_i(h_1)$ is elliptic or parabolic, $\tilde c_i\circ f'(h_1)$ is parabolic. Let $\tilde\gamma_i$ be the generator of the maximal parabolic subgroup of $\Gamma$ containing $\tilde c_i\circ f'(h_1)$. Then $\tilde c_i\circ f'(h_1)=(\tilde\gamma_i)^k$ for some $k$. Note that $k$ does not depend on $i$. So $f_i(h_1)=\gamma_i^k$, where $\gamma_i=\pi^i(\tilde\gamma_i)$. Hence $f_i(h)=(f_i(t^{-1})\gamma_if_i(t))^k$. Similarly, $f_i(h')=(f_i(t^{-1})\gamma_if_i(t))^{k'}$ for an integer $k'$ independent of $i$. Hence by Lemma \ref{fixedpowerrotation}, ${\rm Fix}_{\mathcal{X}}(h)={\rm Fix}_{\mathcal{X}}(h')$. This contradicts the choice of $h, h'$. 

Repeat the same process for $A_2, \dots, A_m$. Then the resulting splitting $\mathbb{G}'$ has the desired properties. 
 
We now show that $\mathbb{G}'$ has bigger abelian rank than $\mathbb{G}$.
Property (1) and (2) of $\mathbb{G}'$ implies that the abelian rank of $\mathbb{G}'$ at any abelian vertex group is at least as big as that of $\mathbb{G}'$. Hene it suffices to show that the abelian rank of $\mathbb{G}'$ at $B$ is strictly bigger than that of $\mathbb{G}$. By Property (3) of $\mathbb{G}'$, the abelian rank of $\mathbb{G}'$ at $B$ is at least ${\rm rank}(B/E)$. The abelian rank of $\mathbb{G}$ at $B$ is ${\rm rank}(B/A)$. Note that we have $1\rightarrow B/A\rightarrow B/E\rightarrow A/E\rightarrow 1$. Since $E$ is a proper subgroup of $A$ and $A$ is small elliptic, so by Lemma \ref{torsionfreeclosed}, $A/E$ is a non-trivial torsion free abelian group. Hence the rank of $A/E$ is positive. Therefore the abelian rank of $\mathbb{G}'$ at $B$ is strictly bigger than that of $\mathbb{G}$.
\end{proof}

\vspace{3mm}

\section{\large The shortening argument}\label{hs}

Let $G$ be a countable group. Suppose $S$ is a finite subset of $G$ and $K$ is a subgroup of the automorphism group of $G$. Let $\mathcal{H}$ be a collection of subgroups of $G$. Denote by ${\rm Aut}(G, \mathcal{H})$ the collection of automorphisms of $G$ that act on each $H\in \mc{H}$ by conjugation. 

\begin{definition}\label{d:short}
Let $\Gamma$ be a Fuchsian group. A homomorphism $f:G\rightarrow \Gamma$ is called {\em short} with respect to $K$ and $S$ if for any $\alpha\in K$, we have $|f|_S\leq |f\circ \alpha|_S$.
\end{definition}

\begin{lemma}
Suppose $f(S)$ is not contained in a parabolic subgroup of $\Gamma$. Then there exists $\alpha\in K$ such that $f\circ\alpha$ is short with respect to $K$ and $S$.  
\end{lemma}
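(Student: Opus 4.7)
The plan is to show that $L := \inf_{\alpha \in K} |f \circ \alpha|_S$ is attained, which is equivalent to the conclusion: since $K$ is a group, the short-ness of $f \circ \alpha$ is exactly the condition that $\alpha$ realize this infimum. I would pick $\alpha_n \in K$ with $\mu_n := |f \circ \alpha_n|_S \searrow L$ and write $f_n := f \circ \alpha_n$. Assuming $S$ generates $G$, the image $f_n(G) = f(G)$ is independent of $n$, so $f_n(S)$ is never contained in a parabolic subgroup of $\Gamma$; hence Lemma \ref{l:cent located points exits} (via \cite[Proposition 2.1]{Bestvina}) yields centrally located points $x_n \in \mathbb{H}$ realizing $\mu_n = \max_{g \in S} d_{\mathbb{H}}(f_n(g) x_n, x_n)$.

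Next, I would normalize the $x_n$ by $\Gamma$-conjugation so that they all lie in a single compact subset $K_0 \subset \mathbb{H}$. Using the nearest-point projection onto the convex core $C(\Gamma) \subset \mathbb{H}$ (which is distance non-increasing, hence preserves being centrally located) we may take $x_n \in C(\Gamma)$. The quotient $C(\Gamma)/\Gamma$ has finite area, and its thick part is compact; a Margulis-type argument, together with the hypothesis that $f_n(S)$ avoids every parabolic subgroup, shows that $x_n$ cannot lie arbitrarily deep in any cusp of $C(\Gamma)/\Gamma$, because any $g \in S$ with $f_n(g)$ not in the corresponding parabolic subgroup would then displace $x_n$ by an amount exceeding $\mu_n$. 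Hence there exist $\gamma_n \in \Gamma$ with $\gamma_n^{-1} x_n \in K_0$, a fixed compact lift of a compact neighborhood of the thick part of $C(\Gamma)/\Gamma$.

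Setting $f_n' := c_{\gamma_n^{-1}} \circ f_n$, we still have $f_n'(G) \subset \Gamma$ and $|f_n'|_S = \mu_n$, and for each $g \in S$ and all large $n$ the element $f_n'(g)$ sends $K_0$ into the $(L+1)$-neighborhood of $K_0$. By proper discontinuity of the $\Gamma$-action on $\mathbb{H}$, only finitely many elements of $\Gamma$ do this; a diagonal extraction over the finite set $S$ then produces a subsequence along which $f_n'(g)$ is independent of $n$ for each $g \in S$. Along this subsequence $\mu_n = |f_n'|_S$ is eventually constant and equal to $L$, so taking $\alpha := \alpha_n$ for any such $n$ gives a short element.

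The main obstacle is the second step, normalizing the centrally located points into a compact region by $\Gamma$-conjugation. This is precisely where the parabolic hypothesis on $f(S)$ is used: without it, the $x_n$ could escape into a cusp and the discreteness of $\Gamma$ would fail to give a finite pigeonhole in the final step. The careful geometric analysis of the thick-thin decomposition of $C(\Gamma)/\Gamma$ for a finitely generated Fuchsian group which may have both cusps and funnels is the technical heart of the argument.
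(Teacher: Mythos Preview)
Your approach is essentially the same as the paper's: take a minimizing sequence $\alpha_n$, find centrally located points $x_n$, conjugate them into a fundamental region, argue they cannot escape to infinity (using the parabolic hypothesis), and conclude by discreteness of $\Gamma$ that the sequence of homomorphisms is eventually constant on $S$. The paper works directly with a fundamental polyhedron $\Omega$ rather than the thick--thin decomposition of $C(\Gamma)/\Gamma$, but the geometric content is the same.

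Two points worth noting. First, your extra assumption that $S$ generates $G$ is doing real work: it is what lets you pass from ``$f_n(S)$ parabolic'' to ``$f(G)$ parabolic'' and hence to a contradiction with the hypothesis on $f(S)$. The paper's proof derives ``$f_n(S)$ parabolic'' and declares ``Contradiction'' without making this step explicit, so you are being more careful here, not less. Second, your preliminary projection onto the convex core is a genuine addition: it handles the possibility that $\Gamma$ has funnels (infinite-area ends), which the paper's argument via $x_n \to x_\infty \in \partial\Omega$ does not address. In the paper's applications $\Gamma$ always has finite coarea so this does not arise, but your version is cleaner as a standalone statement.
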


\begin{proof}
Let $l={\rm inf}\{|f\circ \alpha|_S\mid \alpha \in K\}$. Let $\alpha_n$ be a sequence of automorphisms in $K$ such that $|f\circ\alpha_n|_S$ converges to $l$. Then we have $|f\circ\alpha_n|_S\leq D$ for some $D$ independent of $n$. Let $x_n$ be a centrally located point for $f\circ\alpha_n$. Let $\Omega$ be a fundamental domain of $\Gamma$. Post compose $f\circ\alpha_n$ with a conjugation $c^n$ of $\Gamma$ if necessary, we can assume $x_n\in \Gamma$ for all $n$. Note that post composing $f\circ\alpha_n$ with a conjugation does not change its translation length. So we have a sequence of homomorphisms $f_n=c_n^f\circ\alpha_n$ with a central located point in $\Omega$ and $|f_n|_S$  is bounded. Suppose $x_n$ is unbounded. Since $\{x_n\}$ is in $\Omega$, a subsequence of it converge to a point $x_{\infty}$, which is the fixed point of a parabolic element in $\Gamma$. This implies that for some $n$ large enough $f_n(g)$ is a parabolic element with fixed point $x_{\infty}$ for any $g\in S$ since $d_{\mathbb{H}}(f_n(g)x_n, x_n)\leq |f'_n|_S\leq D$. Hence $f_n(S)$ is contained in the parabolic subgroup of $\Gamma$ fixing $x_\infty$. Contradiction. Therefore $\{x_n\}$ is bounded. As a result, there exists a point $x\in \Omega$ such that for any $g\in S$,  $f_n(g)$ moves $x$ by an amount bounded independent of $n$. So $\{f_n(g)\}$ is contained in a compact subset of ${\rm PSL}(2, \mathbb{R})$. So a subsequence of $f_n(g)$ converges. Since $\{f_n(g)\}\subset \Gamma$ is a sequence in a discrete group, so $f_n(g)$ is the same element in $\Gamma$ for all big enough $n$. This implies that for $n$ large enough $|f_n|_S$ equals $l$. Note that $|f\circ\alpha_n|_S=|f_n|_S$. So $f\circ\alpha_n$ is short. 
\end{proof}

\begin{proposition}\label{shortening}
Assume \ref{standing assumption}. Let $S$ be a good generating set of $G$ relative to a finite collection $\mathcal{H}$ of elliptic or parabolic subgroups. $\mathcal{H}$. Suppose that 
\begin{enumerate}
\item $G$ is freely indecomposable relative to $\mathcal{H}$;  
\item $f_i$ is short  with respect to ${\rm Aut}(G, \mathcal{H})$;
\item ${\rm lim}^\omega|f_i|_S=\infty$; 
\item None of the non-trivial segment stabilizer of $\mc{T}$, the minimal $G$-invariant subtree of  the asymptotic cone $\mathcal{X}$ of $\mathbb{H}$ defined by $\{x_i\}$, $\{|f_i|_S\}$ and $\omega$, is elliptic or parabolic. 
\end{enumerate}
Then $Ker^\omega(f_i)\neq \{1\}$.
\end{proposition}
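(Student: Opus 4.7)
The plan is to argue by contradiction. Assuming $\mathrm{Ker}^\omega(f_i) = \{1\}$, the group $G$ itself acts on $\mathcal{T}$ with trivial global kernel. I will apply Guirardel's Rips machine to obtain a GAD $\Delta$ of $G$ relative to $\mathcal{H}$, and then run Sela's shortening argument to produce an automorphism in $\mathrm{Aut}(G, \mathcal{H})$ that strictly decreases $|f_i|_S$, contradicting hypothesis (2).

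First I would verify that the $G$-action on $\mathcal{T}$ meets the hypotheses of \cite[Theorem 5.1]{Guirardel}. By Lemma \ref{arc sta abel} arc stabilizers are abelian. By Lemma \ref{tripod sta elliptic} tripod stabilizers are elliptic or parabolic, so hypothesis (4) forces them to be trivial. Hypothesis (4) combined with Lemma \ref{nondearcfixed} also ensures the action is stable: a strictly decreasing chain of arc stabilizers would produce a non-degenerate segment with elliptic or parabolic stabilizer. By Lemma \ref{Ell and para has fixed point}, every elliptic or parabolic subgroup --- in particular every $H \in \mathcal{H}$ --- fixes a point of $\mathcal{T}$. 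Guirardel's theorem then decomposes $\mathcal{T}$ as a $G$-equivariant graph of actions whose vertex actions are simplicial, axial (line), or surface-type; collapsing yields a GAD $\Delta$ of $G$ in which each $H \in \mathcal{H}$ is conjugate into a rigid vertex group, so $\Delta$ is relative to $\mathcal{H}$. Hypothesis (1), $G$ freely indecomposable relative to $\mathcal{H}$, keeps $\Delta$ from degenerating to a free product decomposition.

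The key step is the shortening argument itself. Consider $\mathrm{Mod}(\Delta; \mathcal{H})$, generated by Dehn twists along edges of $\Delta$, unimodular automorphisms of abelian vertex groups which fix their peripheral subgroups, and automorphisms induced by orbifold homeomorphisms of QH-vertices fixing all boundary curves. Every such generator acts as the identity on rigid vertex groups --- which absorb the conjugates of the $H \in \mathcal{H}$ --- so $\mathrm{Mod}(\Delta; \mathcal{H}) \subset \mathrm{Aut}(G, \mathcal{H})$. The classical shortening lemma (see \cite{R}, and \cite[Lemma 4.1]{RS1} for the asymptotic-cone version) asserts that for a $G$-action on an $\mathbb{R}$-tree $\mathcal{T}$ of the above type there exists $\alpha \in \mathrm{Mod}(\Delta; \mathcal{H})$ with $|f_i \circ \alpha|_S < |f_i|_S$ for $\omega$-almost every $i$, contradicting hypothesis (2). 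The piece-by-piece shortening is standard: a small element of the mapping class group on a surface piece, a unimodular shear on an axial piece, or a Dehn twist along a simplicial edge, each reduces the displacement contributed by that piece while leaving others unchanged.

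The main obstacle will be that $S$ is only a good generating set of $G$ relative to $\mathcal{H}$, rather than a finite generating set of $G$, so the classical shortening estimate does not apply verbatim. The fix is that any $\alpha \in \mathrm{Mod}(\Delta; \mathcal{H})$ acts by an inner automorphism on each $H \in \mathcal{H}$, and the good-generating-set condition gives a uniform $\omega$-bound on the displacement of $x_i$ by any $h \in \bigcup_j H^j$ when computed with the scaling $\mu_i = |f_i|_S$. Hence the limiting displacement of a centrally located point under $f_i \circ \alpha$ is determined by $\alpha$'s effect on $S$ alone up to a bounded error absorbed in the asymptotic cone, and the piecewise shortening on each vertex action of $\Delta$ yields the strict inequality needed. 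This contradiction forces $\mathrm{Ker}^\omega(f_i) \neq \{1\}$, as claimed.
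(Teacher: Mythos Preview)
Your overall architecture matches the paper's: assume $\mathrm{Ker}^\omega(f_i)=\{1\}$, verify the hypotheses of Guirardel's Rips machine, and then shorten piece by piece to contradict shortness. But the case-by-case shortening is not ``standard'' here in the way you suggest, and this is where your proposal has a real gap.

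The paper uses hypothesis (4) a second time, beyond merely enabling the Rips machine, to check the structural conditions that the shortening arguments of \cite{R} and \cite{RS1} actually require. In the axial case one must know that the index-two orientation-preserving subgroup $G_v^+$ of the line stabilizer is abelian before one can invoke \cite[Section 4.2.1]{R}; the paper proves this by observing that the kernel $K$ of the line action is abelian and, by hypothesis (4), \emph{hyperbolic} rather than elliptic or parabolic, so any $h\in G_v^+$ must preserve the common axis of $f_i(K)$ and hence commute with $K$, forcing $G_v^+$ abelian. In the simplicial case \cite[Proposition 4.18]{R} needs the center of each edge stabilizer to contain an infinite-order element; again hypothesis (4) gives that edge stabilizers are hyperbolic and hence infinite cyclic (being abelian), which supplies the twisting element. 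In the orbifold case one needs the vertex action to be faithful, which follows from triviality of tripod stabilizers. You cite \cite{R} and \cite{RS1} globally but do not verify any of these conditions, and without them the shortening moves you list (unimodular shear, Dehn twist) are not available.

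Two smaller points. Your appeal to Lemma \ref{nondearcfixed} for stability is misplaced: that lemma compares fixed sets of two elements in the same elliptic or parabolic subgroup and says nothing about arbitrary chains of arc stabilizers; the paper instead deduces triviality of unstable-arc stabilizers directly from abelian arc stabilizers plus trivial tripod stabilizers. And your final paragraph about $S$ being only a relative generating set is a red herring: the good-generating-set condition already guarantees that $\{f_i\}$ induces a well-defined $G$-action on $\mathcal{X}$, and $|f_i|_S$ is the quantity being shortened, so no extra bookkeeping is needed there.
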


\begin{proof}
Suppose $Ker^\omega(f_i)= \{1\}$. Since $f_i$ is assumed to be short, to get a contradiction, it suffices to find automorphisms $\alpha_i\in{\rm Aut}(G, \mathcal{H})$ such that $|f_i\circ \alpha_i|_S<|f_i|_S$.

By assumption, none of the non-trivial segment stabilizer is elliptic or parabolic, we know that stabilizers of tripods are trivial by Lemma \ref{tripod sta elliptic}. This together with the fact that segment stabilizer are abelian (Lemma \ref{arc sta abel}) imply that stabilizers of unstable arcs are trivial.  Note that each $H_j$ fixes a point in $\mc{T}$ by Lemma \ref{Ell and para has fixed point}.  Hence the Rips machine \cite[Theorem 5.1]{Guirardel} can be applied to the action of $G$ on $\mathcal{T}$. Since the stabilizers of unstable arcs and the stabilizers of (infinite) tripods are trivial and ($G$, $\mathcal{H}$) is freely indecomposable, the action of $G$ on $\mathcal{T}$ admits a graph of actions  decomposition $\mathcal{G}(\mathbb{A})$ with each vertex action being one of the three types: simplicial, orbifold (IET) or axial.  

Suppose $[x, gx]$ has non-degenerate intersection with an orbifold component $T_{\tilde v}$. Note an orbifold component contains a non-trivial tripod. Hence the kernel of the action of $G_v$ on $T_{\tilde v}$ is faithful. Therefore \cite[Theorem 5.1]{RS1} gives what we need. 

Now suppose $[x, gx]$ has non-degenerate intersection with an axial component $T_{\tilde v}$. Let $G_v$ be the vertex group of $\mathbb{A}$ corresponding to the orbit of $T_{\tilde v}$. Without loss of generality, we assume $G_v$ preserves $T_{\tilde v}$, which is isometric to a line. Let $G^+_v$ be the index-two subgroup of $G_v$ which preserves the ends of $T_{\tilde v}$ and $K$ be the kernel of the action of $G^+_v$ on  $T_{\tilde v}$. By Lemma \ref{arc sta abel}, $K$ is abelian. By the assumption of the proposition, $K$ is neither elliptic nor parabolic. So for any $g\in K$ $f_i(g)$ is hyperbolic \walmost surely. Since $K$ is normal in $G^+_v$,  for any $h\in G^+_v$, we have $hgh^{-1}\in K$. Hence $f_i(hgh^{-1})$ and $f_i(g)$ has the same axis in $\mathbb{H}$. Therefore $f_i(h)$ preserves the same axis.  Note that since $h$ preserves the orientation of $T_{\tilde v}$, $f_i(h)$ is not rotation of order two. Hence $f_i(h)$ is also hyperbolic with the same axis. So $f_i(g)$ and $f_i(h)$ commute \walmost surely. As a result, $g$ and $h$ commute and $G^+_v$ is abelian. Having the above information about $G_v$, one can check that the rest of the proof in the axial case is covered by the argument in \cite[Section 4.2.1]{R}.  

Finally we consider the case where $[x, gx]$ has non-degenerate intersection with a simplicial component $T_{\tilde v}$.  Note that for any edge $e$ in $T_{\tilde v}$, the stabilizer $A_e$ is abelian by Lemma \ref{arc sta abel}. By the assumption of the proposition $A_e$ is not elliptic or parabolic. So $A_e$ is hyperbolic. In particular, it has infinite order.  Hence the center of $A_e$, which equals $A_e$ itself, contains an element of infinite order.  Now we note that \cite[Proposition 4.18]{R} and the rest of the shortening argument follows exactly the same as in section 4.2.3 in \cite{R}.
\end{proof}

\section{\large Proof of Theorem \ref{MT2}}\label{proof}

In this section, we proof Theorem \ref{MT2}. We recall the definition of Dehn twists.

\begin{definition}
Let $\mathbb{G}$ be a splitting of $G$. Suppose $e$ is a separating edge of $\mathbb{G}$, i.e. $\mathbb{G}-e$ has two connected components $\mathbb{G}_1$ and $\mathbb{G}_2$. Let $C$ be the edge group of $e$.  Let $z$ be an element of the centralizer $Z_{G}(C)$ of $C$ in $G$. The automorphism $\alpha_z$ of $G$, called the Dehn twist along $e$ by $z$, is determined as follows. 
\begin{equation*}  \alpha_z(g)=
\left\{ \begin{array}{ll} g, \hspace{12mm}   g\in \pi_1(\mathbb{G}_1)
                                \\ zgz^{-1}, \hspace{4mm}   g\in \pi_1(\mathbb{G}_2)
                                
                         \end{array} \right.    
                         \end{equation*} 
\end{definition}

\begin{remark}
Dehn twists are also defined for non-separating edges. (See \cite[Definition 3.14]{R} for the general definition.) Essentially we are also using them in the next proof. However we think of them as extensions of the Dehn twists defined above.  
\end{remark}

\begin{proposition}\label{vertexbounded}
Assume \ref{standing assumption}. Let $\mathbb{G}$ be a special abelian splitting of $G$ with respect to $\{f_i\}$ such that each non-abelian vertex group $N$ of $\mathbb{G}$ has a good generating set $S_N$ relative to its adjacent edge groups. Suppose $ker^{\omega}(f_i)=\{1\}$ and ${\rm lim}^\omega|f_i|_{S_N}<\infty$ is bounded for each non-abelian vertex group $N$. Then there exists $\bar A=(\mathbb{Z}\oplus(\mathbb{Z}/a_1\mathbb{Z}), \dots, \mathbb{Z}\oplus(\mathbb{Z}/a_k\mathbb{Z}))$ and $\Gamma*_{\bar p}\bar A$ such that $f_i$ factor through $\Gamma*_{\bar p}\bar A$ \walmost surely.  Moreover, the factoring map $\pi_{f_i}: \Gamma*_{\bar p}\bar A\rightarrow \Gamma_i$ is an extension of the natural map $\pi^i$.
\end{proposition}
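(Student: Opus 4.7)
\emph{Proof sketch.} The strategy is to produce the factoring map $\pi_{f_i}$ by building it separately on each vertex group of $\mathbb{G}$ and then gluing along the edges. First, for each non-abelian vertex $N$ of $\mathbb{G}$, I would run the argument of Theorem \ref{strongversionofboundedcase} relative to the finite set $S_N$: since $S_N$ is a good generating set of $N$ relative to its adjacent edge groups, centrally located points $x_i^N$ for $f_i|_N$ with respect to $S_N$ exist \walmost surely (Lemma \ref{l:cent located points exits}), and after conjugating by $\gamma_i^N \in \Gamma_i$ to place $\gamma_i^N x_i^N$ in a fixed fundamental domain, Lemma \ref{boudnary disct} together with the definition of ``good'' forces $\{\gamma_i^N x_i^N\}$ to lie in a bounded region of $\mathbb{H}$ (the goodness of $S_N$ controls the edge-group generators exactly as $S$ controls $G$ in Section \ref{s:Bounded}). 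A diagonal extraction together with Lemma \ref{isolatednbh} then produces, \walmost surely, a homomorphism $f'_N\colon N \to \Gamma$ and a lift $\tilde c_i^N \in \Aut(\Gamma)$ of conjugation by $\gamma_i^N$ such that $f_i|_N = \pi^i \circ \tilde c_i^N \circ f'_N$.

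Next, for each abelian vertex $A$ of $\mathbb{G}$ I would construct a compatible lift. Since $ker^\omega(f_i) = \{1\}$ and $A$ is small elliptic or parabolic, $f_i(A)$ sits \walmost surely in a canonical maximal cyclic subgroup $H_i$ of $\Gamma_i$. If $A$ is parabolic, then $H_i$ is parabolic and of the form $\pi^i(H)$ for a parabolic $H \subset \Gamma$ at a puncture of $\mc{O}$ outside $\bar p$; because $\pi^i|_H$ is an isomorphism, $f_i|_A$ lifts to $H \subset \Gamma$ canonically and independently of $i$. If $A$ is small elliptic, then $H_i = \mathbb{Z}/n_{i,j}\mathbb{Z}$ at some cone $c_j$; here $A$ is torsion-free by Lemma \ref{smallrotation}, and imitating the proof of Lemma \ref{l:reduction 2} one obtains, after choosing $a_j$ so that $a_j \mid n_{i,j}$ \walmost surely (possible after passing to a \w-full subsequence), a lift $A \to P_j \oplus \mathbb{Z}/a_j\mathbb{Z}$ that recovers $f_i|_A$ upon composing with the natural extension of $\pi^i$ to $P_j \oplus \mathbb{Z}/a_j\mathbb{Z} \to \Gamma_i$.

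The main obstacle, and the step I expect to be the most delicate, is gluing these local factorizations into a single homomorphism from $\Gamma *_{\bar p}\bar A$: each $f'_N$ and each abelian lift is only well-defined up to conjugation, and on a shared edge group $E$ of $\mathbb{G}$ the two restrictions need not literally match. To reconcile them, I would fix a maximal subtree of the underlying graph of $\mathbb{G}$ and process the edges in order, using at each stage a Dehn twist along the corresponding edge of $\Gamma *_{\bar p}\bar A$ to adjust the conjugating element on one side. The key point is that the centralizer in $\Gamma_i$ of the image of a non-trivial edge group $E$ is the (unique) maximal elliptic or parabolic subgroup containing it, so the required twist element lies in the image of the adjacent abelian vertex group of $\Gamma *_{\bar p}\bar A$ and the twist does lift there. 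Since $\mathbb{G}$ has only finitely many edges (Corollary \ref{boundonnumofedges}) and all relevant data consists of finitely many integers and finitely many elements of $\Gamma$, passing to one more \w-full subsequence lets the data stabilize and yields a homomorphism $\pi_{f_i}\colon \Gamma *_{\bar p}\bar A \to \Gamma_i$ that extends $\pi^i$ and through which $f_i$ factors via the quotient map $G \to \Gamma *_{\bar p}\bar A$ induced by the splitting $\mathbb{G}$.
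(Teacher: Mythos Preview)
Your overall shape is close to the paper's, but two load-bearing steps are missing or misplaced.

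\textbf{Edge groups are infinite cyclic; without this you cannot run Theorem~\ref{strongversionofboundedcase} on $N$.} Theorem~\ref{strongversionofboundedcase} needs the domain to be finitely generated by its chosen set $S$. Your $S_N$ is only a \emph{relative} generating set for $N$; the adjacent edge groups are not known to be finitely generated a priori. The paper fixes this first: from boundedness of $|f_i|_{S_N}$ and Lemma~\ref{boudnary disct} one sees that, for each adjacent edge group $E$, $f_i(E)$ sits at uniformly bounded depth in its Margulis domain, hence $f_i(g)=e_i^{m}$ with $|m|$ bounded independently of $i$ for every $g\in E$. Since $ker^\omega(f_i)=\{1\}$ this forces $E\cong\mathbb Z$. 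Only then is $N$ genuinely finitely generated, and only then does the convergence argument of Theorem~\ref{strongversionofboundedcase} make sense on $N$. You need this step, and it is also what later feeds into Lemma~\ref{abelianlemma} (which needs the edge group $K$ to be cyclic).

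\textbf{The Dehn twists go on $G$, not on $\Gamma*_{\bar p}\bar A$.} Your gluing paragraph proposes twisting ``along the corresponding edge of $\Gamma*_{\bar p}\bar A$'', but there is no such correspondence: edges of $\mathbb G$ map to cyclic subgroups of $\Gamma_i$, not to the amalgamating edges of the target. The paper's manoeuvre is different and cleaner. Fix a root non-abelian vertex $N_0$ and, for each other non-abelian vertex $N'$, precompose $f_i$ with a Dehn twist of $G$ along the edge of $\mathbb G$ separating them, by a power $g^{l_i}$ of a generator of that edge group, chosen so that the centrally located point of $f_i\circ\alpha_i$ for $N'$ lands within bounded distance of $x_i^{N_0}$. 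This is possible precisely because $E$ is cyclic and Lemma~\ref{boudnary disct} bounds the depth in the Margulis domain on both sides. After composing all these twists (an automorphism $\alpha_i$ of $G$), one has a finite generating set $S$ of $G_0=\langle\text{all non-abelian vertex groups}\rangle$ with $|f_i\circ\alpha_i|_S$ bounded, and a \emph{single} application of Theorem~\ref{strongversionofboundedcase} gives the lift on all of $G_0$ at once. Your scheme of lifting each $N$ separately and then reconciling could in principle be made to work (the reconciliation is conjugation in the parabolic or elliptic centralizer, exactly as you say), but the bookkeeping is more delicate, and the twist must be on the domain side.

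Two smaller corrections. Your appeal to Lemma~\ref{smallrotation} to conclude abelian vertex groups are torsion-free is wrong: that lemma concerns edge groups, and abelian vertex groups of a special abelian splitting need not be small elliptic. The paper uses Lemma~\ref{abelianlemma} (applied with $K=E$ cyclic and $\phi=\tilde f_i|_E$ injective) to produce a virtually cyclic $V$ absorbing the possible torsion, and then a further abelianization-and-quotient step to squeeze $V$ down to $\mathbb Z\oplus(\mathbb Z/a_j\mathbb Z)$. Finally, you omit the treatment of edges outside a maximal subtree of $\mathbb G$; the paper handles the corresponding stable letters via Lemma~\ref{econp}, lifting the conjugating element $f_i(g_e)$ to $\Gamma$.
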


\begin{proof}

We first show that the edge groups of $\mathbb{G}$ are infinite cyclic. Let $N$ be a non-abelian vertex groups of $\mathbb{G}$ and let $x^N_i\in\mathbb{H}$ be a centrally located point of $f_i$ with respect to $S_N$. Let $E$ be an edge group adjacent to $N$. Denote by $E_i$ the maximal elliptic (or parabolic) subgroup of $\Gamma_i$ containing $f_i(E)$. Let $D_i$ be the Margulis domain  associated to $E_i$ (See Section \ref{s:Bounded}).   Since $|f_i|_{S_N}$ has finite $\omega$-limit and both $S_N\cap E$ and $S_N\cap(N-E)$ are both non-empty, by Lemma \ref{boudnary disct} $d_{\mathbb{H}}(x^N_i, \partial D_i)$ has finite $\omega$-limit. We claim that $E$ is infinite cyclic. Define a map $f: E\rightarrow \mathbb{Z}$ as follow: Let $e_i$ be the generator of $E_i$ with the smallest rotation angle. Then for any $g\in E$, since  both $d_{\mathbb{H}}(x^N_i, f_i(g)x^N_i)$ and $d_{\mathbb{H}}(x^N_i, \partial D_i)$ has finite $\omega$-limit, we know that \walmost surely $f_i(g)=(e_i)^{m_i}$ for some $|m_i|\leq d$, where $d$ depends on $g$ but independent of $i$. Hence \walmost surely, we have $f_i(g)=(e_i)^{m}$ for some $|m|\leq d$. Define $f(g)=m$. One can check that $f$ is a homomorphism as $f_i$ are. Suppose $f(g_1)=f(g_2)$. Then $f_i(g_1)=f_i(g_2)$ \walmost surely. Hence $g_1^{-1}g_2\in ker^\omega(f_i)=\{1\}$. Therefore $g_1=g_2$. So $f$ is injective. Since $G$ is freely indecomposable, $E\neq \{1\}$. Hence $E$ is infinite cyclic.  Since $A$ and $E$ are both arbitrary, as a result of the claim, all the vertex groups of $\mathbb{G}$ are finitely generated. 

Let $G_0\subset G$ be the subgroup generated by all the non-abelian vertex groups of $\mathbb{G}$.  We now show that there exists $\alpha_i\in {\rm Aut}(G)$ preserving $G_0$ such that $|f_i\circ \alpha_i|_S$ has finite $\omega$-limit, where $S$ is a finite generating set of $G_0$.

Let $\mathbb{T}$ be a maximal subtree of (the underlying graph of) $\mathbb{G}$. Let $G_T$ be the subgroup of $G$ corresponding to $\mathbb{T}$, i.e. $G_T$ is the fundamental group of $\mathbb{T}$. Let $N_0$ be a non-abelian vertex group of $\mathbb{G}$. We think $N_0$ as the root of the tree $\mathbb{T}$ and consider each of its branches. Let $\mathbb{T}_1$ be a branch at $N_0$, i.e. the closure of a connected component of $\mathbb{T}-N_0$. For each vertex $v$ (or the corresponding vertex group ) of $\mathbb{T}_1$, we define the {\em level} of $v$ (or the corresponding vertex group ) to be the $\mathbb{T}$ distance between $v$ and $N_0$. Then vertex groups of even levels are non abelian vertex group and vertex groups of odd levels are abelian vertex groups. Let $A$ be the  vertex group of level one and $N'$ be a level two vertex group. Denote the edge between $A$ to $N'$ by $e$ and its edge group by $E$. Let $E_i$ be the maximal elliptic (or parabolic) subgroup of $\Gamma_i$ containing $f_i(E)$.  Let $D_i$ be the Margulis domain associated to $E_i$. As in the last paragraph, we have 
\begin{equation}\label{9e:bound1}
{\rm lim}^\omega d_{\mathbb{H}}(x'_i, \partial D_i)<\infty,
\end{equation} 
where $x'_i$ is a centrally located point of $f_i$ with respect to $S_{N'}$.   
Similarly 
\begin{equation}\label{9e:bound2}
{\rm lim}^\omega d_{\mathbb{H}}(x^0_i, \partial D_i)<\infty, 
\end{equation}
where $x^0_i$ is a centrally located point of $f_i$ with respect to $S_{N_0}$. Let $g\in E$. As in the last paragraph, \walmost surely we have $f_i(g)=(e_i)^{m}$ for some $0<|m|\leq d$, where $e_i$ is the generator of $E_i$ with the smallest rotation angle. By the definition of Margulis domain, $e_i$ moves any point in $\partial D_i$ by the Margulis constant $\epsilon_2$. Hence \walmost surely $f_i(g)$ moves any point in $\partial D_i$ by an amount independent of $i$.  Combine this with (\ref{9e:bound1}) and (\ref{9e:bound2}), we know that for some $l_i\in \mathbb{Z}$, 
\begin{equation}
 {\rm lim}^\omega d_{\mathbb{H}}(f_i(g^{l_i})x'_i, x^0_i)<\infty.
 \end{equation} 
 Let $\alpha_i^{N'}\in {\rm Aut}(G_T)$ be the Dehn twist along $e$ by $g^{l_i}$, which conjugates the subgroup corresponding to the branch of $\mathbb{T}$ containing $N'$ by $g^{l_i}$. Since $\alpha_i^{N'}$ acts on all edge groups in $\mathbb{T}$ by conjugation, it can be extended to an automorphism of $G$, which we still denote by $\alpha_i^{N'}$. Note that $f_i(g^{l_i})x'_i$ is a centrally located point of $f_i\circ\alpha_i^{N'}$ with respect to $S_{N'}$. Hence by precomposing $f_i$ with a Dehn twist, we bring the chosen centrally located point for $N'$ to a bounded distance from $x^0_i$. Note that $\alpha_i^{N'}$ changes only the branch at $A$ containing $N'$ and it also preserves the relative distance between the chosen centrally located points of the non-abelian vertex groups in this branch.  

Similarly, we use Dehn twists to bring all the chosen centrally located points of the non-abelian vertex groups in $\mathbb{T}_1$ of level two to a bounded neighborhood of $x^0_i$. Inductively, we can bring the level-$2n$ centrally located points to a bounded neighborhood of the level-$2(n-1)$. Hence after precomposing $f_i$ with finitely many Dehn twists, there is a centrally located point for each of the non-abelian vertex group in $\mathbb{T}_1$ that is at most $r$ away from $x^0_i$, where $r$ is independent of $i$. Similarly, we achieve the same goal for other branches at $N_0$ by using Dehn twists. Let $\alpha_i$ be the composition of all these Dehn twists. Then for each non-abelian vertex group $N$ there is a centrally located point $x^N_i$ of $f'_i=f_i\circ \alpha_i$ with respect to $S_N$ such that $d_{\mathbb{H}}(x^N_i, x^0_i)\leq B$ for some $B$ independent of $i$. This implies that for any $N$ and any $g\in S_N$, we have $d_{\mathbb{H}}(x^0_i, f'_i(g)x^0_i)\leq K$ for some $K$ independent of $i$ and $N$. Therefore $|f_i\circ \alpha_i|_S$ is bounded independent of $i$, where $S$ is a generating set of $G_0$. 

We can now apply Theorem \ref{strongversionofboundedcase} to see that \walmost surely $(f_i\circ\alpha_i)|_{G_0}$ factors through $\pi^i$. Since $\alpha_i$ is an automorphism, $f_i$ also factors through $\pi^i$. Denote the factoring map by $\tilde f_i$.

Next, we take care of the stable letters corresponding to the edges outside of $\mathbb{T}$. Let $e$ be an edge outside of $\mathbb{T}$ with edge group $E$. Let $E_1$ and $E_2$ be two subgroups of $G_0$ corresponding to $e$ and let $g_e\in G$ be the stable letter corresponding to $e$. Then \walmost surely $f_i(E_1)$ and $f_i(E_2)$ are two elliptic (or parabolic) subgroups conjugate to each other in $\Gamma_i$ by $f_i(g_e)$.  By definition of special abelian splitting, if $f_i(E_1)$ and $f_i(E_2)$ are elliptic, they are small elliptic. Hence the order of $f_i(E_1)$ and $f_i(E_2)$ goes to infinity as $i\rightarrow\infty$. As a result in both cases ($f_i(E_1)$ and $f_i(E_2)$ are elliptic or parabolic) $\tilde f_i(E_1)$ and $\tilde f_i(E_2)$ are parabolic subgroups of $\Gamma$, whose canonical projection to $\Gamma_i$ are $f_i(E_1)$ and $f_i(E_2)$, respectively. Therefore, by Lemma \ref{econp}, we know that $\tilde f_i(E_1)$ and $\tilde f_i(E_2)$ are conjugate to each other by some $\gamma^e_i$ with $\pi^i(\gamma^e_i)=f_i(g_e)$. Now we can extend $\tilde f_i$ by defining $\tilde f_i(g_e)=\gamma^e_i$.  In the same way, we extend $\tilde f_i$ to all the stable letters. 

To finish the proof, we need to extend the factoring map $\tilde f_i$ to the abelian vertex groups. Note that $\tilde f_i$ is already defined on the maximal edge groups of the abelian vertex groups of $\mathbb{G}$. Let $A$ be an abelian vertex group and $E$ its maximal edge group. Since $ker^{\omega}(f_i)=\{1\}$ and $f_i|_E$ factor through $\tilde f_i|_{E}$, we have $ker^{\omega}(\tilde f_i|_E)=\{1\}$. Recall that $E$ is shown by be cyclic. Let $g$ be a generator of $E$. Then $\tilde f_i(g)$ is a non-trivial parabolic element of $\Gamma$. Hence $\tilde f_i|_E$ is injective.  By applying Lemma \ref{abelianlemma} to $A$ and $E$ and the maps $f_i|_{A}$ and $\tilde f_i|_{E}$, one can extend $\tilde f_i|_{E}$ to $A$ if a virtually cyclic abelian group $V$ is amalgamated to the maximal parabolic subgroup $P$ of $\Gamma$ containing $\tilde f_i(E)$, i.e. $\Gamma$ is replaced by $\Gamma*_{P}V$. Note that $V$ does not depend on $i$. Repeat the above process to all abelian vertex groups of $\mathbb{G}$ yields the following:
\begin{enumerate}
\item A group $\Gamma'=\Gamma*_{\bar p}\bar V$. Here $\bar V=(V_1, ...., V_k)$, where $V_j$ is the amalgamated product of finitely many virtually cyclic abelian groups over infinite cyclic subgroups.  
\item $\pi'_i: \Gamma'\rightarrow \Gamma_i$, which is an extension of $\pi^i:\Gamma\rightarrow \Gamma_i$. 
\item $\tilde f_i:G\rightarrow \Gamma'$ such that $\pi'_i\circ\tilde f_i=f_i$. 
\end{enumerate}

By construction $\pi'_i(V_j)$ is contained in an elliptic (or parabolic) subgroup of $\Gamma_i$. Hence $\pi'_i$ factors through $\pi''_i: \Gamma*_{\bar p}\bar V'\rightarrow \Gamma_i$, where $\bar V'=(V'_1, ...., V'_k)$ with $V'_j$ being the abelianization of $V_j$.  
 Note that $V'_j$ is virtually cyclic and hence has the form $\mathbb{Z}\oplus F_j$ for some finite abelian group $F_j$. Let $K$ be the normal subgroup of $\Gamma*_{\bar p}\bar V'$ generated by $\bigcup_j ker^{\omega}(\pi''_i|_{F_j})$. Since $\bigcup_j ker^{\omega}(\pi''_i|_{F_j})$ is finite, $\pi''_i$ factors through $(\Gamma*_{\bar p}\bar V')/K$ \walmost surely.  Let $\pi_{f_i}:(\Gamma*_{\bar p}\bar V')/K\rightarrow\Gamma_i$ be that factoring map. Note that $f_i$ also factors through $\pi_{f_i}$. To finish the proof, we need to show that $(\Gamma*_{\bar p}\bar V')/K$ has the form $\Gamma*_{\bar p}\bar A$ for some $\bar A=(\mathbb{Z}\oplus(\mathbb{Z}/a_1\mathbb{Z}), \dots, \mathbb{Z}\oplus(\mathbb{Z}/a_k\mathbb{Z}))$. First note that $(\Gamma*_{\bar p}\bar V')/K$ has the form $(\Gamma*_{\bar p}\bar V'')$, where $\bar V''=(V''_1, ...., V''_k)$ and $V''_j=\mathbb{Z}\oplus (F_j/ker^{\omega}(\pi''_i|_{F_j}))$. Since $F_j/ker^{\omega}(\pi''_i|_{F_j})$ is finite, $\pi_{f_i}|_{F_j/ker^{\omega}(\pi''_i|_{F_j})}$ is injective \walmost surely. So $F_j/ker^{\omega}(\pi''_i|_{F_j})$ is cyclic \walmost surely. 
 \end{proof}

We are not ready to prove Theorem \ref{MT2}.

\begin{proof}[Proof of Theorem \ref{MT2}]
Let $L_1=G/ker^\omega(f_i)$. Then by Lemma \ref{factorthroughlimitgroup}, $f_i$ factor through the quotient map $G\rightarrow L_1$ \walmost surely. Denote the factoring map by $f_i^1:L_1\rightarrow \Gamma_i$. Then it suffices to prove the theorem for $f_i^1$. Repeating the above process produces a sequence of epimorphisms $L_1\rightarrow\cdots\rightarrow L_n\rightarrow \cdots$ of discrete ${\rm PSL}(2, \mathbb{R})$-limit groups and the corresponding sequence of maps $\{f_i^j\}_{j=1,\dots}$. By Lemma \ref{lgs}, the above epimorphism sequence stabilizes. Hence the $\omega$-kernel of $\{f_i^j\}$ is trivial for large $j$. Since it suffices to prove the theorem for $\{f_i^j\}$, we abuse notation and assume the $\omega$-kernel of $\{f_i\}$ is trivial. 

Note that if the theorem holds for all freely indecomposable factors of a Grushko decomposition of $G$, then the theorem holds for $G$. Hence, we can assume that $G$ is freely indecomposable without loss of generality. As a result, each of the non-abelian vertex groups of any special abelian splitting is freely indecomposable relative to its adjacent edge groups. Let $\mathbb{G}$ be a special abelian splitting with respect to $\{f_i\}$. Perform a good refinement (Definition \ref{goodrefine}) on $\mathbb{G}$ if necessary, we can assume that each non-abelian vertex group $N$ of $\mathbb{G}$ has a good generating set $S_N$ relative to its adjacent edge groups. 

Let $\mathcal{E}_N$ be the collection of edge groups of $N$ and let $\mathcal{A}$ be the collection of all the abelian vertex groups of $\mathbb{G}$. Let $\alpha_i\in {\rm Aut}(G, \mathcal{A})$ such that $(f_i\circ\alpha_i)|_{N}$ is short with respect to ${\rm Aut}(N, \mathcal{E}_N)$ and $S_N$ for every non-abelian vertex group $N$. To see that such $\alpha_i$ exists: Let $\alpha_i^N\in {\rm Aut}(N, \mathcal{E}_N)$ such that $f|_{N}\circ\alpha_i^N$ is short with respect to ${\rm Aut}(N, \mathcal{E}_N)$ and $S_N$. Note that $\alpha_i^N$ has a natural extension $\tilde\alpha_i^N\in {\rm Aut}(G, \mathcal{A})$ (See \cite[Definition 3.13]{R}).  Let $\alpha_i$ be the composition of all these $\tilde\alpha_i^N$. Then $\alpha_i$ is what we want. Note that $\mathbb{G}$ is a special abelian splitting with respect to $\{f_i\circ\alpha_i\}$ since $\alpha_i$ acts on the abelian vertex group of $\mathbb{G}$ by conjugation. Note that since $\alpha_i$ is automorphism, it suffices to prove the theorem for $f_i\circ\alpha_i$. We abuse notation and assume $f_i|_{N}$ is short with respect to ${\rm Aut}(N, \mathcal{E}_N)$ for all non-abelian vertex group $N$. 

Note that after shortening $f_i$, the $\omega$-kernel might become non-trivial. In that case, repeat the process of the above three paragraph. By Lemma \ref{lgs}, this process terminate after finitely many steps. So we can again assume the $\omega$-kernel of $\{f_i\}$ is trivial.

If $|f_i|_{S_N}$ is bounded for all non-abelian vertex group $N$, then Proposition \ref{vertexbounded} applies and the conclusion of the theorem follows. 

Suppose ${\rm lim}^{\omega} |f_i|_{S_N}=\infty$ for some non-abelian vertex group $N$. If Theorem \ref{GMWtree} can be applied to all non-abelian vertex group $N$ with $|f_i|_{S_N}\rightarrow \infty$. Then we apply Proposition \ref{coreproposition} to obtain a special abelian splitting $\mathbb{G}_1$ with either more edges or bigger abelian rank. Repeat the above process for $\mathbb{G}_1$ whenever possible. We get a sequence of special abelian splittings $\{\mathbb{G}_1, \dots, \}$ with strictly growing number of edges or AFF-rank. Since there are bounds on the number of edges (Corollary \ref{boundonnumofedges}) and abelian rank (Lemma \ref{boundedAFF}), there exists $\mathbb{G}_k$ to which Proposition \ref{coreproposition} can not be applied. Again, we abuse notation and denote $\mathbb{G}_k$ by $\mathbb{G}$.

By the properties of $\{f_i\}$ and $\mathbb{G}$ that we have established, Proposition \ref{coreproposition} fail to apply for one of the following two reasons: 
\begin{enumerate}
\item $|f_i|_{S_N}$ is bounded for all non-abelian vertex group $N$ of $\mathbb{G}$.
\item Theorem \ref{GMWtree} can NOT be applied to some non-abelian vertex group $N$ with $|f_i|_{S_N}\rightarrow \infty$. 
\end{enumerate}
In the first case, we are done by Proposition \ref{vertexbounded}. Now suppose we are in the second case. Note that hypothesis (1), (2) and (3) of Theorem \ref{GMWtree}  are satisfied by all non-abelian vertex groups of $\mathbb{G}$. Hence some non-abelian vertex group $N$ with $|f_i|_{S_N}\rightarrow \infty$ hypothesis (4) of Theorem \ref{GMWtree} does not hold, i.e. none of the stabilizers of non-degenerate segments in $\mc{T}$ is elliptic or parabolic, where $\mc{T}$ is the minimal $N$-invariant subtree of the asymptotic cone $\mc{X}=\mc{X}(f_i, S_N)$. Hence Proposition \ref{shortening} can be applied to $N$ and $\{f_i|_N\}$ and $S_N$. As a result, the \w kernel of $\{f_i|_N\}$ is non-trivial. Therefore the \w kernel of $\{f_i\}$ is non-trivial. Contradiction! 
\end{proof}

\section{\large ${\rm PSL}(2, \mathbb{R})$-limit group and MR-diagrams}\label{application}

With the notations defined in Definition \ref{orbifolddef} and Definition \ref{amalg}, the strong version of Theorem \ref{TMR} can be stated as follow:

\begin{theorem}\label{finite factoring set}

Let $G$ be a finitely generated group. There exist a finite collection $\mc{F}$ of finitely generated groups of the form $\pi_1(\mc{O})*_{\bar p}\bar A$, where $\mc{O}$ is an orientable hyperbolic 2-orbifold, $\bar p$ is a tuple of punctures of $\mc{O}$ and $\bar A=(\mathbb{Z}\oplus(\mathbb{Z}/a_1\mathbb{Z}), \dots, \mathbb{Z}\oplus(\mathbb{Z}/a_{|\bar p|}\mathbb{Z}))$, with the following properties: 
For each $f\in {\rm Hom}_d(G, {\rm PSL}(2, \mathbb{R}))$, there exist $\pi_1(\mc{O})*_{\bar p}\bar A\in \mc{F}$ and $\bar n \in \mathbb{N}^{|\bar p|}$. 
such that $f=\iota\circ q \circ \tilde f$, where 
\begin{enumerate}
\item $\tilde f\in {\rm Hom}(G, \pi_1(\mc{O})*_{\bar p}\bar A)$, 
\item $q:\pi_1(\mc{O})*_{\bar p}\bar A\rightarrow\pi_1(\mc{O}_{\bar n}) $ is an extension of the natural quotient map from $\pi_1(\mc{O})$ to $\pi_1(\mc{O}_{\bar n})$ and 
\item $\iota$ is a discrete faithful representation of $\pi_1(\mc{O}_{\bar n})$ into ${\rm PSL}(2, \mathbb{R})$. 

\end{enumerate}

\end{theorem}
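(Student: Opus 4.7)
The plan is to reduce Theorem~\ref{finite factoring set} to Theorem~\ref{MT2} via an enumeration-and-contradiction argument. The set of candidate target groups $\pi_1(\mc{O})*_{\bar p}\bar A$ is countable, since such a group is determined by the signature of $\mc{O}$, the sub-tuple $\bar p$ of its punctures, and the torsion orders $(a_1,\dots,a_k)$ in $\bar A$; enumerate them as $G_1,G_2,\dots$. If the theorem failed, then for every $n$ there would exist a discrete representation $f_n:G\to{\rm PSL}(2,\mathbb{R})$ admitting no factorization $f_n=\iota\circ q\circ\tilde f$ through any of $G_1,\dots,G_n$. The goal is then to extract a subsequence of $\{f_n\}$ that Theorem~\ref{MT2} forces to factor through a common $G_N$, contradicting the choice of $f_n$ for $n\geq N$ in that subsequence.

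First I would address the case where infinitely many $f_n$ have non-elementary image. For such $f_n$, the image $f_n(G)$ is a finitely generated non-elementary discrete subgroup of ${\rm PSL}(2,\mathbb{R})$, hence equal to $\pi_1(\mc{O}_{f_n})$ for a hyperbolic cone-type $2$-orbifold $\mc{O}_{f_n}$ of finite type, and the inclusion $f_n(G)\hookrightarrow{\rm PSL}(2,\mathbb{R})$ serves as the discrete faithful $\iota$. Since $\pi_1(\mc{O}_{f_n})$ is a quotient of $G$, its rank is bounded by ${\rm rank}(G)$, and a standard bound on the topological complexity of rank-bounded Fuchsian groups forces the topological type $(g,m,s)$ (genus, number of cone points, number of punctures) of $\mc{O}_{f_n}$ to lie in a finite set. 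Passing to a subsequence where all $\mc{O}_{f_n}$ share a fixed type, I would fix a model orbifold $\mc{O}^\flat$ of this type with every cone point replaced by a puncture, designate a tuple $\bar p^\flat=(p_1,\dots,p_m)$ of $m$ punctures corresponding to those cone points, and identify $\mc{O}_{f_n}\cong (\mc{O}^\flat)_{\bar n_n}$ for some $\bar n_n\in\mathbb{N}^m$ (a further subsequence absorbs the finite permutation ambiguity in this identification).

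Pass once more to a subsequence where each coordinate of $\bar n_n$ is either eventually constant or tends to infinity. Let $\mc{O}'$ denote $\mc{O}^\flat$ with the stabilized-coordinate punctures filled in as cone points of their constant orders, and let $\bar p'\subseteq\bar p^\flat$ index the coordinates of $\bar n_n$ that tend to infinity, with $\bar m_n$ the corresponding sub-tuple. Then $f_n$ factors through $\pi_1(\mc{O}'_{\bar m_n})$ with $\bar m_n\to\infty$ and each $\mc{O}'_{\bar m_n}$ hyperbolic, placing us exactly in the setup of Theorem~\ref{MT2}. That theorem produces a single tuple $\bar A$ of the prescribed form and an amalgam $\pi_1(\mc{O}')*_{\bar p'}\bar A$ through which infinitely many $f_n$ factor, with the factoring map extending the natural quotient $\pi_1(\mc{O}')\to\pi_1(\mc{O}'_{\bar m_n})$. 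This amalgam equals some $G_N$ in the enumeration, so any $f_n$ in the infinite subsequence with $n\geq N$ factors through $G_N$, contradicting the defining property of $f_n$.

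The remaining case, where infinitely many $f_n$ have elementary image, I would handle by a parallel argument: classify finitely generated elementary discrete subgroups of ${\rm PSL}(2,\mathbb{R})$ (cyclic elliptic, cyclic parabolic, cyclic hyperbolic, infinite dihedral) and, for each type, either observe that only finitely many isomorphism classes arise or embed the image into $\pi_1$ of a fixed auxiliary hyperbolic cone-type orbifold with a matching cone point or cusp and invoke Theorem~\ref{MT2} (or Lemma~\ref{l:reduction 2} for the purely abelian reductions). The main obstacle I anticipate is twofold: first, pinning down the finiteness of topological types $(g,m,s)$ for Fuchsian groups of bounded rank — intuitive but requiring a careful rank-versus-signature argument for the cone-point count — and second, coordinating the puncture identifications $\mc{O}_{f_n}\cong (\mc{O}^\flat)_{\bar n_n}$ across the various subsequence reductions so that Theorem~\ref{MT2} can be applied with a single base orbifold $\mc{O}'$ and a single sub-tuple $\bar p'$.
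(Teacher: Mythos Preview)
Your proposal is correct and follows essentially the same enumeration-and-contradiction strategy as the paper: enumerate the candidate groups, assume a bad sequence $\{f_i\}$, use the rank bound to stabilize the topological type of $\mathbb{H}/f_i(G)$, separate cone-point orders into those that stabilize and those that diverge, and feed the resulting sequence into Theorem~\ref{MT2} to obtain a contradiction. The paper's proof is terser on exactly the two points you flag as obstacles --- it simply asserts the finiteness of topological types from the rank bound and handles the identification bookkeeping implicitly by ``passing to a subsequence'' --- but the structure is the same.

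The one genuine difference is your explicit elementary/non-elementary dichotomy. The paper does not make this split: it just writes ``let $\mc{O}_i$ be the orientable 2-orbifold of finite type corresponding to $\mathbb{H}/f_i(G)$'' and proceeds uniformly, since every finitely generated Fuchsian group (elementary or not) yields such an orbifold. Your separate treatment is more careful but not strictly necessary; note, however, that the reference to Lemma~\ref{l:reduction 2} is slightly off, since that lemma assumes the \emph{source} $G$ is abelian, not the image --- what you actually want is the argument inside its proof (factoring cyclic images through a peripheral $\mathbb{Z}\oplus\mathbb{Z}/a\mathbb{Z}$), adapted directly.
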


\begin{proof}
Suppose the theorem is not true. Let $\{\Gamma^1, \Gamma^2, \dots\}$ be an enumeration of the collection of all groups of the same form as those in $\mc{F}$. Then for any $i$, there exits discrete representation $f_i$ of $G$ into ${\rm PSL}(2, \mathbb{R})$ such that $f_i$ does not factor through any of $\{\Gamma^1, \dots, \Gamma^i\}$ in the way described by the theorem. Let $\mc{O}_i$ be the orientable 2-orbifold of finite type corresponding to $\mathbb{H}/f_i(G)$. Since the rank of $f_i(G)$ is bounded by the rank of $G$, after passing to subsequence, we can assume that the underlying surfaces of $\mc{O}_i$ are the same and $\mc{O}_i$ has the same number of cone points. Hence, pass to subsequence if necessary, either $\mc{O}_i$ is isomorphic to a fixed 2-orbifold for all large $i$  or some of the cone points of $\mc{O}_i$ has order going to $\infty$ as $i$ goes to $\infty$.  In the first case, $\pi_1(\mc{O}_i)=\Gamma^m$ for some $m$. But by construction $f_i$ does not factor through $\Gamma^m$ when $i\geq m$. Contradiction.  Now we consider the second case. Let $\mc{O}$ be the orientable hyperbolic 2-orbifold  obtained from $\mc{O}_i$ by replacing all cone points whose orders go to $\infty$ with punctures.  Let $\Gamma_i$ be the fundamental group of $\mc{O}_i$ and $\Gamma$ be the fundamental group of $\mc{O}$. Note that all these 2-orbifold has hyperbolic structures. Hence by Theorem \ref{MT2}, we know that there is $\bar A=(\mathbb{Z}\oplus(\mathbb{Z}/a_1\mathbb{Z}), \dots, \mathbb{Z}\oplus(\mathbb{Z}/a_{|\bar p|}\mathbb{Z}))$ and $\Gamma*_{\bar p}\bar A$ such that $f_i$ factors through $\Gamma*_{\bar p}\bar A$. Note $\Gamma*_{\bar p}\bar A=\Gamma^m$ for some $m$. Hence by our choice of $f_i$, we know that $f_i$ does not factor through $\Gamma*_{\bar p}\bar A$ for all $i\geq m$. Contradiction. 
\end{proof}

\begin{remark}
For any cyclic subgroup $C\subset \pi_1(\mc{O})*_{\bar p}\bar A$ and any $\gamma$ not commuting with $C$, we have that $\gamma^{-1}C\gamma\cap C$ is finite. Hence $\pi_1(\mc{O})*_{\bar p}\bar A$ does not contain any Baumslag-Solitar groups. Therefore each group $\pi_1(\mc{O})*_{\bar p}\bar A$ is a hyperbolic group by Bestvina and Feighn's combination theorem (The last corollary in the introduction of \cite{BestFeighn}) as $\pi_1(\mc{O})$ and $\mathbb{Z}/a_j\mathbb{Z}$ both are.   
\end{remark}

The following corollary of Theorem \ref{finite factoring set} gives the precise description of Makanin-Razborov diagram introduced in the first section. 

\begin{corollary}\label{MR-diagram}
Let $G$ be a finitely generated group. Then there exists a finite collection $\mc{F}$ of groups and a finite directed rooted tree $T$ with root $v_0$ satisfying
\begin{enumerate}
\item Each a group in $\mc{F}$ has the form $\pi_1(\mc{O})*_{\bar p}\bar A$, where $\mc{O}$ is an orientable hyperbolic 2-orbifold, $\bar p$ is a tuple of punctures of $\mc{O}$ and $\bar A=(\mathbb{Z}\oplus(\mathbb{Z}/a_1\mathbb{Z}), \dots, \mathbb{Z}\oplus(\mathbb{Z}/a_{|\bar p|}\mathbb{Z}))$.
\item The vertex $v_0$ of $T$ is labeled by $G$. 
\item Any vertex $v\neq v_0$ of $T$ is labeled by a $\Gamma$ limit group $G_v$ for some $\Gamma\in \mc{F}$.
\item Any edge $e=[v, w]$ of $T$ is labeled by an epimorphism $\pi_e: G_v\rightarrow G_w$, where $v$ and $w$ are the initial and terminal vertices of $e$, respectively.   
\end{enumerate}
such that for any discrete representation $f: G\rightarrow {\rm PSL}(2, \mathbb{R})$ there exist a directed path $[v_0, v_1, \dots, v_l]$ from $v_0$ to some vertex $v_l$, $\pi_1(\mc{O})*_{\bar p}\bar A\in \mc{F}$ and $\bar n\in \mathbb{N}^{|\bar p|}$ such that 
\begin{equation*}
f= \iota\circ q \circ\phi\circ \pi_{l}\circ \alpha_{l-1}\circ \cdots \circ \alpha_1\circ \pi_1
\end{equation*}
where 
\begin{enumerate}
\item $\pi^i$ is the epimorphism labeling the edge $[v_i, v_{i+1}]$;
\item $\alpha_i\in {\rm Mod}(G_{v_i})$ (See Section 3 of \cite{R} for the definition.); 
\item $\phi$ is locally injective homomorphism to $\pi_1(\mc{O})*_{\bar p}\bar A$. (See Section 5.2 of \cite{R} for the definition of locally injective.)
\item $q:\pi_1(\mc{O})*_{\bar p}\bar A\rightarrow\pi_1(\mc{O}_{\bar n}) $ is an extension of the natural quotient map from $\pi_1(\mc{O})$ to $\pi_1(\mc{O}_{\bar n})$ 
\item $\iota$ is a discrete faithful representation $\pi_1(\mc{O}_{\bar n})\rightarrow {\rm PSL}(2, \mathbb{R})$.\end{enumerate}
\end{corollary}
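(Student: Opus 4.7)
The plan is to combine Theorem \ref{finite factoring set} with the Makanin-Razborov diagram theory for hyperbolic groups, producing the desired diagram by splicing finitely many already-known diagrams under a common root. First I would invoke Theorem \ref{finite factoring set} to obtain the finite collection $\mc{F}$ with the property that every discrete representation $f: G\to {\rm PSL}(2,\mathbb{R})$ factors as $\iota \circ q \circ \tilde f$ with $\tilde f\in {\rm Hom}(G, \Gamma)$ for some $\Gamma = \pi_1(\mc{O})*_{\bar p}\bar A \in \mc{F}$. This reduces the task to parametrizing, for each such $\Gamma$, the set ${\rm Hom}(G, \Gamma)$ by an MR diagram whose root is labeled by $G$ and whose non-root vertices are labeled by $\Gamma$-limit groups.

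Next, for each $\Gamma \in \mc{F}$, I would verify (as in the remark following Theorem \ref{finite factoring set}) that $\Gamma$ is a Gromov hyperbolic group: it is an amalgamated product of the hyperbolic 2-orbifold group $\pi_1(\mc{O})$ with the virtually cyclic groups $\mathbb{Z}\oplus\mathbb{Z}/a_j\mathbb{Z}$ along malnormal infinite cyclic subgroups, so Bestvina-Feighn's combination theorem applies. Granted hyperbolicity, I would then invoke the Makanin-Razborov diagram theorem for (possibly torsion) hyperbolic groups of Reinfeldt-Weidmann \cite{R} to obtain a finite directed rooted tree $T_\Gamma$ with root $v_\Gamma$ labeled by $G$, all other vertices labeled by $\Gamma$-limit groups, edges labeled by proper epimorphisms, such that every homomorphism $h\in {\rm Hom}(G, \Gamma)$ factors along some root-to-leaf path using modular automorphisms at each vertex and a final locally injective homomorphism $\phi$ from the leaf limit group into $\Gamma$.

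The tree $T$ is then assembled by taking a single root $v_0$ labeled by $G$ and gluing each $T_\Gamma$ to $v_0$ by identifying the root $v_\Gamma$ (also labeled by $G$) with $v_0$. The resulting tree is finite (as a finite union of finite trees sharing a root), satisfies (1)--(4) of the corollary, and every discrete $f$ factors as desired: apply Theorem \ref{finite factoring set} to extract $\Gamma$, $\bar n$, $\tilde f$, $q$ and $\iota$ with $f=\iota\circ q\circ\tilde f$, then apply the factorization of $\tilde f$ along a path in $T_\Gamma$ to obtain
\begin{equation*}
\tilde f = \phi\circ\pi_l\circ\alpha_{l-1}\circ\cdots\circ\alpha_1\circ\pi_1,
\end{equation*}
and compose with $\iota\circ q$.

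The main obstacle is purely a bookkeeping issue: one must confirm that the class of groups to which the cited MR machinery applies actually contains each $\Gamma\in\mc{F}$, and that the modular automorphism groups ${\rm Mod}(G_{v_i})$ of the $\Gamma$-limit groups produced by \cite{R} are the ones referenced in the statement of the corollary. The hyperbolicity argument via \cite{BestFeighn} handles the first point, and the second is matter of aligning definitions. No further analysis on the ${\rm PSL}(2,\mathbb{R})$ side is required, since Theorem \ref{finite factoring set} has already absorbed all ``discreteness-specific'' information into the finite list $\mc{F}$.
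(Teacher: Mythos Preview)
Your proposal is correct and follows essentially the same approach as the paper: the paper's proof is a two-line argument invoking Theorem \ref{finite factoring set} together with \cite[Theorem 5.7]{R}, noting that each $\Gamma\in\mc{F}$ is hyperbolic (hence equationally Noetherian, so the Reinfeldt--Weidmann machinery applies). Your write-up simply spells out the gluing of the individual $T_\Gamma$'s at a common root, which is exactly the intended construction.
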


\begin{proof}
The follows directly from Theorem \ref{finite factoring set} and \cite[Theorem 5.7]{R} since each group in $\mc{F}$ is a hyperbolic group. (Note that \cite[Theorem 5.7]{R} applies to all hyperbolic groups since they are all equationally Noetherian.)
\end{proof}

\begin{remark}
Since $\iota$ is faithful and $\phi$ is closed to being faithful, the finite diagram of epimorphisms together with the finitely many Modular groups of ${\rm PSL}(2, \mathbb{R})$-discrete limit groups uniformly encode most of the unfaithfulness of all the discrete representations from $G$ to ${\rm PSL}(2, \mathbb{R})$ besides the well understood maps $q$. The key condition that makes the Corollary \ref{MR-diagram} and Theorem \ref{finite factoring set} non-trivial is the finiteness of $\mc{F}$ and the tree $T$. 
\end{remark}

We prove Theorem \ref{finite pre finite abe} in the rest of this section. Recall that a group is {\em coherent} if all its finitely generated subgroups are finitely presented.

\begin{lemma}\label{coherent}
$A*_{F}B$ and $A*_F$ are coherent provided that $A$, $B$ are finitely generated coherent groups and $F$ is finitely generated virtually abelian. 
\end{lemma}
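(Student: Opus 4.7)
The plan is to combine Bass--Serre theory with an accessibility argument; we treat the amalgamated case $H\leq G=A*_FB$, since the HNN case $G=A*_F$ is identical using the Bass--Serre tree of the HNN extension. Given a finitely generated subgroup $H\leq G$, consider the action of $H$ on the Bass--Serre tree $T$ of the given splitting. If $H$ fixes a vertex of $T$, then $H$ is conjugate into $A$ or $B$ and is finitely presented by coherence of the vertex group. Otherwise, pass to the minimal $H$-invariant subtree $T_H$; the action of $H$ on $T_H$ yields a graph-of-groups decomposition $\mathbb{G}$ of $H$ whose vertex groups have the form $H\cap gAg^{-1}$ or $H\cap gBg^{-1}$ and whose edge groups have the form $H\cap gFg^{-1}$.

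Because $F$ is finitely generated virtually abelian, it is Noetherian: every subgroup of every conjugate of $F$ is itself finitely generated (and virtually abelian). In particular, every edge group of $\mathbb{G}$ is finitely generated. The crucial step is to show that the underlying graph of $\mathbb{G}$ can be taken to be finite. This is an accessibility statement for the finitely generated group $H$ acting on $T_H$ with slender edge stabilizers, and it can be established either by invoking a slender-accessibility theorem for finitely generated groups, or more directly by collapsing any edge of $\mathbb{G}$ whose stabilizer equals an adjacent vertex stabilizer and then using that subgroups of $F$ have bounded complexity (torsion-free rank plus size of torsion part) together with a $2$-acylindricity or pigeonhole argument in the spirit of Sela \cite{Sela1} and Weidmann \cite{Weidmann} to bound the number of edges of $\mathbb{G}$.

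Once finiteness of $H\backslash T_H$ is in hand, each vertex group of $\mathbb{G}$ is a finitely generated subgroup of the coherent group $A$ or $B$, and hence finitely presented. The fundamental group of a finite graph of groups with finitely presented vertex groups and finitely generated edge groups is itself finitely presented, via the standard graph-of-groups presentation (finitely many stable letters and finitely many relators, including one relator for each generator of each edge group identifying it with its images in the two adjacent vertex groups). Hence $H$ is finitely presented, completing the argument. The main obstacle is the accessibility step; once that is granted, the remainder is routine Bass--Serre theory.
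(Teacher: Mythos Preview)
Your approach via Bass--Serre theory is the same as the paper's, and the overall outline is correct. However, you are overcomplicating the step where you establish finiteness of the quotient graph. You call this an ``accessibility statement'' and propose to invoke slender-accessibility results or an acylindricity argument in the style of Sela and Weidmann. None of this is needed: it is a completely elementary fact that if $H$ is finitely generated and acts on a tree $T$ without a global fixed point, then the minimal $H$-invariant subtree $T_H$ has finite quotient $H\backslash T_H$. (Take generators $g_1,\dots,g_n$ of $H$, choose a vertex $v$, and let $K$ be the finite subtree spanned by $v,g_1v,\dots,g_nv$; then $H\cdot K$ is connected and $H$-invariant, hence contains $T_H$, so $T_H/H$ is covered by the image of $K$.) This holds with no hypothesis whatsoever on edge stabilizers.

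The paper's proof is essentially your argument with this simplification: the quotient graph is finite automatically; the edge groups are finitely generated because $F$ is Noetherian (being finitely generated virtually abelian); hence the vertex groups are finitely generated (standard for a finite graph of groups with finitely generated fundamental group and finitely generated edge groups); hence the vertex groups are finitely presented by coherence of $A$ and $B$; hence $H$ is finitely presented. Your proposal reaches the same conclusion but takes an unnecessary detour through accessibility machinery.
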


\begin{proof}
Let $G$ be a finitely generated subgroup of $A*_{F}B$. Let $T$ be the Bass-Serre tree of $A*_{F}B$. The action of $H$ on $T$ induces a splitting $\mathbb{G}$ of $G$. Note that the vertex groups and edge groups of $\mathbb{G}$ are (conjugates of) subgroups of $A$, $B$ or $F$. Hence they are all finitely generated since subgroups of $F$ are all finitely generated. Hence they are all finitely presented since $A$ and $B$ are coherent. Therefore $G$ is also finitely presented. 
The case of $A*F$ is similar. 
\end{proof}

The following is a summary of results from Section 6 of \cite{R}.

\begin{proposition}\label{RW}
Let $L$ be a $\Gamma$-limit group, where $\Gamma$ is a hyperbolic group. Then there exists a finitely sequence of epimorphisms 
\begin{equation*}
L=L_0\rightarrow L_1\rightarrow \cdots \rightarrow L_n
\end{equation*}
between $\Gamma$-limit groups with the following properties:
\begin{enumerate}
\item $L_n$ admits a locally injective homomorphism to $\Gamma$. 
\item $L_i$ admits a Dunwoody decomposition $\mathbb{D}_i$. 
\item Each vertex group $D^i_v$ of $\mathbb{D}_i$ admits a virtually abelian JSJ decomposition $\mathbb{A}_v^i$. 
\item The map $\pi_i:L_i\rightarrow L_{i+1}$ is injective on rigid vertex groups of $\mathbb{A}_v^i$. 
\item If all virtually abelian subgroups of $L_n$ are finitely generated, then all virtually abelian subgroups of $L_i$ are finitely generated for all $i$. 
\end{enumerate}
\end{proposition}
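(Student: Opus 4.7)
The plan is to assemble the statement from results already developed in Section 6 of \cite{R}, the only genuinely new content being that the structural data travel together through one sequence. The starting point is that since $\Gamma$ is hyperbolic, it is linear and hence equationally Noetherian by \cite[Theorem B1]{BMR}; thus, as already used in Lemma \ref{lgs}, $\Gamma$-limit groups satisfy the descending chain condition on proper epimorphisms (\cite[Corollary 6.2, Corollary 6.3]{R}). This DCC is what will ultimately terminate the sequence.

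To build the sequence, I would iterate the following construction of \cite{R}. Given a $\Gamma$-limit group $L_i$ realized as $G/\mathrm{ker}^\omega(f_j^i)$ for some stable sequence $\{f_j^i : L_i \to \Gamma\}$, I would first take a Dunwoody decomposition $\mathbb{D}_i$ of $L_i$; on each vertex group $D_v^i$ take the virtually abelian JSJ decomposition $\mathbb{A}_v^i$ (whose existence for $\Gamma$-limit groups is established in \cite[Section 6]{R}). The modular group associated to $\mathbb{A}_v^i$ acts by Dehn twists along edge groups, unimodular automorphisms on abelian vertex groups, and mapping class elements on QH vertices. Replacing each $f_j^i$ by a shortest representative in its modular orbit, there are then two possibilities: either the shortened sequence has translation length realized inside a compact region, yielding a locally injective homomorphism $L_n \hookrightarrow \Gamma$ (giving (1) and ending the process), or Sela's shortening argument as generalized in \cite[Section 6]{R} produces a proper $\Gamma$-limit quotient $L_{i+1} = L_i/\mathrm{ker}^\omega(f_j^i \circ \alpha_j^i)$. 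The key content of the argument is that the quotient map $\pi_i : L_i \to L_{i+1}$ kills only elements that could be shortened, and Dehn twists do not affect rigid vertex groups; this gives property (4) that $\pi_i$ is injective on each rigid vertex group of $\mathbb{A}_v^i$. Properties (2) and (3) hold at each stage by construction.

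Termination at some finite $n$ is forced by the DCC cited above. Property (5), the preservation of finite generation of virtually abelian subgroups, is proved by downward induction on $i$: assume every virtually abelian subgroup of $L_{i+1}$ is finitely generated. Any virtually abelian $A \subset L_i$ acts on the Bass-Serre tree of $\mathbb{D}_i$, so up to conjugation it lies in a vertex group $D_v^i$ where it acts on the Bass-Serre tree of $\mathbb{A}_v^i$; either $A$ is contained in a rigid vertex group (on which $\pi_i$ is injective, so $\pi_i(A)$ is virtually abelian in $L_{i+1}$ and finitely generated by hypothesis, hence so is $A$), or $A$ is conjugate into an abelian vertex group or edge group, which is already controlled by the JSJ structure together with finite generation of $\pi_i(A)$.

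The main obstacle I expect is not any single step but assembling the bookkeeping: ensuring that the Dunwoody decomposition $\mathbb{D}_i$, the JSJ decompositions $\mathbb{A}_v^i$, and the shortening quotient $\pi_i$ can all be chosen compatibly at each stage, and in particular that the sequence of $\Gamma$-limit groups constructed from the shortened homomorphisms is genuinely the same sequence on which the JSJ data sit. This compatibility is the technical content of \cite[Section 6]{R}, and the proof here is essentially a matter of quoting it in the precise form needed; the present proposition is stated as a convenient repackaging rather than a new result.
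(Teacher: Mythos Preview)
The paper does not supply a proof of this proposition at all: it is introduced with the sentence ``The following is a summary of results from Section 6 of \cite{R}'' and then used as a black box. Your proposal correctly identifies this, and your final paragraph acknowledging that the proposition is a repackaging of \cite{R} rather than a new result is exactly the right attitude.

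That said, your sketch contains one genuine error. You write that ``since $\Gamma$ is hyperbolic, it is linear and hence equationally Noetherian by \cite[Theorem B1]{BMR}.'' This is false: hyperbolic groups are not in general linear. The argument in Lemma~\ref{lgs} that you cite works because ${\rm PSL}(2,\mathbb{R})$ is linear, not because it is hyperbolic; that argument does not transfer to an arbitrary hyperbolic $\Gamma$. Equational Noetherianity for all hyperbolic groups is in fact one of the main results of \cite{R} (and independently of Sela), proved by quite different methods. So the DCC you need is available, but not for the reason you give; you should cite it directly from \cite{R} rather than routing through \cite{BMR}.

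A smaller point: your downward induction for (5) handles the rigid case cleanly but is vague on the remaining cases. A virtually abelian subgroup of $D_v^i$ could also sit inside a QH vertex group (where virtually abelian subgroups are virtually cyclic, hence finitely generated) or inside an abelian vertex group of $\mathbb{A}_v^i$; for the latter you need that the abelian vertex groups themselves are finitely generated, which in \cite{R} comes from the structure of the JSJ decomposition and the inductive hypothesis applied to the peripheral subgroup. Your phrase ``already controlled by the JSJ structure together with finite generation of $\pi_i(A)$'' gestures at this but does not actually carry the argument.
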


\begin{proposition}\label{fin gen fin pre}
Let $\Gamma$ be a hyperbolic group. Then the abelian subgroups of any $\Gamma$-Limit group are finitely generated. Suppose further that $\Gamma$ is coherent. Then $\Gamma$-Limit groups are coherent. In particular, $\Gamma$-Limit groups are finitely presented. 
\end{proposition}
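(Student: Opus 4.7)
The plan is to perform downward induction along the sequence of epimorphisms $L = L_0 \to L_1 \to \cdots \to L_n$ provided by Proposition \ref{RW}. The base case is $L_n$, and for the finite generation of abelian subgroups one then appeals directly to part (5) of Proposition \ref{RW} to propagate the conclusion from $L_n$ back to $L = L_0$.

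First I would handle the abelian subgroups of $L_n$. The locally injective homomorphism $\phi\colon L_n\to\Gamma$ is, by definition, injective on each vertex group of the virtually abelian JSJ decomposition $\mathbb{A}$ of $L_n$. In particular, every abelian or rigid vertex group of $\mathbb{A}$ embeds in $\Gamma$, and since abelian subgroups of a hyperbolic group are virtually cyclic, all abelian subgroups of these vertex groups are finitely generated. For an arbitrary abelian subgroup $A\leq L_n$, consider the action on the Bass--Serre tree of $\mathbb{A}$: either $A$ is elliptic and conjugate into a vertex group (settled above), or $A$ preserves an axis, in which case commutative transitivity of $L_n$ (which holds by Lemma \ref{l:comm tran}-type arguments applicable to $\Gamma$-limit groups for hyperbolic $\Gamma$) forces $A$ into the pointwise stabilizer of that axis, a subgroup of an edge stabilizer, hence again inside a vertex group. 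Thus abelian subgroups of $L_n$ are finitely generated, and by Proposition \ref{RW}(5) the same holds in each $L_i$, and in particular in $L$.

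For coherence, assume now that $\Gamma$ is coherent, and proceed by downward induction on $i$ showing each $L_i$ is coherent. For the base case $L_n$: the vertex groups of its JSJ are QH (compact surface orbifold groups, which are coherent), abelian (finitely generated by the previous step, hence coherent), or rigid (which embed in $\Gamma$ and hence are coherent by hypothesis). All edge groups of the JSJ are finitely generated virtually abelian. Iterated application of Lemma \ref{coherent} along the edges of the JSJ yields coherence of every Dunwoody vertex group $D_v^n$. The Dunwoody decomposition has finite edge groups, so one further application of Lemma \ref{coherent} gives coherence of $L_n$. For the inductive step from $L_{i+1}$ to $L_i$: the vertex groups of the JSJ of each Dunwoody vertex $D_v^i$ are QH (coherent), abelian (finitely generated, hence coherent), or rigid; by Proposition \ref{RW}(4), the rigid vertex groups embed into $L_{i+1}$, which is coherent by the inductive hypothesis, so they are coherent as well. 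Edge groups are finitely generated virtually abelian, and Lemma \ref{coherent} then gives coherence of $D_v^i$, followed by another application (over the finite edge groups of the Dunwoody decomposition) to obtain coherence of $L_i$. Finally, since $L$ is finitely generated and coherent, applying the coherence of $L$ to the subgroup $L$ itself yields that $L$ is finitely presented.

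The main obstacle is the base case for $L_n$: one must carefully unpack what ``locally injective'' guarantees and verify that abelian subgroups not conjugate into a JSJ vertex group can still be controlled, which is where commutative transitivity and the malnormal-like behavior of the JSJ edge groups enter. A secondary technical point is ensuring that every splitting used is compatible with Lemma \ref{coherent}, i.e.\ that all edge groups along the way are finitely generated (virtually abelian, or finite in the Dunwoody case); this is built into the statement of Proposition \ref{RW} but should be verified explicitly when the induction is carried out.
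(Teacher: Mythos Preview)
Your overall strategy matches the paper's: both use downward induction along the Reinfeldt--Weidmann sequence from Proposition \ref{RW}, invoking part (5) for the abelian-subgroup statement and Lemma \ref{coherent} iteratively for coherence. The paper is slightly more direct at the base case for coherence: it uses that a locally injective map sends each Dunwoody vertex group of $L_n$ injectively into $\Gamma$, so those vertex groups are coherent immediately and one assembles $L_n$ from them over finite edge groups via Lemma \ref{coherent}, without descending to the JSJ level as you do.

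There is one genuine slip in your treatment of abelian subgroups of $L_n$. If an abelian subgroup $A$ acts on the Bass--Serre tree without a global fixed point, then $A$ preserves an axis $\ell$, but $A$ is \emph{not} contained in the pointwise stabilizer of $\ell$: the hyperbolic elements of $A$ translate along $\ell$, so commutative transitivity cannot force $A$ into an edge stabilizer. The correct argument is that translation length along $\ell$ gives a homomorphism $A\to\mathbb{Z}$ whose kernel fixes $\ell$ pointwise and hence lies in an edge group; since edge groups here are finitely generated virtually abelian, all their subgroups are finitely generated, and $A$ is then an extension of a subgroup of $\mathbb{Z}$ by a finitely generated group, hence finitely generated. (Alternatively, and closer to the paper's implicit reasoning, work with the Dunwoody decomposition of $L_n$ instead of the JSJ: its edge groups are finite, so the axial case is immediate.) With this correction your base case goes through, and the rest of your induction is sound.
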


\begin{proof}
Since abelian subgroups of hyperbolic groups are finitely generated, the first statement follows easily from (1) and (5) of Proposition \ref{RW}. 

Let $\pi_n: L_n\rightarrow \Gamma$ be a locally injective map. Note that all the vertex groups of $\mathbb{D}_n$ are coherent since they are mapped injectively to $\Gamma$ by $\pi_n$ and $\Gamma$ is coherent.  Hence, by Lemma \ref{coherent}, $L_n$ is coherent. Inductively, we can assume that $L_{i+1}$ is coherent. To finished the proof, it suffices to show that $L_i$ is also coherent. First note that the rigid vertex groups of $\mathbb{A}_v^i$ are all coherent as $\pi_i$ is injective on those vertex groups and $L_{i+1}$ is coherent. Since finite extension of 2-orbifold groups and virtually abelian groups are coherent and edge groups of $\mathbb{A}_v^i$ are finitely generated, by Lemma \ref{coherent}, we know that $\pi_1(\mathbb{A}_v^i)$ is coherent. Apply Lemma \ref{coherent} to $\mathbb{D}_i$, we see that $L_i$ is coherent. 
\end{proof}

\begin{theorem}\label{psllimitgroupishyperboliclimit}
Let $L$ be a ${\rm PSL}(2, \mathbb{R})$-discrete limit group. Then $L$ is isomorphic to a $\Gamma$-limit group $L'$, where $\Gamma$ has the form $\pi_1(\mc{O})*_{\bar p}\bar A$. Here $\mc{O}$ is an orientable hyperbolic 2-orbifold, $\bar p$ is a tuple of punctures of $\mc{O}$ and $\bar A=(\mathbb{Z}\oplus(\mathbb{Z}/a_1\mathbb{Z}), \dots, \mathbb{Z}\oplus(\mathbb{Z}/a_{|\bar p|}\mathbb{Z}))$.
\end{theorem}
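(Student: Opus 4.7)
The plan is to combine the main factorization theorem (Theorem \ref{finite factoring set}) with the pigeonhole principle for ultrafilters. Let $L = G/ker^\omega(f_i)$ be the given ${\rm PSL}(2,\mathbb{R})$-discrete limit group, where $\{f_i \co G \to {\rm PSL}(2,\mathbb{R})\}$ is a sequence of discrete representations and $G$ is finitely generated. First I would apply Lemma \ref{factorthroughlimitgroup}: \walmost surely $f_i$ factors as $f_i = f'_i \circ \pi$, where $\pi \co G \to L$ is the natural quotient and $f'_i \co L \to {\rm PSL}(2,\mathbb{R})$ satisfies $ker^\omega(f'_i) = \{1\}$. Note that $L$ is finitely generated since $G$ is, so $\{f'_i\}$ is a sequence of discrete representations of a finitely generated group.

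Next I would apply Theorem \ref{finite factoring set} to $L$: this produces a finite collection $\mathcal{F} = \{\Gamma^1, \dots, \Gamma^n\}$ of groups of the form $\pi_1(\mathcal{O}) *_{\bar p} \bar A$ such that every $f'_i$ admits a factorization $f'_i = \iota_i \circ q_i \circ \widetilde{f}_i$, where $\widetilde{f}_i \in {\rm Hom}(L, \Gamma^{j(i)})$, $q_i$ is the (extension of the) natural quotient to some $\pi_1(\mathcal{O}_{\bar n_i})$, and $\iota_i$ is a discrete faithful representation of $\pi_1(\mathcal{O}_{\bar n_i})$. Since $\mathcal{F}$ is finite and $\omega$ is a $\{0,1\}$-valued finitely additive measure on $\mathbb{N}$, at least one index $j \in \{1,\dots,n\}$ satisfies $\omega(\{i : j(i) = j\}) = 1$. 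Fix this $\Gamma := \Gamma^j \in \mathcal{F}$; then \walmost surely $\widetilde{f}_i \in {\rm Hom}(L, \Gamma)$.

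Now I would form the $\Gamma$-limit group $L' := L/ker^\omega(\widetilde{f}_i)$ and prove that $L \cong L'$, i.e.\ that $ker^\omega(\widetilde{f}_i) = \{1\}$. This is the short verification at the heart of the proof. If $g \in L$ lies in $ker^\omega(\widetilde{f}_i)$, then $\widetilde{f}_i(g) = 1$ \walmost surely, so
\begin{equation*}
f'_i(g) \;=\; \iota_i\bigl(q_i(\widetilde{f}_i(g))\bigr) \;=\; \iota_i(q_i(1)) \;=\; 1
\end{equation*}
\walmost surely, because $q_i$ and $\iota_i$ are homomorphisms. Hence $g \in ker^\omega(f'_i) = \{1\}$, forcing $g = 1$. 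Therefore the natural quotient $L \to L'$ is an isomorphism, exhibiting $L$ as a $\Gamma$-limit group with $\Gamma = \pi_1(\mathcal{O}) *_{\bar p} \bar A$ of the desired form.

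There is no genuine obstacle here beyond invoking Theorem \ref{finite factoring set}; the only subtlety is making sure one first passes to $f'_i$ (so that $ker^\omega$ is trivial) before extracting the common target group $\Gamma$ via the ultrafilter, since otherwise the kernel comparison in the last paragraph would be vacuous. All the real work has been done in the proof of Theorem \ref{MT2} and Theorem \ref{finite factoring set}; this result is essentially a structural reformulation that allows Proposition \ref{fin gen fin pre} to be applied to yield Theorem \ref{finite pre finite abe}.
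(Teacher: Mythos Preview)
Your proposal is correct and follows essentially the same approach as the paper: pass to $f'_i$ via Lemma~\ref{factorthroughlimitgroup}, apply Theorem~\ref{finite factoring set} to $L$, use finiteness of $\mathcal{F}$ and the ultrafilter to land in a single $\Gamma$, and conclude from $ker^\omega(\widetilde{f}_i)\subset ker^\omega(f'_i)=\{1\}$. The paper's version is terser (it leaves the pigeonhole step and the kernel containment implicit), but the argument is the same.
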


\begin{proof}
Let a defining sequence of $L$ be $\{f_i: G\rightarrow{\rm PSL}(2, \mathbb{R})\}$, where $G$ is a finitely generated group and $f_i(G)$ is a discrete subgroup of ${\rm PSL}(2, \mathbb{R})$. Then $L=G/Ker^{\omega}(f_i)$. By Lemma \ref{factorthroughlimitgroup} \walmost surely $f_i$ factors through $L$. Let $f'_i:L\rightarrow {\rm PSL}(2, \mathbb{R})$ be the factoring map. Then $Ker^\omega(f_i')=\{1\}$. Now apply Theorem \ref{finite factoring set} to $L$, we that \walmost surely $f'_i$ factors through some $\Gamma\in \mc{F}$. Let $\{\phi_i: L\rightarrow \Gamma\}$ be the factoring maps. Then $Ker^\omega(\phi_i)\subset Ker^\omega(f_i')=\{1\}$. So $L=L/Ker^\omega(\phi_i)$ is a limit group over $\Gamma$, which has the form described by the theorem.  
\end{proof}

Theorem \ref{finite pre finite abe} is a corollary of the above theorem. 

\begin{corollary}
${\rm PSL}(2, \mathbb{R})$-discrete limit groups are finitely presented. All abelian subgroups of ${\rm PSL}(2, \mathbb{R})$-discrete limit groups are finitely generated. 
\end{corollary}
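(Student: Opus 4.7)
The plan is to deduce the corollary directly from Theorem \ref{psllimitgroupishyperboliclimit} together with Proposition \ref{fin gen fin pre}. Let $L$ be a ${\rm PSL}(2, \mathbb{R})$-discrete limit group. By Theorem \ref{psllimitgroupishyperboliclimit}, $L$ is isomorphic to a $\Gamma$-limit group, where $\Gamma$ has the form $\pi_1(\mc{O})*_{\bar p}\bar A$ with $\mc{O}$ an orientable hyperbolic 2-orbifold and $\bar A=(\mathbb{Z}\oplus(\mathbb{Z}/a_1\mathbb{Z}), \dots, \mathbb{Z}\oplus(\mathbb{Z}/a_{|\bar p|}\mathbb{Z}))$. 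It therefore suffices to verify the two hypotheses needed by Proposition \ref{fin gen fin pre}, namely that $\Gamma$ is hyperbolic and coherent.

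Hyperbolicity of $\Gamma$ is exactly the content of the remark following Theorem \ref{finite factoring set}: each edge group is infinite cyclic and malnormal (in the sense that conjugates of a cyclic edge group by an element not centralizing it intersect it in a finite subgroup), so $\Gamma$ contains no Baumslag-Solitar subgroups, and Bestvina-Feighn's combination theorem applies since $\pi_1(\mc{O})$ and each $\mathbb{Z}\oplus\mathbb{Z}/a_j\mathbb{Z}$ are hyperbolic. For coherence, I would observe that $\pi_1(\mc{O})$ is the fundamental group of a (hyperbolic) $2$-orbifold of finite type, which is virtually a surface group and hence coherent by Scott's theorem, while each virtually cyclic factor $\mathbb{Z}\oplus\mathbb{Z}/a_j\mathbb{Z}$ is coherent. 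Since the amalgamations defining $\Gamma$ are along finitely generated virtually abelian (in fact cyclic) edge groups, Lemma \ref{coherent}, applied iteratively over the $|\bar p|$ edges of the amalgamation, shows $\Gamma$ is coherent.

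With both hypotheses of Proposition \ref{fin gen fin pre} verified, that proposition yields immediately that every $\Gamma$-limit group is coherent and has all abelian subgroups finitely generated. Applying this to $L$, which is finitely generated by definition, gives that $L$ is finitely presented and that every abelian subgroup of $L$ is finitely generated.

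No step appears to present serious difficulty; the argument is essentially a bookkeeping reduction. The most substantive point is invoking coherence of $2$-orbifold groups (a form of Scott's theorem), but this is classical; everything else is a direct appeal to the results already established in the paper.
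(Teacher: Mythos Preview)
Your proof is correct and follows exactly the same approach as the paper: invoke Theorem \ref{psllimitgroupishyperboliclimit} to realise $L$ as a $\Gamma$-limit group, note that $\Gamma$ is hyperbolic and coherent, and apply Proposition \ref{fin gen fin pre}. You have simply spelled out the verification of coherence of $\Gamma$ (via Lemma \ref{coherent} and coherence of $2$-orbifold groups) more explicitly than the paper, which just asserts it.
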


\begin{proof}
By Theorem \ref{psllimitgroupishyperboliclimit}  any ${\rm PSL}(2, \mathbb{R})$-discrete limit group is a $\Gamma$-limit group, where $\Gamma$ is as in Theorem \ref{psllimitgroupishyperboliclimit}. Note that $\Gamma$ is hyperbolic. Since $\Gamma$ is coherent, the corollary follows from Proposition \ref{fin gen fin pre}.
\end{proof}

\appendix
\section{}

In this appendix, we prove a general algebraic facts used in the paper.

\begin{restatable}{lemma}{factabelicone}\label{abelianlemma}
Let $H$ be a finitely generated abelian group and $K$ be a cyclic subgroup of $H$. Let $p: \mathbb{Z}\rightarrow \mathbb{Z}/n\mathbb{Z}$ be the natural quotient map. Suppoe $f: H\rightarrow \mathbb{Z}/n\mathbb{Z}$ and $\phi: K\rightarrow \mathbb{Z}$ are homomorphisms such that $p\circ \phi=f|_K$. Suppose $\phi$ is injective. Then there exists:
\begin{enumerate}
\item a virtually cyclic abelian group $V$ depending only on $H$, $K$ and $\phi$, but not only $f$;
\item $\tilde p: V\rightarrow \mathbb{Z}/n\mathbb{Z}$; 
\item $\iota: \mathbb{Z}\rightarrow V$;
\item $\tilde\phi: H\rightarrow V$ 
\end{enumerate}
such that the following diagram commutes. 

\begin{tikzpicture}[node distance=1.5cm, auto]
  \node (A) {$\mathbb{Z}$};
  \node (B) [below of=A] {$V$};
  \node (C) [below of=B] {$\mathbb{Z}/n\mathbb{Z}$};
  \node (D) [left of=C] {$H$};
  \node (E) [left of=D] {$K$};
  \draw [->, dashed] (A) to node {$\iota$} (B);
  \draw [->, dashed] (B) to node {$\tilde p$} (C);
  \draw [->] (D) to node [swap]{$f$} (C);
  \draw [->]  (E) to (D);
  \draw [->]  (E) to node {$\phi$} (A);
  \draw [->, dashed]  (D) to node {$\tilde \phi$} (B);
  \draw [->, bend left=70 ] (A) to node {$p$} (C); 
\end{tikzpicture}
\end{restatable}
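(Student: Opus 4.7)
The natural construction is the pushout (amalgamated sum) of the diagram $H \hookleftarrow K \xrightarrow{\phi} \mathbb{Z}$ in the category of abelian groups. Concretely, I would set
$$V = (H\oplus \mathbb{Z})/N, \qquad N = \langle\, (k,-\phi(k)) : k\in K\,\rangle,$$
and define $\iota\colon \mathbb{Z}\to V$ by $\iota(z)=[(0,z)]$ and $\tilde\phi\colon H\to V$ by $\tilde\phi(h)=[(h,0)]$. The group $V$ and the maps $\iota$, $\tilde\phi$ manifestly depend only on $H$, $K$, and $\phi$, as required. The left triangle commutes by design: for $k\in K$ one has $\tilde\phi(k)-\iota(\phi(k))=[(k,-\phi(k))]=0$ in $V$.

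To produce $\tilde p$, I would define the auxiliary map $\bar p\colon H\oplus \mathbb{Z}\to \mathbb{Z}/n\mathbb{Z}$ by $\bar p(h,z)=f(h)+p(z)$; the hypothesis $p\circ\phi = f|_K$ makes $\bar p$ vanish on each generator $(k,-\phi(k))$ of $N$, so $\bar p$ descends to $\tilde p\colon V\to \mathbb{Z}/n\mathbb{Z}$. The two remaining triangles in the diagram are then immediate from $\tilde p([(0,z)])=p(z)$ and $\tilde p([(h,0)])=f(h)$. A short check on the normal form for elements of $N$ also shows that both $\iota$ and $\tilde\phi$ are injective, using the injectivity of $\phi$ on $K$; this is what makes the resulting amalgam $\Gamma\ast_{P}V$ in the application meaningful.

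The only nonroutine point is the claim that $V$ is virtually cyclic. Since $H$ is finitely generated abelian, so is $V$, so it suffices to control its torsion-free rank. Write $H\cong \mathbb{Z}^r\oplus T$ with $T$ finite. Because $\phi$ is injective, either $K=\{1\}$ (forcing $V=H\oplus \mathbb{Z}$, which is virtually cyclic exactly when $r=0$) or $K\cong \mathbb{Z}$ generated by some $k_0$ with $\phi(k_0)\neq 0$, in which case $(k_0,-\phi(k_0))$ generates a free cyclic direct summand of $H\oplus\mathbb{Z}$ after a change of basis, and hence $\mathrm{rank}(V)=r$. Thus $V$ is virtually cyclic precisely when $\mathrm{rank}(H)\leq 1$, which is the situation encountered in the application of the lemma (where $H$ is an abelian vertex group of a special abelian splitting sitting inside a parabolic or finite-cyclic subgroup of a Fuchsian group and $K$ is its maximal adjacent edge group, forcing $H/K$ to be finite). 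The expected main obstacle, therefore, is not the pushout construction itself but rather articulating the mild hypothesis on $H$ that makes the virtual-cyclicity conclusion hold, or alternatively rephrasing the lemma so that the ambient rank constraint is made explicit.
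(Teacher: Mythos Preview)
Your pushout construction is the natural first guess and it handles the commutativity of the diagram perfectly, but the gap you yourself flag is real and your proposed escape from it does not work. In the application (Proposition~\ref{vertexbounded}) the group $H$ is an abelian vertex group $A$ of a special abelian splitting of the limit group $G$; it is \emph{not} a subgroup of a Fuchsian group but only maps to one via each $f_i$. Such an $A$ can have arbitrarily large torsion-free rank---indeed the whole point of the ``abelian rank'' machinery in Section~\ref{msas} is that $A/E$ is typically a nontrivial free abelian group. So your pushout $V=(H\oplus\mathbb{Z})/N$ will in general have rank $\geq 2$ and fail to be virtually cyclic, and the lemma as stated really must cope with this.

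The paper's proof avoids the problem by \emph{not} insisting that $\tilde\phi$ be injective. It splits $H/K\cong\mathbb{Z}^b\oplus F$ with $F$ finite, takes $H_2$ to be the preimage of $F$ (so $H_2$ is virtually cyclic since $K$ is cyclic), and sets $V$ to be the amalgamated sum of $H_2$ and $\mathbb{Z}$ over $K\cong\phi(K)$---i.e.\ exactly your pushout but with $H_2$ in place of $H$. This $V$ is automatically virtually cyclic and depends only on $H,K,\phi$. The free part $\mathbb{Z}^b$ of $H$ is then sent into $V$ by first lifting $f|_{\mathbb{Z}^b}$ through $p$ to some $\tilde f:\mathbb{Z}^b\to\mathbb{Z}$ (possible because $\mathbb{Z}^b$ is free) and composing with $\iota$. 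Thus the map $\tilde\phi$ depends on $f$ through this lift, while $V$ itself does not. The missing idea in your attempt is precisely this: sacrifice injectivity of $\tilde\phi$ on the free part of $H$, crushing $\mathbb{Z}^b$ down into the $\mathbb{Z}$-factor of $V$ rather than carrying it along.
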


\begin{proof}
Since $H/K$ is finitely generated, $H/K=\mathbb{Z}^{b}\oplus F$ for some finite abelian group $F$. Let $H_1$ and $H_2$ be the pre-image of $\mathbb{Z}^{b}$ and $F$ in $H$, respectively. Then $H$ is the amalgamated direct product of $H_1$ and $H_2$ over $K$. Note that $H_2$ is virtually cyclic since $K$ is cyclic and $F$ is finite. 

Let $V$ be the amalgamated direct product of $H_2$ and $\mathbb{Z}$ identifying $K$ and $\phi(K)$. Then $V$ is a still a virtually cyclic abelian group. Let $\iota: \mathbb{Z}\rightarrow V$ be the natural embedding from $\mathbb{Z}$ to the $\mathbb{Z}$ factor of $V$. Let $\tilde p: V\rightarrow \mathbb{Z}/n\mathbb{Z}$ be the map induced by $f|_{H_2}$ and $p$. 

Since $H_1/K=\mathbb{Z}^b$, we have $H_1=K\oplus\mathbb{Z}^b$. Since $\mathbb{Z}^b$ is free, there exists $\tilde f: \mathbb{Z}^b\rightarrow \mathbb{Z}$ such that $p\circ\tilde f=f|_{\mathbb{Z}^b}$. Let $\tilde \phi_1: H_1\rightarrow V$ be the map induced by the inclusion from $K$ to the first factor $H_2$ of $V$ and $\tilde f$. Let $\tilde \phi_2: H_2\rightarrow V$ be the natural inclusion. Then $\tilde \phi_1$ and $\tilde\phi_2$ agree on $K$, therefore they induce a map $\tilde \phi: H\rightarrow V$. Now it is easy to check that the group $V$ and the maps $\tilde \phi$, $\tilde p$ and $\iota$ satisfy the conclusion of the lemma. 
\end{proof}

\bibliography{temp}

\end{document}